\title{Isomorphisms between complements of \\ projective plane curves}
\author{Mattias Hemmig}
\institution{Universit\"{a}t Basel, Departement Mathematik und Informatik, Spiegelgasse $1$, CH-$4051$ Basel, Switzerland}\\
\email{mattias.hemmig@gmail.com}}
\date{\vspace{-5ex}} % Empty date or tweak it according to your needs
\journal{\'Epijournal de G\'eom\'etrie Alg\'ebrique} % Epijournal name
\newdimen\origiwspc
\font% original inter word space
\renewcommand{\emph}[1]{{\it #1}}% Redefinition of the emph command which has been changed by the package ulem
\newcommand\N{{\mathbb N}}
\newcommand\A{{\mathbb A}}
\newcommand\Z{{\mathbb Z}}
\newcommand\Pic{\mathrm{ Pic}}
\newcommand\tr{\hbox to 1mm  {${}^t \!  $} }
\newcommand\p{{\mathbb P}}
\renewcommand\k{\mathrm{k}}
\DeclareMathOperator{\Aut}{Aut}
\DeclareMathOperator{\Aff}{Aff}
\DeclareMathOperator{\PGL}{PGL}
\DeclareMathOperator{\Loc}{Loc}
\DeclareMathOperator{\Jon}{Jon}
\DeclareMathOperator{\id}{id}
\DeclareMathOperator{\im}{im}
\DeclareMathOperator{\Sing}{Sing}
\def\dashmapsto{\mapstochar\dashrightarrow}
\newtheorem{theorem}{Theorem}
\newtheorem{lemma}{Lemma}[section]
\newtheorem{corollary}[lemma]{Corollary}
\newtheorem{proposition}[lemma]{Proposition}
\newtheorem{conjecture}[lemma]{Conjecture}
\newtheorem{thm}[lemma]{Theorem}
\newtheorem{definition}[lemma]{Definition}
\newtheorem{remark}[lemma]{Remark}
\newenvironment{claim*}[1][]{\paragraph{\bf Claim~$(#1)$.}}{}
\newenvironment{subproof}[1][\proofname]{%
  \begin{proof}[#1]%
}{%
  \end{proof}%
}
\begin{document}

%%%%%%%%%%%%%%%%%%%%%%%%%%%%%%%
% Add the title to the document
%%%%%%%%%%%%%%%%%%%%%%%%%%%%%%%

\maketitle

%\contribution{}

%%%%%%%%%%%%%%%%%%%%%
% Dedication (if any)
%%%%%%%%%%%%%%%%%%%%%
%\dedication{}

%%%%%%%%%%%%%%%%%%%%%%%%%%%%%%%%%%%%%%%%%%%%%%%%%%%%%%%%%%
% Add abstract, Keywords, MSC classification (recommended)
% Never remove prelims section, make it rather empty
%%%%%%%%%%%%%%%%%%%%%%%%%%%%%%%%%%%%%%%%%%%%%%%%%%%%%%%%%%
\begin{prelims}

\vspace{-0.55cm}

\def\abstractname{Abstract}
\abstract{In this article, we study isomorphisms between complements of irreducible curves in the projective plane $\p^2$, over an arbitrary algebraically closed field. Of particular interest are rational unicuspidal curves. We prove that if there exists a line that intersects a unicuspidal curve $C \subset \p^2$ only in its singular point, then any other curve whose complement is isomorphic to $\p^2 \setminus C$ must be projectively equivalent to $C$. This generalizes a result of H. Yoshihara who proved this result over the complex numbers. Moreover, we study properties of multiplicity sequences of irreducible curves that imply that any isomorphism between the complements of these curves extends to an automorphism of $\p^2$. Using these results, we show that two irreducible curves of degree $\leq 7$ have isomorphic complements if and only if they are projectively equivalent. Finally, we describe new examples of irreducible projectively non-equivalent curves of degree $8$ that have isomorphic complements.}

\keywords{Plane curves, curves of low degree, unicuspidal curves, complements of plane curves}

\MSCclass{14E07; 14H45; 14H50; 14J26}

%\vspace{0.15cm}

\languagesection{Fran\c{c}ais}{%

\vspace{-0.05cm}
{\bf Titre. Isomorphismes entre compl\'ementaires de courbes projectives planes} \commentskip 
{\bf R\'esum\'e.} Dans cet article, nous \'etudions les isomorphismes entre compl\'ementaires de courbes irr\'eductibles dans le plan projectif $\p^2$ sur un corps alg\'ebriquement clos quelconque. Les courbes rationnelles unicuspidales sont d'un int\'er\^et tout particulier. Nous montrons que s'il existe une droite qui intersecte une courbe unicuspidale $C \subset \p^2$ seulement en ses points singuliers, alors toute autre courbe dont le compl\'ementaire est isomorphe \`a $\p^2 \setminus C$ doit \^etre projectivement \'equivalente \`a $C$. Il s'agit d'une g\'en\'eralisation d'un r\'esultat de H. Yoshihara qui l'a d\'emontr\'e sur les nombres complexes. De plus, nous \'etudions des propri\'et\'es des suites de multiplicit\'es des courbes irr\'eductibles qui impliquent que tout isomorphisme entre compl\'ementaires de ces courbes s'\'etend en un automorphisme de $\p^2$. Faisant usage de ces r\'esultats, nous montrons que deux courbes irr\'eductibles de degr\'e $\leq7$ ont des compl\'ementaires isomorphes si et seulement si elles sont projectivement \'equivalentes. Enfin, nous d\'ecrivons de nouveaux exemples de courbes irr\'eductibles de degr\'e $8$ non projectivement \'equivalentes qui ont des compl\'ementaires isomorphes.}

\end{prelims}

%%%%%%%%%%%%%%%%%%%%%
% Content begins here
%%%%%%%%%%%%%%%%%%%%%

\newpage

% Add table of contents (optional)
\setcounter{tocdepth}{2} \tableofcontents

\section{Introduction}

Throughout this article, we fix an algebraically closed field $\k$ of arbitrary characteristic. Curves in $\p^2$ will always be assumed to be closed. Let $C, D \subset \p^2$ be two irreducible curves. We then call $C$ and $D$ \emph{projectively equivalent} if there exists an automorphism of $\p^2$ that sends $C$ to $D$. Our aim is to study isomorphisms $\p^2 \setminus C \to \p^2 \setminus D$ and properties of the curves $C$ and $D$, given such an isomorphism. In 1984, H.~Yoshihara stated the following conjecture.

\begin{conjecture}[\cite{Yos84}]\label{conjecture:yoshihara}
Let $C,D \subset \p^2$ be irreducible curves and $\varphi \colon \p^2 \setminus C \to \p^2 \setminus D$ an isomorphism between their complements. Then $C$ and $D$ are projectively equivalent.
\end{conjecture}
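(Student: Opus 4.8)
The plan is to turn the isomorphism into a birational self-map of $\p^2$ and to analyse it via blow-ups. Since $\p^2\setminus C$ and $\p^2\setminus D$ are dense open subsets of $\p^2$, the isomorphism $\varphi$ is in particular birational, hence extends to a birational transformation $\Phi\colon\p^2\dashrightarrow\p^2$ all of whose base points (proper or infinitely near) lie on $C$, with those of $\Phi^{-1}$ on $D$. Resolving the indeterminacies of $\Phi$ and $\Phi^{-1}$ simultaneously, I would obtain a smooth projective surface $X$ and two birational morphisms $\pi\colon X\to\p^2$, $\eta\colon X\to\p^2$, each a composition of point blow-ups, with $\eta\circ\pi^{-1}=\Phi$. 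Writing $B_\pi=\pi^{-1}(C)$ and $B_\eta=\eta^{-1}(D)$, the hypothesis that $\varphi$ is an \emph{isomorphism} of complements — not merely a birational map — forces $X\setminus B_\pi=X\setminus B_\eta$ as subsets of $X$, hence $B_\pi$ and $B_\eta$ have the same support. So everything is encoded in a single surface $X$ carrying one ``boundary'' curve $B$ that can be contracted, in two different ways, to $(\p^2,C)$ and to $(\p^2,D)$.

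Next I would extract numerical constraints from the open surface $U:=\p^2\setminus C\cong\p^2\setminus D$. Since $C$ is irreducible and ample, one has $\mathcal{O}(U)^\ast=\k^\ast$ and $\Pic(U)\cong\Z/(\deg C)\Z$, and the log Kodaira dimension of $U$ is an isomorphism invariant; comparing these quantities computed on the $C$-side and the $D$-side already yields $\deg C=\deg D$ and restricts the admissible singularity types. Then comes the combinatorial core: each of $B_\pi$, $B_\eta$ is a tree of smooth rational curves together with the strict transform of $C$ (resp.\ $D$), and its weighted dual graph, the positions of its $(-1)$-curves, and the order in which the blow-ups were performed are tightly constrained. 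Running a relative minimal model program inside $X$ — repeatedly contracting $(-1)$-curves that meet the rest of the boundary in at most two points — I would try to show that the only way to reconcile the two contraction structures is that $\pi$ and $\eta$ differ, on a common intermediate model, by an automorphism of $\p^2$, which would force $C$ and $D$ to be projectively equivalent.

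The step I expect to be the genuine obstacle is precisely the last one: proving that the two towers of blow-ups must cancel. When $C$ carries a very singular point — one of high multiplicity with a long multiplicity sequence — and some line (or low-degree curve) meets $C$ only at that point, one can build nontrivial birational maps of de Jonquières type based there whose proper transform of $C$ is a projectively \emph{inequivalent} curve with the same complement, so the MMP need not terminate in the expected configuration. The honest expectation is therefore that the strategy proves Conjecture~\ref{conjecture:yoshihara} only under extra hypotheses taming the singular points — e.g.\ a unicuspidal $C$ meeting some line only at its cusp, or $\deg C\le 7$, where the boundary trees are small enough to classify by hand — while the conjecture is false in general, the first counterexamples arising in degree $8$. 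Isolating exactly which multiplicity-sequence conditions force the cancellation, and exhibiting the degree-$8$ examples, is then what remains to be done.
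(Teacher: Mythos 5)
You have correctly diagnosed the essential point: Conjecture~\ref{conjecture:yoshihara} is false, so the paper contains no proof of it and none exists. The paper records counterexamples of degree $39$ from \cite{Bla09} and of degree $9$ from \cite{Cos12}, and its own Theorem~\ref{theorem:counterexample} produces projectively non-equivalent degree-$8$ curves with multiplicity sequence $(3_{(7)})$ and isomorphic complements, built from three conics pairwise tangent to order $3$ at a common point. What the paper actually proves are exactly the partial results you anticipate: Theorem~\ref{Thm:Yoshihara} (unicuspidal $C$ admitting a line that meets it only at the cusp) and Corollary~\ref{cor:deg7} (degree $\leq 7$). Your sketch of the machinery for those positive results matches the paper's: the isomorphism of complements yields a common resolution $X$ with two $(-1)$-tower resolutions $\pi,\eta$ whose contracted loci share all components except the two strict transforms (Lemma~\ref{Lem:tower}); $\Pic(\p^2\setminus C)\simeq\Z/\deg(C)\Z$ gives equality of degrees (Lemma~\ref{Lem:equaldegree}), with no need for log Kodaira dimension; the tree and SNC structure of the boundary forces both curves to be rational with a unique proper singular point and equally many boundary points on the normalization (Proposition~\ref{Prop:form}); and the low-degree theorem is a case analysis on multiplicity sequences constrained by the genus formula and the requirement that the strict transform have self-intersection $-1$.

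The one place your heuristic is inverted is the role of the very tangent line. You suggest that a line meeting $C$ only at a high-multiplicity point is what enables de Jonqui\`eres-type constructions of projectively inequivalent curves with the same complement. Theorem~\ref{Thm:Yoshihara} asserts the opposite: for unicuspidal $C$, the existence of such a line is precisely the hypothesis under which every curve with isomorphic complement is projectively equivalent to $C$. De Jonqui\`eres maps do appear, but as the tool of that proof: via Jung--van der Kulk one shows that the induced automorphism $\psi$ of $\p^2\setminus L_z$ conjugates back to an automorphism of $\p^2\setminus C$, so the given isomorphism can be corrected to one extending to $\PGL_3$. The genuine counterexamples live elsewhere: Costa's unicuspidal curves admit no such line, and the degree-$8$ example is not unicuspidal at all --- it satisfies $C\setminus\Sing(C)\simeq\A^1\setminus\{0\}$, i.e.\ $k=2$ boundary points, which is exactly the range in which the rigidity statement of Proposition~\ref{Prop:rigidity} (valid only for $k\geq 3$) fails.
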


A counterexample to Conjecture~$\ref{conjecture:yoshihara}$ was given in \cite{Bla09}. The construction given there yields non-isomorphic (and hence projectively non-equivalent) rational curves $C_0$ and $D_0$ of degree $39$ that have isomorphic complements. Both curves have a unique singular point $p_0 \in C_0$ and $q_0 \in D_0$ respectively, such that $C_0 \setminus \{p_0\}$ and $D_0 \setminus \{q_0\}$ are isomorphic to open subsets of $\p^1$, each with $9$ complement points. To see that $C_0$ and $D_0$ are not isomorphic, it is shown that the two sets of $9$ complement points, corresponding to $C_0$ and $D_0$, are non-equivalent by the action of $\PGL_2 = \Aut(\p^1)$ on $\p^1$.

It is a general fact that if there exists an isomorphism $\varphi \colon \p^2 \setminus C \to \p^2 \setminus D$ that does not extend to an automorphism of $\p^2$, then $C$ and $D$ are of the same degree (Lemma~$\ref{Lem:equaldegree}$) and there exist points $p \in C$ and $q \in D$ such that each $C \setminus \{p\}$ and $D \setminus \{q\}$ are isomorphic to complements of $k \geq 1$ points in $\p^1$ (Proposition~$\ref{Prop:form}$). Moreover, when the number $k$ of complement points is $\geq 3$, the isomorphism $\varphi$ is uniquely determined, up to a left-composition with an automorphism of $\p^2$ (Proposition~$\ref{Prop:rigidity}$).

The case of unicuspidal rational curves (i.e.\ when the number $k$ of complement points is $1$) is of particular interest since the rigidity of Proposition~$\ref{Prop:rigidity}$ does not hold there. Indeed, by a result of P. Costa (\cite{Cos12}, \cite[Proposition~A.$3$.]{BFH16}), there exists a family of irreducible rational unicuspidal curves $(C_\lambda)_{\lambda \in \k^*}$ in $\p^2$ that are pairwise projectively non-equivalent, but all have isomorphic complements. The first main result of this article shows that a unicuspidal curve $C$ cannot be part of such a family if there exists a line $L$ that intersects $C$ only in its singular point.

\begin{theorem}\label{Thm:Yoshihara}
Let $C \subset \p^2$ be an irreducible curve and $L \subset \p^2$ a line such that $C \setminus L \simeq \A^1$. Let $\varphi \colon \p^2 \setminus C \to \p^2 \setminus D$ be an isomorphism, where $D \subset \p^2$ is some curve. Then $C$ and $D$ are projectively equivalent.
\end{theorem}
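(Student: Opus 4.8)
If $\varphi$ extends to an automorphism of $\p^2$ there is nothing to prove, so assume it does not. By Lemma~\ref{Lem:equaldegree} we then have $d := \deg C = \deg D$, and by Proposition~\ref{Prop:form} there exist points $p \in C$, $q \in D$ and an integer $k \geq 1$ with $C \setminus \{p\} \simeq D \setminus \{q\} \simeq \p^1 \setminus \{k\text{ points}\}$. The hypothesis $C \setminus L \simeq \A^1$ says that $C \cap L$ is a single point $p_0$ with $C \setminus \{p_0\} \simeq \A^1$, so $C$ is rational and smooth away from $p_0$. If $d \leq 2$ then $\Pic(\p^2 \setminus D) \simeq \Pic(\p^2 \setminus C) \simeq \Z/d\Z$ forces $\deg D = d$, and irreducibility makes $D$ a line or a smooth conic, so $C$ and $D$ are projectively equivalent. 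We may thus assume $d \geq 3$; then $C$ is a unicuspidal rational curve, necessarily $p = p_0$ and $k = 1$, and by the same reasoning $D$ is unicuspidal rational with cusp $q_0 = q$.

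\textbf{The image of $L$ and the surface $\p^2\setminus(C\cup L)$.} Since $L \cap (\p^2 \setminus C) = L \setminus \{p_0\}$ and $\varphi$ maps $\p^2 \setminus C$ isomorphically onto $\p^2 \setminus D$, the curve $\Gamma := \overline{\varphi(L \setminus \{p_0\})} \subseteq \p^2$ satisfies $\varphi(L \setminus \{p_0\}) = \Gamma \setminus D$, hence $\Gamma \setminus D \simeq \A^1$; as $\Gamma$ is the closure of a copy of $\A^1$ and $\Gamma \not\subseteq D$, it is irreducible and rational, with $\Gamma \cap D = \{q_1\}$ a single point at which $\Gamma$ has one branch, and $\Gamma$ smooth elsewhere. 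In particular $\varphi$ restricts to an isomorphism $\p^2 \setminus (C \cup L) \iso \p^2 \setminus (D \cup \Gamma)$. Now $C \cup L$ is ample, so $\p^2\setminus(C\cup L)$ is affine; since $\gcd(\deg C,\deg L)=1$ its Picard group is trivial and its unit group is $\k^\times\times\Z$, the free part generated by $f_C/f_L^{\,d}$ for defining forms $f_C,f_L$. The pencil $\langle f_C, f_L^{\,d}\rangle$ has base locus supported at $p_0$ and defines a morphism $g\colon\p^2\setminus(C\cup L)\to\mathbb{G}_m$; resolving this pencil and using that $C\cdot L$ is concentrated at $p_0$, one checks that every member other than $C$ and $dL$ is a rational curve meeting $C\cup L$ only at $p_0$ with one branch there, so every fibre of $g$ is isomorphic to $\A^1$ and $\p^2\setminus(C\cup L)\simeq\A^1\times\mathbb{G}_m$; likewise $\p^2\setminus(D\cup\Gamma)\simeq\A^1\times\mathbb{G}_m$.

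\textbf{Rigidity --- the main point.} The surface $\A^1\times\mathbb{G}_m$ carries, up to automorphism, a unique $\A^1$-fibration, so $\varphi$ transports the pencil $\langle f_C,f_L^{\,d}\rangle$ to the analogous structure on $\p^2\setminus(D\cup\Gamma)$. Writing the latter via defining forms $f_D,f_\Gamma$, the requirement that the induced $\A^1$-fibration be cut out by a pencil of plane curves of degree $d$, together with $\gcd(d,\deg\Gamma)=1$ (forced by the vanishing of the Picard group), yields $\deg\Gamma=1$: thus $\Gamma$ is a line and $D\setminus\Gamma\simeq\A^1$. One then resolves the birational map underlying $\varphi$ and compares the two pencils $\langle f_C,f_L^{\,d}\rangle$ and $\langle f_D,f_\Gamma^{\,d}\rangle$ fibre by fibre, tracking the distinguished non-reduced members $dL$, $d\Gamma$ and the dual graph of the boundary $\pi^{-1}(C)=\sigma^{-1}(D)$, to conclude that this birational map has no base point; it is therefore an automorphism $\psi\in\PGL_3(\k)$ with $\psi(C\cup L)=D\cup\Gamma$, and comparing degrees of components gives $\psi(C)=D$, so $C$ and $D$ are projectively equivalent.

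I expect the last step to be the real obstacle: upgrading the abstract isomorphism $\varphi$ to an automorphism of $\p^2$. The discrete data alone --- the degree, the multiplicity sequence of the cusp, the presence of a line through the cusp only --- does not determine $C$ up to projective equivalence (for instance the quartics $\{xz^3=y^4\}$ and $\{xz^3=y^4+y^2z^2\}$ share all these features yet are not projectively equivalent), so the argument must genuinely exploit $\varphi$ and the way the two complements sit inside $\p^2$.
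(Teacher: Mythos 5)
Your opening reduction (to $d\geq 3$ and $C$, $D$ unicuspidal with $k=1$) agrees with the paper, and the passage to the isomorphism $\p^2\setminus(C\cup L)\to\p^2\setminus(D\cup\Gamma)$ with both sides isomorphic to $\A^1\times\mathbb{G}_m$ is essentially the rectifiability input the paper also uses (via Proposition~$\ref{Prop:rectifiabilty}$, i.e.\ Cremona-contractibility of $C$; your pencil argument for the triviality of the $\A^1$-fibration is itself only asserted, and the irreducibility, rationality and one-place-at-$p_0$ properties of the members of $\langle f_C,f_L^{\,d}\rangle$ are exactly as hard as the rectifiability statement). The genuine gap is the final step, and it is not merely unfinished: as stated it is false. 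You conclude that ``the birational map underlying $\varphi$ has no base point.'' Take $C=D$ the cuspidal cubic $x^2z=y^3$ and $L$ the line $x=0$; then all hypotheses of the theorem hold, yet $\Aut(\p^2\setminus C)$ contains (infinitely many, in fact an infinite-dimensional family of) automorphisms that do not extend to $\PGL_3(\k)$ (Remark~$\ref{remark:cubic}$), and any of these is a legitimate $\varphi$ whose underlying birational map has nonempty base locus. More generally, whenever the theorem has nontrivial content ($\varphi$ non-extendable), the group $\Aut(\p^2\setminus C)$ must contain non-extendable elements, so no comparison of pencils, non-reduced members and boundary dual graphs can ever show that $\varphi$ itself is biregular.

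What has to be proved instead is that $\varphi$ differs from an extendable isomorphism by an automorphism of $\p^2\setminus C$, and that correction is where all the work in the paper lies: Proposition~$\ref{prop:diagram}$ puts $\varphi$ into the normal form $\theta_D^{-1}\circ\psi\circ\theta_C$ with $\theta_C,\theta_D\in\Aut(\p^2\setminus L_x)$ rectifying $C$ and $D$ and $\psi\colon(x,y)\mapsto(x,y+x^2f(x))$, and Proposition~$\ref{prop:conjugation}$ --- using the local groups $\Loc$ and a conjugation of $\psi$ through the de Jonqui\`eres tower of $\theta_C$ via the Jung--van der Kulk amalgam --- shows that $\rho=\theta_C^{-1}\circ\psi\circ\theta_C$ preserves $L$ and hence lies in $\Aut(\p^2\setminus C)$, whence $\varphi\circ\rho^{-1}=\theta_D^{-1}\circ\theta_C$ contracts no curve and extends. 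Your proposal contains no substitute for this step; you flag it yourself as ``the real obstacle,'' and that self-assessment is accurate --- what you have is a correct setup plus an acknowledgment of the gap, not a proof.
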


This theorem was already proven by H. Yoshihara~\cite{Yos84} over the field of complex numbers. His proof relies on the theorem of Abhyankar-Moh-Suzuki (\cite{AM75}, \cite{Suz74}) and also uses some analytic tools. We give a purely algebraic proof that works over arbitrary algebraically closed fields.

The counterexamples to Conjecture~$\ref{conjecture:yoshihara}$ given by P. Costa are of degree $9$ and it is thus natural to ask what happens in lower degrees. This is the second main result of this article. For the definition of multiplicity sequence used below, see Definition~$\ref{Def:multseq}$.

\begin{theorem}\label{Thm:degree8}
Let $C, D \subset \p^2$ be irreducible curves of degree $\leq 8$ and $\varphi \colon \p^2 \setminus C \to \p^2 \setminus D$ an isomorphism that does not extend to an automorphism of $\p^2$. Then $C$ and $D$ both are either:

\begin{enumerate}
\item[$(i)$] lines;
\item[$(ii)$] conics;
\item[$(iii)$] nodal cubics;
\item[$(iv)$] projectively equivalent rational unicuspidal curves;
\item[$(v)$] projectively equivalent curves of degree $6$ with multiplicity sequence $(3,2_{(7)})$;
\item[$(vi)$] curves of degree $8$ with multiplicity sequence $(3_{(7)})$ such that $$C \setminus \Sing(C) \simeq D \setminus \Sing(D) \simeq \A^1 \setminus \{0\}.$$
\end{enumerate}
\end{theorem}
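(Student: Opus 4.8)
The plan is to analyze the situation by combining the structural results quoted in the introduction with a case-by-case study of low-degree multiplicity sequences. By Lemma~$\ref{Lem:equaldegree}$, a non-extendable isomorphism $\varphi \colon \p^2 \setminus C \to \p^2 \setminus D$ forces $\deg C = \deg D =: d$, and by Proposition~$\ref{Prop:form}$ there are points $p \in C$, $q \in D$ with $C \setminus \{p\} \simeq D \setminus \{q\} \simeq \p^1 \setminus \{k \text{ points}\}$ for some $k \ge 1$; in particular $C$ and $D$ are rational, with a single point off which they are smooth, and the multiplicity sequence of $C$ at $p$ (resp.\ $D$ at $q$) is constrained by $\sum m_i(m_i-1) = (d-1)(d-2)$ together with $\sum m_i \equiv \text{stuff}$ coming from the fact that the proper transform of $C$ after resolving $p$ is a smooth rational curve. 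So the first step is to enumerate, for each $d \le 8$, all multiplicity sequences $(m_0, m_1, \dots)$ of a rational cuspidal curve of degree $d$ with a unique singular point.

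Next I would cut this list down drastically using the two obstructions available. First, Theorem~$\ref{Thm:Yoshihara}$: if some line $L$ meets $C$ only at $p$, i.e.\ $C \setminus L \simeq \A^1$ (equivalently $k = 1$ and that line is the one achieving it), then $C$ and $D$ are projectively equivalent and we are done — this handles all the genuinely unicuspidal cases and feeds into outcome~$(iv)$ (and $(i)$–$(iii)$ for small $d$). So the interesting surviving cases are those with $k \ge 2$, and here I would invoke the rigidity of Proposition~$\ref{Prop:rigidity}$ (valid for $k \ge 3$) and, separately for $k = 2$, whatever intermediate lemmas the paper develops about extension of isomorphisms from multiplicity-sequence data (the "properties of multiplicity sequences ... that imply that any isomorphism ... extends" advertised in the abstract). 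Concretely, for each candidate multiplicity sequence I would try to show either (a) the isomorphism must extend — contradiction, so this sequence does not occur; or (b) it falls into one of the listed exceptional families. The degree bound $d \le 8$ is what makes the list finite and short: one expects only a handful of sequences survive, namely the nodal cubic $(2)$ in outcome~$(iii)$, the sextic $(3, 2_{(7)})$ in outcome~$(v)$, and the octic $(3_{(7)})$ with $C \setminus \Sing C \simeq \A^1 \setminus \{0\}$ (the $k=2$ case) in outcome~$(vi)$.

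The main obstacle, I expect, is outcome~$(vi)$: the degree-$8$ curves with multiplicity sequence $(3_{(7)})$ and $k = 2$. Here rigidity (Proposition~$\ref{Prop:rigidity}$) does not apply, Theorem~$\ref{Thm:Yoshihara}$ does not apply since there is no line meeting $C$ only at its singular point, and one must genuinely argue that the complement is determined well enough to pin down the multiplicity sequence of $D$ and the isomorphism type of $D \setminus \Sing D$, without being able to conclude projective equivalence (indeed the last sentence of the abstract says projectively non-equivalent such curves with isomorphic complements exist). So the work is to show: (1) no multiplicity sequence other than the ones listed is compatible with an essential (non-extendable) isomorphism in degree $\le 8$ — this is a finite but delicate combinatorial/geometric check, resolving $\varphi$ as a birational map of $\p^2$ and tracking how base points and their multiplicities interact with the Bézout/genus constraints; and (2) for the surviving degree-$8$ family, transfer the structural data across $\varphi$ to obtain the symmetric conclusion $C \setminus \Sing C \simeq D \setminus \Sing D \simeq \A^1 \setminus \{0\}$ and $\deg D = 8$ with the same multiplicity sequence. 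Steps $(iv)$ and $(v)$ should follow by the same machinery with less delicate bookkeeping, using Theorem~$\ref{Thm:Yoshihara}$ to upgrade "isomorphic complements" to "projectively equivalent" whenever a suitable line exists.
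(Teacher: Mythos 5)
Your overall skeleton (enumerate multiplicity sequences via the genus formula and the self-intersection bound, then eliminate or classify each case) matches the paper's strategy, but three of your concrete steps would fail as stated. First, Theorem~\ref{Thm:Yoshihara} does \emph{not} handle ``all the genuinely unicuspidal cases'': the unicuspidal quintic with multiplicity sequence $(2_{(6)})$ admits no very tangent line through its cusp (a line through the singular point meets it with local multiplicity at most $m_1+m_2=4<5$), so the theorem is silent there. The paper must treat this case by a separate explicit classification (Lemma~\ref{Lemma:6points} and Proposition~\ref{proposition:quintic}), showing all such quintics are projectively equivalent to a single model $Q$, whence Corollary~\ref{cor:onequintic}. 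Second, your plan to invoke Proposition~\ref{Prop:rigidity} for $k\ge 3$ is a wrong turn: rigidity only says the isomorphism is unique up to left-composition with $\Aut(\p^2)$; it gives no obstruction to the \emph{existence} of a non-extendable isomorphism and cannot eliminate any multiplicity sequence. The actual eliminations in the paper come from an entirely different mechanism: in the minimal resolution of $\varphi$, the exceptional locus of $\eta$ is $E_1\cup\ldots\cup E_{n-1}\cup C_n$ and must be an SNC tree contractible by successive $(-1)$-contractions; tracking which curves are $(-2)$-curves and in which order $\eta$ must contract them yields contradictions (typically two $(-2)$-curves both becoming $(-1)$-curves after a common neighbour is contracted) for all sequences except those in the statement. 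This is the content of Lemma~\ref{Lem:jump} and Propositions~\ref{Prop:nojump}, \ref{Prop:1jump}, \ref{Prop:alleven}, organized by the \emph{shape} of the multiplicity sequence (constant, one unit drop, all even), not by the number $k$ of complement points.

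Third, case $(v)$ is not ``the same machinery with less delicate bookkeeping'': the sextic with multiplicity sequence $(3,2_{(7)})$ is \emph{not} unicuspidal ($C\setminus\Sing(C)\simeq\A^1\setminus\{0\}$), so Theorem~\ref{Thm:Yoshihara} does not apply, and the projective equivalence of $C$ and $D$ requires a genuinely new argument (Proposition~\ref{prop:deg6equivalent}): one exhibits two auxiliary conics through the singular points, contracts them together with suitable exceptional curves to reach a configuration of three mutually tritangent-type conics in $\p^2$, and produces an explicit automorphism of the resolution surface exchanging the strict transforms of $C$ and $D$. Without this construction, and without the quintic classification and the tree-structure eliminations, your outline does not close; the parts you defer to ``a finite but delicate combinatorial/geometric check'' are precisely where the substance of the proof lies.
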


In the proof, we study the diagrams of exceptional curves in the resolutions of the birational transformations of $\p^2$ that are induced by the isomorphisms between the complements, for all types of multiplicity sequences that can occur. We also use Theorem~$\ref{Thm:Yoshihara}$ as an important tool.

As an immediate consequence of Theorem~$\ref{Thm:degree8}$, we get the following corollary.
\begin{corollary}\label{cor:deg7}
Conjecture~$\ref{conjecture:yoshihara}$ holds for all irreducible curves of degree $\leq 7$.
\end{corollary}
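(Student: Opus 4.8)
The plan is to deduce Corollary~\ref{cor:deg7} directly from Theorem~\ref{Thm:degree8}. Suppose $C, D \subset \p^2$ are irreducible curves of degree $\leq 7$ with isomorphic complements, and let $\varphi \colon \p^2 \setminus C \to \p^2 \setminus D$ be an isomorphism. If $\varphi$ extends to an automorphism of $\p^2$, then $C$ and $D$ are projectively equivalent by definition, and we are done. So assume $\varphi$ does \emph{not} extend to an automorphism of $\p^2$; I then want to show this leads to projective equivalence anyway, by inspecting the list in Theorem~\ref{Thm:degree8}.

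First I would invoke Theorem~\ref{Thm:degree8}: since $\deg C, \deg D \leq 7 \leq 8$, the pair $(C,D)$ falls into one of the cases $(i)$--$(vi)$. Cases $(v)$ and $(vi)$ are immediately excluded because they force $C$ (and $D$) to have degree $6$ and $8$ respectively, both incompatible with degree $\leq 7$ once we also recall (Lemma~\ref{Lem:equaldegree}) that $C$ and $D$ have the same degree. Thus only cases $(i)$--$(iv)$ survive. In each of these cases the theorem already asserts that $C$ and $D$ are projectively equivalent: cases $(iv)$ and $(v)$ say so explicitly, while for $(i)$, $(ii)$, $(iii)$ it is classical that any two lines, any two smooth conics, or any two nodal cubics in $\p^2$ are projectively equivalent (the first two are standard, and for nodal cubics one can normalize the node, its two tangent directions, and one further point). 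Hence in every surviving case $C$ and $D$ are projectively equivalent, which is exactly Conjecture~\ref{conjecture:yoshihara} for degree $\leq 7$.

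There is no real obstacle here: the corollary is a bookkeeping consequence of the degree restriction cutting off the tail of the classification. The only point requiring the tiniest care is the remark that within each of the low-degree classes $(i)$--$(iii)$ the curves are automatically projectively equivalent, so that the conclusion "both are lines/conics/nodal cubics" genuinely upgrades to "projectively equivalent"; but this is elementary projective geometry. If one prefers, one can even avoid this by noting that in cases $(i)$ and $(ii)$ every automorphism of the complement of a line or conic extends to $\p^2$ (so these cases do not actually arise under our standing assumption that $\varphi$ does not extend), leaving only $(iii)$ and $(iv)$ to treat, and for both of those the needed equivalence is either stated in the theorem or is the classical fact about nodal cubics.
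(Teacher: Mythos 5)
Your argument is correct and is exactly the paper's intended (unwritten) proof: the paper states the corollary as an ``immediate consequence'' of Theorem~\ref{Thm:degree8} with no further justification, and reading off the list with the degree bound is all that is needed. One small slip: degree $6$ is \emph{not} incompatible with degree $\leq 7$, so case $(v)$ is not excluded by the degree restriction --- but this is harmless, since case $(v)$ already asserts projective equivalence explicitly (as you yourself note in the next sentence); only case $(vi)$, in degree $8$, genuinely needs the degree bound to be ruled out.
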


Finally, we show that Corollary~$\ref{cor:deg7}$ is sharp by giving a counterexample of degree~$8$. The construction is based on a configuration of conics and is given in Section~$\ref{sec:deg8}$.

\begin{theorem}\label{theorem:counterexample}
There exist irreducible projectively non-equivalent curves $C, D \subset \p^2$ of degree $8$ with multiplicity sequence $(3_{(7)})$ that have isomorphic complements. 
\end{theorem}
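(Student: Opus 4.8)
The plan is to construct $C$ and $D$ explicitly as images of a line under suitably chosen birational transformations of $\p^2$, in such a way that the two transformations differ by a birational map that restricts to an isomorphism between the complements. First I would fix a generic line $L_0 \subset \p^2$ (to become the ``curve'' of degree $1$) and look for a birational map $\theta \colon \p^2 \dashrightarrow \p^2$ of degree $8$ whose base points lie on $L_0$, arranged along a single infinitely near tower so that $\theta(L_0)$ is an irreducible curve $C$ of degree $8$ with the prescribed multiplicity sequence $(3_{(7)})$; the proper transform of the general line gives the linear system of $\theta^{-1}$, and the assigned multiplicities $3$ at seven consecutive infinitely near points, together with the genus formula, force $C$ to be rational with a unique singular point and $C \setminus \Sing(C) \simeq \A^1 \setminus \{0\}$. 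The hinted ``configuration of conics'' should enter here: I expect $C$ to be obtained by blowing up a point on a conic and propagating along the conic, which is what produces the repeated multiplicity $3$ and makes the construction rigid enough to control.

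Next I would produce a second such map $\theta' \colon \p^2 \dashrightarrow \p^2$ using a \emph{different} conic (or a different choice of the tangent direction / infinitely near structure) through the same configuration of base points, yielding a curve $D = \theta'(L_0)$ of degree $8$ with the same multiplicity sequence. The key point is that $\theta' \circ \theta^{-1}$ (defined away from $C$ and $D$) should be an isomorphism $\p^2 \setminus C \to \p^2 \setminus D$: this holds precisely when $\theta$ and $\theta'$ have the same base points with the same multiplicities and resolve to the same surface, i.e.\ their common resolution $X \to \p^2$ (on both sides) has the property that the total transform of $C$ and of $D$ consist of the same exceptional configuration plus one $(-1)$-curve each, so that contracting on the two sides gives the required isomorphism on complements. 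Concretely I would check, on a minimal resolution, that the dual graphs of $C + (\text{base locus})$ and $D + (\text{base locus})$ coincide, and that the extra curve contracted on each side meets the configuration in the same way; this is the standard criterion behind the known degree-$9$ examples of Costa and of Blanc, adapted to degree $8$.

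Finally I would verify projective non-equivalence of $C$ and $D$. By Proposition~\ref{Prop:form} and Proposition~\ref{Prop:rigidity} the relevant invariant is the configuration of complement points of $C \setminus \Sing(C) \simeq \A^1 \setminus \{0\}$ inside $\p^1$ up to $\PGL_2$; since here there are exactly two complement points (namely $0$ and $\infty$), that particular invariant is trivial, so I would instead distinguish $C$ and $D$ by a finer projective invariant — for instance the position of the line $L_0' = \theta(\text{line through base points})$ relative to $C$, or the cross-ratio of the tangent cone / the sub-leading data of the cusp, or simply by exhibiting that any $\alpha \in \Aut(\p^2)$ sending $C$ to $D$ would have to send the distinguished configuration of conics used for $C$ to that used for $D$, which a direct computation rules out for a generic choice of parameter. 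The main obstacle I anticipate is precisely this last step: ensuring the family is genuinely non-trivial modulo $\PGL_3$ while keeping all base points and multiplicities aligned so that the complement isomorphism survives — balancing these two competing constraints is the delicate part, and I would resolve it by introducing a free parameter $\lambda$ in the conic configuration and showing the induced curves $C_\lambda$ are pairwise non-equivalent for $\lambda$ in a suitable open set, while the complement isomorphism holds identically in $\lambda$.
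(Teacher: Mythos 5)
Your overall strategy coincides with the paper's: build one surface $X$ admitting two birational morphisms to $\p^2$ whose contracted loci differ by exactly one irreducible curve each (so that the induced birational map of $\p^2$ contracts only $C$ and its inverse only $D$), and break the symmetry with a free parameter $\lambda$ in a configuration of conics, using the canonicity of the resolution to show that a projective equivalence $C\to D$ would force an automorphism of the configuration that the parameter forbids. The paper realizes this with three conics $\Lambda,\Gamma_\lambda,\Delta_\lambda$ pairwise meeting at a single point with multiplicity $3$, together with two lines; after ten blow-ups one reaches a surface of Picard rank $11$ on which the strict transforms of $\Gamma_\lambda$ and $\Delta_\lambda$ are disjoint $(-1)$-curves playing symmetric roles, and the two contractions are obtained by choosing which of the two to keep. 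However, your proposal never exhibits such a configuration, and for an existence theorem the explicit construction \emph{is} the proof: everything you list as ``what should happen'' (degree $8$, multiplicity sequence $(3_{(7)})$, irreducibility, the two contractions, the rigidity needed for non-equivalence) has to be verified on a concrete configuration, and guessing that ``blowing up a point on a conic and propagating along the conic'' will produce it is not a substitute.

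Two of your intermediate claims are also wrong as stated. First, a birational map $\theta$ of degree $8$ whose base points lie on $L_0$ cannot send $L_0$ to a curve of degree $8$: by Lemma~$\ref{Lem:formula}$, $\deg\theta(L_0)=8-\sum_p m_p(\theta)m_p(L_0)<8$ as soon as some base point lies on $L_0$. Second, the criterion ``$\theta$ and $\theta'$ have the same base points with the same multiplicities'' does not produce a non-trivial isomorphism of complements — it produces a projective equivalence. Indeed, a homaloidal net is determined by its base conditions (the Noether equations force the space of curves of degree $d$ with multiplicities $\geq m_i$ at the $p_i$ to have projective dimension exactly $2$), so two Cremona maps with identical base points and multiplicities differ by left composition with an element of $\Aut(\p^2)$, and then $C$ and $D$ would be projectively equivalent, defeating the purpose. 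The correct mechanism, which your following sentence gestures at but which should replace this criterion, is the one above: a single surface $X$ with two contractions $\pi,\eta\colon X\to\p^2$ whose exceptional loci share all components except one, namely the strict transforms of $D$ and of $C$ respectively. Finally, your non-equivalence argument needs the resolution to be canonically attached to $C$ (in the paper, $\pi$ is the minimal SNC-resolution of $C$ followed by one further forced blow-up); without that, an automorphism sending $C$ to $D$ need not respect the chosen conic configuration, and the contradiction with the $\lambda\neq 1$ condition does not follow.
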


\subsection{Acknowledgement}
This article is the second chapter of the author's PhD thesis. I am deeply grateful to my advisor J\'er\'emy Blanc for his excellent support and guidance throughout my PhD. I also thank Adrien Dubouloz and Jean-Philippe Furter for many helpful comments and interesting discussions.

\section{Preliminaries}

The following lemma is a well known fact, but included for the sake of completeness.

\begin{lemma}\label{Lem:equaldegree}
Let $C,D \subset \p^2$ be irreducible curves such that there exists $\varphi \colon \p^2 \setminus C \to \p^2 \setminus D$ an isomorphism. Then $\deg(C) = \deg(D)$.
\end{lemma}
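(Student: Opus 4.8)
The plan is to use the fact that an isomorphism $\varphi \colon \p^2 \setminus C \to \p^2 \setminus D$ induces an isomorphism on Picard groups of the open varieties, and to compare this with what we know about the Picard groups of $\p^2$. Recall that $\Pic(\p^2) \cong \Z$, generated by the class of a line, and that for an irreducible curve $C \subset \p^2$ of degree $d$, the restriction sequence gives an exact sequence $\Z \xrightarrow{\cdot d} \Pic(\p^2) \to \Pic(\p^2 \setminus C) \to 0$, where the first map sends the generator to $[C] = d \cdot [\text{line}]$. Hence $\Pic(\p^2 \setminus C) \cong \Z/d\Z$, and likewise $\Pic(\p^2 \setminus D) \cong \Z/e\Z$ where $e = \deg(D)$. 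Since $\varphi$ is an isomorphism of varieties, it induces a group isomorphism $\Pic(\p^2 \setminus D) \xrightarrow{\sim} \Pic(\p^2 \setminus C)$, so $\Z/e\Z \cong \Z/d\Z$ as groups, which forces $d = e$ (both being non-negative integers, and the trivial case $d = e = 0$ not arising since $C, D$ are curves, i.e.\ $d, e \geq 1$).

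First I would set $d = \deg(C)$ and $e = \deg(D)$ and justify the exact sequence above. The key input is that $\p^2$ is smooth with $\Pic(\p^2) = \Z[\ell]$ ($\ell$ a line), that $C$ is an irreducible (hence integral) divisor on $\p^2$ of class $d[\ell]$, and that for a smooth variety the excision/localization sequence for divisor class groups reads $\Z \cdot [C] \to \Pic(\p^2) \to \Pic(\p^2 \setminus C) \to 0$ (the left-hand group being generated by the class of the irreducible divisor $C$ being removed). Substituting $[C] = d[\ell]$ identifies $\Pic(\p^2 \setminus C)$ with $\Z/d\Z$. I would then note the same for $D$.

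Next I would invoke functoriality of $\Pic$ under isomorphisms: $\varphi$ being an isomorphism of schemes induces an isomorphism $\varphi^* \colon \Pic(\p^2\setminus D) \to \Pic(\p^2 \setminus C)$ of abelian groups. Therefore $\Z/d\Z \cong \Z/e\Z$ as abelian groups. Comparing orders (both groups are finite of order $d$ and $e$ respectively, and $d, e \geq 1$), we conclude $d = e$, i.e.\ $\deg(C) = \deg(D)$.

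The only mildly delicate point — and the one I would state carefully rather than belabor — is the exactness of the localization sequence for Picard groups and the identification of the image of $\Z$ with the subgroup generated by $[C]$; this is standard for smooth varieties (e.g.\ via Weil divisors, since $\p^2$ is smooth so Cartier $=$ Weil), but one should make sure $C$ is taken with multiplicity one, which holds as $C$ is a reduced irreducible curve. Everything else is formal. An alternative, equally short route avoiding divisor class group machinery: pull back the line bundle $\mathcal{O}_{\p^2}(1)|_{\p^2 \setminus C}$; its restriction has order exactly $d$ in $\Pic(\p^2 \setminus C)$, and $\varphi^*$ of the analogous class from the $D$-side has order $e$, forcing $d = e$. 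I would present whichever is cleaner in the paper's conventions, but the Picard-group counting argument is the heart of it.
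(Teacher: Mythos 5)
Your argument is exactly the paper's: the localization sequence $\Z \to \Pic(\p^2) \to \Pic(\p^2\setminus C) \to 0$ with the first map hitting $[C]=\deg(C)[\ell]$ (exactness at $\Pic(\p^2)$ using irreducibility of $C$), giving $\Pic(\p^2\setminus C)\simeq \Z/\deg(C)\Z$, followed by functoriality of $\Pic$ under the isomorphism $\varphi$. Correct and essentially identical to the paper's proof.
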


\begin{proof}
Consider the following exact sequence of groups
$$0 \to \Z \xrightarrow{\alpha} \Pic(\p^2) \xrightarrow{\beta} \Pic(\p^2 \setminus C) \to 0$$
where $\alpha$ sends $1$ to the class of $C$ in $\Pic(\p^2)$ and $\beta$ is induced by the map that sends a curve $E \subset \p^2$ to the restriction $E \cap (\p^2 \setminus C)$. The exactness at $\Pic(\p^2)$ follows from the irreducibility of $C$. Since the class $[C]$ equals $\deg(C)[L]$, where $L$ is a line in $\p^2$, we obtain that $\Pic(\p^2 \setminus C) \simeq \Z / \deg(C)\Z$. The isomorphism $\varphi \colon \p^2 \setminus C \to \p^2 \setminus D$ induces an isomorphism on the corresponding Picard groups and hence the claim follows.
\end{proof}

\begin{remark}
{\rm
The claim of Lemma~$\ref{Lem:equaldegree}$ is false for reducible curves. As an example, consider the curves given by the equations $yz=0$ and $(x^2-yz)z=0$. They have isomorphic complements via the automorphism of $\p^2 \setminus \{z=0\}$ that sends $[x:y:z]$ to $[xz:x^2-yz:z^2]$ (which is an involution). This example also shows that it is easy to construct reducible counterexamples to Conjecture~$\ref{conjecture:yoshihara}$.}
\end{remark}

\begin{definition}
{\rm
Let $m \in \mathbb{Z}$. A birational morphism $\pi \colon X \to \p^2$ is called a \emph{$m$-tower resolution} of a curve $C \subset \p^2$ if
\begin{enumerate}
\item[$(i)$] there exists a decomposition
$$\pi \colon X = X_n \xrightarrow{\pi_n} \ldots \xrightarrow{\pi_2} X_1 \xrightarrow{\pi_1} X_0 = \p^2$$
where $\pi_{i}$ is the blow-up of a point $p_i$, for $i = 1,\ldots,n$, such that $\pi_{i}(p_{i+1}) = p_i$, for $i=1,\ldots,n-1$;
\item[$(ii)$] the strict transform of $C$ by $\pi$ in $X$ is isomorphic to $\p^1$ and has self-intersection~$m$.
\end{enumerate}}
\end{definition}

We use the following notational conventions throughout this article. Given a $m$-tower resolution of a curve $C \subset \p^2$ as above and $i \in \{1,\ldots,n\}$, we denote by $C_i$ the strict transform of $C$ by $\pi_{1} \circ \ldots \circ \pi_{i}$ in $X_i$. We usually denote by $E_i$ the exceptional curve of $\pi_i$, i.e.\ $\pi_i^{-1}(p_i) = E_i \subset X_i$. By abuse of notation, we also denote its strict transforms in $X_{i+1},\ldots,X_n$ by $E_i$.

\bigskip

We will frequently use the following fundamental lemma.

\begin{lemma}[\cite{Bla09}]\label{Lem:tower}
Let $C \subset \p^2$ be an irreducible curve and $\varphi \colon \p^2 \setminus C \to \p^2 \setminus D$ an isomorphism, for some curve $D \subset \p^2$. Then either $\varphi$ extends to an automorphism of $\p^2$ or the induced birational map $\varphi \colon \p^2 \dasharrow \p^2$ has a minimal resolution
$$\xymatrix{&X \ar[dr]^{\eta} \ar[dl]_{\pi} \\ \p^2 \ar@{-->}[rr]^{\varphi} && \p^2}$$
where $\pi$ and $\eta$ are $(-1)$-tower resolutions of $C$ and $D$ respectively.
\end{lemma}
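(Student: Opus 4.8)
The plan is to prove Lemma~\ref{Lem:tower} by analyzing the minimal resolution of the birational map $\varphi \colon \p^2 \dasharrow \p^2$ induced by the isomorphism of complements, and showing that the exceptional configuration is forced to be a ``tower'' on both sides.

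\textbf{Setup.} Suppose $\varphi$ does not extend to an automorphism of $\p^2$. Take the minimal resolution
$$\xymatrix{&X \ar[dr]^{\eta} \ar[dl]_{\pi} \\ \p^2 \ar@{-->}[rr]^{\varphi} && \p^2}$$
with $\pi, \eta$ birational morphisms that are sequences of point blow-ups, and such that no $(-1)$-curve in $X$ is contracted by both $\pi$ and $\eta$ (minimality). Since $\varphi$ restricts to an isomorphism $\p^2 \setminus C \to \p^2 \setminus D$, every curve contracted by $\pi$ that is \emph{not} contracted by $\eta$ must map into $D$, and conversely; and $\varphi$ sends $\p^2 \setminus C$ isomorphically onto $\p^2 \setminus D$, so the strict transform $\tilde C$ of $C$ in $X$ is sent by $\eta$ onto $D$ (it is not contracted, as $C$ is a curve and $\varphi$ is birational), and likewise $\eta$ contracts $\tilde C$ to... no: $\eta(\tilde C) = D$. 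Symmetrically $\pi(\tilde D) = C$ where $\tilde D$ is the strict transform of $D$. The key numerical input is Lemma~\ref{Lem:equaldegree}: $\deg C = \deg D =: d$.

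\textbf{Key steps.} First I would set up the Picard group bookkeeping on $X$: $\Pic(X) = \Z \ell \oplus \bigoplus \Z e_i$ from $\pi$, and also $= \Z \ell' \oplus \bigoplus \Z e_j'$ from $\eta$, where $\ell, \ell'$ are pullbacks of lines and $e_i, e_j'$ the total transforms of exceptional divisors. One shows that the curves contracted by $\eta$ are exactly: the exceptional curves of $\pi$ minus those also touched by $\eta$, together with $\tilde C$ — and here is the crucial point — that $\eta$ contracts $\tilde C$. Indeed $D = \eta(\tilde C)$ but we want to see $\tilde C$ as an exceptional curve of $\eta$: consider instead that $\eta$ contracts $\tilde C$ only if $\tilde C$ is a $(-1)$-curve rational; the real argument is that the union of curves contracted by $\pi$ (call their number $n$) equals the union of curves contracted by $\eta$ (number $m$) \emph{plus} $\tilde D$ on one side and $\tilde C$ on the other, forcing $n = m+1 = $ ... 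Actually the clean route: the curve $C$ pulls back under $\pi$ to $\tilde C + (\text{exceptional stuff})$, and $\tilde C$ must be contracted by $\eta$ because otherwise $\eta(\tilde C) = D$ and then $\varphi^{-1}(D)$ would contain a curve, contradicting that $\varphi^{-1}$ is defined on $\p^2 \setminus D$... no. Let me restate: $\varphi$ maps $\p^2\setminus C$ to $\p^2 \setminus D$; a general line $\ell$ in the target pulls back to a curve of some degree $d'$ with assigned base points; comparing with Lemma~\ref{Lem:equaldegree} pins $d' = d$. The decisive claim is that \emph{$\pi$ has exactly one more blow-up than $\eta$} is false in general; rather both $\pi$ and $\eta$ contract the strict transform of the other curve. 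I would argue: $\tilde D$ (strict transform of $D$ in $X$) is contracted by $\pi$ since $\pi(\tilde D) = C$ has dimension $1$ only if $\tilde D$ is not contracted — so in fact $\pi(\tilde D) = C$ and $\tilde D$ is NOT contracted by $\pi$; instead $\tilde D$ is the image under $\eta^{-1}$ of... I will organize this via: curves of $X$ contracted by $\pi$ but not $\eta$ $\leftrightarrow$ irreducible components of $D$ (just $D$, since $D$ irreducible), so there is exactly one such, the strict transform $\tilde D$; symmetrically exactly one curve contracted by $\eta$ but not $\pi$, namely $\tilde C$. By minimality every $\pi$-exceptional curve is also $\eta$-exceptional except $\tilde D$, and vice versa except $\tilde C$. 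Counting: if $\pi$ blows up $n$ points and $\eta$ blows up $m$ points, the $\pi$-exceptional locus has $n$ irreducible components, of which $n-1$ are shared and one is $\tilde D$; the $\eta$-exceptional locus has $m$ components, $m-1$ shared and one is $\tilde C$; shared count gives $n - 1 = m - 1$, so $n = m$.

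\textbf{From resolution to tower.} Now I must show the configuration of $\pi$-exceptional curves together with $\tilde D$ forms a \emph{chain} (linear tree) so that $\pi$ is a tower resolution of $C$, and that $\tilde C$ has self-intersection $-1$ in $X$ before being contracted by $\eta$, i.e. $\tilde C^2 = -1$, which after blowing down $\eta$... wait, $\tilde C$ IS contracted by $\eta$, a single $(-1)$-curve among the $\eta$-exceptional chain — but for $\pi$ to be a \emph{$(-1)$-tower resolution of $C$} we need the strict transform of $C$ in $X$, which is $\tilde C$, to be $\simeq \p^1$ with self-intersection $-1$. Since $\tilde C$ is contracted by $\eta$ it is a smooth rational $(-1)$-curve — good, that is automatic. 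So $\pi$ being a $(-1)$-tower resolution of $C$ amounts to: (a) $\tilde C \simeq \p^1$ with $\tilde C^2 = -1$ — done; (b) the points $p_1, \dots, p_n$ blown up by $\pi$ lie in a tower $p_{i+1} \in E_i$. For (b) the argument is that if the $\pi$-blow-ups were not in a single tower, the exceptional configuration would have a branch point or disconnected pieces; one then traces, using the fact that $\tilde C$ meets the $\pi$-exceptional locus appropriately (since $C$ is irreducible and the affine piece $\p^2 \setminus C \cong \p^2 \setminus D$ contains no complete curves, the whole $\pi$-exceptional locus sits ``over'' $C$ in a connected string meeting $\tilde C$), to contradict minimality of $\eta$: a stray $(-1)$-curve in $X$ shared by both $\pi$ and $\eta$ contradicts minimality, and a non-tower configuration forces such a curve. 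Symmetrically for $\eta$ and $D$.

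\textbf{Main obstacle.} I expect the hard part to be the combinatorial/geometric argument that the exceptional configuration is genuinely a tower (a linear chain with the strict transform at the end), rather than some more complicated tree — this is where one must use minimality of the resolution together with the ``no complete curves in the affine complement'' principle most carefully, and keep track of self-intersections so that exactly one $(-1)$-curve is available to contract at each stage on each side. The degree equality from Lemma~\ref{Lem:equaldegree} and standard Picard-group exact sequences handle the bookkeeping, but the precise claim that $\pi$ and $\eta$ are \emph{tower} resolutions (linearity of the chain) is the crux; since this is quoted from \cite{Bla09}, I would cite that for the detailed verification while indicating the structure above.

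\textbf{Remark on reference.} Since the statement is attributed to \cite{Bla09}, the cleanest exposition is to reduce to the two assertions ``$n = m$ and there is a unique non-shared exceptional curve on each side'' and ``the shared-plus-strict-transform configuration is a chain'', and cite \cite{Bla09} for the second; the first follows from Lemma~\ref{Lem:equaldegree} and the Picard computations sketched above.
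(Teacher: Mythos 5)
The paper offers no proof of this lemma: it is quoted verbatim from \cite{Bla09}, so there is no argument of the paper's own to compare against. Judged as a reconstruction, your proposal assembles the right ingredients (minimal resolution, irreducibility of $C$ and $D$ forcing a unique non-shared exceptional curve on each side, minimality excluding $(-1)$-curves contracted by both $\pi$ and $\eta$), but it has genuine gaps and errors. The most serious error is the claim that the strict transform $\tilde{C}$ of $C$, being contracted by $\eta$, is ``automatically'' a $(-1)$-curve. A curve contracted by a composition of blow-ups is a smooth rational curve of self-intersection $\leq -1$, not $=-1$; that its self-intersection is exactly $-1$ in $X$ is precisely part of what must be proved. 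The correct route is: the exceptional curve of the \emph{last} blow-up in the factorization of $\eta$ is a $(-1)$-curve in $X$ contracted by $\eta$; by minimality it is not contracted by $\pi$; the only curve contracted by $\eta$ but not by $\pi$ is $\tilde{C}$ (irreducibility of $C$); hence $\tilde{C}$ is that last exceptional curve and $\tilde{C}^2=-1$. (Symmetrically, the last exceptional curve $E_n$ of $\pi$ must equal $\tilde{D}$.) Note also that before any of this one must show that in the non-extending case $\varphi$ actually contracts $C$ -- a birational self-map of $\p^2$ contracting no curve is an automorphism -- which your sketch never establishes and which is needed for $\tilde{C}$ to be $\eta$-exceptional at all.

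The second gap is the tower structure itself, which you assert (``a non-tower configuration forces such a curve'') and then defer to \cite{Bla09}; this is the crux, and the assertion as you state it is incomplete. The actual argument: the base points of $\pi$ form a cluster (a forest under the first-neighborhood relation), and each leaf of that forest yields an $E_i$ that is a $(-1)$-curve in $X$. By minimality such an $E_i$ is not contracted by $\eta$, so $\eta$ maps it onto an irreducible component of $D$; since $D$ is irreducible and distinct leaves give distinct curves, there is exactly one leaf. A forest with one leaf is a single chain with each point in the first neighborhood of its predecessor, i.e.\ a tower. So a non-tower configuration does not merely ``force a shared $(-1)$-curve'' -- it forces either a shared $(-1)$-curve (against minimality) or a second component of $D$ (against irreducibility), and both horns are needed. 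Finally, the write-up contains several self-contradictions about which curves are contracted by which morphism (``$\eta(\tilde{C})=D$'' versus ``$\eta$ contracts $\tilde{C}$''; ``$\tilde{D}$ is NOT contracted by $\pi$'' versus your later, correct, bookkeeping): in the non-extending case $\eta(\tilde{C})$ is a point and $\eta(\tilde{D})=D$, while $\pi(\tilde{C})=C$ and $\pi(\tilde{D})$ is a point. These should be fixed before the counting argument $n=m$ can be trusted.
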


Given a resolution as in Lemma~$\ref{Lem:tower}$, where $\pi$ has a decomposition $$\pi \colon X = X_n \xrightarrow{\pi_n} \ldots \xrightarrow{\pi_2} X_1 \xrightarrow{\pi_1} X_0 = \p^2$$ with base-points $p_1,\ldots,p_n$ and exceptional curves $E_1,\ldots,E_n$, we make the following observations that are used throughout this article.

\begin{enumerate}
\item[$(i)$] For any $i \in \{1,\ldots,n\}$, the curve $E_1\cup \ldots \cup E_i \subset X_i$ has simple normal crossings (SNC) and has a tree structure, i.e.\ for any two curves from $E_1,\ldots,E_i$ there exists a unique chain of curves from $E_1,\ldots,E_i$ connecting them.
\item[$(ii)$] For any $i \in \{1,\ldots,n\}$, the curves $E_1,\ldots,E_{i-1} \subset X_i$ have self-intersection $\leq -2$ and $E_i \subset X_i$ has self-intersection $-1$. 
\item[$(iii)$] The contracted locus of $\eta$ is $E_1 \cup \ldots E_{n-1} \cup C_n \subset X$ and is also a SNC-curve that has a tree structure. Moreover, $E_n$ is the strict transform of $D$ by $\eta$.
\end{enumerate}

\begin{remark}
{\rm We take the notations of Lemma~$\ref{Lem:tower}$ and suppose that $\varphi$ does not extend to an automorphism of $\p^2$. We then have a $(-1)$-tower resolution $\pi = \pi_1 \circ \ldots \circ \pi_n$ of $C$ with exceptional curves $E_1,\ldots,E_n$ and a $(-1)$-tower resolution $\eta = \eta_1 \circ \ldots \circ \eta_n$ of $D$ with exceptional curves $F_1,\ldots,F_n$. We then have the equality $\{E_1,\ldots,E_{n-1}\} = \{F_1,\ldots,F_{n-1}\}$ and $E_n$ is the strict transform of $D$ by $\eta$ and $F_n$ is the strict transform of $C$ by $\pi$. One may ask if such a resolution is always symmetric in the sense that 
$$E_i \cdot E_j = F_i \cdot F_j \quad \mbox{and} \quad E_i \cdot F_n = F_i \cdot E_n$$
for all $i,j = 1,\ldots,n$. This is in general not the case. For instance, there exists a non-symmetric resolution of an automorphism of the complement of a line with the following configuration of curves, where the unlabeled curves are $(-2)$-curves.
\begin{center}
$$
\begin{tikzpicture}[scale=1.5]
\draw (0,0)--(1,1);
\draw (0.8,1)--(1.8,0);
\draw[very thick] (1.4,0.6)--(0.4,-0.4);
\draw (1.6,0)--(2.6,1);
\draw (2.4,1)--(3.4,0);
\draw (3.2,0)--(4.2,1);
\draw (3.6,0.6)--(4.6,-0.4);
\draw (4,1)--(5,0);
\draw[very thick] (4.8,0)--(5.8,1);

\node at (1.86,0.5){\scriptsize $-3$};
\node at (3.86,0.1){\scriptsize $-3$};
\node at (0.65,0.1){\scriptsize $-1$};
\node at (5.07,0.5){\scriptsize $-1$};
\end{tikzpicture}
$$
\end{center}

\noindent Starting with either of the $(-1)$-curves in this configuration, one can successively contract all curves except the other $(-1)$-curve, whose image is a line in $\p^2$.
	
Similarly, one can find non-symmetric resolutions of automorphisms of the complement of a conic. However, no example of a non-symmetric resolution of an isomorphism between complements of irreducible singular curves is known to the author.}
\end{remark}

\begin{proposition}\label{Prop:form}
Let $\varphi \colon \p^2 \setminus C \hookrightarrow \p^2$ be an open embedding, where $C$ is an irreducible curve and let us consider $D = \p^2 \setminus \im(\varphi)$. If $\varphi$ does not extend to an automorphism of $\p^2$, then one of the following holds.
\begin{enumerate}
\item[$(i)$] $C$ and $D$ both are lines.
\item[$(ii)$] $C$ and $D$ both are conics.
\item[$(iii)$] $C$ and $D$ each have a unique proper singular point $p$ and $q$ respectively, such that $C \setminus \{p\}$ and $D \setminus \{q\}$ each are isomorphic to open subsets of $\p^1$, with the same number of complement points. 
\end{enumerate} 
\end{proposition}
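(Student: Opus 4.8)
The plan is to start from the minimal resolution of $\varphi$ provided by Lemma~$\ref{Lem:tower}$, so we have a variety $X$ with two birational morphisms $\pi\colon X\to\p^2$ and $\eta\colon X\to\p^2$, both $(-1)$-tower resolutions, of $C$ and $D$ respectively. Write $\pi=\pi_1\circ\dots\circ\pi_n$ with exceptional curves $E_1,\dots,E_n$ and base-points $p_1,\dots,p_n$; by the tower property the $p_i$ form a chain, i.e.\ $p_{i+1}$ lies on $E_i$. The key elementary fact I would record first is how the complement $C_i\setminus(\text{base-points})$ changes at each step: the strict transform $C_n\subset X$ is isomorphic to $\p^1$, and pushing it forward through $\eta$ (which contracts $E_1\cup\dots\cup E_{n-1}\cup C_n$, with $E_n$ becoming $D$) and separately tracking where the base-points of $\eta^{-1}$ sit, one sees that $C$ is the image of $\p^1$ under a morphism that is an isomorphism away from the points of $\p^1=C_n$ lying on $\bigcup E_i$. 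So $C$ is rational, and $C\setminus\Sing_{\mathrm{proper}}(C)$ is an open subset of $\p^1$; the same holds for $D$ by symmetry.

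Next I would make precise the statement "$C$ has a unique proper singular point." The proper singular points of $C$ are exactly the points $\pi(p_i)=\pi(E_i)\in\p^2$ that are actual points of $\p^2$ (as opposed to infinitely near), so the number of such points is the number of distinct images in $\p^2$ of the chain $p_1,\dots,p_n$. Because the $p_i$ form a single chain (each infinitely near the previous), there is really only one "root", namely $p_1\in\p^2$; any $p_i$ with $i\ge2$ is infinitely near $p_1$. Hence $C$ has at most one proper singular point $p:=p_1$, and it has exactly one unless $\pi$ is a single blow-up or the curve is smooth — which, if $\varphi$ does not extend to an automorphism, is exactly the low-degree cases. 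Concretely: if $\deg C=1$ then $C$ is a line (case~$(i)$); if $\deg C=2$, then $C$ is a conic (case~$(ii)$); in the remaining cases $n\ge$ (something) and $C$ has a unique proper singular point $p$, and likewise $D$ has a unique proper singular point $q$. The "same number of complement points" follows because $\p^1=C_n$ meets $\bigcup E_i$ in some number $k$ of points, and the roles of $C$ and $D$, of the $E_i$'s and the $F_i$'s, are symmetric: $C\setminus\{p\}$ is $\p^1$ minus the points where $C_n$ meets $E_1\cup\dots\cup E_{n-1}$, and an analogous count with $F_i$ gives $D\setminus\{q\}$; since $\{E_1,\dots,E_{n-1}\}=\{F_1,\dots,F_{n-1}\}$ as curves on $X$, and both $C_n$ and $E_n=F_n$-as-strict-transform meet this common tree, a chain/valence argument on the SNC tree shows these two intersection numbers coincide.

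For the degree-$1$ and degree-$2$ dichotomy I would argue as follows. By Lemma~$\ref{Lem:equaldegree}$, $\deg C=\deg D=:d$. From the factorization through $X$, the linear system $\eta^*(\text{lines})$ pulled back by $\pi$ and pushed down is the homaloidal system defining $\varphi$; comparing $\Pic(X)$ computations (the class of $C_n$, of a general line's total transform, the multiplicities $m_i$ of $C$ at $p_i$) with the requirement that $C_n^2=-1$ and $C_n\cong\p^1$ pins down the possible $(d;m_1,\dots,m_n)$. When $d\le2$ one checks directly that no singular solution exists, so $C$ (hence $D$) is smooth, giving a line or a conic; when $d\ge3$ the curve must be singular, and by the chain structure above the singularity is concentrated at a single proper point. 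I expect the main obstacle to be precisely this last bookkeeping — namely, showing cleanly that the strict transform being rational with self-intersection $-1$ forces a unique proper singular point and matching the complement-point counts on the two sides — because it requires combining the tree/SNC structure of the exceptional configuration with the Picard-group constraints without getting lost in case distinctions. Everything else (rationality of $C$, $\deg C=\deg D$, the $\p^1$-minus-points description) is fairly direct from Lemma~$\ref{Lem:tower}$ and Lemma~$\ref{Lem:equaldegree}$.
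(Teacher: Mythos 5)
Your overall route is the paper's: start from the resolution of Lemma~$\ref{Lem:tower}$, get rationality of $C$ and $D$ from the $(-1)$-tower structure, and get uniqueness of the proper singular point from the fact that all base-points of $\pi$ are infinitely near $p_1$, so that $\Sing(C)\subseteq\{p_1\}$, with equality for $d\geq 3$ by the genus--degree formula. The degree $\leq 2$ dichotomy needs none of the Picard/homaloidal bookkeeping you propose: an \emph{irreducible} plane curve of degree $1$ or $2$ is automatically smooth, so cases $(i)$ and $(ii)$ follow from $\deg C=\deg D$ (Lemma~$\ref{Lem:equaldegree}$) alone.

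The genuine gap is the step you yourself flag, the equality of the numbers of complement points. First, your description of $C\setminus\{p\}$ is off: writing $\hat C=C_n$, $\hat D=E_n$ and $E=E_1\cup\ldots\cup E_{n-1}$, one has $C\setminus\{p\}\simeq \hat C\setminus(E\cup\hat D)$, i.e.\ you must also remove the points where $\hat C$ meets $\hat D$, not only those where it meets $E_1,\ldots,E_{n-1}$. Second, ``a chain/valence argument shows the counts coincide'' is the entire content of the claim and needs an actual argument; note moreover that $E$ is in general a \emph{forest}, not a single tree, so the count is not trivially $1=1$. The argument the paper uses is: by inclusion--exclusion, $\#\bigl(\hat C\cap(E\cup\hat D)\bigr)=\#(\hat C\cap E)+\#(\hat C\cap\hat D)-\#(\hat C\cap E\cap\hat D)$, and the last two terms are symmetric in $C$ and $D$, so it suffices to prove $\#(\hat C\cap E)=\#(\hat D\cap E)$. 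This holds because $\hat C\cup E$ is the exceptional locus of the $(-1)$-tower resolution $\eta$ and hence a connected tree of rational curves: $\hat C$ must meet every connected component of $E$ (otherwise the locus is disconnected) and can meet each component in at most one point (otherwise there is a cycle), so $\#(\hat C\cap E)$ equals the number of connected components of $E$; the same applies to $\hat D\cup E$ as the exceptional locus of $\pi$. Once this is inserted, your proof is complete and coincides with the paper's.
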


\begin{proof}
By Lemma~$\ref{Lem:tower}$ the birational map $\varphi$ has a minimal resolution
$$\xymatrix{&X \ar[dr]^{\eta} \ar[dl]_{\pi} \\ \p^2 \ar@{-->}[rr]^{\varphi} && \p^2}$$
where $\pi$ and $\eta$ are $(-1)$-tower resolutions of $C$ and $D$ respectively. Since $C$ and $D$ have the same degree the cases $(i)$ and $(ii)$ are clear and we assume that $C$ (and thus also $D$) has degree $\geq 3$. The curves $C$ and $D$ are both rational since they have a $(-1)$-tower resolution and hence they have a singular point $p$ and $q$ respectively, by the genus-degree formula for plane curves. Denote by $\hat{C}$ the strict transform of $C$ by $\pi$, by $\hat{D}$ the strict transform of $D$ by $\eta$, and by $E$ be the union of irreducible curves in $X$ contracted by both $\pi$ and $\eta$. Then $\hat{C}\cup E$ is the exceptional locus of $\eta$ and its irreducible components form a tree, since $\eta$ is a $(-1)$-tower resolution. Likewise, $\hat{D}\cup E$ is the exceptional locus of $\pi$ and is a tree of irreducible curves. We thus have isomorphisms $C \setminus \{p\} \simeq \hat{C} \setminus (E \cup \hat{D})$ and $D \setminus \{q\} \simeq \hat{D} \setminus (E \cup \hat{C})$ induced by $\pi$ and $\eta$ respectively. Since $\hat{C}$ and $\hat{D}$ are both isomorphic to $\p^1$ and they both intersect $E$ transversely it follows that $C \setminus \{p\}$ and $D \setminus \{q\}$ are isomorphic to open subsets of $\p^1$. The number of intersection points between $\hat{C}$ and $E \cup \hat{D}$ is given by
$$\#(\hat{C}\cap E) + \#(\hat{C} \cap \hat{D}) - \#(\hat{C}\cap E\cap \hat{D}).$$
For $\hat{D}$ the same formula holds with $\hat{C}$ and $\hat{D}$ exchanged. It thus suffices to show that $\#(\hat{C}\cap E) = \#(\hat{D}\cap E)$. Since the graphs of curves of $\hat{C} \cup E$ and $\hat{D} \cup E$ define a tree, it follows that $\#(\hat{C} \cap E)$ and $\#(\hat{D} \cap E)$ respectively is the number of connected components of $E$.
\end{proof}

As a direct consequence, we get the following observation, which we can already find in \cite{Yos84} and \cite{Bla09}.

\begin{corollary}
Let $C,D \subset \p^2$ be irreducible closed curves and $\varphi \colon \p^2 \setminus C \to \p^2 \setminus D$ an isomorphism. If $C$ is not rational or has more than one proper singular point, then $\varphi$ extends to an automorphism of $\p^2$.
\end{corollary}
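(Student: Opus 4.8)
The plan is to argue by contraposition and simply read off the conclusion from Proposition~$\ref{Prop:form}$. Suppose that $\varphi$ does \emph{not} extend to an automorphism of $\p^2$; it then suffices to show that $C$ is rational and has at most one proper singular point. Apply Proposition~$\ref{Prop:form}$ to the open embedding $\varphi \colon \p^2 \setminus C \hookrightarrow \p^2$ (taking $D = \p^2 \setminus \im(\varphi)$, which is consistent since $\varphi$ is an isomorphism onto $\p^2 \setminus D$). We land in one of the three listed cases.

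In cases $(i)$ and $(ii)$, $C$ is a line or a smooth conic, so $C \simeq \p^1$ is rational and has no singular points at all; in particular the desired conclusion holds. In case $(iii)$, $C$ has a unique proper singular point $p$, so it certainly has at most one proper singular point, and $C \setminus \{p\}$ is isomorphic to an open subset of $\p^1$, which forces $C$ to be a rational curve. Thus in every case $C$ is rational with at most one proper singular point, which is precisely the contrapositive of the statement. Hence if $C$ is not rational, or if it has more than one proper singular point, then $\varphi$ must extend to an automorphism of $\p^2$.

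There is essentially no obstacle here: the entire content is already packaged in Proposition~$\ref{Prop:form}$, and the only thing to check is the bookkeeping that ``at most one proper singular point and rational'' is the negation of the hypothesis ``not rational or more than one proper singular point.'' One minor point worth stating explicitly is that in cases $(i)$ and $(ii)$ a line and a smooth conic genuinely have \emph{no} proper singular points, so they trivially satisfy the bound of at most one; this handles the degenerate low-degree cases uniformly with case $(iii)$.
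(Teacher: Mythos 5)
Your proof is correct and matches the paper exactly: the paper states this corollary as a direct consequence of Proposition~$\ref{Prop:form}$ with no further argument, and your contrapositive reading of the three cases is precisely that deduction. Nothing is missing.
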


\begin{proposition}\label{Prop:rigidity}
Let $C \subset \p^2$ be an irreducible curve and $\varphi \colon \p^2 \setminus C \hookrightarrow \p^2$ an open embedding that does not extend to an automorphism of $\p^2$. Let $p \in C$ be a point such that $C \setminus \{p\}$ is isomorphic to $\p^1 \setminus \{p_1,\ldots,p_k\}$, where $p_1,\ldots,p_k \in \p^1$ are distinct points. If $k \geq 3$, then $\varphi$ is uniquely determined up to a left-composition with an automorphism of $\p^2$.
\end{proposition}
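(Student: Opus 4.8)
\textbf{Proof proposal.}

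The plan is to reduce the statement to the rigidity of a single birational self-map of $\p^2$ and then to show that the "$C$-side'' of a minimal resolution is an invariant of $C$. Let $\varphi_1,\varphi_2\colon\p^2\setminus C\hookrightarrow\p^2$ be two open embeddings that do not extend to automorphisms, and write $D_i=\p^2\setminus\im(\varphi_i)$. It suffices to prove that $\psi:=\varphi_2\circ\varphi_1^{-1}\colon\p^2\setminus D_1\to\p^2\setminus D_2$ extends to an automorphism $\alpha$ of $\p^2$: then $\alpha\circ\varphi_1$ and $\varphi_2$ agree on a dense open subset of $\p^2\setminus C$, hence everywhere. Applying Proposition~$\ref{Prop:form}$ to each $\varphi_i$ (the hypothesis $k\geq 3$ forces us into case~$(iii)$), each $D_i$ is irreducible with a unique proper singular point $q_i$, and $D_i\setminus\{q_i\}$ is isomorphic to $\p^1$ minus $k$ points, the same $k$ inherited from $C$. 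So assume, for contradiction, that $\psi$ does not extend to an automorphism.

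The heart of the argument is the claim that, for a $(-1)$-tower resolution $\pi\colon X\to\p^2$ of $C$ arising as in Lemma~$\ref{Lem:tower}$, the sequence of blown-up points — and therefore $X$, the exceptional configuration $\hat C\cup E_1\cup\dots\cup E_n$ (with $\hat C=C_n$ the $(-1)$-curve strict transform of $C$), and the distinguished curve $E_n$, which is the strict transform of $D_i$ by $\eta$ — depends only on $C$. The first centre is the unique proper singular point $p$ of $C$. Inductively, every further centre must lie on the exceptional locus produced so far (otherwise $E_1\cup\dots\cup E_i$ would be disconnected, violating the tree structure recorded in the observations following Lemma~$\ref{Lem:tower}$) and, being part of a minimal resolution, on the strict transform of $C$; hence at each stage it is one of finitely many candidates, namely the singular points of the current strict transform of $C$ together with its intersection points with the exceptional curves. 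Which candidates occur, and with what multiplicities, is then pinned down by the constraints that $\hat C$ must become smooth of self-intersection $-1$ and that $\hat C\cup E_1\cup\dots\cup E_{n-1}$ must be a tree meeting $\hat C$ in exactly $k$ points; this is exactly where $k\geq 3$ is used, via the rigidity of $\p^1$ with at least three marked points — concretely, the $k$ intersection points of $\hat C$ with $E_1\cup\dots\cup E_n$ are precisely the $k$ preimages $p_1,\dots,p_k$ of $p$ in the normalization of $C$, and once $k\geq 3$ there is no freedom in how the resolution can be routed through them. Granting the claim, the resolutions of $\varphi_1$ and $\varphi_2$ take place on the \emph{same} surface $X$, with the same configuration and the same distinguished curve $E_n$, compatibly with the two projections.

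It then remains to observe that the two contractions $\eta_1,\eta_2\colon X\to\p^2$, each of which contracts the fixed subconfiguration $\hat C\cup E_1\cup\dots\cup E_{n-1}$ (an $n$-curve negative-definite configuration), differ only by post-composition with an automorphism of $\p^2$: the pullback of a line is the unique effective class $L\in\Pic(X)$ with $L^2=1$ and $L\cdot K_X=-3$ lying in the rank-one orthogonal complement of the contracted curves, so the linear system $|L|$ defining the contraction is intrinsic to $X$ together with the chosen curves. Hence $\eta_2=\beta\circ\eta_1$ for some $\beta\in\Aut(\p^2)$, and therefore $\varphi_2=\eta_2\circ\pi^{-1}=\beta\circ\eta_1\circ\pi^{-1}=\beta\circ\varphi_1$, contradicting the assumption that $\psi$ does not extend and completing the proof. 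The main obstacle is the claim of the second paragraph: turning "the self-intersection-reducing blow-ups are forced'' into a rigorous statement, and pinpointing precisely how $k\geq 3$ enters, requires a careful enumeration of the possible exceptional trees and of the ways the $k$ complement points can be distributed among their connected components — this combinatorial analysis is the technical core of the argument, and it is also where the contrast with the unicuspidal case $k=1$ (Costa's family) becomes visible.
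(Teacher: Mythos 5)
Your overall skeleton agrees with the paper's: reduce everything to showing that the centres of the $(-1)$-tower resolution $\pi$ of $C$ (and of $\eta$ of $D$) are intrinsic to $C$ (resp.\ to $D$), and then conclude that any two embeddings differ by left-composition with an automorphism of $\p^2$; your final step (two contractions of the same negative configuration to $\p^2$ differ by an automorphism) is fine. But the central claim --- that for $k\geq 3$ the centres of $\pi$ are determined by $C$ --- is precisely what you leave unproved, as you yourself acknowledge, and the two ingredients you sketch for it do not work as stated. First, a centre of a $(-1)$-tower resolution arising from Lemma~$\ref{Lem:tower}$ need \emph{not} lie on the strict transform of $C$: in the unicuspidal case $k=1$ the tower continues far beyond the desingularization of $C$, with base-points on exceptional curves disjoint from the strict transform (this non-uniqueness is exactly why Costa's family exists). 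That all centres lie on strict transforms of $C$ when $k\geq 3$ is a conclusion to be established, not something one may invoke from ``being part of a minimal resolution.'' Second, $k\geq 3$ does not enter through the rigidity of $\p^1$ with three marked points; no such argument appears, and your ``no freedom in how the resolution can be routed'' is an assertion, not a proof.

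The mechanism the paper actually uses is the following. Let $E$ be the union of curves contracted by both $\pi$ and $\eta$, and $\hat C$, $\hat D$ the strict transforms of $C$ and $D$. Since $\hat C$ and $\hat D$ are the last exceptional curves of the respective towers, each meets $E$ in at most two points, and $k=\#\,\hat C\cap(E\cup\hat D)$. Hence $k\geq 3$ forces $\hat C\cap\hat D$ to be non-empty. If $\hat C$ and $\hat D$ meet in at least two points, or in one point tangentially, then contracting the $(-1)$-curve $\hat D$ makes the image of $\hat C$ singular, so $\pi$ must be the minimal resolution of singularities of $C$; in the one remaining case ($k=3$, a single transversal intersection) contracting $\hat D$ creates a triple point among the images of $\hat C$ and two components of $E$, so $\pi$ is the minimal resolution for which $\pi^*(C)$ is an SNC divisor. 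Either way the centres of $\pi$ are canonically attached to $C$, and symmetrically for $\eta$ and $D$. Without this (or an equivalent) argument, your proposal has a genuine gap at its core.
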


\begin{proof}
By Lemma~$\ref{Lem:tower}$ there exists a $(-1)$-tower resolution $\pi \colon X = X_n \xrightarrow{\pi_n} \ldots \xrightarrow{\pi_2} X_1 \xrightarrow{\pi_1} \p^2$ with exceptional curves $E_1,\ldots,E_n$ and a $(-1)$-tower resolution $\eta \colon X \to \p^2$ of some curve $D \subset \p^2$ such that $\varphi \circ \pi = \eta$. We denote by $E = E_1 \cup \ldots \cup E_{n-1}$ the union of irreducible curves in $X$ that are contracted by both $\pi$ and $\eta$. Moreover, we denote by $\hat{C} = C_n$ the strict transform of $C$ by $\pi$ in $X$, and by $\hat{D} = E_n$ the strict transform of $D$ by $\eta$ in $X$. Since $\pi$ and $\eta$ are $(-1)$-tower resolutions, we know that $E \cup \hat{C}$ and $E \cup \hat{D}$ have a tree structure such that $\hat{C}$ and $\hat{D}$ each intersect $E$ in $1$ or $2$ points. It also follows that $k = \#\hat{C}\cap (E \cup \hat{D})$.

Let us assume first that $k \geq 4$. Then it follows that $\hat{C}$ and $\hat{D}$ intersect in at least two points. This implies that the image of $\hat{C}$ after contracting the $(-1)$-curve $\hat{D}$ is singular. Hence $\pi$ is the minimal resolution of singularities of $C$, i.e.\ the blow-up of all the singular points of $C$. By the same argument $\eta$ is the minimal resolution of singularities of $D$. Thus the base-points of $\pi$ and $\eta$ are completely determined by $C$ and $D$ respectively. But this means that for any other birational map $\psi \colon \p^2 \dashrightarrow \p^2$ that restricts to an isomorphism $\p^2 \setminus C \to \p^2 \setminus D$ the composition $\psi \circ \varphi^{-1}$ is an automorphism of $\p^2$. Thus the claim follows in this case.

We now assume that $k = 3$. Then $\hat{C}$ and $\hat{D}$ intersect in $1,2$, or $3$ points. Assume first that $\hat{C}$ and $\hat{D}$ intersect in $2$ or $3$ points. Then the image of $\hat{C}$ after contracting $\hat{D}$ is singular, so $\pi$ is the minimal resolution of singularities of $C$, and analogously $\eta$ is the minimal resolution of singularities of $D$. Then for the same reason as before, any other isomorphism $\p^2 \setminus C \to \p^2 \setminus D$ is just $\varphi$ composed with an automorphism of $\p^2$.

Finally, we assume that $k=3$ and that $\hat{C}$ and $\hat{D}$ intersect in only one point. We can assume that this intersection is transversal, otherwise, if they were tangent, $\pi$ and $\eta$ would again be the minimal resolutions of the singularities of $C$ and $D$ respectively and we could argue as before. The curve $\hat{D}$ intersects $E$ in two distinct components, say $E_i$ and $E_j$. If we contract the $(-1)$-curve $\hat{D}$, there is a triple intersection between the images of $\hat{C}$, $E_i$ and $E_j$. But this means that $\pi$ is the minimal resolution of $C$ such that the pull-back $\pi^*(C)$ is a SNC-divisor on $X$. Hence the base-points of $\pi$ are again completely determined by the curve $C$. Likewise, the base-points of $\eta$ are determined by $D$. We then argue as before that any isomorphism $\p^2 \setminus C \to \p^2 \setminus D$ is the composition of $\varphi$ with an automorphism of $\p^2$.
\end{proof}

\begin{corollary}\label{cor:uniqueD}
Let $C \subset \p^2$ be an irreducible curve such that there exists no point $p \in C$ such that $C \setminus \{p\}$ is isomorphic to $\A^1$ or $\A^1 \setminus \{0\}$. Then there exists at most one curve $D \subset \p^2$, up to projective equivalence, such that $\p^2 \setminus C$ and $\p^2 \setminus D$ are isomorphic and such that $D$ is not projectively equivalent to $C$.
\end{corollary}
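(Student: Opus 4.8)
The plan is to derive the corollary as a formal consequence of Propositions~\ref{Prop:form} and \ref{Prop:rigidity}. I would start by letting $D_1, D_2 \subset \p^2$ be two curves, neither projectively equivalent to $C$, together with isomorphisms $\varphi_i \colon \p^2 \setminus C \iso \p^2 \setminus D_i$ for $i=1,2$, each regarded as an open embedding $\p^2 \setminus C \hookrightarrow \p^2$ with image $\p^2 \setminus D_i$, and aim to show that $D_1$ and $D_2$ are projectively equivalent. The first observation is that neither $\varphi_i$ extends to an automorphism of $\p^2$: if $\varphi_i$ were the restriction of some $\beta \in \Aut(\p^2)$, then $\beta(C) = \p^2 \setminus \beta(\p^2 \setminus C) = \p^2 \setminus \im(\varphi_i) = D_i$, contradicting the choice of $D_i$.

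Next I would apply Proposition~\ref{Prop:form} to $\varphi_1$. The hypothesis on $C$ rules out cases $(i)$ and $(ii)$, since a line and an irreducible conic are both isomorphic to $\p^1$, and removing any point from $\p^1$ gives $\A^1$, which would contradict the hypothesis. Hence case $(iii)$ holds: $C$ has a unique proper singular point $p$, and $C \setminus \{p\}$ is isomorphic to an open subset of $\p^1$, that is, to $\p^1 \setminus \{p_1, \ldots, p_k\}$ for distinct points $p_1, \ldots, p_k$ and some $k \geq 1$. Since $C \setminus \{p\}$ is, by hypothesis, isomorphic to neither $\A^1$ (the case $k=1$) nor $\A^1 \setminus \{0\}$ (the case $k=2$), we conclude $k \geq 3$.

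Then I would invoke Proposition~\ref{Prop:rigidity} for this $C$, the point $p$, and $k \geq 3$. Both $\varphi_1$ and $\varphi_2$ are open embeddings $\p^2 \setminus C \hookrightarrow \p^2$ that do not extend to automorphisms of $\p^2$, so the rigidity statement yields $\alpha \in \Aut(\p^2)$ with $\varphi_2 = \alpha \circ \varphi_1$. Taking complements of the images gives
$$D_2 = \p^2 \setminus \im(\varphi_2) = \p^2 \setminus \alpha(\im(\varphi_1)) = \alpha\bigl(\p^2 \setminus \im(\varphi_1)\bigr) = \alpha(D_1),$$
so $D_1$ and $D_2$ are projectively equivalent, as claimed.

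I expect no genuine obstacle here: the argument is essentially bookkeeping on top of the two preceding propositions. The only point requiring a little care is the translation of the hypothesis on $C$ into the inputs needed — that $C$ is neither a line nor a conic (excluding cases $(i)$ and $(ii)$ of Proposition~\ref{Prop:form}) and that the number $k$ of complement points of $C \setminus \{p\}$ is at least $3$ (so that Proposition~\ref{Prop:rigidity} applies), for which it suffices to test the hypothesis at the single point $p$ produced by Proposition~\ref{Prop:form}.
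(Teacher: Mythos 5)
Your argument is correct and is exactly the route the paper takes: the paper's proof is the one-line "direct consequence of Proposition~\ref{Prop:rigidity}", and you have simply filled in the routine verification that the hypotheses of Propositions~\ref{Prop:form} and \ref{Prop:rigidity} apply (ruling out the line/conic cases and $k\le 2$) and the bookkeeping that $\varphi_2=\alpha\circ\varphi_1$ forces $D_2=\alpha(D_1)$.
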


\begin{proof}
This is a direct consequence of Proposition~$\ref{Prop:rigidity}$.
\end{proof}

\begin{remark}
{\rm
P. Costa's example \cite{Cos12} shows that Corollary~$\ref{cor:uniqueD}$ does in general not hold when there exists a point $p$ such that $C \setminus \{p\} \simeq \A^1$. On the other hand, there is no known example of pairwise projectively non-equivalent curves $C,D,E \subset \p^2$ such that all $3$ curves have isomorphic complements and there exists a point $p \in C$ such that $C \setminus \{p\} \simeq \A^1 \setminus \{0\}$.}
\end{remark}

\section{Unicuspidal curves with a very tangent line}

\subsection{Very tangent lines}

Let $C \subset \p^2$ be an irreducible curve. A singular point $p \in C$ is called a \emph{cusp} if the preimage of $p$ under the normalization $\hat{C} \to C$ consists of only one point. A curve is called \emph{unicuspidal} if it has one cusp and is smooth at all other points. We call a line $L\subset \p^2$ \emph{very tangent} to $C$ if there exists a point $q$ such that $(C\cdot L)_q = \deg(C)$. By B\'ezout's theorem this means that $L$ intersects $C$ in only one point. A line that is very tangent to $C$ is also tangent in the usual sense, except in the special case where $C$ is a line and the intersection is transversal. 

\begin{lemma}\label{Lem:unicuspidal}
Let $C \subset \p^2$ be an irreducible curve and $L \subset \p^2$ a line. Then $C \setminus L \simeq \A^1$ if and only if $L$ is very tangent to $C$ and one of the following holds:
\begin{enumerate}
\item[$(i)$] $C$ is a line.
\item[$(ii)$] $C$ is a conic.
\item[$(iii)$] $C$ is rational and unicuspidal and $L$ passes through the singular point of $C$.
\end{enumerate}
\end{lemma}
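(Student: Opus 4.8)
The statement is an "if and only if". The direction that needs work is the forward implication: assuming $C \setminus L \simeq \A^1$, I want to conclude that $L$ is very tangent and that $C$ falls into one of the three listed types. The reverse implication should be essentially a direct computation in each of the three cases, so I will dispatch it quickly first and then concentrate on the forward direction.

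For the reverse direction, suppose $L$ is very tangent to $C$, meeting $C$ only at a point $q$ with $(C\cdot L)_q = \deg C$. In case $(i)$, $C$ is a line meeting $L$ in one point, so $C\setminus L \simeq \A^1$ trivially. In case $(ii)$, $C$ is a smooth conic and $L$ is tangent to it at $q$; then $C\setminus\{q\}$ is $\A^1$ (a smooth rational curve minus a point), and since $L\cap C = \{q\}$ we get $C\setminus L \simeq \A^1$. In case $(iii)$, the normalization $\nu\colon \p^1 \to C$ is bijective (because $C$ is rational and unicuspidal), so $C\setminus L = C\setminus\{q\}$ is the image under $\nu$ of $\p^1$ minus one point (the preimage of $q$), hence $\simeq \A^1$.

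For the forward direction, assume $C\setminus L \simeq \A^1$. First I would argue that $C\cap L$ is a single point $q$: since $\A^1$ has trivial Picard group while $C \setminus L$ is an affine open subset of $C$, the restriction map forces $C\cap L$ to consist of one point (more concretely, if $C \cap L$ had $\geq 2$ points then removing them from the projective curve $C$ — or rather from its normalization — would leave a curve with nontrivial units or nontrivial Picard group, contradicting $\A^1$; I would phrase this via the normalization $\nu\colon\hat C\to C$, noting that $\nu^{-1}(C\setminus L)$ is an open subset of the smooth projective curve $\hat C$ which must be $\A^1$, hence $\hat C \simeq \p^1$ and $\nu^{-1}(C\cap L)$ is a single point). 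This gives at once that $C$ is rational, that $C\cap L = \{q\}$ — so $L$ is very tangent to $C$ by Bézout — and that the preimage $\nu^{-1}(q)$ is a single point. The last fact means that $q$ is either a smooth point of $C$ or a cusp. If $q$ is the unique point where $\nu$ fails to be an isomorphism, then $C$ is smooth away from $q$ (any other singular point of $C$ would have to be a point with more than one preimage or a cusp, but then $\nu^{-1}$ of it lies in $\nu^{-1}(C\setminus L)\simeq\A^1$, and $\A^1$ being a smooth curve forces $\nu$ to be an isomorphism there) — so either $C$ is smooth everywhere, or $C$ is unicuspidal with cusp $q\in L$, which is case $(iii)$.

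It remains to treat the smooth case: if $C$ is smooth and rational it has degree $1$ or $2$ by the genus-degree formula (a smooth plane curve of degree $d$ has genus $\binom{d-1}{2}$, which is $0$ only for $d\le 2$), giving cases $(i)$ and $(ii)$; and one checks $L$ is very tangent in these cases because $C\cap L$ is the single point $q$, so $(C\cdot L)_q = \deg C$ by Bézout. The main obstacle, and the step I would be most careful about, is the clean deduction that $C\setminus L\simeq\A^1$ forces $\#(C\cap L)=1$ together with the smoothness of $C$ away from that point; everything else is routine. I would handle both through the normalization: $\nu^{-1}(C\setminus L)$ is an open dense subset of $\hat C$ isomorphic to $\A^1$, which pins down $\hat C\simeq\p^1$ and shows the complement $\hat C \setminus \nu^{-1}(C\setminus L)$ is one point; then $\nu$ restricted to $\nu^{-1}(C\setminus L)\to C\setminus L$ is a morphism from $\A^1$ that is bijective onto a smooth-or-not target, but since $C\setminus L$ is itself isomorphic to $\A^1$ this restriction is an isomorphism, so $C$ is smooth on $C\setminus L$ and $\nu$ can fail to be an isomorphism only over $q$.
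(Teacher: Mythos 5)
Your proposal is correct and follows essentially the same route as the paper: both directions go through the normalization $\p^1\to C$, with the complement of an $\A^1$ inside $\p^1$ being a single point, Bézout giving very tangency, and the genus--degree formula handling the smooth case. The only stylistic difference is that you analyse $\nu^{-1}(C\cap L)$ directly while the paper first observes $\Sing(C)\subset C\cap L$ and analyses $\nu^{-1}(\Sing(C))$; the content is the same.
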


\begin{proof}
Assume that $L$ is very tangent to $C$. If $C$ is a line or a conic, then $C$ is isomorphic to $\p^1$ and thus $C \setminus L \simeq \A^1$. We thus assume that $C$ is rational and unicuspidal with singular point $p$, where $L$ passes through $p$. It follows that $C$ has a normalization $\eta \colon \p^1 \to C$ such that $\eta^{-1}(p)$ consists of only one point and thus $C \setminus \{p\} \simeq \p^1 \setminus \eta^{-1}(p) \simeq \A^1$. Since $L$ is very tangent to $C$, the intersection $C \cap L$ consists only of the point $p$. It follows that $C \setminus L \simeq C \setminus \{p\} \simeq \A^1$. 

To prove the converse, assume that $C \setminus L \simeq \A^1$. It follows that $C$ is rational and $\Sing(C) \subset C \cap L$. We consider the normalization $\eta \colon \p^1 \to C$ and obtain $ C \setminus L \subset C \setminus \Sing(C) \simeq \p^1 \setminus \eta^{-1}(\Sing(C))$. Since $C \setminus L \simeq \A^1$, it follows that $\eta^{-1}(\Sing(C))$ consists of at most one point. If $\eta^{-1}(\Sing(C))$ is empty, then $C \simeq \p^1$ is smooth and thus either a line or a conic, by the genus-degree formula. Since $C \setminus L \simeq \A^1$, it follows that $L$ intersects $C$ in only one point and is thus very tangent to $C$. If $\eta^{-1}(\Sing(C))$ is not empty, then it contains exactly one point and thus $C$ is unicuspidal and $C \setminus L = C \setminus \Sing(C)$. Since $C \cap L = \Sing(C)$ consists of only one point, the line $L$ is very tangent to $C$.
\end{proof}

If $C$ is unicuspidal and rational and has a very tangent line $L$ through the singular point, then $C\setminus L \simeq \A^1$. In other words, $C$ is equivalent to the closure of the image of a closed embedding $\A^1 \hookrightarrow \A^2 \simeq \p^2 \setminus L$. Note that not all rational unicuspidal curves admit a very tangent line through the singular point. For instance, there exists such a unicuspidal quintic curve that is studied in detail in Section~$\ref{subsec:quintic}$.

We call $C \setminus L \subset \p^2 \setminus L \simeq \A^2$ \emph{rectifiable} if there exists an automorphism $\theta \in \Aut(\p^2 \setminus L)$ such that $\theta(C) = L' \setminus L$ for some line $L' \subset \p^2$ that is distinct from $L$. Suppose that there exists an open embedding $\varphi \colon \p^2 \setminus C \hookrightarrow \p^2$ that does not extend to an automorphism of $\p^2$, then the induced birational map $\p^2 \dashrightarrow \p^2$ contracts the curve $C$ to a point. It turns out that $C \setminus L \subset \p^2 \setminus L$ is then rectifiable. This is a consequence of the following proposition, proven in \cite[Proposition~$3.16$]{BFH16}. It also follows from the work of \cite{KM83} and \cite{Gan85} (see \cite[Remark~$2.30$]{BFH16}).

\begin{proposition}\label{Prop:rectifiabilty}
Let $C \subset \A^2 = \p^2 \setminus L_\infty$ be a closed curve, isomorphic to $\A^1$, and denote by $\overline{C}$ the closure of $C$ in $\p^2$. Then the following are equivalent:
\begin{enumerate}
\item[$(i)$] There exists an automorphism of $\A^2$ that sends $C$ to a line.
\item[$(ii)$] There exists a birational transformation of $\p^2$ that sends $\overline{C}$ to a point.
\end{enumerate}
\end{proposition}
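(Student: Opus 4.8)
The plan is to prove the two implications separately; $(i)\Rightarrow(ii)$ is elementary, while $(ii)\Rightarrow(i)$ carries all the weight. For $(i)\Rightarrow(ii)$: assume there is an automorphism $\theta$ of $\A^2$ with $\theta(C)=L'\cap\A^2$ for a line $L'\neq L_\infty$. I extend $\theta$ to a birational transformation $\bar\theta$ of $\p^2$; its strict transform of $\overline{C}$ is then the line $L'$. It thus suffices to contract $L'$ to a point by some element of $\Bir(\p^2)$: choosing two points $a,b\in L'$ and a point $c\notin L'$, the quadratic transformation $\tau$ with proper base points $a,b,c$ contracts the line $L'=\overline{ab}$ to a point, so $\tau\circ\bar\theta$ contracts $\overline{C}$.

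For $(ii)\Rightarrow(i)$ I start from a birational transformation $\psi\colon\p^2\dasharrow\p^2$ with $\psi(\overline{C})$ a point, resolve its base points to obtain birational morphisms $\pi,\sigma\colon X\to\p^2$ with $\sigma=\psi\circ\pi$ (the same kind of resolution diagram as in Lemma~\ref{Lem:tower}), and observe that the strict transform $\widetilde{C}\subset X$ of $\overline{C}$ is contracted by $\sigma$. Factoring $\sigma$ into point blow-downs, $\widetilde{C}$ is eventually blown down, and since blowing down a curve can never smooth a singular curve it follows that $\widetilde{C}\simeq\p^1$ (in particular $\pi$ resolves the singularities of $\overline{C}$, consistently with $C\simeq\A^1$). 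I then argue by induction on $d=\deg\overline{C}$. If $d=1$ then $C$ is an affine line and we are done. If $d\geq2$, the key structural fact is that a closed curve $C\simeq\A^1$ in $\A^2$ meets $L_\infty$ in a single point $p_\infty$ with a single branch, i.e. $\overline{C}$ has one place at infinity; after a linear change of coordinates placing $p_\infty=[0:1:0]$ and putting $\overline{C}$ in suitable position with respect to the coordinate points, the aim is to produce an automorphism of $\A^2$ — an affine map composed with a de~Jonqui\`eres automorphism $(x,y)\mapsto(x,y-p(x))$, which geometrically is a birational transformation of $\p^2$ fibred by the pencil of lines through $p_\infty$ — that strictly decreases $\deg\overline{C}$; induction then finishes the argument.

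The hard part will be the inductive step: extracting from the contractibility hypothesis the divisibility relation between the Puiseux-type exponents of the branch of $\overline{C}$ at $p_\infty$ that is needed for the de~Jonqui\`eres reduction to drop the degree, and establishing this in arbitrary characteristic. In characteristic zero the required divisibility holds automatically — this is the content of the Abhyankar--Moh--Suzuki theorem, by which every closed curve isomorphic to $\A^1$ in $\A^2$ is already rectifiable — but in characteristic $p$ embeddedness alone is insufficient, since non-rectifiable closed curves isomorphic to $\A^1$ exist, and hypothesis $(ii)$ is precisely the extra input that restores the argument. Concretely, one should read the divisibility off the dual graph of the embedded resolution of the pair $(\overline{C},L_\infty)$ together with the constraint that $\sigma$ contracts $\widetilde{C}$; this combinatorial analysis of curves with one place at infinity is also carried out in \cite{KM83} and \cite{Gan85}, from which the proposition can alternatively be deduced.
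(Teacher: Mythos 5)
The paper contains no proof of this proposition: it is imported verbatim from \cite[Proposition~3.16]{BFH16}, with the remark that it also follows from \cite{KM83} and \cite{Gan85}. So there is no in-paper argument to measure you against; I can only assess your proposal on its own terms.

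Your $(i)\Rightarrow(ii)$ is correct and complete: extend the automorphism to a birational map of $\p^2$ and compose with a quadratic transformation having two of its three proper base-points on the image line. Your preliminary observations for $(ii)\Rightarrow(i)$ are also sound (every irreducible curve contracted by a birational morphism of smooth projective surfaces is a smooth rational curve, so $\widetilde{C}\simeq\p^1$; and $C\simeq\A^1$ forces one place at infinity). The problem is that everything after that is a programme rather than a proof. The entire mathematical content of the proposition sits in the step you defer: showing that Cremona-contractibility of $\overline{C}$ forces the arithmetic condition on the branch at infinity (the divisibility between the two relevant intersection/multiplicity invariants at $p_\infty$) that makes a de~Jonqui\`eres map drop $\deg\overline{C}$. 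In characteristic $0$ the proposition is empty modulo Abhyankar--Moh--Suzuki, since $(i)$ always holds; in characteristic $p$ the implication $(ii)\Rightarrow(i)$ \emph{is} the theorem of Kumar--Murthy and Ganong, and saying that ``one should read the divisibility off the dual graph together with the constraint that $\sigma$ contracts $\widetilde{C}$'' names the difficulty without resolving it. So, as a self-contained argument, your proposal has a genuine gap at its core --- though, to be fair, it is exactly the gap the paper itself closes by citation rather than by proof, and your sketch correctly identifies both where the difficulty lies and which references contain the missing analysis.
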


We call a curve satisfying condition $(ii)$ of Proposition~$\ref{Prop:rectifiabilty}$ \emph{Cremona-contractible}. Note that condition $(i)$ is always satisfied if the characteristic of $\k$ is $0$ by the Abhyankar-Moh-Suzuki theorem (\cite{AM75}, \cite{Suz74}), but in general not in positive characteristic. It follows from Proposition~$\ref{Prop:rectifiabilty}$ that Theorem~$\ref{Thm:Yoshihara}$ holds if $C \setminus L \subset \p^2 \setminus L$ is not rectifiable.

\subsection{Automorphisms of $\A^2$ and de Jonqui\`eres maps}

\begin{definition}
{\rm
Let $L \subset \p^2$ be a line and $p \in L$. We denote by $\Jon(\p^2,L,p)$ the group of automorphisms of $\p^2 \setminus L$ that preserve the pencil of lines through $p$. We call an element in $\Jon(\p^2,L,p)$ a \emph{de Jonqui\`eres map with respect to $L$ and $p$}. }
\end{definition}

We recall the following standard terminology, for instance as used in \cite{Alb02}.
\begin{definition}
{\rm Let $X$ be a surface and let $p \in X$ be a point. Let $E$ be the exceptional curve of the blow-up of $p$. We then say that a point $q \in E$ lies in the \emph{first neighborhood of $p$}. For $k > 1$, we say that a point lies in the \emph{$k$-th neighborhood of $p$} if it lies in the first neighborhood of some point in the $(k-1)$-th neighborhood of $p$. We say that a point is \emph{infinitely near to $p$} if it lies in the $k$-th neighborhood of $p$, for some $k\geq 1$. We call a point $q$ \emph{proximate to $p$} (denoted $q \succ p$) if $q$ lies on the strict transform of the exceptional curve of the blow-up of $p$. We sometimes call the points of $X$ \emph{proper} to distinguish them from infinitely near points.}
\end{definition}

Throughout this section, we fix a line $L \subset \p^2$ and a point $p \in L$. Moreover, we fix projective coordinates $[x:y:z]$ on $\p^2$ and denote the lines
$$L_x \colon x = 0 \qquad \qquad L_y \colon y = 0 \qquad \qquad L_z \colon z = 0.$$

\begin{lemma}\label{Lem:decomposition}
Let $j \in \Jon(\p^2,L,p) \setminus \Aut(\p^2)$ be of degree $d$. Then the minimal resolution of $j$ has $2d-1$ base-points with exceptional curves $E_1,\ldots,E_{2d-1}$ as in the following configuration
$$
\begin{tikzpicture}[scale=1.05]
\draw[very thick] (0,2.5)--(1,1.5);
\draw (0.1,2) node{\scriptsize $E_{2d-1}$};
\draw (0.8,1.5)--(1.8,2.5);
\draw (1.75,2) node{\scriptsize $E_{2d-2}$};
\draw[dashed] (1.6,2.5)--(2.6,1.5);
\draw (2.4,1.5)--(3.4,2.5);
\draw (2.5,2) node{\scriptsize $E_{d+1}$};

\draw[very thick] (0,0)--(1,1);
\draw (0.25,0.5) node{\scriptsize $L$};
\draw (0.8,1)--(1.8,0);
\draw (1.55,0.5) node{\scriptsize $E_2$};
\draw[dashed] (1.6,0)--(2.6,1);
\draw (2.4,1)--(3.4,0);
\draw (2.6,0.5) node{\scriptsize $E_{d-1}$};

\draw (3.2,2.5)--(3.2,0);
\draw (3.4,0.8) node{\scriptsize $E_d$};
\draw (3,1.25)--(5,1.25);
\draw (4.1,1.42) node{\scriptsize $E_1[-d]$};
\end{tikzpicture}
$$
where the self-intersection numbers are $-1$ for thick lines, $-2$ for thin lines, or otherwise are indicated in square brackets.
\end{lemma}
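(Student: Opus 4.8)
The plan is to realise $j$ explicitly as a Cremona transformation and resolve it by an explicit chain of blow-ups, tracking the local form of the map at each step. After a projective change of coordinates I may assume $L=L_z$ and $p=[1:0:0]$; identifying $\p^2\setminus L$ with $\A^2$ via $[x:y:z]\mapsto(x/z,y/z)$, the lines through $p$ other than $L$ become the horizontal lines $\{y=\text{const}\}$, and an elementary argument (pulling back the two coordinate functions) shows that any automorphism of $\A^2$ permuting these has the triangular form $(x,y)\mapsto(ax+b(y),\alpha y+\beta)$ with $a,\alpha\in\k^*$, $b\in\k[y]$. Rescaling, I take $a=1$ and $b$ monic; as $j\notin\Aut(\p^2)$ we have $\deg b=:d\geq 2$. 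Homogenising gives $j=[\,xz^{d-1}+B(y,z):yz^{d-1}:z^d\,]$ with $B$ homogeneous of degree $d$ and $B(y,0)=y^d$, so these forms have degree $d$ with no common factor and $\deg j=d$; reading off common zeros, the only proper base point of $j$ is $p$, at which the net has multiplicity $d-1$ (in the chart $x=1$ the first entry has multiplicity $d-1$, the others $\geq d$).

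Next I would resolve $j$ in two phases. Blowing up $p=p_1$ and working in the chart where $E_1$ and $L_z^{\text{str}}$ are visible, the map becomes $[\,v^{d-1}+u\,g(v):uv^{d-1}:uv^d\,]$ with $g(0)\neq 0$, whose unique base point is $p_2=E_1\cap L_z^{\text{str}}$. I would then show by induction that after blowing up $p_1,\dots,p_k$ ($1\leq k\leq d-1$) the map has the form $[\,v_k^{\,d-k}+u_k g(v_k):u_kv_k^{d-1}:u_kv_k^d\,]$ in a chart with $E_1^{\text{str}}=\{u_k=0\}$, $E_k=\{v_k=0\}$ (and $L_z^{\text{str}}=\{v_1=0\}$ for $k=1$), with base point $p_{k+1}=\{u_k=0\}\cap\{v_k=0\}$: blowing up $p_{k+1}$ in the chart $u_k=u_{k+1}v_{k+1},\ v_k=v_{k+1}$ and cancelling one factor $v_{k+1}$ reproduces the shape with $k{+}1$, and no new base point arises in the discarded chart. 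Hence $p_2,\dots,p_d$ all lie on $E_1^{\text{str}}$, so $E_1$ becomes a $(-d)$-curve; $L_z^{\text{str}},E_2,\dots,E_{d-1}$ are each blown up once, forming a chain of $(-2)$-curves ending at the $(-1)$-curve $L_z^{\text{str}}$; and $E_d$ meets $E_1^{\text{str}}$ and $E_{d-1}^{\text{str}}$. The normal form has now degenerated to $[\,1+u\,g(v):uv^{d-1}:uv^d\,]$, whose base point $p_{d+1}$ is the point of $E_d$ with $u=-1/g(0)$, lying on no other exceptional component.

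In the second phase, since $g(0)\neq0$ one may replace the local coordinate $u$ near $p_{d+1}$ by $s:=1+u\,g(v)$, so that the map becomes $[\,s:v^{d-1}w_1:v^dw_2\,]$ with $w_1,w_2$ local units and $\Gamma:=\{s=0\}$ a smooth curve through $p_{d+1}$ transverse to $E_d$. A similar induction shows that blowing up $p_{d+1}$ and then $p_{d+2},p_{d+3},\dots$, each at the intersection of $\Gamma^{\text{str}}$ with the most recent exceptional curve, decreases the exponent of $v$ by one each time, yielding a chain $E_{d+1},\dots,E_{2d-1}$ of $(-2)$-curves except for the $(-1)$-curve $E_{2d-1}$; once the exponent hits $0$, after blowing up $p_{2d-1}$, the map is a morphism. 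Thus $E_1[-d]$ meets only $E_d[-2]$; $E_d$ meets $E_1^{\text{str}}$, $E_{d-1}^{\text{str}}$, $E_{d+1}$; the lower chain is $E_d-E_{d-1}-\cdots-E_2-L_z$ and the upper one $E_d-E_{d+1}-\cdots-E_{2d-1}$, with self-intersections $-2$ throughout except $-d$ at $E_1$ and $-1$ at $L_z$ and $E_{2d-1}$, which is the displayed configuration. As every blow-up was of an honest base point and the process terminates in a morphism, $\pi=\pi_1\circ\cdots\circ\pi_{2d-1}$ together with $\eta=j\circ\pi$ is the minimal resolution of $j$, which therefore has $2d-1$ base points. (This is consistent with Lemma~\ref{Lem:tower}: $p_1,\dots,p_{2d-1}$ form a single chain of successively infinitely near points, $L_z^{\text{str}}=C_{2d-1}$ is a $(-1)$-curve, and $E_{2d-1}$ is the strict transform of the image line.)

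I expect the main obstacle to be the bookkeeping in the two inductions: choosing the charts so that the normal forms genuinely persist, checking that no spurious base point appears in the chart discarded at each blow-up, and tracking which component each newly created base point lies on, since these proximity relations are exactly what fix the self-intersection numbers and the tree shape. A secondary point is uniformity in $j$: the argument must hold for an arbitrary such map, but the inductions use only that $\deg b=d$ exactly (equivalently $g(0)\neq0$ at every stage), so no case analysis is needed.
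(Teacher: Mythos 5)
Your proposal is correct, and it reaches the stated configuration by a genuinely different route than the paper. The paper argues abstractly: it invokes Lemma~$\ref{Lem:tower}$ to obtain $(-1)$-tower resolutions $\pi,\eta$ of $L$ on both sides of $j$, uses the fact that $j$ preserves the pencil through $p$ to conclude that $E_1$ is the \emph{last} curve contracted by $\eta$, and then pins down each successive base-point by self-intersection bookkeeping on the $\eta$-side (a curve cannot acquire self-intersection $<-2$, and the images under partial contraction must become $(-1)$-curves in the right order); the degree only appears at the very end, computed by pushing a general line through the resolution and finding self-intersection $m^2$. You instead start from the triangular normal form $(x,y)\mapsto(ax+b(y),\alpha y+\beta)$ --- which is exactly the content of the paper's \emph{next} lemma on affine de Jonqui\`eres maps, proved there by the same elementary fibration argument you sketch, so there is no circularity --- homogenize, and resolve by explicit local computations in charts, so that the degree, the proximity relations, and the self-intersections all fall out of the persisting normal forms $[v_k^{d-k}+u_kg(v_k):u_kv_k^{d-1}:u_kv_k^d]$ and $[s:v^{j}w_1:v^{j+1}w_2]$. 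I checked the two inductions and the discarded-chart verifications (including the transition at $k=d$, where the base-point $p_{d+1}$ appears in both charts but is a single point of $E_d$ off $E_1$ and $E_{d-1}$), and they are sound. What the paper's route buys is brevity and independence from any normal form, at the price of leaning on the tower lemma from \cite{Bla09}; what yours buys is a self-contained, line-by-line verifiable computation that does not use Lemma~$\ref{Lem:tower}$ at all. One small point worth making explicit: the normalizations ($L=L_z$, $p=[1:0:0]$, $a=1$, $b$ monic, $\alpha=1$, $\beta=0$) amount to pre- and post-composing $j$ with automorphisms of $\p^2$ fixing $L$ and $p$, which transport the base-points but leave the number of base-points, the proximity relations, and the self-intersection numbers unchanged, so proving the lemma for the normal form suffices.
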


\begin{proof}
The map $j$ is an automorphism of $\p^2 \setminus L$ that does not extend to an automorphism of $\p^2$, thus by Lemma~$\ref{Lem:tower}$ there exists a $(-1)$-tower resolution $\pi \colon X = X_n \xrightarrow{\pi_n} \ldots \xrightarrow{\pi_2} X_1 \xrightarrow{\pi_1} X_0 = \p^2$ of $L$ with exceptional curves $E_1,\ldots,E_n$ and a $(-1)$-tower resolution $\eta \colon X \to \p^2$ of $L$ such that $j \circ \pi = \eta$.  The unique proper base-point of $j$ is $p$, which is thus the base-point of the first blow-up with exceptional curve $E_1$. Since $\pi$ is a $(-1)$-tower resolution of $L$, the next base-point is the intersection point between $E_1$ and the strict transform of $L$. After this blow-up, the strict transform of $L$ has self-intersection $-1$ and thus there is no more base-point on this curve. We observe that $E_1$ is the last curve contracted by $\eta$, since $j$ preserves the pencil of lines through $p$. The next base-point is thus either the intersection point $q$ between $E_1$ and $E_2$ or a point on $E_2 \setminus (E_1 \cup L)$. Let $m \geq 0$ be the number of base-points proximate to $q$. After blowing up these $m$ points we have the following resolution.
$$
\begin{tikzpicture}[scale=1.05]
\draw[very thick] (0,1.5)--(1,2.5);
\draw (0.25,2) node{\scriptsize $L$};
\draw (0.8,2.5)--(1.8,1.5);
\draw[dashed] (1.6,1.5)--(2.6,2.5);
\draw[very thick] (2.4,2.5)--(3.4,1.5);
\draw (1.58,2) node{\scriptsize $E_2$};
\draw (2.7,2) node{\scriptsize $E_{m}$};
\draw (3.2,1.5)--(4.2,2.5);
\draw (4.27,2) node{\scriptsize $E_1[-m]$};
\end{tikzpicture}
$$
The next base-point then lies on $E_{m} \setminus E_1$. It cannot be the intersection point with $E_{m-1}$, because then $E_{m-1}$ would have self-intersection $<-2$ in $X$. But $\eta$ first contracts $L$ and then the curves $E_2,\ldots,E_{m-2}$. After these contractions the self-intersection of the image of $E_{m-1}$ must be $-1$. Hence the next base-point lies on $E_{m} \setminus (E_1 \cup E_{m-1})$. We observe moreover that after $\eta$ contracts $L, E_2, \ldots, E_{m}$ the image of $E_1$ has self-intersection $-m+1$. Thus there is a chain of $(-2)$-curves of length $m-1$ attached to $E_m$, which are obtained by successively blowing up points that lie on the last exceptional curve but not on the intersection with another one. Since $E_1$ is the last curve contracted by $\eta$, it follows that $E_{2m-1}$ is the last exceptional curve of $\pi$.

Let us now determine the degree of $j$. For this we look at the degree of the image of a line $L'$ that does not pass through the base-points of $j$. The strict transform of $L'$ is drawn in the diagram on the left below.
$$
\begin{tikzpicture}[scale=1.05]
\draw (0.8,1.5)--(1.8,2.5);
\draw (0.75,2) node{\scriptsize $E_{2m-2}$};
\draw[dashed] (1.6,2.5)--(2.6,1.5);
\draw (2.4,1.5)--(3.4,2.5);
\draw (2.47,2) node{\scriptsize $E_{m+1}$};

\draw[very thick] (0,0)--(1,1);
\draw (0.25,0.5) node{\scriptsize $L$};
\draw (0.8,1)--(1.8,0);
\draw (1.1,0.5) node{\scriptsize $E_2$};
\draw[dashed] (1.6,0)--(2.6,1);
\draw (2.4,1)--(3.4,0);
\draw (2.55,0.5) node{\scriptsize $E_{m-1}$};

\draw (3.2,2.5)--(3.2,0);
\draw (3.45,0.85) node{\scriptsize $E_m$};
\draw (3,1.25)--(4.4,1.25);
\draw (3.7,1.42) node{\scriptsize $E_1$};

\draw (0.2,0)--(-0.8,1);
\draw (-0.7,0.5) node{\scriptsize $L'[1]$};

\draw[->] (4.6,1.25)--(5.15,1.25);

\draw (5.8,0.75)--(6.8,1.75);
\draw (5.75,1.25) node{\scriptsize $E_{2m-2}$};
\draw[dashed] (6.6,1.75)--(7.6,0.75);
\draw[very thick] (7.4,0.75)--(8.4,1.75);
\draw (7.45,1.25) node{\scriptsize $E_{m+1}$};
\draw (8.2,1.75)--(9.2,0.75);
\draw (9.27,1.25) node{\scriptsize $E_1[-m]$};
\draw (8.9,0.6) node{\scriptsize $L'[m+1]$};

\draw (8.3,0.5)--(8.3,2);

\draw[->] (9.85,1.25)--(10.35,1.25);

\draw[very thick] (10.5,1.25)--(12,1.25);
\draw (11.9,1.42) node{\scriptsize $E_1$};
\draw (11.25,1.45) node{$\scriptstyle (m-1)$};
\fill (11.25,1.25) circle (2pt);
\draw (11.22,0.6) node{\scriptsize $L'[2m-1]$};

\draw (10.5,0.5) to [out=70,in=180] (11.25,1.25);
\draw (11.25,1.25) to [out=0,in=110] (12,0.5);

\end{tikzpicture}
$$
After the curves $L,E_2,\ldots,E_m$ are contracted the image of $L'$ has self-intersection $m+1$ and $L'$ intersects $E_{m+1}$ and $E_1$, as shown in the diagram in the middle. Next, the curves $E_{m+1},\ldots,E_{2m-2}$ are contracted and the image of $L$ has self-intersection $2m-1$ and $L$ intersects $E_1$ with multiplicity $(m-1)$. Thus after $E_1$ is contracted the self-intersection of the image of $L$ is $2m-1 + (m-1)^2 = m^2$ and hence the degree $d$ of $j$ is equal to $m$.
\end{proof}

We often identify $\p^2 \setminus L_z$ with the affine plane $\A^2$ with coordinates $x,y$, via the open embedding $(x,y) \mapsto [x:y:1]$. We call $j \in \Aut(\A^2)$ an \emph{affine de Jonqui\`eres map} if it is the restriction of a de Jonqui\`eres map with respect to $L_z$ and $[0:1:0]$. Affine de Jonqui\`eres maps then preserve the fibration $(x,y) \mapsto x$.

\begin{lemma}
Let $j \in \Aut(\A^2)$ be an affine de Jonqui\`eres map. Then $j$ is of the form
$$(x,y) \mapsto \left(ax+b,cy+f(x)\right)$$
where $a,c \in \k^*$, $b \in \k$, and $f \in \k[x]$.
\end{lemma}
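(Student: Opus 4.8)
The plan is to show that an affine de Jonqui\`eres map is exactly a fibered automorphism of $\A^2$ over $\A^1$ with respect to the first projection, and then to analyse such automorphisms fiber by fiber.

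First I would unwind the definition. By hypothesis $j$ is the restriction to $\A^2 = \p^2 \setminus L_z$ of an element of $\Jon(\p^2, L_z, [0:1:0])$, hence $j$ preserves the pencil of lines through $[0:1:0]$. A line of $\p^2$ passes through $[0:1:0]$ precisely when its equation has the form $ax + cz = 0$, and restricting to $\A^2$ via $(x,y) \mapsto [x:y:1]$ shows that the members of this pencil other than $L_z$ itself are exactly the vertical lines $\{x = \text{const}\}$. Therefore $j$ permutes the vertical lines and descends to an automorphism $\bar{j} \in \Aut(\A^1)$ with $\pi \circ j = \bar{j} \circ \pi$ for $\pi \colon \A^2 \to \A^1$, $(x,y) \mapsto x$; concretely, $j^*(x)$ must generate (up to a unit in $\k[x,y]^\times = \k^\times$) the ideal of a vertical line, so $j^*(x) = ax+b$ with $a \in \k^*$, $b \in \k$. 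Writing $j = (j_1,j_2)$ with $j_1,j_2 \in \k[x,y]$, this gives $j_1 = ax+b$.

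It remains to determine $g := j_2 \in \k[x,y]$. For every $x_0 \in \k$ the map $j$ restricts to an isomorphism from the fiber $\pi^{-1}(x_0) \cong \A^1$ onto $\pi^{-1}(ax_0+b) \cong \A^1$, which under the identifications given by the coordinate $y$ is the automorphism $y \mapsto g(x_0,y)$ of $\A^1$. Since every automorphism of $\A^1$ is affine, $g(x_0,y)$ has degree exactly $1$ in $y$ with nonzero leading coefficient, for each $x_0$. Writing $g(x,y) = \sum_{i \geq 0} a_i(x) y^i$ with $a_i \in \k[x]$, the vanishing of $a_i(x_0)$ for all $x_0 \in \k$ and all $i \geq 2$ forces $a_i = 0$ for $i \geq 2$, because $\k$ is infinite; thus $g(x,y) = a_1(x)y + a_0(x)$. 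Finally $a_1(x_0) \neq 0$ for every $x_0 \in \k$ and $\k$ is algebraically closed, so the polynomial $a_1$ has no root and is a nonzero constant $c \in \k^*$. Setting $f := a_0 \in \k[x]$ yields $j(x,y) = (ax+b,\, cy+f(x))$.

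The argument is elementary, so the only point that genuinely requires care is the translation of the pencil condition: one must check that the lines of $\p^2$ through $[0:1:0]$ cut out precisely the vertical fibration on $\A^2$ (together with $L_z$), so that preserving the pencil is equivalent to commuting with $(x,y) \mapsto x$, and one should make sure $j^*(x)$ really lies in $\k[x]$ rather than merely having vertical fibers. After that, the fiberwise analysis and the use of $\k[x]^\times = \k^\times$ are routine. One could alternatively read off the shape of $j$ from the degree-$1$ case of Lemma~\ref{Lem:decomposition}, but the direct approach above is shorter and also handles the case $j \in \Aut(\p^2)$, which that lemma excludes.
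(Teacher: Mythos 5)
Your proof is correct and follows essentially the same route as the paper's: both arguments first combine the preservation of the vertical fibration with the irreducibility of the components of $j$ to conclude $j^*(x)=ax+b$, and then show the second component is affine in $y$ with constant invertible leading coefficient. The only (minor) divergence is in that last step, where the paper normalizes $a=x$ and invokes the structure of $\k[x]$-algebra automorphisms of $\k[x][y]$, while you specialize fiber by fiber and use that $\k$ is infinite and algebraically closed; both are equally valid and of comparable length.
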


\begin{proof}
The map $j$ sends $(x,y)$ to $(a(x,y),b(x,y))$, where $a,b \in \k[x,y]$. Since $j$ is an automorphism of $\A^2$, the polynomials $a$ and $b$ are irreducible. Moreover, $j$ preserves the fibration $(x,y) \mapsto x$, thus $a$ is a scalar multiple of some element $x-\lambda$ with $\lambda \in \k$. We can then apply an affine coordinate change and may assume that $a=x$. But then $j$ induces a $\k[x]$-automorphism of the polynomial ring $\k[x][y]$, and thus $b$ is of degree $1$ in the variable $y$. Moreover, the coefficient of $y$ is an element in $\k[x]^*= \k^*$ und thus the claim follows.
\end{proof}

We will use the well known structure theorem of Jung and van der Kulk in the sequel. We denote by $\Aff(\p^2,L)$ the \emph{affine group with respect to $L$}, which consists of the automorphisms of $\p^2$ that preserve $L$. Moreover, we denote by $B(\p^2,L,p)$ the intersection $\Aff(\p^2,L) \cap \Jon(\p^2,L,p)$.

\begin{thm}[\cite{Jun42}, \cite{vdK53}]\label{Thm:Jung}
The group $\Aut(\p^2 \setminus L)$ is generated by the subgroups $\Aff(\p^2,L)$ and $\Jon(\p^2,L,p)$. Moreover, $\Aut(\p^2 \setminus L)$ is a free product $$\Aff(\p^2,L) \ast_{B(\p^2,L,p)} \Jon(\p^2,L,p),$$ amalgamated over the intersection of these two subgroups.
\end{thm}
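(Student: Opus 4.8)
Fix coordinates so that $L=L_z$ and $p=[1:0:0]$; this identifies $\p^2\setminus L$ with $\A^2=\Spec\,\k[x,y]$ and $\Aut(\p^2\setminus L)$ with the group of polynomial automorphisms $\varphi=(P,Q)$, $P,Q\in\k[x,y]$. Under this identification $\Aff(\p^2,L)$ is the group of affine transformations (degree $1$), $\Jon(\p^2,L,p)$ is the de Jonqui\`eres (triangular) group of maps $(x,y)\mapsto(cx+f(y),\,ay+b)$ with $a,c\in\k^*$, $b\in\k$, $f\in\k[y]$, and $B(\p^2,L,p)$ is their intersection, the affine triangular maps (those with $\deg f\le1$); equivalently $B(\p^2,L,p)$ is exactly the set of affine maps preserving $L$ and fixing $p$. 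Two statements must be proved: (a) $\Aff(\p^2,L)$ and $\Jon(\p^2,L,p)$ generate $\Aut(\p^2\setminus L)$; (b) there are no relations beyond those of the amalgam over $B(\p^2,L,p)$.

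\textbf{Generation.} I would prove (a) by induction on the total degree $\deg P+\deg Q$ of $\varphi=(P,Q)$. If $\deg P=\deg Q=1$ then $\varphi\in\Aff(\p^2,L)$, so assume $m:=\deg P$ and $n:=\deg Q$ are not both $1$; after possibly composing with the affine coordinate swap (which lies in $\Aff(\p^2,L)$) we may assume $n\le m$. The engine is van der Kulk's \emph{degree lemma}: a polynomial automorphism $(P,Q)$ with $(m,n)\ne(1,1)$ and $n\le m$ satisfies $n\mid m$, and its leading form $\overline P$ is a scalar multiple of $\overline Q^{\,m/n}$. Granting this, choose $\lambda\in\k^*$ with $\deg(P-\lambda Q^{m/n})<m$; then $j\colon(x,y)\mapsto(x-\lambda y^{m/n},\,y)$ lies in $\Jon(\p^2,L,p)$, and $j\circ\varphi=(P-\lambda Q^{m/n},\,Q)$ is again an automorphism (so both of its components are nonconstant) of strictly smaller total degree. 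By induction $j\circ\varphi$, and therefore $\varphi$, is a product of elements of $\Aff(\p^2,L)$ and $\Jon(\p^2,L,p)$. (Alternatively one could run the same kind of induction on the length of a $(-1)$-tower resolution supplied by Lemma~\ref{Lem:tower}, peeling off a de Jonqui\`eres factor as in Lemma~\ref{Lem:decomposition}; but the Jacobian argument is shorter.)

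\textbf{The degree lemma, and the main obstacle.} Since $\varphi$ is an automorphism, $\Jac(P,Q)=P_xQ_y-P_yQ_x$ is a nonzero constant. Comparing top-degree homogeneous parts, the degree-$(m+n-2)$ component of $\Jac(P,Q)$ equals $\Jac(\overline P,\overline Q)$; as $m+n-2>0$ while $\Jac(P,Q)$ has degree $0$, this forces $\Jac(\overline P,\overline Q)=0$. Because $\k$ is algebraically closed, $\overline P$ and $\overline Q$ factor into linear forms, and the vanishing of their Jacobian forces $\overline P=\lambda R^{a}$, $\overline Q=\mu R^{b}$ for a common form $R$ and $\lambda,\mu\in\k^*$, with $a\deg R=m$ and $b\deg R=n$. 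One then uses $\k[P,Q]=\k[x,y]$ to rule out $\deg R<n$: otherwise both leading forms would be proper powers of a form of degree $<n$, which a short weighted-degree computation shows is incompatible with $x$ and $y$ being polynomials in $P$ and $Q$. Hence $\deg R=n$, $b=1$, and $\overline P=\lambda\overline Q^{m/n}$. I expect this degree lemma to be the principal obstacle: the Jacobian bookkeeping is routine, but extracting the divisibility $n\mid m$ cleanly — and, in positive characteristic $p=\chr(\k)$, coping with inseparable phenomena (leading forms lying in $\k[x^{p},y^{p}]$, vanishing leading partials, so that even the passage to $\Jac(\overline P,\overline Q)$ above must be argued more carefully) — is the delicate point, and is precisely where van der Kulk's treatment is needed rather than Jung's characteristic-zero one.

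\textbf{The amalgamated structure.} For (b) it suffices to show that the canonical surjection $\Aff(\p^2,L)\ast_{B(\p^2,L,p)}\Jon(\p^2,L,p)\to\Aut(\p^2\setminus L)$ is injective, i.e.\ that a reduced word of length $\ge2$ — alternating between $\Aff(\p^2,L)\setminus B(\p^2,L,p)$ and $\Jon(\p^2,L,p)\setminus B(\p^2,L,p)$ — is never the identity. I would control this with the degree of an element of $\Aut(\p^2\setminus L)$ viewed as a birational self-map of $\p^2$: elements of $\Aff(\p^2,L)$ have degree $1$, and an element of $\Jon(\p^2,L,p)\setminus B(\p^2,L,p)$ does not extend to $\p^2$, so by Lemma~\ref{Lem:decomposition} it has degree $\ge2$ with unique proper base point $p$. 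The crucial fact is multiplicativity of this degree along reduced words: any $a\in\Aff(\p^2,L)\setminus B(\p^2,L,p)$ moves $p$, so when one composes the minimal resolutions of consecutive syllables no infinitely near base point cancels; arguing by induction on the word length one gets that a reduced word has degree equal to the product of the degrees of its $\Jon(\p^2,L,p)$-syllables — a base-point bookkeeping of exactly the kind carried out in the proof of Lemma~\ref{Lem:decomposition}. Since a reduced word of length $\ge2$ contains at least one such syllable, its degree is $\ge2$ and it is not the identity. This establishes (b), and together with (a) the theorem follows.
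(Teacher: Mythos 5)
The paper does not actually prove Theorem~\ref{Thm:Jung}: it is quoted from \cite{Jun42} and \cite{vdK53}, and the remark following it points to the geometric proofs in \cite{Lam02}, \cite{BD11}, \cite{BS15}, which work by blowing up and contracting the line at infinity in the spirit of the rest of the article. Judged as a stand-alone argument, the second half of your proposal (the amalgam structure) is correct, and it coincides with machinery the paper does develop: the multiplicativity of the degree along a reduced word, via the bookkeeping of proper base-points of consecutive syllables, is exactly Lemma~\ref{lemma:jonqprod}, and your deduction that a reduced word of length $\geq 2$ contains a de Jonqui\`eres syllable of degree $\geq 2$ and hence has degree $\geq 2$ is the standard way to pass from generation to the amalgamated product.

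The gap is in the generation half, and it sits precisely where you predicted: the degree lemma. Your Jacobian argument is sound in characteristic $0$, but the paper works over an arbitrary algebraically closed field, and in characteristic $p>0$ the implication $\Jac(\overline P,\overline Q)=0\Rightarrow\overline P,\overline Q$ are powers of a common form is false ($\overline P=x^p$, $\overline Q=y$); flagging the problem does not close it. Even in characteristic $0$, the step you call ``a short weighted-degree computation'' --- ruling out $\deg R<n$, i.e.\ extracting $n\mid m$ --- carries most of the content of van der Kulk's theorem: distinct monomials $P^iQ^j$ with $im+jn$ equal have leading forms that are both powers of $R$, so massive cancellation is possible and the contradiction with $x\in\k[P,Q]$ requires a real argument (Newton polygons, or induction on a suitable invariant), not a one-line degree count. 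As written, the proposal reduces the theorem to a lemma that is essentially as hard as the theorem. The fix is either to reproduce van der Kulk's characteristic-free elimination argument, or to adopt the alternative you mention only parenthetically and then set aside: induct on the length of the $(-1)$-tower resolution of $L$ given by Lemma~\ref{Lem:tower}, peeling off one de Jonqui\`eres factor at a time as in Lemma~\ref{Lem:decomposition}. That route never mentions Jacobians, works in all characteristics, and is the one used by the references the paper itself recommends.
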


\begin{remark}
{\rm There exist many proofs of Theorem~$\ref{Thm:Jung}$. The proof in \cite{Lam02} uses blow-ups and contractions of the line $L_\infty = \p^2 \setminus \A^2$, in the spirit of the methods used in this article. For more proofs with a similar strategy see \cite{BD11} and \cite{BS15}.}
\end{remark}

\begin{lemma}\label{lemma:jonqprod}
Let $\theta \in \Aut(\p^2 \setminus L)$ with $$\theta = a \circ j_n \circ a_n \circ \ldots \circ j_1 \circ a_1,$$ where $a_1, a \in \left(\Aff(\p^2,L) \setminus \Jon(\p^2,L,p)\right) \cup \{\id\}$, $a_i \in \Aff(\p^2,L) \setminus \Jon(\p^2,L,p)$ for $i = 2,\ldots,n$ and where $j_i \in \Jon(\p^2,L,p) \setminus \Aff(\p^2,L)$ for $i = 1,\ldots,n$. Then $\theta$ has unique proper base-point $a_1^{-1}(p)$. Moreover, the degree of $\theta$ is $\prod_{i=1}^n \deg(j_i)$.
\end{lemma}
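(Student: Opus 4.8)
The plan is to prove both assertions of Lemma~\ref{lemma:jonqprod} by induction on $n$, using the amalgamated free product structure of Theorem~\ref{Thm:Jung} together with the resolution-theoretic description of de Jonqui\`eres maps from Lemma~\ref{Lem:decomposition}. The decomposition $\theta = a \circ j_n \circ a_n \circ \ldots \circ j_1 \circ a_1$ is \emph{reduced} in the sense of amalgamated products (no two consecutive factors lie in the amalgam $B(\p^2,L,p)$ and no factor is in $B(\p^2,L,p)$ except possibly the boundary affine pieces, which are excluded by the hypothesis that the $a_i$'s and $j_i$'s lie outside the respective subgroups); hence $\theta \notin \Aut(\p^2)$ as soon as $n \geq 1$. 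This guarantees that $\theta$ has a well-defined minimal resolution with base-points, and in particular a unique proper base-point, so the statement about the base-point makes sense.

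\textbf{Base case and the key geometric claim.} For $n = 1$ we have $\theta = a \circ j_1 \circ a_1$ with $a_1, a$ affine (or identity). The affine map $a_1$ is an automorphism of $\p^2$ preserving $L$, so the proper base-points of $\theta$ are exactly the $a_1$-preimages of the proper base-points of $a \circ j_1$; post-composition with the automorphism $a$ changes nothing on the source side. By Lemma~\ref{Lem:decomposition}, $j_1$ has unique proper base-point $p$ and degree $d_1 = \deg(j_1)$, so $\theta$ has unique proper base-point $a_1^{-1}(p)$ and degree $d_1 = \prod_{i=1}^{1}\deg(j_i)$, using that affine maps do not change the degree. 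For the inductive step, write $\theta = a \circ j_n \circ \bigl(a_n \circ \theta'\bigr)$ where $\theta' = j_{n-1} \circ a_{n-1} \circ \ldots \circ j_1 \circ a_1$, which by induction has unique proper base-point $a_1^{-1}(p)$ and degree $\prod_{i=1}^{n-1}\deg(j_i)$. The essential point to establish is: \emph{when one composes $\theta'$ (a de Jonqui\`eres-type birational map of degree $e$, contracting fibers of the pencil through $p$ to $p$) with an affine map $a_n$ that does \emph{not} preserve the pencil through $p$, and then with the de Jonqui\`eres map $j_n$ of degree $d_n$, there is no cancellation}: the base-points of $\theta'$ are disjoint from the base-points introduced by $j_n \circ a_n$, so the resolution of $\theta$ is the ``concatenation'' of the resolution of $\theta'$ with that of $j_n$, the unique proper base-point stays $a_1^{-1}(p)$, and the degrees multiply to $e \cdot d_n$.

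\textbf{Carrying out the inductive step.} To see the no-cancellation, I would argue at the level of base-points and the minimal resolution in the Bass--Serre style already used in the proof of Theorem~\ref{Thm:Jung} via blow-ups (cf.\ \cite{Lam02}, \cite{BD11}). A cancellation in the composition $j_n \circ a_n \circ \theta'$ would produce a decomposition of $\theta$ into strictly fewer de Jonqui\`eres factors, contradicting the uniqueness of the reduced form of an element in an amalgamated product: the word $a\, j_n\, a_n \cdots j_1\, a_1$ is reduced, so its syllable length is an invariant, and in particular the number of base-points of $\theta$ (which by \cite{Lam02} equals $\sum_i (2\deg(j_i)-1)$ counted with the tree structure coming from each Lemma~\ref{Lem:decomposition} block) is what it should be. Concretely: the proper base-point of $j_n$ is $p$, pulled back through $a_n$ to the proper point $a_n^{-1}(p)$, which is a proper base-point of $j_n \circ a_n$; since $a_n \notin \Jon(\p^2,L,p)$, this point $a_n^{-1}(p)$ does not lie on $L$ in a way compatible with the pencil direction of $\theta'$, hence it is not a base-point of $(\theta')^{-1}$, so no base-point of $j_n \circ a_n$ is erased when pre-composing with $\theta'$; dually, the sole proper base-point $a_1^{-1}(p)$ of $\theta'$ survives. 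Therefore $\theta$ has unique proper base-point $a_1^{-1}(p)$, and for the degree one computes $\deg(\theta) = \deg(j_n)\cdot\deg(\theta') = \prod_{i=1}^n \deg(j_i)$ either by tracking the self-intersection of the pull-back of a general line through the two concatenated resolutions, exactly as in the last paragraph of the proof of Lemma~\ref{Lem:decomposition}, or by invoking multiplicativity of degree under composition without common base-points.

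\textbf{The main obstacle} I expect is making the ``no cancellation / base-points are disjoint'' claim rigorous without re-proving a chunk of the Jung--van der Kulk machinery: one must verify that after pre-composing with $\theta'$ the blow-up that resolves $p$ for $j_n$ is genuinely needed — i.e.\ that $a_n$ moves the pencil-through-$p$ direction off the fiber that $\theta'$ contracts — and this is precisely where the hypothesis $a_n \in \Aff(\p^2,L)\setminus \Jon(\p^2,L,p)$ (respectively $a_1 \notin \Jon(\p^2,L,p)$ at the boundary) is used. The cleanest route is to phrase everything through the normal form in the amalgamated product $\Aff(\p^2,L) \ast_{B(\p^2,L,p)} \Jon(\p^2,L,p)$: the given expression for $\theta$ is a reduced word of syllable length $2n$ (plus boundary letters), so $\theta \notin \Aff(\p^2,L)\cup\Jon(\p^2,L,p)$ and its minimal resolution is rigidly determined, blockwise, by the Lemma~\ref{Lem:decomposition} resolutions of the $j_i$ conjugated by the affine parts; the unique proper base-point is then read off the first block and the degree is the product of the block degrees.
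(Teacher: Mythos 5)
Your proposal is correct and follows essentially the same route as the paper: induction on $n$, using that $j_n\circ a_n$ has unique proper base-point $a_n^{-1}(p)\neq p$ while the inverse of the already-composed map $\theta'=j_{n-1}\circ a_{n-1}\circ\ldots\circ j_1\circ a_1$ has unique proper base-point $p$, so the two maps have no common base-point and the degrees multiply (the paper cites \cite[Proposition~$4.2.1$]{Alb02} for this). The one refinement worth adopting from the paper is that it carries the statement ``$(\theta')^{-1}$ has unique proper base-point $p$'' explicitly through the induction, which is the clean justification of your claim that $a_n^{-1}(p)$ is not a base-point of $(\theta')^{-1}$ --- the relevant fact is simply $a_n^{-1}(p)\neq p$ because an affine map preserving $L$ and fixing $p$ would lie in $\Jon(\p^2,L,p)$, not anything about ``pencil directions''; the surrounding discussion of reduced words in the amalgamated product is not needed.
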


\begin{proof}
The map $j_1$ has unique proper base-point $p$, and thus $j_1 \circ a_1$ has unique proper base-point $a_1^{-1}(p)$ and $(j_1 \circ a_1)^{-1}$ has unique proper base-point $p$. We proceed by induction and assume that $j_{n-1} \circ a_{n-1} \circ \ldots \circ j_1 \circ a_1$ has unique proper base-point $a_1^{-1}(p)$ and its inverse has unique proper base-point $p$. Moreover, the unique proper base-point of $(j_n \circ a_n)$ is $a_n^{-1}(p)$, which is different from $p$ since $a_n \notin \Jon(\p^2,L,p)$. It then follows that the composition $j_n \circ a_n \circ \ldots \circ j_1 \circ a_1$ again has $a_1^{-1}(p)$ as its unique proper base-point. This remains true after a left-composition with $a \in \Aff(\p^2,L)$.

To compute the degree of $\theta$, we observe that $\deg(j_i \circ a_i) = \deg(j_i)$ for all $i$, since the maps $a_i$ are affine and hence have degree $1$. We use again that $(j_{n-1} \circ a_{n-1} \circ \ldots \circ j_1 \circ a_1)^{-1}$ and $j_n \circ a_n$ have no common base-point and obtain the result by induction by using \cite[Proposition~$4.2.1$]{Alb02}.
\end{proof}

\begin{definition}
{\rm
Let $X$ be a surface and let $C \subset X$ be a curve. For a point $p \in C$, let $\mathcal{O}_{X,p}$ be the local ring at $p$, with unique maximal ideal $\mathfrak{m}_p$. Let moreover $f \in \mathcal{O}_{X,p}$ be a local equation of $C$ at $p$. We then define the \emph{multiplicity $m_p(C)$ of $C$ at $p$} to be the largest integer $m$ such that $f \in \mathfrak{m}_p^m$.

Let $\Lambda$ be a linear system of curves on $\p^2$ and let $p$ be a proper or infinitely near point of $\p^2$. We then define the \emph{multiplicity of $\Lambda$ at $p$} to be the smallest multiplicity $m_p(C)$ among all curves $C$ in $\Lambda$.

For a birational map $\theta \colon \p^2 \dasharrow \p^2$, we denote by $\Lambda_\theta$ the linear system of curves on $\p^2$, given by the preimage of $\theta$ of the linear system of lines on $\p^2$. For a proper or infinitely near point $p$ of $\p^2$, we define the \emph{multiplicity $m_p(\theta)$ of $\theta$ at $p$} to be the multiplicity of the linear system $\Lambda_\theta$ at $p$.

For a more detailed account of these notions, we refer to \cite{Alb02}.}
\end{definition}

We will use the following well known formula in the sequel.

\begin{lemma}\label{Lem:formula}
Let $\theta \colon \p^2 \dasharrow \p^2$ be a birational map and $C \subset \p^2$ a curve that is not contracted by $\theta$. Then the following formula holds:
$$\deg \theta(C) = \deg(\theta)\deg(C) - \sum_p m_p(\theta) m_p(C)$$
where the sum ranges over all proper and infinitely near points of $\p^2$, but only finitely many summands are different from $0$.
\end{lemma}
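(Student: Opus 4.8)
The plan is to resolve the indeterminacies of $\theta$ and translate the statement into intersection theory on the resolution. First I would choose a birational morphism $\pi \colon X \to \p^2$, obtained by successively blowing up the proper and infinitely near base-points $p_1,\dots,p_n$ of $\theta$, such that $\sigma := \theta \circ \pi \colon X \to \p^2$ becomes a morphism (this is just elimination of indeterminacy). Write $H \in \Pic(X)$ for the total transform of a line in the source $\p^2$ and $E_1,\dots,E_n$ for the total transforms of the exceptional curves of the successive blow-ups. These classes form an orthogonal basis of $\Pic(X)$ with $H^2 = 1$, $E_i^2 = -1$, $H\cdot E_i = 0$ and $E_i \cdot E_j = 0$ for $i \neq j$.

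Next I would identify the relevant classes. The pull-back $\sigma^*\ell$ of the class $\ell$ of a line in the target $\p^2$ is, by construction, the class of the strict transform of the linear system $\Lambda_\theta$, so $\sigma^*\ell = \deg(\theta)\,H - \sum_i m_i E_i$; by the very definition of the multiplicity of a linear system at a (possibly infinitely near) point, one reads off $m_i = m_{p_i}(\theta)$ as one successively blows up. Likewise, if $\hat C \subset X$ denotes the strict transform of $C$, then $[\hat C] = \deg(C)\,H - \sum_i m_{p_i}(C)\,E_i$, the coefficient of $E_i$ being $m_{p_i}(C)$ for the same reason (a base-point of $\theta$ not lying on $C$ simply contributes $m_{p_i}(C) = 0$, and only base-points of $\theta$ lying on $C$ contribute to the sum, whence finiteness).

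Finally I would compute. Since $\theta$ is birational and does not contract $C$, the induced rational map $C \dashrightarrow \theta(C)$ is birational, so $\sigma$ maps $\hat C$ birationally onto $\theta(C)$ and hence $\sigma_*[\hat C] = [\theta(C)]$. By the projection formula,
$$\deg\theta(C) = [\theta(C)]\cdot \ell = \sigma_*[\hat C]\cdot \ell = [\hat C]\cdot \sigma^*\ell = \big(\deg(C)H - \textstyle\sum_i m_{p_i}(C)E_i\big)\cdot\big(\deg(\theta)H - \sum_i m_{p_i}(\theta)E_i\big) = \deg(\theta)\deg(C) - \sum_i m_{p_i}(\theta)\,m_{p_i}(C),$$
using orthogonality of the basis; this is exactly the asserted formula.

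The main difficulty, such as it is, lies not in the intersection-number computation but in justifying that the coefficients of the $E_i$ appearing in $\sigma^*\ell$ and in $[\hat C]$ really coincide with the intrinsically defined multiplicities $m_{p_i}(\theta)$ and $m_{p_i}(C)$ of the local rings at the (infinitely near) points $p_i$ — this is the standard bookkeeping of infinitely near points, and is the reason the paper refers to \cite{Alb02}. One must also take care that $\pi$ blows up \emph{all} base-points of $\theta$ (so $\sigma$ is genuinely a morphism) and that $\hat C$ is not contracted by $\sigma$, which is equivalent to $C$ not being contracted by $\theta$. A slightly different route, avoiding the infinitely near subtleties, would be to factor $\theta$ into a composition of quadratic (de Jonquières) transformations, check the formula for a single quadratic map directly, and propagate it multiplicatively; but the intersection-theoretic argument above is the cleanest.
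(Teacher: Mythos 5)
Your proof is correct and follows essentially the same route as the paper: resolve the indeterminacy of $\theta$, express the strict transform of $C$ and the pull-back of a line in the orthogonal basis of total transforms of exceptional divisors, and conclude by the projection formula. The paper's proof is the same computation, likewise citing \cite{Alb02} for the identification of the coefficients with the multiplicities at infinitely near points.
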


\begin{proof}
We consider a minimal resolution
$$\xymatrix{&X \ar[dr]^{\sigma_2} \ar[dl]_{\sigma_1} \\ \p^2 \ar@{-->}[rr]^{\theta} && \p^2}$$
where $\sigma_1$ and $\sigma_2$ are compositions of blow-ups. We denote by $p_1,\ldots,p_n$ the base-points of $\sigma_1$ and by $\overline{E}_1,\ldots,\overline{E}_n$ the total transforms of their exceptional divisors in $X$. Let moreover $L \subset \p^2$ be a line that does not pass through the base-points of $\theta$ and $\theta^{-1}$. We then have
$$\Pic(X) \simeq \mathbb{Z}\sigma_1^*(L) \oplus \mathbb{Z}\overline{E}_1 \oplus \ldots \oplus \mathbb{Z}\overline{E}_n$$
with the intersection-numbers $\overline{E}_i \cdot \overline{E}_j = - \delta_{ij}$ and $\overline{E}_i \cdot \sigma_1^*(L) = 0$ for $i,j =1,\ldots,n$ and $\sigma_1^*(L)^2=1$. We find for the strict transform $\hat{C}$ of $C$ by $\sigma_1$ and the total transform of $L$ by $\sigma_2$ the following divisor formulas:
\begin{align*}
\hat{C} =& \deg(C)\sigma_1^*(L) - \sum_{i=1}^n m_{p_i}(C)\overline{E}_i, \\
\sigma_2^*(L) =& \deg(\theta)\sigma_1^*(L) - \sum_{i=1}^n m_{p_i}(\theta)\overline{E}_i.
\end{align*}
The degree of $\theta(C)$ is equal to the intersection number $\theta(C) \cdot L$. Using the projection formula, we then obtain
$$\deg(\theta(C)) = \theta(C)\cdot L = \hat{C} \cdot \sigma_2^*(L) = \deg(C)\deg(\theta) - \sum_{i=1}^n m_{p_i}(C)m_{p_i}(\theta).$$
\end{proof}

\begin{lemma}\label{Lem:degree}
Let $\theta \in \Aut(\p^2 \setminus L_x) \setminus \Aut(\p^2)$ and let $C \subset \p^2$ be a curve different from $L_x$. Then the following holds.
\begin{enumerate}
\item[$(i)$] $\theta$ has a unique proper base-point and contracts $L_x$ to a point $p \in L_x$.
\item[$(ii)$] $\deg(\theta(C)) \leq \deg(\theta)\deg(C)$, and equality holds if and only if $p \notin C$.
\item[$(iii)$] If $L$ is a line and $\theta \in \Jon(\p^2,L_x,[0:1:0])$, then $\theta^{-1}(L)$ is a line if and only if $[0:1:0] \in L$.
\end{enumerate}
\end{lemma}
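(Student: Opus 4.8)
The plan is to deduce all three statements from the $(-1)$-tower resolution given by Lemma~\ref{Lem:tower} and from the degree formula of Lemma~\ref{Lem:formula}. For $(i)$, I would first apply Lemma~\ref{Lem:tower}: since $\theta$ restricts to an isomorphism $\p^2\setminus L_x\to\p^2\setminus L_x$ and does not extend to an automorphism of $\p^2$, there is a minimal resolution with birational morphisms $\pi,\eta\colon X\to\p^2$ that are $(-1)$-tower resolutions of $L_x$. Writing $\pi=\pi_1\circ\ldots\circ\pi_n$ with base-points $p_1,\ldots,p_n$, the relation $\pi_i(p_{i+1})=p_i$ in the definition of a tower resolution gives that $p_1$ is the unique proper base-point of $\theta$ and that each $p_i$ with $i\geq 2$ is infinitely near $p_1$. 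Since $\theta$ is an isomorphism off $L_x$, all its base-points lie on $L_x$, so the proper one, which I call $p$, lies on $L_x$; likewise $L_x$ is the only curve $\theta$ can contract, it does contract one as $\theta\notin\Aut(\p^2)$, and applying the same reasoning to $\theta^{-1}$ shows the image point of $L_x$ (the unique proper base-point of $\theta^{-1}$) also lies on $L_x$.

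For $(ii)$, I would apply Lemma~\ref{Lem:formula} to $\theta$ and $C$ (which is not contracted, since $C\neq L_x$), yielding
$$\deg(\theta(C))=\deg(\theta)\deg(C)-\sum_q m_q(\theta)\,m_q(C)$$
with the sum over all proper and infinitely near points of $\p^2$. Nonnegativity of the summands is the inequality, and equality amounts to $m_q(C)=0$ for every base-point $q$ of $\theta$. Here the point of $(i)$ does the work: every base-point of $\theta$ is $p$ or is infinitely near $p$, so if $p\notin C$ then the strict transform of $C$ stays disjoint from $E_1\cup\ldots\cup E_i$ as one blows up $p_1,\ldots,p_n$ in turn, whence $m_{p_i}(C)=0$ for all $i$; and if $p\in C$ then $m_p(\theta)\,m_p(C)\geq 1$ since $p$ is a base-point, so the inequality is strict.

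For $(iii)$, I would use that $\theta$ and $\theta^{-1}$ both lie in $\Jon(\p^2,L_x,[0:1:0])$ and hence preserve the pencil $\mathcal{P}$ of lines through $[0:1:0]$. A general member $\ell$ of $\mathcal{P}$ is not contracted by $\theta$, so its image is a line of $\mathcal{P}$; as $\deg(\theta)\geq 2$, Lemma~\ref{Lem:formula} forces $\ell$ to meet a base-point of $\theta$, hence to pass through $p$ (every base-point being $p$ or infinitely near $p$), and since $p\in L_x$ while a general member of $\mathcal{P}$ meets $L_x$ only at $[0:1:0]$, one gets $p=[0:1:0]$; likewise the proper base-point of $\theta^{-1}$ is $[0:1:0]$, so by $(i)$ the map $\theta$ contracts $L_x$ to $[0:1:0]$. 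For a line $L\neq L_x$: if $[0:1:0]\in L$ then $L\in\mathcal{P}$ is not contracted by $\theta^{-1}$, so $\theta^{-1}(L)\in\mathcal{P}$ is a line; conversely, if $L':=\theta^{-1}(L)$ is a line, then $L'\neq L_x$ (else $\theta(L')$ would be a point, not the line $L$), and either $[0:1:0]\in L'$, giving $L=\theta(L')\in\mathcal{P}$, or $r:=L'\cap L_x$ differs from $[0:1:0]$, hence is not a base-point of $\theta$, so $\theta(r)=[0:1:0]$ and $[0:1:0]\in\theta(L')=L$ — either way $[0:1:0]\in L$. (The case $L=L_x$, where $\theta^{-1}$ contracts $L$, is set aside.)

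The main obstacle is not computational but lies in correctly using Lemma~\ref{Lem:tower}: one has to recognize that a $(-1)$-tower resolution really is a tower, so that $\theta$ has exactly one proper base-point, it sits on $L_x$, and all other base-points are infinitely near it. That single fact is what turns the sum in Lemma~\ref{Lem:formula} into the one-point condition $p\notin C$ in $(ii)$ and what locates the base-point of a de Jonqui\`eres map at $[0:1:0]$ in $(iii)$; once it is available, tracking the contracted curves and the behaviour of the pencil $\mathcal{P}$ is routine.
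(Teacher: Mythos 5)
Your proposal is correct and follows essentially the same route as the paper: Lemma~\ref{Lem:tower} gives the $(-1)$-tower resolutions of $L_x$, hence the unique proper base-point on $L_x$ and the contraction of $L_x$ to a point of $L_x$ for $(i)$, and $(ii)$ and $(iii)$ are then read off from the degree formula of Lemma~\ref{Lem:formula} together with the fact that all base-points are concentrated at (or infinitely near) that single proper point, which is $[0:1:0]$ in the de Jonqui\`eres case. You merely spell out the details that the paper leaves implicit, including a slightly more hands-on pencil-and-point-chasing argument for the converse in $(iii)$, which is fine.
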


\begin{proof}
To prove $(i)$, consider the induced birational map $\theta \colon \p^2 \dasharrow \p^2$. Since $\theta$ does not extend to an automorphism of $\p^2$, it follows from Lemma~$\ref{Lem:tower}$ that $\theta$ has a minimal resolution
$$\xymatrix{&X \ar[dr]^{\sigma_2} \ar[dl]_{\sigma_1} \\ \p^2 \ar@{-->}[rr]^{\theta} && \p^2}$$
where $\sigma_1$ and $\sigma_2$ are $(-1)$-tower resolutions of $L_x$. In particular, $\theta$ has a unique proper base-point. The strict transform of $L_x$ in $X$ by $\sigma_1$ is the exceptional curve of the last blow-up in the tower of $\sigma_2$. This means that $\theta$ contracts $L_x$ to a point of $L_x$, which is moreover the unique proper base-point of $\theta^{-1}$. The statements $(ii)$ and $(iii)$ follow directly from the formula $$\deg \theta(C) = \deg(\theta)\deg(C) - \sum_q m_q(\theta) m_q(C)$$
of Lemma~$\ref{Lem:formula}$, since $\theta$ has a unique proper base-point (which is $[0:1:0]$ if $\theta \in \Jon(\p^2,L_x,[0:1:0])$).
\end{proof}

\subsection{Isomorphisms between complements of unicuspidal curves}

\begin{lemma}\label{Lem:minimality}
Let $C \subset \p^2$ be a unicuspidal curve such that
$$\Theta = \{\theta \in \Aut(\p^2 \setminus L_x) \mid \theta(C) = L_z \}$$
is non-empty. Then for any $\theta \in \Theta$ and any minimal resolution
$$\xymatrix{&X \ar[dr]^{\sigma_2} \ar[dl]_{\sigma_1} \\ \p^2 \ar@{-->}[rr]^{\theta} && \p^2}$$
the following are equivalent.
\begin{enumerate}
\item[$(i)$] $\deg{\theta} \leq \deg{\theta'}$ for all $\theta' \in \Theta$.
\item[$(ii)$] The unique proper base-point of $\theta^{-1}$ is different from $[0:1:0]$.
\item[$(iii)$] $\deg(\theta) = \deg(C)$.
\item[$(iv)$] The strict transform of $C$ by $\sigma_1$ intersects the strict transform of $L_x$ by $\sigma_2$ in $X$.
\item[$(v)$] The strict transform of $C$ by $\sigma_1$ in $X$ has self-intersection $1$.
\end{enumerate}
\end{lemma}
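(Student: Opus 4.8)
The plan is to fix $\theta\in\Theta$ together with its minimal resolution $\sigma_1,\sigma_2\colon X\to\p^2$, record the geometry shared by the five conditions, and then prove the cycle $(i)\Leftarrow(iii)$, $(i)\Rightarrow(ii)$, $(ii)\Rightarrow(iii)$, together with $(ii)\Leftrightarrow(iv)\Leftrightarrow(v)$, using $(ii)$ as the hinge. For the setup: since $C$ is unicuspidal it is singular, hence no $\theta\in\Theta$ extends to an automorphism of $\p^2$ and Lemma~\ref{Lem:degree} applies to each $\theta\in\Theta$; thus $\theta$ has a unique proper base-point, contracts $L_x$ to a point $p\in L_x$, and $p$ is the unique proper base-point of $\theta^{-1}$. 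As $L_x\cap L_z=\{[0:1:0]\}$ while $p$ always lies on $L_x$, condition $(ii)$ (that $p\ne[0:1:0]$) is equivalent to $p\notin L_z$, i.e.\ to saying that $L_z$ avoids every base-point of $\theta^{-1}$; here one uses that in a $(-1)$-tower resolution the base-points form a single tower, so the strict transform of a line meets at most one base-point per neighbourhood and none after it leaves the tower. Finally, writing $\hat C$ for the strict transform of $C$ by $\sigma_1$: since $\sigma_2(\hat C)=\theta(C)=L_z$ and $\hat C$ is not among the curves contracted by $\sigma_2$ (those are $E_1,\dots,E_{n-1}$ and the strict transform of $L_x$ by $\sigma_1$), the curve $\hat C$ is also the strict transform of $L_z$ by $\sigma_2$; and the strict transform of $L_x$ by $\sigma_2$ is the last exceptional curve $E_n$ of $\sigma_1$, as recorded after Lemma~\ref{Lem:tower}.

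To get $(ii)\Leftrightarrow(iii),(iv),(v)$: applying Lemma~\ref{Lem:formula} to $\theta^{-1}$ and $L_z$ (with $\theta^{-1}(L_z)=C$) gives $\deg(C)=\deg(\theta^{-1})-\sum_q m_q(\theta^{-1})\,m_q(L_z)$, the sum ranging over the base-points $q$ of $\theta^{-1}$; by the previous paragraph this sum is $0$ precisely when $(ii)$ holds, and positive otherwise. Since $\deg(\theta^{-1})=\deg(\theta)$ (a standard fact, e.g.\ via Lemmas~\ref{Lem:decomposition} and~\ref{lemma:jonqprod}), this is $(ii)\Leftrightarrow(iii)$. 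For $(v)$, the self-intersection of $\hat C$ computed as a strict transform by $\sigma_2$ is $\hat C^2=1-\sum_q m_q(L_z)^2$, equal to $1$ iff that sum vanishes iff $(ii)$. For $(iv)$, as $\hat C$ and $E_n$ are the strict transforms by $\sigma_2$ of $L_z$ and of $L_x$, one has $\hat C\cdot E_n=L_z\cdot L_x-\sum_q m_q(L_z)\,m_q(L_x)=1-\sum_q m_q(L_z)\,m_q(L_x)$; if $(ii)$ holds the sum is $0$ and the curves meet, whereas if $(ii)$ fails the base-point $[0:1:0]$ contributes $1$ and no further base-point lies on both strict transforms (they separate after the first blow-up, $L_z$ and $L_x$ being distinct lines through $[0:1:0]$), so the sum is $1$ and $\hat C\cdot E_n=0$.

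Finally $(i)$ is brought into the cycle. Applying the displayed formula to an arbitrary $\theta'\in\Theta$ shows $\deg(C)\le\deg(\theta')$ for all $\theta'\in\Theta$, so $\deg(\theta)=\deg(C)$ forces $\theta$ to have minimal degree; this is $(iii)\Rightarrow(i)$. For $(i)\Rightarrow(ii)$, suppose $\theta$ has minimal degree but $(ii)$ fails, so $\theta$ contracts $L_x$ to $[0:1:0]$. Write $\theta$ in reduced form for the amalgam $\Aut(\p^2\setminus L_x)=\Aff(\p^2,L_x)\ast_B\Jon(\p^2,L_x,[0:1:0])$ of Theorem~\ref{Thm:Jung}. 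By Lemma~\ref{lemma:jonqprod} (applied to $\theta$ and $\theta^{-1}$) the point to which $\theta$ contracts $L_x$ is the image of $[0:1:0]$ under the leftmost factor, which therefore fixes $[0:1:0]$; an affine automorphism fixing $[0:1:0]$ preserves the pencil of lines through $[0:1:0]$ and hence lies in $\Jon(\p^2,L_x,[0:1:0])$, contradicting reducedness, so the leftmost factor is a de Jonqui\`eres map $j\in\Jon(\p^2,L_x,[0:1:0])$ of degree $\ge2$. Then $\theta=j\circ\theta_0$ with $\theta_0\in\Aut(\p^2\setminus L_x)$ of degree $\deg(\theta)/\deg(j)<\deg(\theta)$, and $\theta_0(C)=j^{-1}(L_z)$ is a line (Lemma~\ref{Lem:degree}$(iii)$, since $[0:1:0]\in L_z$) different from $L_x$ (as $j$ contracts $L_x$ to a point). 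Choosing $\alpha\in\Aff(\p^2,L_x)$ with $\alpha(j^{-1}(L_z))=L_z$ — possible since $\Aff(\p^2,L_x)$ is transitive on the lines $\ne L_x$ — gives $\alpha\circ\theta_0\in\Theta$ of degree $\deg(\theta_0)<\deg(\theta)$, contradicting minimality; hence $(i)\Rightarrow(ii)$ and the cycle closes.

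I expect the implication $(i)\Rightarrow(ii)$ to be the main obstacle: it is the only step that uses the group structure of $\Aut(\p^2\setminus L_x)$ rather than the geometry of a single resolution, and the delicate points are to read off correctly, from the Jung--van der Kulk normal form, the locus to which $L_x$ is contracted, and to verify that replacing $\theta$ by $\theta_0$ (then by $\alpha\circ\theta_0$) strictly lowers the degree while remaining inside $\Theta$.
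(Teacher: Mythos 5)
Your proof is correct and takes essentially the same route as the paper: the same cycle $(iii)\Rightarrow(i)\Rightarrow(ii)\Rightarrow(iii)$ with $(ii)\Leftrightarrow(iv),(v)$, the same use of Lemma~\ref{Lem:formula} for $(ii)\Leftrightarrow(iii)$ and the minimality of $\deg(C)$, the same resolution-geometry for $(iv)$ and $(v)$, and the same Jung--van der Kulk shortcut for $(i)\Rightarrow(ii)$ --- you peel the leftmost de Jonqui\`eres factor off $\theta$ where the paper peels the rightmost one off $\theta^{-1}$, which is the same factor up to inversion. No gaps.
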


\begin{proof}
Let $\theta \in \Theta$. We first prove $(i) \Rightarrow (ii)$ and thus assume that $\theta$ has minimal degree in $\Theta$. We use Theorem~$\ref{Thm:Jung}$ to write $$\theta^{-1} = a_{n+1} \circ j_n \circ a_n \circ \ldots \circ j_1 \circ a_1,$$ where $a_1, a_{n+1} \in (\Aff(\p^2,L_x) \setminus \Jon(\p^2,L_x,[0:1:0])) \cup \{\id\}$, $a_i \in \Aff(\p^2,L_x) \setminus \Jon(\p^2,L_x,[0:1:0])$ for $i=2,\ldots,n$, and $j_i \in \Jon(\p^2,L_x,[0:1:0]) \setminus \Aff(\p^2,L_x)$ for $i=1,\ldots,n$. If $(j_1 \circ a_1)(L_z)$ is a line, we can find $a_1' \in \Aff(\p^2,L_x)$ such that $a_1'(L_z) = (j_1 \circ a_1)(L_z)$. But then $\theta' \coloneqq (a_{n+1} \circ j_n \circ a_n \circ \ldots \circ j_2 \circ a_2 \circ a_1')^{-1}$ lies in $\Theta$ and $\deg(\theta') < \deg(\theta)$ by Lemma~$\ref{lemma:jonqprod}$, which contradicts the minimality of the degree of $\theta$ in $\Theta$. It follows moreover from Lemma~$\ref{Lem:degree}$ that $(j_1 \circ a_1)(L_z)$ is a line if and only if $[0:1:0] \in a_1(L_z)$, i.e. $a_1^{-1}([0:1:0]) \in L_z$. Thus by the minimality of the degree of $\theta$, we have that $a_1^{-1}([0:1:0]) \notin L_z$. Since $a_1^{-1}([0:1:0])$ is the unique proper base-point of $\theta^{-1}$, it follows that it is different from $[0:1:0]$ and hence $(ii)$ is proved.

Assume now that the unique proper base-point of $\theta^{-1}$ is different from $[0:1:0]$. From Lemma~$\ref{Lem:formula}$ we obtain the formula
$$\deg(\theta) = \deg(\theta^{-1}) = \deg(C) + \sum_p m_p(\theta^{-1})m_p(L_z).$$
Since the unique proper base-point of $\theta^{-1}$ lies on $L_x$ and is different from $[0:1:0]$, we then have $\deg(\theta) = \deg(C)$. This shows $(ii) \Rightarrow (iii)$. Moreover, if we assume that $\deg(\theta) = \deg(C)$, then $\theta$ has minimal degree in $\Theta$. Thus the implication $(iii) \Rightarrow (i)$ is also proved.

Finally, we show that $(iv)$ and $(v)$ are both equivalent to $(ii)$. We consider a minimal resolution of the induced birational map by $\theta$: 
$$\xymatrix{&X \ar[dr]^{\sigma_2} \ar[dl]_{\sigma_1} \\ \p^2 \ar@{-->}[rr]^{\theta} && \p^2.}$$
Since $\theta \in \Aut(\p^2 \setminus L_x) \setminus \Aut(\p^2)$ both $\sigma_1$ and $\sigma_2$ are $(-1)$-tower resolutions of $L_x$. We denote by $\hat{L}_{x}$ the strict transform of $L_x$ by $\sigma_2$ in $X$ and by $\hat{C}$ the strict transform of $C$ by $\sigma_1$ (which is also the strict transform $\hat{L}_z$ of $L_z$ by $\sigma_2$). Suppose that the unique proper base-point of $\theta^{-1}$ is different from $[0:1:0]$. Then $\hat{L}_x$ intersects $\hat{L}_z = \hat{C}$ and $\hat{C}$ has self-intersection $1$. This shows that $(ii)$ implies $(iv)$ and $(v)$. On the other hand, if we blow up the point $[0:1:0]$, then the strict transforms of $L_x$ and $L_z$ do not intersect and have self-intersection $<1$. Thus the implications $(iv) \Rightarrow (ii)$ and $(v) \Rightarrow (ii)$ also follow.
\end{proof}

\begin{proposition}\label{prop:diagram}
Let $\varphi \colon \p^2 \setminus C \to \p^2 \setminus D$ be an isomorphism, where $C, D \subset \p^2$ are curves such that $C$ is rational and unicuspidal with singular point $[0:1:0]$ and has very tangent line $L_x$. Let $\theta_C$ be an automorphism of $\p^2 \setminus L_x$ such that $\theta_C(C) = L_z$ and suppose that $\theta_C$ is of minimal degree with this property.

Then $D$ is also rational and unicuspidal and, after a suitable change of coordinates, has singular point $[0:1:0]$ and very tangent line $L_x$. Moreover, there exists an automorphism $\theta_D$ of $\p^2 \setminus L_x$ such that $\theta_D(D) = L_z$ and $\psi \in \Aut(\p^2 \setminus L_z)$ that preserves the line $L_x$ such that the following diagram commutes:
$$\xymatrix{\p^2 \ar@{-->}[d]_{\theta_C} \ar@{-->}[r]^{\varphi} & \p^2 \ar@{-->}[d]^{\theta_D} \\ \p^2 \ar@{-->}[r]^{\psi} & \p^2.}$$
Furthermore, $\theta_D$ can be chosen such that in the chart $z=1$, the map $\psi$ has the form $$(x,y) \mapsto \left(x,y+x^2f(x)\right)$$ for some polynomial $f \in \k[x]$.
\end{proposition}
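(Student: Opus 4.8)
My plan is the following. First dispose of the case that $\varphi$ extends to an automorphism $\alpha$ of $\p^2$: then $D=\alpha(C)$ is projectively equivalent to $C$, hence rational unicuspidal, and since a unicuspidal plane curve of degree $\ge 3$ has a unique very tangent line through its cusp (a line through the cusp other than the branch tangent meets the curve there with multiplicity the cusp multiplicity $m<\deg C$), after a coordinate change on the target $D$ has cusp $[0:1:0]$ and very tangent line $L_x$, which forces $\alpha$ to fix both $[0:1:0]$ and $L_x$; then $\theta_D:=\theta_C\circ\alpha^{-1}\in\Aut(\p^2\setminus L_x)$ satisfies $\theta_D(D)=L_z$ and $\psi:=\theta_D\circ\varphi\circ\theta_C^{-1}=\id$, which has the required form. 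From now on assume $\varphi$ does not extend; by Proposition~$\ref{Prop:form}$ the curve $D$ is rational with a unique proper singular point, which must be a cusp, so $D$ is rational unicuspidal.

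The first and main task is to show that $D$ too admits a very tangent line through its cusp, and to identify it. Since $L_x$ is very tangent to $C$ we have $C\cap L_x=\{[0:1:0]\}$, so $L_x\setminus\{[0:1:0]\}$ is closed in $\p^2\setminus C$ and isomorphic to $\A^1$; hence $B:=\varphi(L_x\setminus\{[0:1:0]\})$ is closed in $\p^2\setminus D$ and isomorphic to $\A^1$, and its closure $\overline B\subset\p^2$ is an irreducible rational curve which is smooth or unibranch at the unique point $b_\infty:=\overline B\setminus B\in D$, so $\overline B$ is a line, a conic, or a rational unicuspidal curve with cusp $b_\infty$; moreover $\overline B\cap D=\{b_\infty\}$, since $B$ avoids $D$. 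I would then compute $\deg\overline B$ from Lemma~$\ref{Lem:formula}$ applied to $\varphi$ and $L_x$: because $L_x$ is very tangent to the cusp branch of $C$, its strict transform passes exactly through the initial segment of the chain of base points of the tower resolution of $\varphi$ along which the intersection multiplicity $\deg C$ is accumulated, and the minimality of $\theta_C$ (through Lemma~$\ref{Lem:minimality}$) controls which base points these are and their multiplicities for $\Lambda_\varphi$; carrying this out gives $\deg\overline B=1$. Thus $\overline B=:L'$ is a line meeting $D$ only at $b_\infty$, i.e.\ very tangent to $D$, and $b_\infty$ is forced to be the cusp of $D$. After a change of coordinates on the target $\p^2$ I may assume $L'=L_x$ and $b_\infty=[0:1:0]$; this only replaces $\varphi$ (and later $\psi$) by a composition with a projective automorphism and does not affect the shape of the diagram to be produced.

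Next, by Proposition~$\ref{Prop:rectifiabilty}$ applied to $D$ there is $\theta_D\in\Aut(\p^2\setminus L_x)$ with $\theta_D(D)=L_z$; I take it of minimal degree. Set $\psi:=\theta_D\circ\varphi\circ\theta_C^{-1}$, a birational self-map of $\p^2$; the key point is that $\psi\in\Aut(\p^2\setminus L_z)$ and $\psi(L_x)=L_x$. That $\psi$ (and symmetrically $\psi^{-1}=\theta_C\circ\varphi^{-1}\circ\theta_D^{-1}$) contracts $L_z$ is clear, since $\theta_C^{-1}$ carries $L_z$ to $C$, which $\varphi$ contracts. The delicate part is the behaviour along $L_x$: by Lemma~$\ref{Lem:tower}$ the resolutions of $\varphi$, of $\theta_C$, of $\varphi^{-1}$ and of $\theta_D$ are towers following the forced chain of infinitely near points that resolves the cusp of $C$, resp.\ the cusp of $D$, and the minimality of $\theta_C$ and of $\theta_D$ (Lemma~$\ref{Lem:minimality}$) makes the lengths of these chains agree in exactly the way needed so that, in $\psi=\theta_D\circ\varphi\circ\theta_C^{-1}$, the contraction of $L_x$ performed by $\theta_C^{-1}$ is undone and the strict transform of $L_x$ under $\psi$ is again $L_x$; the same holds for $\psi^{-1}$. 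Hence $\psi$ and $\psi^{-1}$ contract only $L_z$ and are defined outside $L_z$, i.e.\ $\psi\in\Aut(\p^2\setminus L_z)$ and $\psi(L_x)=L_x$.

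Finally I would normalise $\psi$ by elementary algebra. In the chart $z=1$ with coordinates $x,y$, an automorphism of $\A^2=\p^2\setminus L_z$ fixing the line $L_x=\{x=0\}$ sends the coordinate $x$ to $ax$ with $a\in\k^*$ (a coordinate of $\k[x,y]$ with reduced zero locus $\{x=0\}$ is a scalar multiple of $x$), and so has the form $(x,y)\mapsto(ax,\,cy+g(x))$ with $a,c\in\k^*$ and $g\in\k[x]$; apply this to $\psi$. The projective automorphisms of $\p^2$ fixing both $L_x$ and $L_z$ are exactly the maps $M\colon(x,y)\mapsto(a'x,\,b'x+c'y+d')$ with $a',c'\in\k^*$ and $b',d'\in\k$; left-composing $\theta_D$ by such an $M$ keeps $\theta_D\in\Aut(\p^2\setminus L_x)$ with $\theta_D(D)=L_z$, and replaces $\psi$ by $M\circ\psi$. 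Choosing $a'=a^{-1}$, $c'=c^{-1}$ and $b',d'$ to cancel the terms of $g$ of degree $\le1$, the map $M\circ\psi$ has the form $(x,y)\mapsto(x,\,y+x^2f(x))$ with $f\in\k[x]$, and replacing $\theta_D$ by $M\circ\theta_D$ finishes the proof. The hardest part will be the two middle paragraphs: showing that $\overline{\varphi(L_x\setminus\{[0:1:0]\})}$ is a line very tangent to $D$ through its cusp, and matching the chains of base points of the tower resolutions — this is exactly where the minimality of $\theta_C$ and $\theta_D$ is used and where $\psi$ gets pinned down.
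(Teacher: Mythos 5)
Your opening reduction (the extendable case, and $D$ rational unicuspidal via Proposition~\ref{Prop:form}) and your closing normalisation of $\psi$ by projective maps preserving $L_x$ and $L_z$ both match the paper and are fine. But the two middle steps that you yourself flag as ``the hardest part'' are left as assertions, and they are where essentially all of the content of the proposition lies; as written there is a genuine gap. For the claim that the closure $\overline{B}$ of $\varphi(L_x\setminus\{[0:1:0]\})$ has degree $1$: to apply Lemma~\ref{Lem:formula} to $\varphi$ and $L_x$ you must know the multiplicities $m_p(\varphi)$ and $m_p(L_x)$ at \emph{every} base-point of $\varphi$, i.e.\ you must determine exactly which infinitely near points above $[0:1:0]$ occur in the $(-1)$-tower resolution of $\varphi$ and how the strict transform of $L_x$ threads through them. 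The paper does precisely this: it shows that the resolution $\pi$ of $\varphi$ factors through the minimal $1$-tower resolution $\sigma_1$ of $C$ extracted from $\theta_C$ (this is where Lemma~\ref{Lem:minimality} enters), then identifies the remaining base-points one at a time (two forced points on $C_0\cap L_x$ and $C_1\cap E_1$, a chain of $k$ points proximate to $E_1\cap E_2$, a further chain of $k$ points producing $(-2)$-curves), and closes with a termination argument: if $E_{2k+3}$ were not the last exceptional curve, its image under the contractions of $\eta$ would be singular, a contradiction. Saying ``carrying this out gives $\deg\overline{B}=1$'' skips all of this; that computation \emph{is} the proof.

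Second, the statement that $\psi=\theta_D\circ\varphi\circ\theta_C^{-1}$ lies in $\Aut(\p^2\setminus L_z)$ and preserves $L_x$ is not automatic for an arbitrary minimal-degree $\theta_D$ supplied by Proposition~\ref{Prop:rectifiabilty}. As a birational map, $\theta_C^{-1}$ contracts $L_x$ to the proper base-point of $\theta_C$, and one must verify that $\theta_D\circ\varphi$ re-expands this point to the line $L_x$ in the target rather than to some other curve (and symmetrically for $\psi^{-1}$); this again requires matching the towers of base-points of $\theta_C$, $\varphi$ and $\theta_D$. The paper gets this for free because it \emph{constructs} $\theta_D$ out of the symmetric half of the resolution of $\varphi$ (the maps $\tau_1,\tau_2$ in its notation), so the required matching is built in, and the conclusion that $\eta(L_x)$ is a very tangent line to $D$ through its cusp is read off from the same symmetric configuration. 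Your phrase ``the minimality \dots\ makes the lengths of these chains agree in exactly the way needed'' is the statement to be proved, not a proof of it.
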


\begin{proof}
The map $\theta_C$ induces a birational map $\p^2 \dasharrow \p^2$. It does not extend to an automorphism of $\p^2$ since $C$ is singular but its image by $\theta_C$ is a line. Thus $\theta_C$ contracts $L_x$ and no other curves. We consider a minimal resolution of $\theta_C$:
$$\xymatrix{&X \ar[dr]^{\sigma_2} \ar[dl]_{\sigma_1} \\ \p^2 \ar@{-->}[rr]^{\theta_C} && \p^2.}$$
By Lemma~$\ref{Lem:tower}$, the morphisms $\sigma_1$ and $\sigma_2$ are $(-1)$-tower resolutions of $L_x$. In particular, $\theta_C$ has a unique proper base-point. Since the image of $C$ is a line, the unique proper base-point of $\theta_C$ is the singular point $[0:1:0]$ and the strict transform of $C$ by $\sigma_1$ in $X$ is smooth. Hence $\sigma_1$ factors through the minimal SNC-resolution of $C$. Moreover, by the minimality of the degree of $\theta_C$, it follows from Lemma~$\ref{Lem:minimality}$ that the strict transform of $C$ by $\sigma_1$ intersects the strict transform of $L_x$ by $\sigma_2$ in $X$, i.e.\ the last exceptional curve of $\sigma_1$. It follows that the strict transform of $C$ by $\sigma_1$ in $X$ has self-intersection~$1$ by Lemma~$\ref{Lem:minimality}$. In fact, $\sigma_1$ is the minimal $1$-tower resolution of $C$ that factors through the SNC-resolution of $C$.

We now consider the induced birational map $\varphi \colon \p^2 \dasharrow \p^2$. We assume that $\varphi$ does not extend to an automorphism of $\p^2$, otherwise the proof is finished. Thus by Lemma~$\ref{Lem:tower}$ the map $\varphi$ has a minimal resolution
$$\xymatrix{&Y \ar[dr]^{\eta} \ar[dl]_{\pi} \\ \p^2 \ar@{-->}[rr]^{\varphi} && \p^2}$$
where $\pi$ and $\eta$ are $(-1)$-tower resolutions of $C$ and $D$ respectively. Hence $\varphi$ has a unique proper base-point, which is the singular point $[0:1:0]$ of $C$. Since $C$ is unicuspidal, it follows that after each blow-up in the resolution $\pi$, the strict transform of $C$ and the exceptional curve intersect in a unique point. Since $\sigma_1$ is the minimal $1$-tower resolution of $C$ that factors through the SNC-resoltion, it follows that $\pi$ factors through $\sigma_1$. We then get the following commutative diagram:
$$\xymatrix{
&&Y \ar[dl] \ar[ddrr]^{\eta} \\
&X \ar[dl]_{\sigma_2} \ar[dr]^{\sigma_1} && \\
\p^2 \ar@{<--}[rr]^{\theta_C} && \p^2 \ar@{-->}[rr]^{\varphi} && \p^2.} $$
The morphism $Y \to X$ is given by a tower of blow-ups. For $i \in \{0,\ldots,n\}$, we denote the intermediate surfaces by $X_i$, where $X_0 = X$ and $X_n = Y$ and $X_i$ is obtained after the $i$-th blow-up in this tower. The corresponding exceptional curves, as well as their strict transforms, are denoted by $E_i$. Moreover, we denote by $C_i$ the strict transform of $C$ in $X_i$. In the surface $X=X_0$, the curves $L_x$ and $C_0$ intersect transversely in a unique point and have self-intersections~$-1$ and $1$ respectively. Since $\pi$ is a $(-1)$-tower resolution of $C$, the base-point in $X_0$ lies on the previous exceptional curve, which is the strict transform of $L_x$ by $\sigma_2$. Moreover, since the self-intersection of $C_0$ is $1$, the base-point in $X_0$ also lies on $C_0$, otherwise $C_n$ would have self-intersection $1$ in $Y$. Thus the base-point of $\pi$ in $X_0$ is the intersection point between $C_0$ and $L_x$. We argue similarly that the base-point in $X_1$ is the intersection point between $C_1$ and $E_1$. In $X_2$ we then have the minimal $(-1)$-resolution of $C$ and thus have the following configuration of curves, where the dashed line represents the remaining exceptional curves, the unlabeled curves have self-intersection $-2$, and the thick lines represent $(-1)$-curves:

$$
\begin{tikzpicture}[scale=1.3]
\draw [dashed] (-0.8,1)--(0.2,0);
\draw (0,0)--(1,1);
\draw (0.25,0.5) node{\scriptsize $L_x$};
\draw (0.8,1)--(1.8,0);
\draw (1.15,0.5) node{\scriptsize $E_1$};
\draw[very thick] (1.6,0)--(2.6,1);
\draw (1.85,0.5) node{\scriptsize $E_2$};
\draw[very thick] (2.4,1)--(3.4,0);
\draw (3.15,0.5) node{\scriptsize $C_2$};
\end{tikzpicture}
$$

\noindent Since $C_2$ has self-intersection $-1$, none of the subsequent base-points of $\pi$ lie on $C_2$, respectively its strict transforms, otherwise $C_n$ would have self-intersection $<-1$. Since the curves $E_1$ and $C_2$ are not connected in $X_2$ via the other exceptional curves (except $E_2$), it follows that $\pi$ has another base-point in $X_2$, which must lie on $E_2$. This base-point is either the intersection point $p$ between $E_1$ and $E_2$ or lies on $E_2 \setminus (E_1 \cup C_2)$. Let $k \geq 0$ denote the number of base-points proximate to $p$. After blowing up these points, we obtain the following configuration in $X_{k+2}$:

$$
\begin{tikzpicture}[scale=1.3]
\draw [dashed] (-0.8,1)--(0.2,0);
\draw (0,0)--(1,1);
\draw (0.25,0.5) node{\scriptsize $L_x$};
\draw (0.8,1)--(1.8,0);
\draw (1.17,0.5) node{\scriptsize $E_1[-k-2]$};
\draw[very thick] (1.6,0)--(2.6,1);
\draw (2.48,0.5) node{\scriptsize $E_{k+2}$};
\draw (2.4,1)--(3.4,0);
\draw (3.27,0.5) node{\scriptsize $E_{k+1}$};
\draw [dashed] (3.2,0)--(4.2,1);
\draw (4,1)--(5,0);
\draw (4.75,0.5) node{\scriptsize $E_2$};
\draw[very thick] (4.8,0)--(5.8,1);
\draw (5.7,0.5) node{\scriptsize $C_{k+2}$};
\end{tikzpicture}
$$

\noindent Again, we see that $E_1$ is not connected to $E_{k+1}\cup \ldots \cup E_2 \cup C_{k+2}$ and thus $\pi$ has a base-point on $E_{k+2}$, which now lies on $E_{k+2} \setminus E_1$. This base-point is not the intersection point between $E_{k+2}$ and $E_{k+1}$ since the morphism $\eta$ first contracts $C_n$ and then the chain of curves $E_2,\ldots,E_k$. This implies that $E_{k+1}$ is a $(-2)$-curve in $X$. Thus the next base-point lies on $E_{k+2} \setminus (E_1 \cup E_{k+1})$.

We observe that $\eta$ first contracts the chain of curves $C_n, E_2,\ldots,E_{k+2}$. After contracting this chain, the image of $E_1$ has self-intersection $-(k+1)$. This implies that there is a chain of $k$ $(-2)$-curves attached to $E_{k+2}$, which then are contracted by $\eta$, so the image of $E_1$ has self-intersection $-1$ after this chain is contracted. It follows that we have the following configuration in $X_{2k+3}$:

$$
\begin{tikzpicture}[scale=1.3]
\draw [dashed] (-0.8,1)--(0.2,0);
\draw (0,0)--(1,1);
\draw (0.25,0.5) node{\scriptsize $L_x$};
\draw (0.8,1)--(1.8,0);
\draw (1.17,0.5) node{\scriptsize $E_1[-k-2]$};
\draw (1.6,0)--(2.6,1);
\draw (2.47,0.5) node{\scriptsize $E_{k+2}$};
\draw (2.4,1)--(3.4,0);
\draw (3.27,0.5) node{\scriptsize $E_{k+1}$};
\draw [dashed] (3.2,0)--(4.2,1);
\draw (4,1)--(5,0);
\draw (4.75,0.5) node{\scriptsize $E_2$};
\draw[very thick] (4.8,0)--(5.8,1);
\draw (5.75,0.5) node{\scriptsize $C_{2k+3}$};

\draw (2,0.6)--(3.4,-0.8);
\draw (2.58,-0.3) node{{\scriptsize $E_{k+3}$}};
\draw[dashed] (3.2,-0.8)--(4.2,0.2);
\draw (4,0.2)--(5,-0.8);
\draw (4.88,-0.3) node{{\scriptsize $E_{2k+2}$}};
\draw[very thick] (4.8,-0.8)--(5.8,0.2);
\draw (5.75,-0.3) node{{\scriptsize $E_{2k+3}$}};
\end{tikzpicture}
$$

\noindent We now argue that this resolution is in fact $\pi$ itself. Suppose it were not, then there would be another base-point on $E_{2k+3} \setminus E_{2k+2}$, and thus $E_{2k+3}$ is also contracted by $\eta$. We observe that $\eta$ first contracts $C_n$, followed by $E_{2},\ldots,E_{k+2}$, and then $E_{k+3},\ldots,E_{2k+2}$. After these contractions, the image of $E_1$ has self-intersection $-1$ and is contracted next. After that, $L_x$ and all the exceptional curves of $\sigma_1$ are contracted. The next contracted curve must then be the image of $E_{2k+3}$. But we observe that the image of $E_{2k+3}$ after these contractions is singular. This follows from the fact that $C$ is singular and from the symmetry of the configuration in $X_{2k+3}$. But then $E_{2k+3}$ cannot be contracted by $\eta$ and we have a contradiction. It follows that $E_{2k+3}$ is the last exceptional curve in the $(-1)$-tower resolution $\pi$.

We observe moreover, also by the symmetry of the configuration, that $\eta(L_x)$ is a line in $\p^2$ that is very tangent to $D=\eta(E_{2k+3})$ at the singular point. In fact, using the symmetry of the resolution, we obtain a diagram
$$\xymatrix{
&&& Y \ar[dll]_{\pi'} \ar[drr]^{\eta'} &&\\
& X \ar[dl]_{\sigma_2} \ar[dr]^{\sigma_1} &&&& X' \ar[dl]_{\tau_1} \ar[dr]^{\tau_2} & \\
\p^2 \ar@{<--}[rr]^{\theta_C} && \p^2 \ar@{-->}[rr]^{\varphi} && \p^2 \ar@{-->}[rr]^{\theta_D} && \p^2}$$
such that $\eta = \tau_1 \circ \eta'$ where $\tau_1$ is the minimal $1$-tower resolution of $D$, $\eta'$ is the contraction of the curves $C,E_1,\ldots,E_{2k+3}$, and $\theta_D$ is an automorphism of $\p^2 \setminus L_x$ that sends $D$ to $L_z$.

We now consider the birational map $\psi = \theta_D \circ \varphi \circ (\theta_C)^{-1}$, which is an automorphism of $\p^2 \setminus L_z$. With the resolution above, we see that $\psi$ preserves $L_x$. Hence, in the affine chart $z=1$, the map $\psi$ has the form $(x,y) \mapsto \left(ax,by+cx+x^2f(x)\right)$, where $a,b \in \k^*, c \in \k$ and $f \in \k[x]$. Let $\alpha$ be the map $[x:y:z] \mapsto [a^{-1}x:b^{-1}(y-cx):z]$, which is an automorphism of $\p^2 \setminus (L_x \cup L_z)$. We define $\psi' \coloneqq \alpha \circ \psi$ and $\theta_D' \coloneqq \alpha \circ \theta_D$. Then $\psi'$ has the form $(x,y) \mapsto \left(x,y+x^2f(x)\right)$, as claimed.
\end{proof}

\begin{definition}\label{Def:Loc}
{\rm
Let $X$ be an irreducible surface, $C \subset X$ an irreducible curve, and $p \in C$ a point. Let $\mathfrak{a}$ be the kernel of the restriction homomorphism $\mathcal{O}_{X,p} \to \mathcal{O}_{C,p}$, $f \mapsto f|_C$. Then we denote by $\Loc(X,C,p)$ the group of birational maps $\varphi \colon X \dasharrow X$ fixing $p$, such that $\varphi^*$ induces
\begin{enumerate}
\item[$(i)$] an automorphism of $\mathcal{O}_{X,p}$,
\item[$(ii)$] a bijection $\mathfrak{a} \to \mathfrak{a}$,
\item[$(iii)$] the identity on $\mathcal{O}_{X,p} / \mathfrak{a}^2$,
\item[$(iv)$] the identity on $\mathfrak{a}/\mathfrak{a}^3$.
\end{enumerate}}
\end{definition}

\begin{remark}
{\rm If $\varphi \in \Loc(X,C,p)$, then $\varphi$ induces a local isomorphism in a neighborhood of $p$ in $X$ and $C$. Thus for a birational map $\theta \colon X \dasharrow Y$ that is a local isomorphism in a neighborhood of $p \in X$, the conjugation $\psi \mapsto \theta^{-1} \circ \psi \circ \theta$ induces an isomorphism $\Loc(Y,\theta(C),\theta(p)) \to \Loc(X,C,p)$.}
\end{remark}

\begin{lemma}\label{Lem:characterization}
For any $\lambda \in \k$, $\Loc(\A^2,L_x,(0,\lambda))$ coincides with the group of birational maps $\varphi \colon \A^2 \dasharrow \A^2$ such that $\varphi$ and $\varphi^{-1}$ each can be written of the form
$$(x,y) \mapsto \left(x+x^3\alpha(x,y), y+x^2\beta(x,y)\right)$$
for some $\alpha, \beta \in \mathcal{O}_{\A^2,(0,\lambda)}$.
\end{lemma}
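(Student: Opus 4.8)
The plan is to unwind the four axioms of Definition~\ref{Def:Loc} in the concrete case $X=\A^2$, $C=L_x$, $p=(0,\lambda)$, and to match them term by term against the asserted normal form; throughout, write $R=\mathcal{O}_{\A^2,(0,\lambda)}$ and let $G$ denote the group described in the statement. The one structural fact I would record at the outset is that, since $L_x=\{x=0\}$ is smooth at $p$ with local equation $x$, the restriction $R\to\mathcal{O}_{L_x,(0,\lambda)}$ is $f(x,y)\mapsto f(0,y)$, so that $\mathfrak{a}=xR$, $\mathfrak{a}^2=x^2R$ and $\mathfrak{a}^3=x^3R$. From then on the proof is a dictionary between ``$\varphi^*$ acts trivially modulo a power of $x$'' and ``$\varphi$ differs from the identity by a power of $x$''.

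Before either inclusion I would isolate one auxiliary claim, which is the only non-mechanical point in the argument: \emph{if $\varphi$ is a birational self-map of $\A^2$ of the form $(x,y)\mapsto(x+x^3\alpha,\,y+x^2\beta)$ with $\alpha,\beta\in R$, then $\varphi$ is defined at $p$, $\varphi(p)=p$, the pullback $\varphi^*$ maps $R$ into $R$, and $\varphi^*(f)\equiv f\pmod{x^2R}$ for every $f\in R$.} The first three assertions come from evaluating at $x=0$ and from $h(\varphi)(p)=h(p)\neq 0$ for the denominators $h$ of elements of $R$. The congruence is, for $f=g\in\k[x,y]$, just the observation that $g(x+x^3\alpha,\,y+x^2\beta)-g(x,y)$ lies in the ideal $(x^3\alpha,x^2\beta)\subseteq x^2R$; for a general $f=g/h\in R$ one clears denominators and uses that $h$ and $\varphi^*(h)$ are units of $R$. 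The mild subtlety worth flagging is exactly that $R$ is a localization, so the polynomial case does not immediately yield the general case.

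For the inclusion $\Loc(\A^2,L_x,(0,\lambda))\subseteq G$: take $\varphi\in\Loc(\A^2,L_x,(0,\lambda))$. Axiom $(i)$ makes $\varphi^*$ a ring automorphism of $R$, so $\varphi$ is defined at $p$, and it fixes $p$ by hypothesis. Axiom $(ii)$ gives $\varphi^*(\mathfrak{a})=\mathfrak{a}$, hence $\varphi^*$ descends to $\mathfrak{a}/\mathfrak{a}^3=xR/x^3R$, and axiom $(iv)$ forces this descended map to be the identity; applied to the class of $x$ this reads $\varphi^*(x)=x+x^3\alpha$ for some $\alpha\in R$. Axiom $(iii)$ applied to $y$ gives $\varphi^*(y)-y\in\mathfrak{a}^2=x^2R$, i.e.\ $\varphi^*(y)=y+x^2\beta$ for some $\beta\in R$. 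Thus $\varphi=(x+x^3\alpha,\,y+x^2\beta)$; and since $\Loc(\A^2,L_x,(0,\lambda))$ is a group (equivalently, because $(i)$--$(iv)$ are inherited by $(\varphi^*)^{-1}=(\varphi^{-1})^*$), the inverse $\varphi^{-1}$ also lies in it and the same derivation puts it in normal form, so $\varphi\in G$.

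For the reverse inclusion $G\subseteq\Loc(\A^2,L_x,(0,\lambda))$: let $\varphi\in G$. Applying the auxiliary claim to $\varphi$ and to $\varphi^{-1}$, both are defined at $p$ and fix $p$, so $\varphi$ is a local isomorphism at $p$ and $\varphi^*\colon R\to R$ is a ring automorphism with inverse $(\varphi^{-1})^*$, which gives $(i)$ and ``$\varphi$ fixes $p$''. Writing $\varphi^*(x)=x(1+x^2\alpha)$ with $1+x^2\alpha\in R^*$ yields $\varphi^*(\mathfrak{a})=\varphi^*(x)R=xR=\mathfrak{a}$, which is $(ii)$. The auxiliary congruence is exactly $(iii)$. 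For $(iv)$, let $xf\in\mathfrak{a}$ with $f\in R$: then $\varphi^*(xf)=x(1+x^2\alpha)\varphi^*(f)\equiv x\cdot 1\cdot f=xf\pmod{x^3R}$, using $1+x^2\alpha\equiv 1$ and $\varphi^*(f)\equiv f\pmod{x^2R}$. Hence $\varphi\in\Loc(\A^2,L_x,(0,\lambda))$, and the two inclusions together prove the lemma. I do not anticipate any genuine difficulty beyond the localization point flagged in the second paragraph; everything else is a direct unwinding of Definition~\ref{Def:Loc}.
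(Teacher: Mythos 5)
Your proof is correct and follows essentially the same route as the paper's: identify $\mathfrak{a}=xR$ with $\mathfrak{a}^2=x^2R$, $\mathfrak{a}^3=x^3R$, read the normal form off axioms $(ii)$--$(iv)$ applied to $x$ and $y$ for one inclusion, and verify the four axioms directly (using that $\Loc$ resp.\ $G$ is closed under inverses) for the other. The only difference is that you spell out the verification of $(ii)$--$(iv)$ and the localization point that the paper leaves as ``one then checks''; no gap.
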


\begin{proof}
Let $\varphi$ be a birational map of $\A^2$ of the proposed form. Then $\varphi$ is defined at $(0,\lambda)$ and fixes $(0,\lambda)$. The same is true for $\varphi^{-1}$, so it is a local isomorphism at $(0,\lambda)$ and thus satisfies $(i)$ of Definition~$\ref{Def:Loc}$. One then checks points $(ii)-(iv)$ for the ideal $\mathfrak{a} = (x) \subset \k[x,y]_{(x,y-\lambda)} = \mathcal{O}_{\A^2,(0,\lambda)}$. It follows that $\varphi \in \Loc(\A^2,L_x,(0,\lambda))$.

To prove the converse, let us consider $\varphi \in \Loc(\A^2,L_x,(0,\lambda))$. Since $\varphi^*$ induces an automorphism of $\mathcal{O}_{\A^2,(0,\lambda)}=\k[x,y]_{(x,y-\lambda)}$ we can write $\varphi^*(x) = f$ and $\varphi^*(y) = g$ for some $f,g \in \mathcal{O}_{\A^2,(0,\lambda)}$. As $\varphi^*$ preserves the ideal $(x)$ and induces the identity on $\mathcal{O}_{\A^2,(0,\lambda)} / (x^2)$, we can express $f(x,y) = x+x^2\alpha(x,y)$ and $g(x,y) = y+x^2\beta(x,y)$, for some $\alpha, \beta \in \mathcal{O}_{\A^2,(0,\lambda)}$. Finally, since $\varphi^*$ induces the identity on $(x)/(x^3)$, it follows that $x$ divides $\alpha$ and hence $\varphi$ is of the desired form. Since $\Loc(\A^2,L_x,(0,\lambda))$ is a group, also the inverse of $\varphi$ can be written in this form.
\end{proof}

\begin{proposition}\label{prop:conjugation}
Let $L \subset \p^2$ be a line and $q_1,q_2 \in L$ with $q_1 \neq q_2$. Let $\psi \in \cap_{p \in L \setminus \{q_2\}}\Loc(\p^2,L,p)$ and $\theta \in \Aut(\p^2 \setminus L) \setminus \Aut(\p^2)$ such that $\theta^{-1}$ has base-point $q_1$ and $\theta$ has base-point $q_2$. Then $\theta^{-1} \circ \psi \circ \theta$ lies in $\cap_{p \in L \setminus \{q_2\}}\Loc(\p^2,L,p)$.
\end{proposition}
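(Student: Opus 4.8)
The plan is to reduce the statement, one point $p\in L\setminus\{q_2\}$ at a time, to a corresponding statement on a common resolution of $\theta$, and then transport it back using the behaviour of $\Loc$ under local isomorphisms (the Remark following Definition~\ref{Def:Loc}).

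Since $\theta\notin\Aut(\p^2)$, Lemma~\ref{Lem:tower} gives a minimal resolution $\p^2\xleftarrow{\sigma_1}X\xrightarrow{\sigma_2}\p^2$ with $\theta\circ\sigma_1=\sigma_2$, where $\sigma_1,\sigma_2$ are $(-1)$-tower resolutions of $L$; by Lemma~\ref{Lem:degree}, $\sigma_1$ starts by blowing up $q_2$, $\sigma_2$ starts by blowing up $q_1$, and $\theta$ contracts $L$ to $q_1$. Let $\hat L\subset X$ be the strict transform of $L$ by $\sigma_1$. Every exceptional curve of $\sigma_1$ is contracted to $q_2$, so $\hat L$ meets the remaining exceptional curves of $X$ only inside $\sigma_1^{-1}(q_2)$, and $\sigma_1$ restricts to an isomorphism between a neighbourhood of $\hat L\setminus\sigma_1^{-1}(q_2)$ and a neighbourhood of $L\setminus\{q_2\}$, carrying $\hat L$ to $L$ and the point over each $p\in L\setminus\{q_2\}$ to $p$. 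Writing $\hat\psi:=\sigma_2^{-1}\circ\psi\circ\sigma_2$, one checks $\theta^{-1}\circ\psi\circ\theta=\sigma_1\circ\hat\psi\circ\sigma_1^{-1}$, so by the Remark after Definition~\ref{Def:Loc} it suffices to show that $\hat\psi\in\Loc(X,\hat L,\hat p)$ for every $\hat p\in\hat L\setminus\sigma_1^{-1}(q_2)$.

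To obtain this I would run an induction down the tower $\sigma_2=\tau_1\circ\cdots\circ\tau_n$, seeded by the fact that $q_1\in L\setminus\{q_2\}$ forces $\psi\in\Loc(\p^2,L,q_1)$. The key technical point is that membership in $\Loc$ propagates under blow-ups: if $\chi\in\Loc(Y,C,q)$ and $\rho\colon Y'\to Y$ is the blow-up of $q$ with exceptional curve $E$ and $C'\subset Y'$ the strict transform of $C$, then conditions $(iii)$--$(iv)$ force $d\chi_q=\id$, so the lift $\chi'=\rho^{-1}\circ\chi\circ\rho$ fixes $E$ and $C'$ pointwise, lies in $\Loc(Y',C',q')$ at the point $q'=E\cap C'$, and lies in $\Loc(Y',E,q'')$ at every other point $q''\in E$; moreover blowing up a point off $C$ leaves $\chi$ inside $\Loc(\cdot,C,\cdot)$ along $C$. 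This can be verified in the standard blow-up charts via the normal form $(x,y)\mapsto(x+x^3\alpha,\,y+x^2\beta)$ of Lemma~\ref{Lem:characterization} and its analogue for $E$. Granting this, the induction proceeds: the base points of $\sigma_2$ lie one on the previous exceptional curve at each stage, only the first two of them lie on (strict transforms of) $L$ — this is where $L$, of self-intersection $1$, is turned into a $(-1)$-curve, so in particular $n\ge 2$ — and all later ones lie off $\hat L$ on the exceptional locus; hence after two blow-ups the lift of $\psi$ is already in $\Loc(\cdot,\hat L,q)$ for every point $q$ of $\hat L$ outside its single intersection with the freshly created exceptional curve, and the remaining blow-ups happen away from $\hat L$. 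Since the points of $\hat L$ meeting another exceptional curve of $X$ are precisely those in $\hat L\cap\sigma_1^{-1}(q_2)$, this gives $\hat\psi\in\Loc(X,\hat L,\hat p)$ for all $\hat p\in\hat L\setminus\sigma_1^{-1}(q_2)$, as required.

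The hypothesis $q_1\neq q_2$ enters exactly at the seed of the induction, since $\psi$ need not lie in $\Loc(\p^2,L,q_2)$. I expect the main obstacle to be the blow-up propagation statement: tracking, through the chart computations, the two orders of tangency encoded by conditions $(iii)$ and $(iv)$ of $\Loc$, and in particular keeping straight which curve plays the role of ``$C$'' as the base points of $\sigma_2$ migrate along the exceptional tree (sometimes the strict transform of $L$, sometimes the strict transform of an earlier exceptional curve, sometimes the new exceptional curve). Everything else is formal manipulation of the resolution diagram and the functoriality of $\Loc$ under local isomorphisms.
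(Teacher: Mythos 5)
Your reduction --- pass to the minimal resolution $\p^2\xleftarrow{\sigma_1}X\xrightarrow{\sigma_2}\p^2$ of $\theta$, lift $\psi$ to $\hat\psi=\sigma_2^{-1}\circ\psi\circ\sigma_2$, and descend along $\sigma_1$ using the Remark after Definition~\ref{Def:Loc} --- is sound and matches the spirit of the paper's proof. But the engine of your induction, the blow-up propagation lemma, is false, and it fails exactly where the real content of the proposition lies. Take $\chi\colon(x,y)\mapsto(x,y+x^2)$, which lies in $\Loc(\A^2,L_x,(0,0))$, and blow up the origin via $\rho\colon(x,y)\mapsto(xy,y)$, so that $E=\{y=0\}$ and the strict transform of $L_x$ is $\{x=0\}$. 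Then
$$\rho^{-1}\circ\chi\circ\rho\colon(x,y)\mapsto\left(\frac{x}{1+x^2y},\;y+x^2y^2\right)=\left(x-x^3y+\cdots,\;y+x^2y^2\right),$$
and at a point $(\nu,0)\in E$ with $\nu\neq0$ both conditions $(iii)$ and $(iv)$ of Definition~\ref{Def:Loc} fail for the ideal $(y)$: the perturbations of $x$ and $y$ are divisible by $y$ only to orders $1$ and $2$, not $2$ and $3$. So the lift does \emph{not} lie in $\Loc(Y',E,q'')$ at points of $E$ away from the strict transform of $C$. What actually happens is that the two orders of tangency in the normal form $\left(x+x^a\alpha,\,y+x^b\beta\right)$ of Lemma~\ref{Lem:characterization} degrade as the base-points of $\sigma_2$ migrate along the exceptional tree. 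The paper's proof, after reducing to a single de Jonqui\`eres factor via Theorem~\ref{Thm:Jung} and Lemma~\ref{Lem:decomposition}, consists of tracking $(a,b)$ quantitatively through the explicit $2d-1$ blow-ups: the exponents climb from $(3,2)$ to $(d+2,d+1)$ over the first $d$ blow-ups and the second one then falls back from $d+1$ to $2$ over the remaining $d-1$, landing \emph{exactly} at the threshold required for $\Loc$. A purely qualitative ``$\Loc$ propagates under blow-up'' statement cannot see this balance, so your induction cannot close.

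There is a second, related gap in the final step. The curve along which you must verify $\Loc$-membership at the top of the tower is $\hat L$, the strict transform of $L$ by $\sigma_1$, and this is the \emph{last} exceptional curve of $\sigma_2$ --- it is created only at the final blow-up. In your concluding sentence you silently replace it by the strict transform of $L$ by $\sigma_2$, the curve carrying the first two base-points of $\sigma_2$ and stabilized after two blow-ups; these are different curves (the latter is precisely the one \emph{not} contracted by $\sigma_1$). Consequently the ``remaining blow-ups'' do not happen away from the relevant curve: they are the ones that successively build it, and the local normal form of $\hat\psi$ along it depends on the entire history of the tower. To repair the argument you would need to reinstate the Jung--van der Kulk reduction (or otherwise pin down the combinatorics of the full resolution of $\theta$) and carry out the exponent bookkeeping --- which is the paper's proof.
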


\begin{proof}
Since the base-point of $\theta^{-1}$ is $q_1$ and the base-point of $\theta$ is not $q_1$ we can by Theorem~$\ref{Thm:Jung}$ write $\theta = j_n \circ a_n \circ \ldots \circ j_1 \circ a_1$ with $j_i \in \Jon(\p^2,L,q_1) \setminus \Aff(\p^2,L)$ and $a_i \in \Aff(\p^2,L) \setminus \Jon(\p^2,L,q_1)$ for $i=1,\ldots,n$. By induction, it suffices to prove the claim for $\theta = j \circ a$ with $j \in \Jon(\p^2,L,q_1) \setminus \Aff(\p^2,L)$ and $a \in \Aff(\p^2,L) \setminus \Jon(\p^2,L,q_1)$.

We then find a minimal resolution
$$\xymatrix{&X \ar[dr]^{\pi} \ar[dl]_{\eta} \\ \p^2 \ar@{-->}[rr]^{j \circ a} && \p^2}$$
where $\pi^{-1}$ has the same base-points as $j^{-1} \in \Jon(\p^2,L,q_1)$. Let $d \geq 2$ be the degree of $j^{-1}$, so we can write $\pi$ as a composition of $2d-1$ blow-ups $\pi \colon X = X_{2d-1} \xrightarrow{\pi_{2d-1}} \ldots \xrightarrow{\pi_2} X_1 \xrightarrow{\pi_1} X_0 = \p^2$, as described in Lemma~$\ref{Lem:decomposition}$. We denote the exceptional curve of $\pi_i$ by $E_i$ for $i=1,\ldots,2d-1$.

We want to lift $\psi$ to a birational transformation of $X$ by conjugation with $\pi$. To do this, we choose coordinates on $\p^2$ such that $L=L_x$ and $q_1=[0:0:1]$ and $q_2 = [0:1:0]$. By Lemma~$\ref{Lem:characterization}$, we can locally express $\psi$ as $$(x,y) \mapsto \left(x+x^3\alpha(x,y),y+x^2\beta(x,y)\right)$$ for some $\alpha, \beta \in \cap_{\lambda \in \k}\mathcal{O}_{\A^2,(0,\lambda)}$. We proceed by conjugating $\psi$ step-by-step with the blow-ups $\pi_i$.

The first blow-up has base-point $(0,0)$ and is locally given by $\pi_1 \colon (x,y) \mapsto (xy,y)$. We thus obtain:

\begin{equation*}
\begin{split}
\pi_1^{-1}\psi\pi_1(x,y) &= \left(\frac{xy+x^3y^3\alpha(xy,y)}{y+x^2y^2\beta(xy,y)},y+x^2y^2\beta(xy,y)\right) \\ 
&= \left(x+x^3y\frac{(y\alpha(xy,y)-b(xy,y))}{1+x^2y\beta(xy,y)},y+x^2y^2\beta(xy,y)\right) \\
&\eqqcolon \left(x+x^3y\alpha_1(x,y),y+x^2y^2\beta_1(x,y)\right) \eqqcolon \psi_1(x,y)
\end{split}
\end{equation*}
In local coordinates of $\A^2 \subset X_1$, the exceptional curve $E_1$ of $\pi_1$ is given by $y=0$ and $\alpha_1, \beta_1 \in \cap_{\lambda \in \k}\mathcal{O}_{\A^2,(0,\lambda)}$.

The base-point of $\pi_2$ is then the point $(0,0) \in E_1$. Indeed, the base-points of $\pi_2,\ldots,\pi_{d}$ all lie on $E_1$, hence each of these blow-ups is of the form $(x,y) \mapsto (x,xy)$, in local coordinates. We can thus write $\pi_2 \circ \ldots \circ \pi_d \colon (x,y) \mapsto (x,x^{d-1}y)$ and thus conjugation with this map yields:
\begin{equation*}
\begin{split}
\psi_d(x,y) &=\left(x+x^{d+2}y\alpha_1(x,x^{d-1}y),\frac{x^{d-1}y+x^{2d}y^2\beta_1(x,x^{d-1}y)}{\left(x+x^{d+2}y\alpha_1(x,x^{d-1}y)\right)^{d-1}}\right) \\ 
&= \left(x+x^{d+2}\alpha_1(x,x^{d-1}y),y+x^{d+1}y^2 \frac{x^{d-1}y^2\beta_1(x,x^{d-1}y)+\ldots}{ \left(1+x^{d+1}y\alpha_1(x,x^{d-1}y)\right)^{d-1}}\right)
\end{split}
\end{equation*}
In local coordinates of $\A^2 \subset X_d$, we can write
$$\psi_d(x,y) = \left(x+x^{d+2}\alpha_d(x,y),y+x^{d+1}\beta_d(x,y)\right)$$
for some $\alpha_d, \beta_d \in \cap_{\lambda \in \k}\mathcal{O}_{\A^2,(0,\lambda)}$.

The base-point of the blow-up $\pi_{d+1}$ is a point on $E_d$ but not $E_{d-1}$. In local coordinates, this means that $\pi_{d+1}$ can be expressed as $(x,y) \mapsto (x,xy+\mu)$, for some $\mu \in \k^*$. The conjugated map is then:
\begin{equation*}
\begin{split}
\psi_{d+1}(x,y)& = \left(x+x^{d+2}\alpha_d(x,xy+\mu), \frac{xy+x^{d+1}\beta_d(x,xy+\mu)}{ x+x^{d+2}\alpha_d(x,xy+\mu)}\right) \\
&= \left(x+x^{d+2}\alpha_d(x,xy+\mu),y+x^d \frac{\beta_d(x,xy+\mu)-xy\alpha_d(x,xy+\mu)}{ 1+x^{d+1}\alpha_d(x,xy+\mu)}\right)
\end{split}
\end{equation*}
and thus we can find $\alpha_{d+1}, \beta_{d+1} \in \cap_{\lambda \in \k}\mathcal{O}_{\A^2,(0,\lambda)}$ such that
$$\psi_{d+1}(x,y) = \left(x+x^{d+2}\alpha_{2d-1}(x,y),y+x^d\beta_{2d-1}(x,y)\right).$$
After conjugating with the $d-2$ remaining blow-ups $\pi_{d+2},\ldots,\pi_{2d-1}$, we thus obtain
$$\psi_{2d-1}(x,y) = \left(x+x^{d+2}\alpha_{2d-1}(x,y),y+x^2\beta_{2d-1}(x,y)\right)$$
for some $\alpha_{2d-1},\beta_{2d-1} \in \cap_{\lambda \in \k}\mathcal{O}_{\A^2,(0,\lambda)}$ and hence it follows that $\psi_{2d-1} \in \Loc(X,E_{2d-1},(0,\lambda))$ for all $\lambda \in \k$ by Lemma~$\ref{Lem:characterization}$.

We now consider the following commutative diagram:
$$\xymatrix{
\p^2 \ar@{-->}[rrr]^{(j \circ a)^{-1}\circ \psi \circ (j \circ a)} \ar@/_3pc/@{-->}[dd]^{j\circ a} &&&\p^2 \ar@/^3pc/@{-->}[dd]_{j\circ a} \\
X \ar@{-->}[rrr]^{\psi_{2d-1}} \ar[d]_{\pi} \ar[u]^{\eta} &&& X \ar[d]^{\pi} \ar[u]_{\eta} \\
\p^2 \ar@{-->}[rrr]^{\psi} &&&\p^2
}$$
For any $p \in L_x \setminus [0:1:0]$, it follows that $\eta$ induces a local isomorphism $\eta^{-1}(p) \to p$ and we thus have $(j \circ a)^{-1} \circ \psi \circ (j \circ a) = \eta \circ \psi_{2d-1} \circ \eta^{-1} \in \Loc(\p^2,L_x,p)$.
\end{proof}

\begin{proof}[Proof of Theorem~$\ref{Thm:Yoshihara}$]
By Lemma~$\ref{Lem:equaldegree}$ the curves $C$ and $D$ have the same degree. Thus the claim of the theorem is clear for lines and conics and we can assume that $C$ has degree at least $3$ and is hence singular, in fact unicuspidal. The isomorphism $\varphi \colon \p^2 \setminus C \to \p^2 \setminus D$ induces a birational map $\p^2 \dasharrow \p^2$. If $\varphi$ extends to an automorphism of $\p^2$, then $C$ and $D$ are projectively equivalent. We thus assume that $\varphi$ does not extend to an automorphism of $\p^2$, i.e.\ $C$ is contracted by $\varphi$. Since $C \setminus L \simeq \A^1$, we can apply Proposition~$\ref{Prop:rectifiabilty}$ by identifying $\p^2 \setminus L \simeq \A^2$, so there exists an automorphism of $\p^2 \setminus L$ that sends $C$ to a line. We can then use Proposition~$\ref{prop:diagram}$ and for suitable coordinates obtain the diagram
$$\xymatrix{\p^2 \ar@{-->}[d]_{\theta_C} \ar@{-->}[r]^{\varphi} & \p^2 \ar@{-->}[d]^{\theta_D} \\ \p^2 \ar@{-->}[r]^{\psi} & \p^2}$$
where $\theta_C, \theta_D \in \Aut(\p^2 \setminus L_x)$ with $\theta_C(C)=\theta_D(D)=L_z$ and $\psi \in \Aut(\p^2 \setminus L_z)$ has the following form $(x,y) \mapsto (x,y+x^2f(x))$; it thus lies in $\Loc(\p^2,L_x,[0:\lambda:1])$ for all $\lambda \in \k$. The base-point $p$ of $\theta_C$ is different from $[0:1:0]$ and is thus of the form $[0:\lambda:1]$ for some $\lambda \in \k$. We then define the map $\rho = (\theta_C)^{-1} \circ \psi \circ \theta_C$, which is an automorphism of $\p^2 \setminus (L_x \cup C)$. It follows from Proposition~$\ref{prop:conjugation}$ that $\rho$ lies in $\Loc(\p^2,L_x,[0:0:1])$ and in particular preserves the line $L_x$. Thus $\rho$ is an automorphism of $\p^2 \setminus C$ and consequently $\varphi' \coloneqq \varphi \circ \rho^{-1}$ is an isomorphism $\p^2 \setminus C \to \p^2 \setminus D$. On the other hand, $\varphi' = (\theta_D)^{-1} \circ \theta_C$ is an automorphism of $\p^2 \setminus L_x$ and hence does not contract $C$. We conclude that $\varphi'$ contracts no curves and is indeed an automorphism of $\p^2$, making the curves $C$ and $D$ projectively equivalent.
\end{proof}

\section{Curves of low degree}
In this section we study Conjecture~$\ref{conjecture:yoshihara}$ for curves of low degree, i.e.\ degree $\leq 8$. It is a case study on the multiplicity sequences that occur (see Definition~$\ref{Def:multseq}$).

\subsection{Cases by multiplicity sequences}

\begin{lemma}\label{Lem:inequalities1}
Let $C \subset \p^2$ be an irreducible curve of degree $d \geq 3$ such that there exists an open embedding $\p^2 \setminus C \hookrightarrow \p^2$ that does not extend to an automorphism of $\p^2$. Then $C$ is a rational curve, where all the proper and infinitely near singular points of $C$ can be ordered from  $p_1$ to $p_k$, with multiplicities $m_1 \geq \ldots \geq m_k \geq 2$, such that $p_1 \in C$ is a proper point and $p_{i+1}$ lies in the first neighborhood of $p_i$, for $i=1,\ldots,k-1$. Moreover, the multiplicities satisfy the following relations:
\begin{align}
d^2 - 3d + 2 &= \sum_{i=1}^k m_i(m_i - 1), \label{genus} \\
d^2 + 1 &\geq \sum_{i=1}^k m_i^2. \label{squares}
\end{align}
\end{lemma}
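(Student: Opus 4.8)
The plan is to combine Lemma~$\ref{Lem:tower}$ (which furnishes a $(-1)$-tower resolution of $C$) with the standard divisor-theoretic bookkeeping on the resolution surface. First I would invoke Lemma~$\ref{Lem:tower}$: since the open embedding does not extend to an automorphism, there is a minimal resolution $X \xrightarrow{\pi} \p^2$, $X \xrightarrow{\eta} \p^2$ with $\pi,\eta$ being $(-1)$-tower resolutions of $C$ and of $D = \p^2 \setminus \im(\varphi)$. In particular the strict transform $\hat C = C_n$ of $C$ in $X$ is isomorphic to $\p^1$, so $C$ is rational; and because $\pi$ is a \emph{tower} (each $p_{i+1}$ is a proper or infinitely near point of $p_i$, lying on the previous exceptional curve), the base-points $p_1, \ldots, p_n$ of $\pi$ form a single chain, $p_1 \in C$ proper and $p_{i+1}$ in the first neighborhood of $p_i$. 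The points among these at which the strict transform of $C$ is singular are exactly the ones with multiplicity $m_i := m_{p_i}(C) \geq 2$; after discarding the smooth points (multiplicity $1$) and reindexing, one still has a chain, and I would note $m_1 \geq \cdots \geq m_k \geq 2$ since multiplicities are non-increasing along a proximity chain of an irreducible curve. (Strictly, non-increasing should be argued via the proximity inequalities, but this is standard; alternatively one works with all $n$ points and sets $m_i = 1$ for the smooth ones, which does not affect the two displayed relations.)

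For the genus relation \eqref{genus}: the arithmetic genus of a degree-$d$ plane curve is $\binom{d-1}{2} = \tfrac12(d^2-3d+2)$, and each blow-up at a point of multiplicity $m_i$ drops the arithmetic genus of the strict transform by $\binom{m_i}{2} = \tfrac12 m_i(m_i-1)$. Since the final strict transform $\hat C \simeq \p^1$ has genus $0$, summing gives $\tfrac12(d^2-3d+2) = \sum_{i=1}^k \tfrac12 m_i(m_i-1)$, i.e. the displayed identity. For the inequality \eqref{squares}: write, in $\Pic(X)$, the class of $\hat C$ as $d\,\pi^*(L) - \sum_i m_i \overline E_i$ where $\overline E_i$ are the total transforms of the exceptional divisors (as in the proof of Lemma~$\ref{Lem:formula}$). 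Then $\hat C^2 = d^2 - \sum_i m_i^2$. Since $\pi$ and $\eta$ are both $(-1)$-tower resolutions and $\hat C = C_n$ is the last-but-one object contracted by $\eta$ with the exceptional curve $E_n$ of $\eta$'s tower being the strict transform of $D$... more precisely, by observation~$(ii)$ following Lemma~$\ref{Lem:tower}$, in the tower for $\eta$ every exceptional curve has self-intersection $\leq -1$, and $\hat C$ is one of the curves contracted by $\eta$ (it is $E_{n}$ for $\eta$ up to relabeling, or it is contracted as part of the chain $E_1 \cup \cdots \cup E_{n-1} \cup C_n$); in any case a curve contracted by a birational morphism from a smooth surface has self-intersection $\leq -1$. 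Hence $\hat C^2 \geq -1$ is wrong in sign — rather I want $\hat C^2 \leq -1$? No: $\hat C = C_n$ appears in the contracted locus of $\eta$, and contracted curves have \emph{negative} self-intersection, so $\hat C^2 \leq -1$, giving $d^2 - \sum m_i^2 \leq -1$, i.e. $\sum m_i^2 \geq d^2 + 1$ — the reverse of what is claimed. So instead: $\hat C$ is \emph{not} contracted by $\eta$ — wait, $C$ is contracted by $\varphi$, so $\hat C = C_n$ \emph{is} in the contracted locus of $\eta$. Let me reconsider: the correct reading is that $F_n = E_n$ (strict transform of $C$ under $\pi$... no). I would here carefully use observation~$(iii)$: the contracted locus of $\eta$ is $E_1 \cup \cdots \cup E_{n-1} \cup C_n$, and $E_n$ is the strict transform of $D$. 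Among the curves $E_1,\dots,E_{n-1},C_n$, the curve $C_n$ has self-intersection $-1$ (since after $n-1$ blow-ups giving a $(-1)$-tower of $D$ and one more giving $E_n$... hmm). The cleanest route: $\hat C$ is the exceptional curve of the \emph{last} blow-up in some reordering of $\eta$, or one shows directly $\hat C^2 = -1$, whence $d^2 - \sum m_i^2 = -1$ would give equality, not inequality. Since only an inequality is claimed, I would instead bound: $\hat C^2 \leq 1$. This follows because $\hat C \simeq \p^1$ sits in a smooth rational surface $X$ obtained from $\p^2$ by $n \geq 1$ blow-ups, and in such a surface the only smooth rational curves of self-intersection $\geq 2$ would force... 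Actually the clean statement: since $\hat C$ is contracted by $\eta$ (as $C$ is contracted by $\varphi$) OR is a component meeting the contracted locus, and since in the $(-1)$-tower $\eta$ the relevant strict transform of $C$ before the final blow-ups has self-intersection $-1$, we get $\hat C^2 = -1$; but if instead one only uses that $\hat C^2 \le 1$ — true because after the first blow-up $\pi_1$ at $p_1 \in C$ the self-intersection of $C_1$ is $d^2 - m_1^2 \le d^2 - 1$ and it only decreases... no, it can only decrease or stay, and the \emph{final} value is bounded by... I will fix this in the writeup by using: the final strict transform satisfies $\hat C^2 \le 1$ (in fact one can prove $\hat C^2 = -1$), hence $\sum m_i^2 = d^2 - \hat C^2 \ge d^2 - 1$, which is \eqref{squares} with the claimed $d^2+1$ — so actually I need $\hat C^2 \ge -1$, giving $\sum m_i^2 \le d^2 + 1$. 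And $\hat C^2 \ge -1$ holds because $\hat C$ is an \emph{irreducible} curve on the smooth surface $X$ which either is not contracted by $\eta$ (then $\hat C^2 \ge 0$ since its image is a curve, being the strict transform $F_n$... ) — concretely, $\hat C = C_n$ maps under $\eta$ to $D$ which is a curve of positive degree, so $\hat C$ is \emph{not} contracted by $\eta$, hence $\hat C^2 \geq -1$ with equality impossible for a non-contracted curve? A non-contracted curve can still have negative self-intersection, but here $\eta_* \hat C = D$ with $D^2 = d^2 > 0$ and blow-ups only lower self-intersection when centered on the curve, so $\hat C^2 \geq$ something. The honest bound I will use: $\hat C^2 \geq -1$, since $\hat C$ is a $\p^1$ in the contracted locus of $\eta$... and all curves in that locus that are \emph{not} in the $(-1)$-tower of $D$...

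Given the sign subtleties, \textbf{the main obstacle} is exactly pinning down the self-intersection of the final strict transform $\hat C$ in $X$: one must argue from observation~$(iii)$ after Lemma~$\ref{Lem:tower}$ that $\hat C = F_n$ is the strict transform of $C$ for the resolution $\eta$ of $D$, so $\hat C^2 = -1$ (it is the last exceptional curve of $\eta$ in the symmetric picture, or more robustly: $C$ is contracted by $\varphi$ to the unique proper base-point of $\varphi^{-1}$, and the exceptional divisor over that point, pulled back to $X$, contains $\hat C$ with self-intersection $-1$). Then \eqref{squares} reads $d^2 - \sum m_i^2 = \hat C^2 = -1 \le 1$, hmm that gives equality $\sum m_i^2 = d^2+1$, consistent with the inequality. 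Wait — but then why state only $\geq$? Because when we restrict the sum to \emph{singular} points (multiplicities $\geq 2$), we drop the smooth base-points which contribute $m_i^2 = 1$ each, so $\sum_{\text{sing}} m_i^2 = \sum_{\text{all}} m_i^2 - (\#\text{smooth base-points}) = (d^2+1) - (\#\text{smooth base-points}) \le d^2+1$. That resolves it cleanly. So the final step is: using all $n$ base-points one gets the \emph{equality} $d^2 + 1 = \sum_{i=1}^n m_{p_i}^2$ from $\hat C^2 = -1$, and then discarding the smooth ones yields the claimed inequality \eqref{squares}; similarly \eqref{genus} is unaffected by discarding smooth points since $m_i(m_i-1) = 0$ there. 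I will present it in that order: (1) apply Lemma~$\ref{Lem:tower}$, get rationality and the chain of base-points; (2) genus drop formula for \eqref{genus}; (3) $\Pic(X)$ computation plus $\hat C^2 = -1$ for the equality, then restrict to singular points for \eqref{squares}.
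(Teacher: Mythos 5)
Your final plan is correct and is essentially the paper's proof: invoke Lemma~$\ref{Lem:tower}$ to get a $(-1)$-tower resolution, deduce rationality and the chain of base-points, use the genus-degree formula for \eqref{genus}, and compute $-1 = (C_n)^2 = d^2 - \sum_{i=1}^n m_i^2$ over all $n$ base-points before discarding the smooth ones to get \eqref{squares}. The long detour over the sign of $\hat C^2$ was unnecessary: condition $(ii)$ in the definition of a $(-1)$-tower resolution states outright that the strict transform of $C$ in $X$ has self-intersection $-1$, which is exactly what the paper uses.
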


\begin{proof}
Let $\varphi \colon \p^2 \setminus C \hookrightarrow \p^2$ be an open embedding that does not extend to an automorphism of $\p^2$. Then by Lemma~$\ref{Lem:tower}$ there exists a $(-1)$-tower resolution $\pi \colon X = X_n \xrightarrow{\pi_n} \ldots \xrightarrow{\pi_2} X_1 \xrightarrow{\pi_1} X_0 = \p^2$ of $C$ with base-points $p_1,\ldots,p_n$ and exceptional curves $E_1,\ldots,E_n$, and a $(-1)$-tower resolution $\eta \colon X \to \p^2$ of some curve $D \subset \p^2$ such that $\varphi \circ \pi = \eta$. For $i \in \{1,\ldots,n\}$, we denote by $m_i$ the multiplicity of $C_i$ at $p_i$, so we have $m_1 \geq \ldots \geq m_n$. The strict transform $C_n$ in $X$ is smooth, thus $\pi$ factors through the minimal resolution of singularities of $C$ and blows up all its $k \leq n$ singular points, hence the first part of the claim follows.

For equation~(\ref{genus}), we observe that $C$ is a rational curve since $C_n \simeq \p^1$ and thus has genus $g(C) = 0$. By the genus-degree formula for plane curves we get
$$0 = g(C) = \frac{(d-1)(d-2)}{2} - \sum_{i=1}^{k} \frac{m_i(m_i-1)}{2}$$
and hence identity (\ref{genus}) follows. To see the inequality (\ref{squares}), it is enough to observe that for a blow-up $\pi_i$ with exceptional curve $E_i$, we get
$$\pi_i^*(C_i) = C_{i+1} + m_{i}E_i$$
and hence $(C_{i+1})^2 = (C_i)^2 - m_i^2$, using the identities $(E_i)^2 = -1$ and $C_{i+1} \cdot E_i = m_i$. We then inductively obtain
$$-1 = (C_n)^2 = d^2 - \sum_{i=1}^n m_i^2.$$
The claim then follows from the fact that the number $k$ of singular points is $\leq n$.
\end{proof}

The previous lemma motivates the following definition.

\begin{definition}\label{Def:multseq}
{\rm
Let $C \subset \p^2$ be a curve and let $m_1 \geq \ldots \geq m_k \geq 2$ be some integers. We say that $C$ has \emph{multiplicity sequence} $(m_1,\ldots,m_k)$ if $C$ has (proper or infinitely near) singular points $p_1,\ldots,p_k$ with multiplicities $m_1,\ldots,m_k$ such that $p_1 \in C$ is a proper point and $p_{i+1}$ lies in the first neighborhood of $p_{i}$ for $i \geq 1$, and moreover $C$ is smooth at all other points. For a constant subsequence $(m,\ldots,m)$ of length $l \geq 1$, we also use the short notation $(m_{(l)})$.}
\end{definition}

\begin{remark}
{\rm It is not known to the author whether there exist irreducible curves $C,D \subset \p^2$ that have isomorphic complements but have different multiplicity sequences.}
\end{remark}

\begin{lemma}\label{Lem:inequalities2}
Let $C \subset \p^2$ be an irreducible curve of degree $d \geq 3$ with multiplicity sequence $(m_1, \ldots, m_k)$, where we set $m_2 \coloneqq 1$ if $k=1$. If there exists an open embedding $\p^2 \setminus C \hookrightarrow \p^2$ that does not extend to an automorphism of $\p^2$, then the following inequalities hold:
$$m_1+m_2 \leq d < 3m_1.$$
\end{lemma}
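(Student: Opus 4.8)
The plan is to extract both inequalities from the two numerical relations already available in Lemma~\ref{Lem:inequalities1}, namely the genus formula $d^2-3d+2 = \sum_i m_i(m_i-1)$ and the bound $d^2+1 \geq \sum_i m_i^2$, together with one elementary geometric fact about multiplicity sequences. For the lower bound $m_1+m_2 \leq d$, I would use the proximity inequality for the first two points of the sequence: a line through the proper singular point $p_1$ and (infinitely near) through $p_2$ meets $C$ with multiplicity at least $m_1+m_2$ at $p_1$, so B\'ezout against a line gives $m_1+m_2 \leq d$. (If $k=1$ this reads $m_1+1 \leq d$, which holds since $d \geq 3 > m_1$ once we have the upper bound, or directly since a smooth point contributes at least $1$ more intersection; this is why the convention $m_2 := 1$ is made.) Alternatively, one can read this off the resolution: in the $(-1)$-tower resolution $\pi$, the curve $E_1 \subset X_n$ has self-intersection $\leq -2$, and since $E_1^2 = -1 - (\text{number of points proximate to } p_1 \text{ blown up after})$, at least one further base-point $p_j$ (with $j \geq 2$, so $m_j \leq m_2$) is proximate to $p_1$; combined with $p_2 \succ p_1$ and the proximity inequality $m_1 \geq \sum_{p_i \succ p_1} m_i$ this is a slight overkill, so I would just cite the B\'ezout argument.

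For the upper bound $d < 3m_1$, I would argue by contradiction: suppose $d \geq 3m_1$. Subtracting, the genus formula gives $\sum_i m_i^2 = d^2 - 3d + 2 + \sum_i m_i$, and feeding this into $d^2 + 1 \geq \sum_i m_i^2$ yields
\[
3d - 1 \geq \sum_{i=1}^k m_i.
\]
On the other hand, the genus formula also gives $\sum_i m_i(m_i-1) = d^2-3d+2 = (d-1)(d-2)$, and since each $m_i \leq m_1$ we get $\sum_i m_i(m_i - 1) \leq (m_1 - 1)\sum_i m_i$, hence $\sum_i m_i \geq \frac{(d-1)(d-2)}{m_1 - 1}$. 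Combining with the previous displayed inequality gives $3d - 1 \geq \frac{(d-1)(d-2)}{m_1-1}$, i.e. $(m_1-1)(3d-1) \geq (d-1)(d-2)$. Under the assumption $3m_1 \leq d$, i.e. $m_1 \leq d/3$, the left side is at most $(\frac{d}{3}-1)(3d-1) = (d-3)(3d-1)/3 = (3d^2 - 10d + 3)/3 = d^2 - \frac{10}{3}d + 1$, while the right side is $d^2 - 3d + 2$; the inequality $d^2 - \frac{10}{3}d + 1 \geq d^2 - 3d + 2$ simplifies to $-\frac{1}{3}d \geq 1$, which is false for $d > 0$. This contradiction establishes $d < 3m_1$.

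The main obstacle is making sure the lower bound is argued cleanly in the unicuspidal case $k=1$ and, more importantly, double-checking the chain of inequalities in the upper bound --- in particular the step $\sum_i m_i(m_i-1) \leq (m_1-1)\sum_i m_i$ needs every $m_i$ (for $i \geq 2$) to satisfy $m_i - 1 \leq m_1 - 1$, which is immediate from $m_1 \geq m_i$, and the arithmetic at the end must be verified term by term. I would also note that inequality~(\ref{squares}) is used in the non-strict form, so no edge case is lost; everything else is linear algebra on the two given identities, so once the setup is right the conclusion follows mechanically. I expect no conceptual difficulty beyond organizing these estimates, and the proof should be short.
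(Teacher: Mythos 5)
Your proposal is correct and follows essentially the same route as the paper: the lower bound is the identical B\'ezout argument with a line through $p_1$ and $p_2$, and the upper bound is extracted from the same two relations of Lemma~\ref{Lem:inequalities1}. The only cosmetic difference is that the paper extends the sequence by $1$'s to turn both relations into equalities and reads off $-\bigl(1+\tfrac{d}{3}\bigr)=\sum_i m_i\bigl(\tfrac{d}{3}-m_i\bigr)$ directly, whereas you run the equivalent estimate by contradiction; your arithmetic checks out.
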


\begin{proof}
We use the set-up of the proof of Lemma~$\ref{Lem:inequalities1}$ and extend the multiplicity sequence $(m_1, \ldots, m_k)$ by $m_{k+1} = \ldots=m_n = 1$ such that both (\ref{genus}) and (\ref{squares}) from Lemma~$\ref{Lem:inequalities1}$ become equalities. We then subtract (\ref{genus}) from (\ref{squares}) for the extended multiplicity sequence and obtain
$$3d-1 = \sum_{i=1}^n m_i.$$
We then multiply this equation by $\frac{d}{3}$ and subtract (\ref{squares}), so we get
$$-\left(1+\frac{d}{ 3}\right) = \sum_{i=1}^n m_i\left(\frac{d}{3} - m_i\right).$$
Since the right-hand side of this equation is negative, so is the left-hand side. Thus, at least one of the terms $\frac{d}{3}-m_i$ is negative. The inequality $d < 3m_1$ now follows from the fact that the multiplicity sequence is non-increasing.

The inequality $m_1+m_2 \leq d$ follows from B\'ezout's theorem, where we intersect $C$ with a line going through points $p_1$ and $p_2$ of multiplicity $m_1$ and $m_2$ respectively.
\end{proof}

\begin{corollary}\label{Cor:table}
Let $C \subset \p^2$ be an irreducible curve of degree $\leq 8$ such that there exists an open embedding $\p^2 \setminus C \hookrightarrow \p^2$ that does not extend to an automorphism of $\p^2$. Then $C$ has one of the multiplicity sequences shown in the following table.
\begin{table}[ht]
\centering
\begin{tabular}{|c|l|}
\hline
degree & multiplicity sequences \\ \hline
$3$ & $(2)$ \\ \hline
$4$ & $(3); (2_{(3)})$ \\ \hline
$5$ & $(4); (3,2_{(3)}); (2_{(6)})$ \\ \hline
$6$ & $(5); (4,2_{(4)}); (3_{(3)},2); (3_{(2)},2_{(4)}); (3,2_{(7)})$ \\ \hline
$7$ & $(6); (5,2_{(5)}); (4,3_{(3)}); (4,3_{(2)},2_{(3)}); (4,3,2_{(6)}); (3_{(4)},2_{(3)})$ \\ \hline
$8$ & $(7); (6,2_{(6)}); (5,3_{(3)},2_{(2)}); (5,3_{(2)},2_{(5)}); (4_{(3)},3); (4_{(3)},2_{(3)}); (4_{(2)},3_{(3)}); $ \\
& $(4_{(2)},3_{(2)},2_{(3)}); (4_{(2)},3,2_{(6)}); (4,3_{(5)}); (4,3_{(4)},2_{(3)}); (3_{(7)})$ \\ \hline
\end{tabular}
\caption{Multiplicity sequences for degree $\leq 8$.}
\label{table:sequences}
\end{table}
\end{corollary}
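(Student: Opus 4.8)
The plan is to reduce the statement to a finite Diophantine enumeration. We may assume $d\geq 3$, since otherwise $C$ is smooth. By Lemmas~\ref{Lem:inequalities1} and~\ref{Lem:inequalities2}, the multiplicity sequence $(m_1,\ldots,m_k)$ of $C$ is non-increasing with $m_k\geq 2$ and satisfies the genus equality~(\ref{genus}), the inequality~(\ref{squares}), and $m_1+m_2\leq d<3m_1$ (with $m_2=1$ when $k=1$). For each fixed degree $d\in\{3,\ldots,8\}$ only finitely many sequences satisfy these relations, so the statement comes down to listing them.

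For a fixed $d$ I would first pin down $m_1$: the inequality $d<3m_1$ forces $m_1\geq\lfloor d/3\rfloor+1$, B\'ezout's bound gives $m_1\leq d-m_2\leq d-2$ when $k\geq 2$, and for $k=1$ the equality~(\ref{genus}) reads $m_1(m_1-1)=(d-1)(d-2)$, whence $m_1=d-1$. For each admissible $m_1$ I would subtract $m_1(m_1-1)$ from the right-hand side of~(\ref{genus}) and run through all ways of writing the remainder as $\sum_{i\geq 2}m_i(m_i-1)$ with $2\leq m_i\leq m_1$ in non-increasing order, discarding at each step every partial sequence that violates $\sum m_i^2\leq d^2+1$ or $m_1+m_2\leq d$. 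Carrying this out degree by degree reproduces exactly the entries of the table for $d\leq 6$ and for $d=8$.

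The only genuinely non-mechanical point is degree $7$: the numerical search above produces, besides the six sequences in the table, the sequence $(3_{(5)})$, which has to be ruled out by a sharper intersection estimate. For this, observe that the conics on $\p^2$ form a linear system of dimension $5$, so imposing passage through the five proper or infinitely near points $p_1,\ldots,p_5$ of $C$ (which form a chain in the resolution of $C$) still leaves a non-empty linear system; pick a conic $Q$ in it, so that $m_{p_i}(Q)\geq 1$ for $i=1,\ldots,5$. Since $C$ is irreducible of degree $7$ it is not a component of $Q$, so B\'ezout together with the inequality for intersection multiplicities at a point and its infinitely near points gives $14=(C\cdot Q)\geq\sum_{i=1}^5 m_{p_i}(C)\,m_{p_i}(Q)\geq 15$, a contradiction. (The analogous argument with the line through $p_1$ and $p_2$ is exactly the B\'ezout bound $m_1+m_2\leq d$, and higher-degree auxiliary curves would trim further cases, but in the range $d\leq 8$ the conic estimate is needed only for $(3_{(5)})$.)

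The main obstacle is organizational rather than conceptual: the degree $7$ and degree $8$ cases each split into about a dozen subcases, and one must be careful neither to overlook an admissible sequence nor to retain an inadmissible one. The single subtlety is recognizing that the purely numerical constraints overcount by exactly one sequence, in degree $7$, which is then removed by the conic argument above.
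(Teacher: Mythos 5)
Your proposal is correct and follows the same overall strategy as the paper: enumerate all sequences compatible with the relations of Lemma~$\ref{Lem:inequalities1}$ and Lemma~$\ref{Lem:inequalities2}$, and observe that the only spurious survivor is $(3_{(5)})$ in degree $7$. The one place where you diverge is in how that sequence is killed. The paper performs a quadratic Cremona transformation centred at $p_1,p_2,p_3$ (after checking that these points are not collinear and that $p_3$ is not proximate to $p_1$), obtaining a quintic with two points of multiplicity $3$, which violates $m_1+m_2\leq d$. You instead intersect $C$ directly with a conic through $p_1,\ldots,p_5$ and get $14\geq 15$. The two arguments are essentially equivalent -- your conic is the pullback under the paper's quadratic map of the line through the two surviving triple points -- but yours is more economical, since it needs no discussion of whether the quadratic transformation exists. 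One small point of rigour in your version: a member $Q$ of the linear system of conics with assigned multiplicity $1$ at the five points need not a priori have \emph{effective} multiplicity $\geq 1$ at each of them (the strict transform could miss a later point of the chain if an earlier multiplicity jumps), so the inequality $(C\cdot Q)\geq\sum_{i=1}^{5}m_{p_i}(C)\,m_{p_i}(Q)\geq 15$ is cleanest when phrased on the blow-up: the class $2H-\overline{E}_1-\cdots-\overline{E}_5$ is effective and meets the strict transform $7H-3\overline{E}_1-\cdots-3\overline{E}_5$ of $C$ in $14-15=-1<0$, while the irreducible curve $\hat{C}$ is not a component of that effective divisor. With that (standard) reformulation your argument is complete.
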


\begin{proof}
This follows from computations using Lemma~$\ref{Lem:inequalities1}$ and Lemma~$\ref{Lem:inequalities2}$, but we need to look at one case more carefully. In degree $7$ the multiplicity sequence $(3_{(5)})$ is consistent with the inequalities in Lemma~$\ref{Lem:inequalities1}$ and Lemma~$\ref{Lem:inequalities2}$. Suppose that there exists such a curve $C$ and denote by $p_1,p_2,p_3$ the first $3$ singular points, all of multiplicity $3$. By B\'ezout's theorem these points are not collinear. Moreover, $p_3$ is not proximate to $p_1$ as the sum of the multiplicities of the strict transform of $C$ at $p_2$ and $p_3$ is larger than the multiplicity at $p_1$. Thus there exists a quadratic transformation $q$ with base-points $p_1,p_2,p_3$. The degree of $q(C)$ is then $2\cdot7 - 3 - 3 - 3 = 5$ by Lemma~$\ref{Lem:formula}$ and $q(C)$ has two singular points of multiplicity $3$. But this is not possible by Lemma~$\ref{Lem:inequalities2}$. Hence no curve of of degree $7$ with multiplicity sequence $(3_{(5)})$ exists.
\end{proof}

The case of cubic curves is then straightforward.

\begin{lemma}
Let $C \subset \p^2$ be a cubic curve and let $\varphi \colon \p^2 \setminus C \to \p^2 \setminus D$ an isomorphism, where $D \subset \p^2$ is some curve. Then $C$ and $D$ are projectively equivalent.
\end{lemma}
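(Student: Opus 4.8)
The plan is to handle the two possibilities for a cubic curve $C$ separately, using the classification from Corollary~$\ref{Cor:table}$. If $\varphi$ extends to an automorphism of $\p^2$, then $C$ and $D$ are projectively equivalent and we are done; so assume $\varphi$ does not extend. By Corollary~$\ref{Cor:table}$, the only possible multiplicity sequence for a cubic in this situation is $(2)$, so $C$ is a nodal or cuspidal cubic. I would first dispose of the nodal case: a nodal cubic has two branches through its singular point, so the preimage of the node under the normalization consists of two points, hence $C$ is not unicuspidal; but by Proposition~$\ref{Prop:form}$ (case $(iii)$), since $\deg C = 3 \geq 3$, the curve $C$ must have a unique proper singular point $p$ with $C \setminus \{p\}$ isomorphic to an open subset of $\p^1$ — and for a nodal cubic $C \setminus \{p\} \simeq \A^1 \setminus \{0\}$, which still has a single complement point... so this needs more care. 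Actually the cleaner route: by Proposition~$\ref{Prop:form}$, $C$ and $D$ have the same number $k$ of complement points, and for a cubic with multiplicity sequence $(2)$ the normalization $\p^1 \to C$ identifies exactly the points lying over the singular point. For a node, $k = 2$; for a cusp, $k = 1$.

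\textbf{Nodal case.} When $C$ is a nodal cubic, $C \setminus \{p\} \simeq \p^1 \setminus \{p_1, p_2\} \simeq \A^1 \setminus \{0\}$, so $k = 2$, and by Proposition~$\ref{Prop:form}$ the curve $D$ is also a nodal cubic. However, the statement to be proved claims $C$ and $D$ are \emph{projectively equivalent}, which is a stronger conclusion than just "both nodal cubics" — note that all nodal cubics in $\p^2$ are in fact projectively equivalent to each other (the nodal cubic $zy^2 = x^3 + x^2 z$ is unique up to $\PGL_3$), so this case is immediate. I would cite or prove this classical fact (any two irreducible nodal cubics are projectively equivalent), which then gives the conclusion regardless of which particular nodal cubic $D$ turns out to be.

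\textbf{Cuspidal case.} When $C$ is a cuspidal cubic, it is rational and unicuspidal, and a line through the cusp meets $C$ only at the cusp with multiplicity $3$ (since $m_1 = 2$ and a generic line through $p_1$ has intersection multiplicity $2$ at $p_1$, leaving one more intersection point — but actually the tangent line at the cusp, or rather a suitable line, meets $C$ with full multiplicity $3$ at $p_1$). More precisely, the cuspidal cubic $zy^2 = x^3$ has the line $L\colon z = 0$ meeting it only at $[0:1:0]$ with multiplicity $3$, so $C \setminus L \simeq \A^1$. Hence Theorem~$\ref{Thm:Yoshihara}$ applies directly and yields that $C$ and $D$ are projectively equivalent. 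The one point requiring justification is that \emph{every} irreducible cuspidal cubic admits such a very tangent line through its cusp; this follows because all cuspidal cubics are projectively equivalent to $zy^2 = x^3$, or alternatively because the tangent cone at an ordinary cusp is a double line, and a short computation shows the tangent line at the cusp meets the cubic only there. The main (and only mild) obstacle is assembling these classical normal-form facts about cubics cleanly; the real work has already been done in Theorem~$\ref{Thm:Yoshihara}$ and Proposition~$\ref{Prop:form}$.
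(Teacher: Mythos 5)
Your proposal is correct. The nodal case is handled exactly as in the paper: Lemma~$\ref{Lem:equaldegree}$ and Proposition~$\ref{Prop:form}$ force $D$ to be a singular cubic of the same singularity type as $C$, and one then invokes the classical fact that irreducible nodal cubics form a single $\PGL_3$-orbit. For the cuspidal case you take a genuinely different route: instead of arguing symmetrically (Proposition~$\ref{Prop:form}$ gives $D \setminus \Sing(D) \simeq \A^1$, so $D$ is a cuspidal cubic, and all cuspidal cubics are projectively equivalent --- which is all the paper does), you exhibit a very tangent line through the cusp and invoke Theorem~$\ref{Thm:Yoshihara}$. This works, and it has the minor virtue of not needing to identify the singularity type of $D$ at all; but it is heavier machinery than necessary, and in fact redundant with your own argument: you justify the existence of the very tangent line by appealing to the projective uniqueness of the cuspidal cubic, and once you have that uniqueness you could conclude directly, since Proposition~$\ref{Prop:form}$ already tells you $D$ is also a unicuspidal cubic. (Alternatively, the very tangent line follows without any normal form from Lemma~$\ref{lemma:first2}$, since $\deg(C)=3=m_1+m_2$ with $m_1=2$, $m_2=1$.) So both routes are valid; the paper's is the more economical one, while yours trades the classification of cuspidal cubics for the main theorem of Section~3.
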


\begin{proof}
If $\varphi$ extends to an automorphism of $\p^2$, the claim is clear. If not, then $C$ is rational and hence singular with a point of multiplicity $2$. It is a well known fact that can be checked by simple computations that there are only two singular cubic curves, up to projective equivalence. One class is represented by the cuspidal cubic curve $x^2z-y^3 = 0$ and the other class by the nodal cubic curve $x^2z - y^3 - y^2z = 0$. It follows from Lemma~$\ref{Lem:equaldegree}$ that $D$ is again a cubic curve and by Proposition~$\ref{Prop:form}$ that the singularity of $D$ is of the same type as the singularity of $C$, i.e.\ $D \setminus \Sing(D) \simeq \A^1$ if $C$ is unicuspidal or $D \setminus \Sing(D) \simeq \A^1 \setminus \{0\}$ if $C$ is nodal. Hence $C$ and $D$ are projectively equivalent.
\end{proof}

\begin{remark}\label{remark:cubic}
{\rm The complement of a nodal cubic curve has infinitely many automorphisms, up to composition with automorphisms of $\p^2$. For a description, see for instance \cite[Lemma~$2.24$]{Yos85}. The automorphism group of the complement of a cuspidal cubic is even infinite dimensional, see \cite[Theorem~A $(6)$]{Yos85}.}
\end{remark}

We will frequently use the following formula for intersection numbers.

\begin{lemma}\label{Lem:proximate mult}
Let $C \subset \p^2$ be a curve and $\pi \colon X_n \xrightarrow{\pi_n} \ldots \xrightarrow{\pi_2} X_1 \xrightarrow{\pi_1} X_0 = \p^2$ a $(-1)$-tower resolution of $C$ with base-points $p_1,\ldots,p_n$ and exceptional curves $E_1,\ldots,E_n$. For $i \leq k \leq n$, we then have
$$C_k \cdot E_i = m_{p_i}(C_i) - \sum_{p_j \succ p_i, j \leq k} m_{p_j}(C_j).$$
\end{lemma}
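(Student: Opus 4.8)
The statement to prove is the formula
$$C_k \cdot E_i = m_{p_i}(C_i) - \sum_{p_j \succ p_i,\, j \leq k} m_{p_j}(C_j)$$
for a $(-1)$-tower resolution. The natural approach is induction on $k$, starting from $k=i$ and increasing $k$ by one blow-up at a time. I will track how the intersection number $C_k \cdot E_i$ changes when we pass from $X_k$ to $X_{k+1}$ by blowing up the point $p_{k+1}$.

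\textbf{Base case.} For $k=i$, the exceptional curve $E_i$ has just been created as $\pi_i^{-1}(p_i)$, and by the standard local description of a blow-up, $C_i$ (the strict transform being blown up at $p_i$) meets $E_i$ with intersection multiplicity exactly $m_{p_i}(C_i)$ — this is the classical fact that the strict transform of a curve of multiplicity $m$ at a point meets the exceptional divisor in $m$ points counted with multiplicity. Since no point $p_j$ with $j \le i$ can be proximate to $p_i$ (proximate points of $p_i$ lie on $E_i$, hence appear only at step $i+1$ or later), the sum on the right-hand side is empty, and the formula reads $C_i \cdot E_i = m_{p_i}(C_i)$, which is exactly what we just said.

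\textbf{Inductive step.} Assume the formula holds in $X_k$, i.e. $C_k \cdot E_i = m_{p_i}(C_i) - \sum_{p_j \succ p_i,\, j \le k} m_{p_j}(C_j)$. Now perform the blow-up $\pi_{k+1}$ at $p_{k+1}$. We have $\pi_{k+1}^*(C_k) = C_{k+1} + m_{p_{k+1}}(C_k)\,E_{k+1}$ and, for $i < k+1$, $\pi_{k+1}^*(E_i) = E_i + \varepsilon\, E_{k+1}$ where $\varepsilon = 1$ if $p_{k+1}$ lies on (the strict transform of) $E_i$, i.e. if $p_{k+1} \succ p_i$, and $\varepsilon = 0$ otherwise. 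Using the projection formula $\pi_{k+1}^*(C_k)\cdot E_i = C_k \cdot (\pi_{k+1})_*E_i = C_k \cdot E_i$ in $X_k$, combined with $E_{k+1}\cdot E_i = 0$ in $X_{k+1}$ (as $E_i$ is a strict transform and $E_{k+1}$ the new exceptional curve) and $E_{k+1}^2 = -1$, one expands
$$C_k \cdot E_i = \pi_{k+1}^*(C_k)\cdot \pi_{k+1}^*(E_i) = \big(C_{k+1} + m_{p_{k+1}}(C_k)E_{k+1}\big)\cdot\big(E_i + \varepsilon E_{k+1}\big) = C_{k+1}\cdot E_i - \varepsilon\, m_{p_{k+1}}(C_k),$$
so that $C_{k+1}\cdot E_i = C_k\cdot E_i + \varepsilon\, m_{p_{k+1}}(C_k)$. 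If $p_{k+1} \not\succ p_i$ then $\varepsilon=0$ and also $p_{k+1}$ does not appear in the right-hand sum for index $k+1$, so both sides are unchanged. If $p_{k+1}\succ p_i$ then $\varepsilon = 1$, and we must reconcile the increase $+m_{p_{k+1}}(C_k)$ on the left with the fact that the sum acquires a new term $-m_{p_{k+1}}(C_{k+1})$: but when $p_{k+1} \succ p_i$, the point $p_{k+1}$ lies on $E_i$ but (since $E_1\cup\cdots\cup E_k$ is SNC with a tree structure, observation $(i)$ after Lemma~$\ref{Lem:tower}$, and $p_{k+1}$ is on $E_k$) it does not lie on the strict transform of $C$ at that stage in a way that affects $C_k$; more precisely one checks $C_{k+1}\cdot E_i = C_k\cdot E_i + m_{p_{k+1}}(C_k)$ while $m_{p_{k+1}}(C_{k+1}) = 0$, so the new summand contributes $0$ and consistency holds — alternatively, and more robustly, one keeps the sum indexed by $C_j$ with $j\le k$, observes adding the blow-up extends it to $j \le k+1$ only if $m_{p_{k+1}}(C_{k+1})>0$, i.e. $p_{k+1}\in C$, in which case $p_{k+1}\not\succ p_i$ is impossible only in the degenerate configurations, and the bookkeeping matches. (I would phrase this cleanly by simply noting $m_{p_{k+1}}(C_k) = m_{p_{k+1}}(C_{k+1}) + \sum_{p_j\succ p_{k+1}} m_{p_j}(C_j)$ and doing the telescoping directly.)

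\textbf{Main obstacle.} The genuine subtlety is keeping the proximity bookkeeping straight: a point $p_{k+1}$ proximate to $p_i$ need not lie on $C$, and conversely, so one must carefully distinguish $m_{p_{k+1}}(C_k)$ (the intersection contribution that appears in the pull-back of $C_k$) from the summands $m_{p_j}(C_j)$ that the right-hand side records. The cleanest route, which I would adopt, is to avoid case analysis altogether: prove the identity by expressing both sides in terms of the divisor class computation $C_k = d\,\pi^*(L) - \sum_{j\le k} m_{p_j}(C_j)\,\overline{E}_j$ in $\mathrm{Pic}(X_k)$ (as in the proof of Lemma~$\ref{Lem:formula}$), together with the proximity relations $E_i = \overline{E}_i - \sum_{p_j\succ p_i} \overline{E}_j$ expressing the strict transform of the exceptional curve in terms of total transforms, and then reading off $C_k \cdot E_i$ using $\overline{E}_a\cdot\overline{E}_b = -\delta_{ab}$. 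This gives $C_k \cdot E_i = -\big(-m_{p_i}(C_i)\big)\cdot(+1) - \sum_{p_j\succ p_i,\, j\le k}\big(-m_{p_j}(C_j)\big)\cdot(-1) = m_{p_i}(C_i) - \sum_{p_j\succ p_i,\, j\le k} m_{p_j}(C_j)$ directly, with no induction and no case distinctions.
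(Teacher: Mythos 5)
Your final paragraph is exactly the paper's proof: the paper invokes the proximity relation $E_i = \overline{E}_i - \sum_{p_j\succ p_i,\, j\le k}\overline{E}_j$ and the identity $C_k\cdot\overline{E}_j = m_{p_j}(C_j)$ (Corollaries $1.1.25$ and $1.1.27$ of \cite{Alb02}) and reads the formula off from $\overline{E}_a\cdot\overline{E}_b=-\delta_{ab}$, with no induction and no case analysis. That closing argument is correct and complete, so the route you say you "would adopt" is precisely the one the paper takes.

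The inductive argument preceding it is, however, broken as written. The assertion that $E_{k+1}\cdot E_i=0$ in $X_{k+1}$ fails in exactly the interesting case: when $p_{k+1}\succ p_i$, the point $p_{k+1}$ lies on the strict transform of $E_i$, so that strict transform meets $E_{k+1}$ transversely once, i.e.\ $E_i\cdot E_{k+1}=\varepsilon$. Your expansion also silently uses $C_{k+1}\cdot E_{k+1}=0$ where the correct value is $m_{p_{k+1}}(C_k)$. Redoing the computation with the correct intersection numbers gives
$$C_k\cdot E_i \;=\; C_{k+1}\cdot E_i + \varepsilon\, m_{p_{k+1}}(C_k) + \varepsilon\, m_{p_{k+1}}(C_k) - \varepsilon\, m_{p_{k+1}}(C_k) \;=\; C_{k+1}\cdot E_i + \varepsilon\, m_{p_{k+1}}(C_k),$$
so $C_{k+1}\cdot E_i = C_k\cdot E_i - \varepsilon\, m_{p_{k+1}}(C_k)$: the intersection number \emph{decreases} by the multiplicity at the newly blown-up point, which is exactly the new summand acquired by the right-hand side of the lemma when $p_{k+1}\succ p_i$. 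With the correct sign the induction closes immediately, and the entire "reconciliation" passage — in particular the false claim $m_{p_{k+1}}(C_{k+1})=0$ and the hedged discussion around it — becomes unnecessary. In short: keep the divisor-class computation, or repair the sign in the induction; as submitted, only the former is a valid proof.
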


\begin{proof}
Let $i,k \in \N$ with $i \leq k \leq n$. We denote by $\overline{E}_j$ the total transform of $E_j$ in $X_k$ for $j=1,\ldots,k$. By \cite[Corollary~$1.1.25$]{Alb02}, we can then write
$$E_i = \overline{E}_i - \sum_{p_j \succ p_i, j\leq k} \overline{E}_j.$$
By \cite[Corollary~$1.1.27$]{Alb02}, we have $C_k \cdot \overline{E}_j = m_{p_j}(C_j)$ and the claim follows.
\end{proof}

\begin{lemma}\label{Lem:jump}
Let $C \subset \p^2$ be an irreducible curve that has multiplicity sequence $(m_1, \ldots, m_k)$. If there exist $r < s \leq k-2$ such that
\begin{align*}
m_{r+1} + m_{r+2} &> m_r > m_{r+1}, \\
m_{s+1} + m_{s+2} &> m_s > m_{s+1}, \\
m_s + m_{s+1} &> m_{s-1},
\end{align*}
then every open embedding $\p^2 \setminus C \hookrightarrow \p^2$ extends to an automorphism of $\p^2$.
\end{lemma}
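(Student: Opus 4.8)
The plan is to argue by contradiction. Suppose some open embedding $\p^2\setminus C\hookrightarrow\p^2$ does not extend to an automorphism of $\p^2$. By Lemma~\ref{Lem:tower} the induced birational map $\p^2\dashrightarrow\p^2$ has a minimal resolution consisting of a $(-1)$-tower resolution $\pi\colon X\to\p^2$ of $C$, with base-points $p_1,\dots,p_n$ and exceptional curves $E_1,\dots,E_n$, together with a $(-1)$-tower resolution $\eta\colon X\to\p^2$ of the complementary curve $D$. I will use the facts recorded after Lemma~\ref{Lem:tower}: the curves $E_1,\dots,E_{n-1}$ are precisely those contracted by both $\pi$ and $\eta$, and have self-intersection $\le-2$ in $X$; the strict transform $C_n$ of $C$ has self-intersection $-1$ (Lemma~\ref{Lem:inequalities1}); and the $\eta$-exceptional locus $C_n\cup E_1\cup\dots\cup E_{n-1}$ is a tree of smooth rational curves with simple normal crossings.

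The first step is to read off the local structure of $\pi$ at the two jumps $p_r$ and $p_s$. Since $p_{r+1}$ is always proximate to $p_r$, the proximity inequality for $C$ at $p_r$ combined with $m_{r+1}+m_{r+2}>m_r$ forces $p_{r+2}$ \emph{not} to be proximate to $p_r$; following the configuration of exceptional curves from this point onwards then shows that $p_{r+1}$ is the only base-point proximate to $p_r$, so $E_r$ is blown up exactly once after it appears and is therefore a $(-2)$-curve in $X$. By Lemma~\ref{Lem:proximate mult} we get $C_n\cdot E_r=m_r-m_{r+1}$, which is positive since $m_r>m_{r+1}$, and since $C_n\cup E_1\cup\dots\cup E_{n-1}$ is a tree (two components can meet in at most one point) this forces $C_n\cdot E_r=1$. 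The same reasoning at $p_s$, where in addition $m_s+m_{s+1}>m_{s-1}$ is used to rule out $p_{s+1}$ being proximate to $p_{s-1}$ and so to control $E_{s-1}$, gives that $E_s$ is a $(-2)$-curve in $X$ with $C_n\cdot E_s=1$. As $r\neq s$, the curve $C_n$ meets the two \emph{distinct} $(-2)$-curves $E_r$ and $E_s$, transversally and in one point each.

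The second step is to contract along $\eta$. Inside its exceptional locus $C_n\cup E_1\cup\dots\cup E_{n-1}$, the only $(-1)$-curve of $X$ is $C_n$, since the $E_i$ with $i<n$ have self-intersection $\le-2$; hence $\eta$ contracts $C_n$ at its first step. After this contraction the images of $E_r$ and $E_s$ each acquire self-intersection $-2+1=-1$ and both pass through the image point of $C_n$, so they now meet each other --- they could not have met at that point already in $X$, as that would be a triple point of the simple normal crossings tree. But $E_r$ and $E_s$ still have to be contracted by $\eta$, and each of them can be contracted only while it is a $(-1)$-curve; whichever of the two is contracted first raises the self-intersection of the other to $0$, making it impossible to contract afterwards. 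This contradiction proves that every open embedding $\p^2\setminus C\hookrightarrow\p^2$ extends to an automorphism of $\p^2$.

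I expect the main difficulty to be in the second step: establishing rigorously that, under the three inequalities, $p_{r+1}$ (respectively $p_{s+1}$) is the unique point proximate to $p_r$ (respectively $p_s$) --- so that $E_r$ and $E_s$ are $(-2)$-curves --- and keeping track of the whole intersection pattern of $C_n$ with $E_1,\dots,E_{n-1}$. This is exactly where the proximity inequality for $C$ and Lemma~\ref{Lem:proximate mult} enter, together with all of the hypotheses. Once the positions and self-intersections of $E_r$ and $E_s$ are secured, the contraction argument of the third paragraph is essentially formal; and should the analysis instead produce a third exceptional curve met by $C_n$, contracting $C_n$ would immediately create a forbidden triple point, closing the argument in that case too.
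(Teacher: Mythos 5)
Your proof is correct, but the concluding contradiction is obtained by a different mechanism than the one in the paper. The setup coincides: both arguments take the minimal resolution from Lemma~\ref{Lem:tower} and use Lemma~\ref{Lem:proximate mult} together with the inequalities $m_{r+1}+m_{r+2}>m_r>m_{r+1}$ (and the analogous ones at $s$) to conclude that $p_{r+1}$ is the only base-point proximate to $p_r$, so that $E_r$ and $E_s$ are distinct $(-2)$-curves of $X$ each meeting $C_n$ transversally in one point. The paper then concludes combinatorially: it uses the third inequality $m_s+m_{s+1}>m_{s-1}$ to argue that $E_r$ and $E_s$ are joined by a chain inside $E_1\cup\dots\cup E_{n-1}$, so that together with $C_n$ they close a cycle, contradicting the tree structure of the exceptional locus of $\eta$. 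You conclude dynamically instead: $\eta$ must contract $C_n$ first, after which the images of $E_r$ and $E_s$ are two $(-1)$-curves through the same point; since self-intersections only increase under the remaining contractions, neither can be contracted after the other, yet both must be contracted. Your route sidesteps the connectivity of $E_r$ and $E_s$ inside $E_1\cup\dots\cup E_{n-1}$, which is the delicate point of the paper's argument, and in fact it never uses the hypothesis $m_s+m_{s+1}>m_{s-1}$, so it establishes a formally stronger statement. The contraction-order device you rely on is the same one the paper deploys in several of its later proofs (for instance Claim~$(2)$ in the proof of Proposition~\ref{Prop:nojump}), so it is entirely consistent with the paper's toolkit, just not the argument chosen for this particular lemma.
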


\begin{proof}
Suppose that there exists an open embedding $\varphi \colon \p^2 \setminus C \hookrightarrow \p^2$ that does not extend to an automorphism of $\p^2$. Then by Lemma~$\ref{Lem:tower}$ there exists $\pi \colon X = X_n \xrightarrow{\pi_n} \ldots \xrightarrow{\pi_2} X_1 \xrightarrow{\pi_1} X_0 = \p^2$ a $(-1)$-tower resolution of $C$ with base-points $p_1,\ldots,p_n$ and exceptional curves $E_1,\ldots,E_n$, and a $(-1)$-tower resolution $\eta \colon X \to \p^2$ of some curve $D \subset \p^2$ such that $\varphi \circ \pi = \eta$. For any $i \in \{1,\ldots,k\}$, we obtain from Lemma~$\ref{Lem:proximate mult}$ the equation
$$C_n \cdot E_i = m_i - \sum_{p_j \succ p_i} m_j.$$
The point $p_{r+1}$ is proximate to $p_r$, but $p_{r+2}$ is not, as $C_n \cdot E_r \geq 0$ and $m_{r+1} + m_{r+2} > m_r$. Hence we have $C_n \cdot E_r = m_{r} - m_{r+1} > 0$. Analogously we get $C_n \cdot E_s > 0$. The curve $E_1 \cup \ldots \cup E_{n-1} \cup C_n$ in $X$ is the exceptional locus of $\eta$ and thus has a tree structure. By the same argument as before, the point $p_{s+1}$ is not proximate to $p_{s-1}$, hence it follows that the curves $E_r$ and $E_s$ are connected in $E_1 \cup \ldots \cup E_{n-1}$ via some chain of curves. Since $E_r$ and $E_s$ are also connected via $C_n$, this yields a contradiction to the tree structure of $E_1 \cup \ldots \cup E_{n-1} \cup C_n$.
\end{proof}

\begin{corollary}\label{cor:jumpcase}
Let $C \subset \p^2$ be an irreducible rational curve with one of the multiplicity sequences $(4,3,2_{(6)})$, $(4,3_{(2)},2_{(3)})$, $(4,3_{(4)},2_{(3)})$, $(4_{(2)},3,2_{(6)})$, $(4_{(2)},3_{(2)},2_{(3)})$, $(5,3_{(2)},2_{(5)})$, or $(5,3_{(3)},2_{(2)})$. Then any open embedding $\p^2 \setminus C \hookrightarrow \p^2$ extends to an automorphism of $\p^2$.
\end{corollary}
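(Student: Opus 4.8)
The plan is to deduce Corollary~\ref{cor:jumpcase} directly from Lemma~\ref{Lem:jump}. For each of the seven multiplicity sequences in the statement it suffices to exhibit a pair of indices $r<s\le k-2$ satisfying the three inequalities
$$m_{r+1}+m_{r+2}>m_r>m_{r+1},\qquad m_{s+1}+m_{s+2}>m_s>m_{s+1},\qquad m_s+m_{s+1}>m_{s-1},$$
since Lemma~\ref{Lem:jump} then immediately forces every open embedding $\p^2\setminus C\hookrightarrow\p^2$ to extend to an automorphism of $\p^2$.

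Writing the sequence as $(m_1,\dots,m_k)$, the following choices should work: for $(4,3,2_{(6)})$ take $(r,s)=(1,2)$; for $(4,3_{(2)},2_{(3)})$ and for $(5,3_{(2)},2_{(5)})$ take $(r,s)=(1,3)$; for $(4,3_{(4)},2_{(3)})$ take $(r,s)=(1,5)$; for $(5,3_{(3)},2_{(2)})$ take $(r,s)=(1,4)$; for $(4_{(2)},3,2_{(6)})$ take $(r,s)=(2,3)$; and for $(4_{(2)},3_{(2)},2_{(3)})$ take $(r,s)=(2,4)$. In every case the three inequalities are a direct inspection; for instance, for $(5,3_{(3)},2_{(2)})$ one has $m_2+m_3=6>m_1=5>m_2=3$, then $m_5+m_6=4>m_4=3>m_5=2$, and finally $m_4+m_5=5>m_3=3$, with $s=4\le k-2=4$.

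There is essentially no obstacle here: the mathematical content lies entirely in Lemma~\ref{Lem:jump}, and what remains is a finite check. The only points needing a little care are that the chosen $s$ satisfies $s\le k-2$ (so that $m_{s-1},m_{s+1},m_{s+2}$ are genuine entries of the sequence) and that $r<s$; both hold in all seven cases because each listed sequence carries a sufficiently long tail of $2$'s. Conceptually, the point is that each of these sequences exhibits two ``compensated jumps'', $m_r>m_{r+1}$ with $m_{r+1}+m_{r+2}>m_r$ and $m_s>m_{s+1}$ with $m_{s+1}+m_{s+2}>m_s$, which are moreover not separated by an intervening larger multiplicity ($m_s+m_{s+1}>m_{s-1}$) --- precisely the configuration ruled out by Lemma~\ref{Lem:jump} via the tree structure of the exceptional locus of $\eta$.
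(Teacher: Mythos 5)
Your proposal is correct and is exactly the paper's approach: the paper's proof consists of the single sentence that the corollary follows directly from Lemma~$\ref{Lem:jump}$, and your explicit choices of $(r,s)$ for each of the seven sequences all satisfy the three inequalities and the constraints $r<s\le k-2$, so you have merely spelled out the finite check the paper leaves implicit.
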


\begin{proof}
This follows directly from Lemma~$\ref{Lem:jump}$.
\end{proof}

\subsection{The unicuspidal case and a special quintic curve}
\label{subsec:quintic}

If $C \subset \p^2$ is a unicuspidal curve that admits a very tangent line through the singular point, then Theorem~$\ref{Thm:Yoshihara}$ gives an affirmative answer to Conjecture~$\ref{conjecture:yoshihara}$. In low degrees this is often the case, as we will see using the following lemma, which we can already find in \cite{Yos84}.

\begin{lemma}\label{lemma:first2}
Let $C \subset \p^2$ be a curve with multiplicity sequence $(m_1,\ldots,m_k)$, where we set $m_2 =1$ if $k=1$. If $\deg(C) = m_1 + m_2$, then there exists a very tangent line to $C$ through the proper singular point.
\end{lemma}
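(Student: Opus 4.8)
The plan is to exhibit an explicit very tangent line through the first singular point $p_1$. Let $d=\deg(C)$ and suppose $d=m_1+m_2$. The key input is B\'ezout combined with the proximity/multiplicity bookkeeping: a line $L$ through $p_1$ contributes at least $m_1$ to the intersection multiplicity $(C\cdot L)_{p_1}$, and if $L$ also passes through $p_2$ (which lies in the first neighborhood of $p_1$) it contributes at least $m_1+m_2=d$. Since $(C\cdot L)_{p_1}\le \deg(C)\deg(L)=d$ by B\'ezout, equality must hold, so $L$ meets $C$ only at $p_1$ and $L$ is very tangent to $C$ at the proper singular point $p_1$.

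The main thing to justify carefully is the existence of a line through $p_1$ and $p_2$, and the claim that passing through an infinitely near point $p_2$ of $p_1$ forces local intersection multiplicity at least $m_1+m_2$. First I would note that there is always a (unique) line $L$ through the proper point $p_1$ whose strict transform, after blowing up $p_1$, passes through $p_2$: the lines through $p_1$ form a pencil parametrized by the exceptional curve $E_1$, and $p_2\in E_1$ singles out one member. (If $p_2$ happens to be a point on $E_1$ that lies on the strict transform $C_1$ but not on any coordinate direction, this is still fine; the line is simply the one whose tangent direction at $p_1$ is $p_2$.) Then I would invoke the standard formula relating intersection multiplicity to multiplicities at infinitely near points: if $L$ passes through $p_1$ with multiplicity $m_{p_1}(L)=1$ and its strict transform passes through $p_2$, then $(C\cdot L)_{p_1}\ge m_{p_1}(C)m_{p_1}(L)+m_{p_2}(C_1)m_{p_2}(L_1)\ge m_1+m_2$, using $m_{p_2}(C_1)=m_2$ by the definition of the multiplicity sequence (Definition~\ref{Def:multseq}) and $m_{p_2}(L_1)\ge 1$. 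This is exactly the kind of computation already used in Lemma~\ref{Lem:inequalities2} and Lemma~\ref{Lem:formula}.

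Then the B\'ezout estimate closes the argument: $m_1+m_2\le (C\cdot L)_{p_1}\le\sum_q (C\cdot L)_q\le\deg(C)=m_1+m_2$, forcing $(C\cdot L)_q=0$ for all $q\ne p_1$, i.e. $C\cap L=\{p_1\}$, and $(C\cdot L)_{p_1}=\deg(C)$, which is the definition of $L$ being very tangent to $C$ (at the proper singular point $p_1$). In the degenerate case $k=1$ with $m_2=1$, the same argument works: one takes the tangent line $L$ to $C$ at the smooth-or-cuspidal point $p_1$ in the direction $p_2$ (the unique point in the first neighborhood lying on $C_1$), giving $(C\cdot L)_{p_1}\ge m_1+1=d$, hence equality.

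I expect the only genuine obstacle to be phrasing the statement ``the line through $p_1$ in the direction of the infinitely near point $p_2$'' cleanly and making sure the local intersection bound $(C\cdot L)_{p_1}\ge m_1+m_2$ is stated with the right hypotheses — in particular that $p_2$, being the second point of the multiplicity sequence, is genuinely in the first neighborhood of $p_1$ and carries multiplicity exactly $m_2$ for the strict transform of $C$, which is immediate from Definition~\ref{Def:multseq}. Everything else is a direct application of B\'ezout. I would write it in about half a page, citing \cite[Corollary~1.1.27]{Alb02} or the analogue of Lemma~\ref{Lem:proximate mult} for the local intersection-multiplicity decomposition.
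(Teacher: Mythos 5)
Your proposal is correct and follows essentially the same route as the paper: take the line through $p_1$ whose tangent direction is the infinitely near point $p_2$, observe $(C\cdot L)_{p_1}\geq m_1+m_2=\deg(C)$, and conclude by B\'ezout. The paper's proof is just a terser version of the same argument; your extra care about the existence of the line and the local intersection bound is fine (only a minor quibble: when $k=1$ the point of $C_1$ on $E_1$ need not be unique, but any choice works).
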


\begin{proof}
Let $p_1 \in C$ be the proper singular point of multiplicity $m_1$ and $p_2$ a point infinitely near to $p_1$ with multiplicity $m_2$. Then there exists a line $L$ through $p_1$ and $p_2$. We then get the local intersection $(C \cdot L)_{p_1} \geq m_1 + m_2 = \deg(C)$. By B\'ezout's theorem $L$ intersects $C$ in no other point and we have equality $(C \cdot L)_{p_1} = \deg(C)$, and thus $L$ is very tangent to $C$.
\end{proof}

In Table~$\ref{table:sequences}$, we find the multiplicity sequence $(2_{(6)})$ for quintic curves. It follows from B\'ezout's theorem that such curves do not admit a very tangent line through the singular point and hence Theorem~$\ref{Thm:Yoshihara}$ does not apply. We thus have to study this case separately. This seems to be a well known class of curves and was already considered in \cite{Yos84} and \cite{Yos79}, but without full proofs. Over the field of complex numbers, unicuspidal quintic curves were classified in \cite[Theorem~$2.3.10.$]{Nam84}. For the sake of completeness, we give a self-contained treatment of the case of unicuspidal curves with multiplicity sequence $(2_{(6)})$ below.

\begin{lemma}\label{Lemma:6points}
Let $C$ and $D \subset \p^2$ be irreducible unicuspidal quintic curves with multiplicity sequence $(2_{(6)})$ with singular points $p_1,\ldots,p_6$ and $q_1,\ldots,q_6$ respectively. Then there exists $\alpha \in \Aut(\p^2)$ such that $\alpha(p_i) = q_i$ for $i=1,\ldots,6$.
\end{lemma}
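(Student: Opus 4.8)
The plan is to show that the six (proper or infinitely near) points carrying the multiplicities of a degree-$5$ curve with multiplicity sequence $(2_{(6)})$ are essentially determined: any two such configurations are related by an automorphism of $\p^2$. The key is that having a long chain $p_1 \succ p_2 \succ \cdots \succ p_6$ of points of multiplicity $2$ on a quintic is very restrictive, so one should reconstruct the positions of $p_1,\ldots,p_6$ step by step from incidence and Bézout constraints, and then use that $\Aut(\p^2)=\PGL_3$ acts transitively on configurations in ``general enough'' position.

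First I would pin down the combinatorics of the infinitely near points. By Bézout, $p_1,p_2,p_3$ cannot be collinear (a line through three double points would meet $C$ with multiplicity $\geq 6 > 5$), and more precisely, since $C_n\cdot E_i \geq 0$ for each exceptional curve (as in Lemma~$\ref{Lem:proximate mult}$ / Lemma~$\ref{Lem:jump}$), the proximity relations among $p_1,\ldots,p_6$ are forced: $p_3$ is not proximate to $p_1$, $p_4$ not proximate to $p_2$, and so on, so the chain is ``generic'' in the sense that each $p_{i+1}$ lies on $E_i$ but off the earlier exceptional curves. This means the first few points can be described concretely: $p_1$ a proper point of $\p^2$, $p_2$ a tangent direction at $p_1$, $p_3$ a point in the second neighborhood not on the strict transform of the line realizing that tangent direction, etc. In suitable affine coordinates one can arrange $p_1=(0,0)$, $p_2$ the direction $y=0$, and then the conic (or line) through $p_1,\ldots$ and the successive normalizations of the local branch of $C$ of the form $y = $ (power series starting in degree $\geq 2$) constrain the later points.

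Next I would use a parametrization/normalization argument: $C$ is rational and unicuspidal, so it is the image of a degree-$5$ morphism $\p^1 \to \p^2$ whose unique singular point has one local branch; expanding this branch in a local parameter $t$, the conditions ``$C$ has multiplicity $2$ at each of $p_1,\ldots,p_6$'' translate into the vanishing of prescribed coefficients, and one checks that the Zariski-local model of the branch is rigid up to reparametrization — concretely, the branch looks like $(t^2, t^5 + \cdots)$ or the relevant normal form for the $(2_{(6)})$ singularity (an $A$-type / $E_8$-adjacent singularity on a quintic). The point is that the $6$-jet of the branch at the cusp, which is exactly what records $p_1,\ldots,p_6$, is determined up to the action of local coordinate changes, i.e.\ up to $\Aut(\p^2)$ acting on the configuration. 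Then I would invoke the fact that $\PGL_3$ acts transitively on these normalized configurations to produce the desired $\alpha$ with $\alpha(p_i)=q_i$.

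The main obstacle I anticipate is the bookkeeping in the middle: after the first two or three points are normalized by a projective change of coordinates, the remaining freedom in $\PGL_3$ is small (a few parameters), and one must verify that these parameters suffice to match $q_4,q_5,q_6$ — equivalently, that the quintic with sequence $(2_{(6)})$ has no genuine modulus in its singularity placement. I expect this to come down to a short explicit computation: writing the generic such configuration after normalization and checking that the residual group acts transitively (no invariants), perhaps by exhibiting an explicit normal form such as $C:\ x^2 z^3 = y^5$ up to coordinates, or by a direct dimension count ($\dim\PGL_3 = 8$ versus the $8$ conditions of imposing the chain of double points plus passing through a fixed general point). One should also double-check that the argument is characteristic-free: the normalization step involves extracting a square root of the leading term of the branch, which is fine over an algebraically closed field, but one must make sure no division by $5$ or by $\mathrm{char}(\k)$ sneaks in when solving for the coefficients — if it does, the offending characteristics must be handled by a separate (finite) case analysis.
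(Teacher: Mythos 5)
The heart of your argument --- that the configuration $p_1,\ldots,p_6$ is rigid up to $\Aut(\p^2)$ --- is exactly the step you do not carry out, and the two shortcuts you offer in its place do not work. First, you pass from ``the $6$-jet of the branch is determined up to local coordinate changes'' to ``determined up to $\Aut(\p^2)$ acting on the configuration''; these are very different statements. The group of local (formal or analytic) coordinate changes at the cusp is infinite-dimensional and does act transitively on branches of a fixed equisingularity type, but the lemma asks for a global element of the $8$-dimensional group $\PGL_3$, and the entire content of the statement is that this much smaller group already suffices. Second, the dimension count $\dim\PGL_3=8$ against $8$ conditions cannot establish transitivity (it is consistent with a positive-dimensional stabilizer and a nontrivial modulus), and your candidate normal form $x^2z^3=y^5$ is not a curve of the required type: it has two singular points, one with multiplicity sequence $(2_{(2)})$ at $[0:0:1]$ and one of multiplicity $3$ at $[1:0:0]$, rather than a single cusp with sequence $(2_{(6)})$. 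The actual normal form, established in Proposition~$\ref{proposition:quintic}$, is $(xz+y^2)\left((xz+y^2)z+2x^2y\right)-x^5=0$; note that its tangent line at the cusp meets the curve with multiplicity only $4$, so the embedded branch is \emph{not} of the shape $(t^2,t^{13})$ in any projective coordinates --- a degree-two term must first be absorbed by a non-linear change of variables, which is precisely what $\PGL_3$ cannot do. Your opening observations (non-collinearity of $p_1,p_2,p_3$ and the absence of extra proximities, via B\'ezout and $C_n\cdot E_i\geq 0$) are correct and consistent with the paper, but they only set the stage.

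For comparison, the paper settles the transitivity question by an explicit computation: it first applies a quadratic transformation based at $p_1,p_2,p_3$ to replace the quintic by a unicuspidal quartic, so that the six points become two clusters of proper and infinitely near points located at $[0:0:1]$ and $[0:1:0]$; after normalizing four of the points, the residual subgroup of $\PGL_3$ is a concrete three-parameter family of matrices, and its induced action on the two remaining points (which live on exceptional curves of blow-ups) is computed by hand and shown to be transitive. Some such explicit verification --- in your parametrization or the paper's --- is unavoidable, and the paper's purely incidence-theoretic version also sidesteps your worry about small characteristics, since no Puiseux expansion or square root is ever extracted. As written, your proposal has a genuine gap at its central step.
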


\begin{proof}
Let $L \subset \p^2$ be the line through $p_1$ and $p_2$. The singular points $p_1,p_2,p_3$ of $C$ all have multiplicity $2$, thus they are not collinear by B\'ezout's theorem. It follows that there exists a quadratic map $\theta_1 \colon \p^2 \dasharrow \p^2$ with base-points $p_1,p_2,p_3$ and exceptional curves $E_1, E_2, E_3$. The map $\theta_1$ is then given by first blowing up $p_1, p_2, p_3$ and then contracting $L_3,E_2,E_1$, as shown below. We denote by $p_1',p_2',p_3'$ the base-points of $(\theta_1)^{-1}$ and by $p_4',p_5',p_6'$ the singular points of $C' \coloneqq \theta_1(C)$.
\begin{center}
\begin{tikzpicture}[scale=0.85]

\draw (0.75,0)--(0,1.25);
\node at (0.05,0.7) {\scriptsize $E_2$};
\draw (0,1)--(0.75,2.25);
\node at (0.05,1.6) {\scriptsize $E_1$};
\draw[very thick] (0.32,0.5)--(1.07,1.75);
\node at (1.3,1.6) {\scriptsize $L_3$};
\draw[very thick] (0.5,0.15)--(2.5,0.15);
\node at (2.4,-0.07) {\scriptsize $E_3$};
\draw (1.6,0.15) to [out=90, in=200] (2.5,1.5);
\draw (1.6,0.15) to [out=90, in=-20] (0.5,1.5);
\node at (2,0.7) {\scriptsize $C_3$};
\fill (1.6,0.15) circle (2pt);
\node at (1.6,-0.07) {\scriptsize $p_4$};

\draw[->] (3.25,1.125) -- (4.25,1.125);

\draw[very thick] (5.75,0)--(5,1.25);
\draw (5,1)--(5.75,2.25);
\draw[very thick] (5.5,0.15)--(7.5,0.15);
\draw (6.6,0.15) to [out=90, in=200] (7.5,1.5);
\draw (6.6,0.15) to [out=90, in=30] (5.25,0.6);
\fill (5.36,0.65) circle (2pt);
\node at (5.08,0.65) {\scriptsize $p_3'$};

\draw[->] (8.25,1.125) -- (9.25,1.125);

\draw[very thick] (10.75,0)--(10,1.25);
\draw (10.5,0.15)--(12.5,0.15);
\draw (11.6,0.15) to [out=90, in=200] (12.5,1.5);
\draw (11.6,0.15) to [out=90, in=0] (11.1,0.75);
\draw (11.1,0.75) to [out=180, in=80] (10.63,0);
\fill (10.66,0.15) circle (2pt);
\node at (10.5,-0.07) {\scriptsize $p_2'$};

\draw[->] (13.25,1.125) -- (14.25,1.125);

\draw (14.9,0.15)--(17,0.15);
\draw (16.1,0.15) to [out=90, in=200] (17,1.5);
\draw (16.1,0.15) to [out=90, in=0] (15.79,0.75);
\draw (15.79,0.75) to [out=180, in=0] (15.25,0.15);
\draw (15.25,0.15) to [out=180, in=-85] (14.9,0.95);
\node at (16.45,0.7) {\scriptsize $C'$};
\fill (15.25,0.15) circle (2pt);
\node at (15.29,-0.07) {\scriptsize $p_1'$};
\fill (16.1,0.15) circle (2pt);
\node at (16.14,-0.07) {\scriptsize $p_4'$};

\end{tikzpicture}
\end{center}
By Lemma~$\ref{Lem:formula}$, the degree of $C'$ is $2 \cdot 5 - 1 \cdot 2 - 1 \cdot 2- 1 \cdot 2 = 4$ and hence $C'$ is a unicuspidal quartic curve. Likewise, there exists a quadratic map $\theta_2$ that sends $D$ to a unicuspidal quartic curve $D'$, where we analogously denote the points $q_1',\ldots,q_6'$.

We show that there exists an automorphism $\alpha' \in \Aut(\p^2)$ such that $\alpha'(p_i') = q_i'$ for $i=1,\ldots,6$, which implies that the map $\alpha = (\theta_2)^{-1} \circ \alpha' \circ \theta_1$ is an automorphism of $\p^2$ that sends $p_i$ to $q_i$, for $i=1,\ldots,6$, since the base-points of $(\theta_1)^{-1}$ are sent to the base-points of $(\theta_2)^{-1}$.

We can assume that we have $p_1' = q_1' = [0:0:1]$ and $p_4' = q_4' = [0:1:0]$ (after a linear change of coordinates). By B\'ezout's theorem the points $p_1', p_4', p_5'$ are not collinear, thus we can moreover assume that $p_5'$, respectively $q_5'$, corresponds to the tangent direction $L_z$.

The points $p_1', p_2', p_4'$ are in fact collinear and thus $p_2'$ corresponds to the tangent direction $L_x$, and the same is the case for $q_2'$. The linear maps fixing $p_1', p_2', p_4', p_5'$ then correspond to matrices in $\PGL_3$ of the form
$$
\begin{pmatrix}
a & 0 & 0 \\
b & c & 0 \\
0 & 0 & 1
\end{pmatrix}
$$
where $a,b,c \in \k$ and $ac \neq 0$. We now consider the action of these linear maps on the points $p_3'$ and $p_6'$. We thus blow up the point $p_1' = [0:0:1]$. In local coordinates, this blow-up is given by $(u,v) \mapsto [uv:v:1]$ and moreover $p_2' = (0,0)$. With a linear map of the above form, we get $[uv:v:1] \mapsto [auv:buv+cv:1]$ and the induced map in the blow-up is locally given by $(u,v) \mapsto \left(\frac{au}{ bu+c},(bu+c)v\right)$. The induced map on the exceptional curve is then $[u:v] \mapsto [\frac{a}{c} u : c v] = [\frac{a}{c^2}u : v]$. We observe that $p_3'$ is not proximate to $p_1'$ and that $p_3'$ is not collinear with $p_1',p_2'$ and $p_4'$ by B\'ezout's theorem. Thus $p_3'$ is neither of the points $[0:1]$ or $[1:0]$ on the exceptional curve and we can assume that $p_3' = q_3' = [1:1]$. From this we obtain the condition $a = c^2$.

For the point $p_6'$, we consider the blow-up of $p_4' = [0:1:0]$,  given by $(u,v) \mapsto [u:1:uv]$ in local coordinates, and $p_5' = (0,0)$. Applying a linear map of the form above, we get $[u:1:uv] \mapsto [au:bu+c:uv]$ and the induced map on the blow-up is given by $(u,v) \mapsto \left(\frac{au}{bu+c},\frac{v}{a}\right)$, in local coordinates. The induced map on the exceptional curve is $[u:v] \mapsto [\frac{a}{c} u : \frac{1}{a} v] = [\frac{a^2}{c}u : v] = [c^3 u : v]$. As before, we see that $p_6'$ is not proximate to $p_4'$ and is not collinear with $p_4'$ and $p_5'$. Hence we can also assume that $p_6' = q_6' = [1:1]$ and get the condition $c=1$.

We have thus found a linear map that sends $p_i'$ to $q_i'$ for $i=1,\ldots,6$ and the claim follows.
\end{proof}

\begin{proposition}\label{proposition:quintic}
Let $C \subset \p^2$ be an irreducible unicuspidal quintic curve with multiplicity sequence $(2_{(6)})$. Then $C$ is projectively equivalent to the curve
$$Q \colon (xz+y^2) \left( (xz+y^2)z + 2x^2y \right) - x^5=0.$$
\end{proposition}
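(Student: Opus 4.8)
The plan is to attach to $C$ a canonical \emph{osculating conic} and use it to put $C$ into the special form of $Q$. First I would record the structure of $C$: by Lemma~\ref{Lem:inequalities1} it is rational with a single cusp $p_1$ whose infinitely near singular points $p_1,\ldots,p_6$ form a chain, each of multiplicity $2$. Two instances of B\'ezout's theorem will be used repeatedly. (a)~No line passes through $p_1,p_2,p_3$, since such a line $L$ would satisfy $(C\cdot L)_{p_1}\geq 2+2+2=6>5=\deg C$. (b)~There is a \emph{unique} conic $\Gamma$ through $p_1,\ldots,p_5$; it is smooth; and it avoids $p_6$. Indeed, uniqueness holds because a pencil of conics has base scheme of length $4<5$; smoothness, because a reducible conic through the chain $p_1,\ldots,p_5$ would contain a line through $p_1,p_2,p_3$, contradicting~(a); and $p_6\notin\Gamma$ because otherwise $(C\cdot\Gamma)_{p_1}\geq 2\cdot 6=12>10=\deg C\cdot\deg\Gamma$. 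Consequently $(C\cdot\Gamma)_{p_1}=10$, $C\cap\Gamma=\{p_1\}$, and, both curves passing through $p_2$, they share their tangent line at $p_1$.

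Next I would verify directly -- by blowing up $[0:0:1]$ six times, and checking that the Jacobian ideal of $Q$ has support only at $[0:0:1]$ -- that $Q$ is an irreducible unicuspidal quintic with multiplicity sequence $(2_{(6)})$ and cusp $[0:0:1]$, that $\Gamma_0:=\{xz+y^2=0\}$ is its osculating conic (concretely, $Q|_{\Gamma_0}$ vanishes to order exactly $10$ at $[0:0:1]$ while $Q|_{L_x}$ vanishes to order $4$, where $L_x=\{x=0\}$ is the common tangent line), and that $Q$ is irreducible because a factorisation would create a singular point with at least two branches, which $Q$ lacks. Since every pair (smooth conic, point on it) is $\Aut(\p^2)$-equivalent to $(\Gamma_0,[0:0:1])$ -- or, alternatively, by Lemma~\ref{Lemma:6points} -- after applying a suitable automorphism of $\p^2$ I may assume that $C$ too has cusp $[0:0:1]$, osculating conic $\Gamma=\Gamma_0$, and common tangent line $L_x$.

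Restricting to $\Gamma\simeq\p^1$, the form $C|_\Gamma$ has degree $10$ and is supported at the single point $[0:0:1]$; since $L_x$ is tangent to $\Gamma$ there, $x^5|_\Gamma$ has the same property, so $C|_\Gamma=\lambda\cdot x^5|_\Gamma$ for some $\lambda\in\k^{*}$. Rescaling $C$ gives $C=\Gamma\cdot G-x^5$ for a unique cubic form $G$, and likewise $Q=\Gamma\cdot G_0-x^5$ with $G_0=\Gamma z+2x^2y$, whence $C-Q=\Gamma\cdot(G-G_0)$. Both $C$ and $Q$, and therefore $C-Q$, lie in the linear system $\Lambda$ of quintics of multiplicity $\geq 2$ at $p_1,\ldots,p_6$. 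As $\Gamma$ is smooth with chain $p_1,\ldots,p_5$, it has multiplicity $1$ at each of $p_1,\ldots,p_5$ and $0$ at $p_6$; using $m_{p_i}(\Gamma K')=m_{p_i}(\Gamma)+m_{p_i}(K')$ for any form $K'$, the membership $\Gamma\cdot(G-G_0)\in\Lambda$ forces the cubic $K:=G-G_0$ to pass through $p_1,\ldots,p_5$ and to have multiplicity $\geq 2$ at $p_6$. The heart of the proof is that no nonzero cubic can do this, which forces $K=0$, i.e.\ $C=Q$. I would argue by cases on $m_{p_1}(K)$. If $m_{p_1}(K)=1$, then $K$ is smooth at $p_1$ and stays smooth along the chain, so $m_{p_6}(K)=1$, impossible. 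If $m_{p_1}(K)=3$, then $K$ is a union of lines through $p_1$; only the tangent line $T=L_x$ can carry $p_2$, and by~(a) it does not reach $p_3$, so $m_{p_3}(K)=0$, contradicting that $K$ passes through $p_3$. If $m_{p_1}(K)=2$, then from $6=\deg K\cdot\deg\Gamma\geq (K\cdot\Gamma)_{p_1}\geq\sum_{i=1}^{5}m_{p_i}(K)\geq 2+1+1+1+1$ one gets $m_{p_2}(K)=\cdots=m_{p_5}(K)=1$; hence a single smooth local branch of $K$ at $p_1$ follows the chain up to $p_5$, and that branch is a line (excluded by~(a)), the conic $\Gamma$ itself (excluded: then $K=\Gamma\cdot\ell$ for a line $\ell$ and $m_{p_6}(K)=m_{p_6}(\Gamma)+m_{p_6}(\ell)\leq 1$), or a branch of an irreducible cubic; in every case that branch reaches $p_6$ with multiplicity at most $1$, so $m_{p_6}(K)\leq 1$, again impossible.

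The main obstacle is this final step: excluding the auxiliary cubic $K$ uniformly over all its degenerations -- reducible, nonreduced, or cuspidal components -- which amounts to a careful bookkeeping of how the branches of a plane cubic can follow an infinitely near chain, with~(a) as the decisive input. Writing out the verification of the properties of $Q$ in the second step, valid in every characteristic, is routine but must also be carried out in full.
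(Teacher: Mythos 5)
Your strategy differs from the paper's (which matches the six singular points via Lemma~\ref{Lemma:6points} and then transports $C$ by an explicit degree-$5$ involution of the complement of $xz+y^2=0$ onto a line tangent to that conic), but it contains a fatal gap: the intended conclusion $K=0$, i.e.\ $C=Q$ after normalization, is \emph{false}. The paper's own proof exhibits a one-parameter family $Q_\alpha$ of irreducible unicuspidal quintics with multiplicity sequence $(2_{(6)})$ all having the \emph{same} six proper and infinitely near singular points as $Q$ (they are the images under a fixed quintic involution, with those six points as its double base-points, of the tangent lines $L_\alpha$ to the conic); hence they all have the same cusp, tangent line and osculating conic $\Gamma$, yet they are pairwise distinct curves. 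Concretely $Q_\alpha-Q_0=\alpha\,\Gamma\,\bigl(\alpha x\Gamma-2y\Gamma+2x^3\bigr)$, so $K_\alpha=\alpha\bigl(\alpha x\Gamma-2y\Gamma+2x^3\bigr)$ is a nonzero cubic with a double point at $p_1$ passing through the whole chain, which your case $m_{p_1}(K)=2$ claims to exclude. The last step of the proposition — showing that all members of this residual family are projectively equivalent to $Q$ — is therefore unavoidable and entirely missing from your proposal.

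The error that lets you ``exclude'' $K_\alpha$ is the sentence asserting that $C-Q$ lies in the linear system of quintics with multiplicity $\geq 2$ at $p_1,\ldots,p_6$. Effective multiplicity of the strict transform at an \emph{infinitely near} point is not a linear condition; what is linear, and what survives subtraction, are the valuative conditions $\ord_{E_i}(\pi^*F)\geq 2i$ for the exceptional divisors over the chain. These give for $K=(C-Q)/\Gamma$ only $\ord_{E_i}(\pi^*K)\geq i$ for $i\leq 5$ and $\ord_{E_6}(\pi^*K)\geq 7$, which translates (after your Bézout step) into $m_{p_6}(K)\geq 1$, not $m_{p_6}(K)\geq 2$ — and $K_\alpha$ satisfies $m_{p_6}(K_\alpha)=1$. (For the same reason $K=c\,\Gamma L_x$ survives all the legitimate conditions; the corresponding $Q+c\,\Gamma^2L_x$ is a quintic with effective multiplicity exactly $2$ at all six points whose singularity has two branches, so even the effective-multiplicity statement you rely on is false for actual pairs of such curves.) Two smaller points: normalizing only the pair $(\Gamma,p_1)$ does not pin down $p_6$, which lies off $\Gamma$, so you must take the Lemma~\ref{Lemma:6points} route you mention in passing; and your cases $m_{p_1}(K)\in\{1,3\}$, as well as the construction and uniqueness of the osculating conic, are correct.
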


\begin{proof}
We start by constructing a birational map $\p^2 \dasharrow \p^2$ that sends the line $L_z$ to the quintic curve $Q$. To do this we consider first the quadratic map $\theta_1 \colon [x:y:z] \dashmapsto [x^2:xy:xz+y^2]$. This map is an automorphism of $\p^2 \setminus L_x$ and sends the line $L_z$ to the conic $xz+y^2 = 0$. Next, consider the quadratic map $\theta_2 \colon [x:y:z] \dashmapsto [xz:x^2-yz:z^2]$, which induces an automorphism of $\p^2 \setminus L_z$. We compute the composition $\psi \coloneqq (\theta_1)^{-1} \circ \theta_2 \circ \theta_1$ and obtain
$$[x:y:z] \dashmapsto [x(xz+y^2)^2:(xz+y^2)\big(x^3 - y(xz+y^2) \big):(xz+y^2)\big(z(xz+y^2)+2x^2y)\big)-x^5].$$
The map $\psi$ is an automorphism of the complement of the conic $xz+y^2=0$ in $\p^2$ and is moreover an involution. Hence both $\psi$ and $\psi^{-1}$ contract the conic $xz+y^2 = 0$ and have a unique proper base-point $[0:0:1]$. The image of the line $L_z$ by $\psi$ is exactly the quintic curve $Q$. The degree of $\psi$ is $5$ and the linear system of $\psi$ contains the curve $Q$ whose only proper singular point is $[0:0:1]$ with multiplicity $2$, thus by the Noether equations $\psi$ has $6$ base-points of multiplicity $2$, which then must be the same as the singular points of $Q$.

Let $C$ be any unicuspidal quintic curve with multiplicity sequence $(2_{(6)})$. We can assume by Lemma~$\ref{Lemma:6points}$ that after a change of coordinates the $6$ (proper and infinitely near) singular points of $C$ and $Q$ coincide. Hence by Lemma~$\ref{Lem:formula}$ the birational map $\psi^{-1}$ sends the curve $C$ to a curve of degree $5\cdot5-2\cdot2-2\cdot2-2\cdot2-2\cdot2-2\cdot2-2\cdot2 = 1$, i.e.\ a line. This line is tangent to the conic $xz+y^2 = 0$ since $C$ is unicuspidal and the line does not pass through the base-point $[0:0:1]$ of $\psi$. The tangents to the conic $xz+y^2 = 0$ that do not pass through $[0:0:1]$ are parametrized by the family $L_\alpha \colon \alpha^2x+2\alpha y-z = 0$, where $\alpha \in \k$. We then compute the equation of the image of $L_\alpha$ under $\psi$ and get
$$Q_\alpha \colon (xz+y^2) \left( (xz+y^2)(\alpha^2x-2\alpha y-z)+2x^2(\alpha x-y) \right) +x^5 = 0.$$
Thus $C = Q_\alpha$, for some $\alpha \in \k$. A short computation shows that the automorphism of $\p^2$ given by
$$[x : y : z] \mapsto [x : \alpha x + y : -\alpha^2x - 2\alpha y + z]$$
sends the curve $Q_\alpha$ to the curve $Q_0 = Q$.
\end{proof}

\begin{corollary}\label{cor:onequintic}
Let $Q \subset \p^2$ be an irreducible unicuspidal quintic curve with multiplicity sequence $(2_{(6)})$ and $\varphi \colon \p^2 \setminus Q \to \p^2 \setminus D$ an isomorphism, where $D \subset \p^2$ is some curve. Then $D$ is projectively equivalent to $Q$.
\end{corollary}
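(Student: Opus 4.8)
The proof combines the classification of the possible multiplicity sequences of quintics (Corollary~\ref{Cor:table}) with Theorem~\ref{Thm:Yoshihara}, Lemma~\ref{lemma:first2}, and Proposition~\ref{proposition:quintic}. If $\varphi$ extends to an automorphism of $\p^2$, then $Q$ and $D$ are projectively equivalent and there is nothing to prove, so I would assume henceforth that $\varphi$ does not extend. By Lemma~\ref{Lem:equaldegree}, $D$ has degree $5$, and since $\varphi^{-1}$ is a non-extending open embedding $\p^2 \setminus D \hookrightarrow \p^2$, Lemma~\ref{Lem:inequalities1} shows that $D$ is rational while Corollary~\ref{Cor:table} shows that its multiplicity sequence is one of $(4)$, $(3,2_{(3)})$, or $(2_{(6)})$. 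It then suffices to exclude the first two sequences.

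Next I would observe that $D$ is unicuspidal. Indeed, as $\deg Q = \deg D = 5 \geq 3$, Proposition~\ref{Prop:form}$(iii)$ applies to the open embedding $\varphi \colon \p^2 \setminus Q \hookrightarrow \p^2$, so $Q$ and $D$ each possess a unique proper singular point whose complement in the respective curve is isomorphic to an open subset of $\p^1$ with the same number of complement points. Since $Q$ is unicuspidal, $Q \setminus \Sing(Q) \simeq \A^1$ has exactly one complement point, hence the same holds for $D \setminus \Sing(D)$; this forces the normalization of $D$ to be injective over its singular point, i.e.\ $D$ is unicuspidal.

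Now suppose for contradiction that $D$ has multiplicity sequence $(4)$ or $(3,2_{(3)})$. In both cases $m_1 + m_2 = 5 = \deg D$ (using the convention $m_2 = 1$ in the first case), so by Lemma~\ref{lemma:first2} there is a line $L$ that is very tangent to $D$ and passes through its proper singular point. Since $D$ is rational and unicuspidal, Lemma~\ref{Lem:unicuspidal}$(iii)$ gives $D \setminus L \simeq \A^1$. Applying Theorem~\ref{Thm:Yoshihara} to the isomorphism $\varphi^{-1} \colon \p^2 \setminus D \to \p^2 \setminus Q$ and the line $L$, we conclude that $D$ is projectively equivalent to $Q$. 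But a projective equivalence carries the singular points of one curve to those of the other, preserving multiplicities and the infinitely near structure, so it preserves multiplicity sequences; since $Q$ has multiplicity sequence $(2_{(6)})$, this contradicts $D$ having sequence $(4)$ or $(3,2_{(3)})$.

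Hence $D$ has multiplicity sequence $(2_{(6)})$, and Proposition~\ref{proposition:quintic} applied to $D$ gives that $D$ is projectively equivalent to $Q$, completing the proof. There is no serious obstacle here: the argument is essentially bookkeeping built on the earlier results, and the only point deserving attention is the verification that the two excluded sequences satisfy $m_1 + m_2 = \deg D$, so that Lemma~\ref{lemma:first2}, and through it Theorem~\ref{Thm:Yoshihara}, can be brought to bear.
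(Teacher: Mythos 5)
Your proposal is correct and follows essentially the same route as the paper: reduce to the three possible quintic multiplicity sequences via Corollary~\ref{Cor:table}, rule out $(4)$ and $(3,2_{(3)})$ by combining Lemma~\ref{lemma:first2} with Theorem~\ref{Thm:Yoshihara} applied to $\varphi^{-1}$, and conclude with Proposition~\ref{proposition:quintic}. The only cosmetic difference is how the contradiction is extracted (you note that projective equivalence preserves multiplicity sequences, while the paper notes that $Q$ admits no very tangent line through its cusp); both are valid.
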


\begin{proof}
By Lemma~$\ref{Lem:equaldegree}$ and Proposition~$\ref{Prop:form}$, the curve $D$ is also a rational unicuspidal quintic. It thus has one of the multiplicity sequences $(4), (3,2_{(3)})$, or $(2_{(6)})$ by Corollary~$\ref{Cor:table}$. In the first two cases, $D$ admits a very tangent line through the singular point by Lemma~$\ref{lemma:first2}$, and thus by Theorem~$\ref{Thm:Yoshihara}$, this would also hold for the curve $Q$. Since $Q$ does not admit a very tangent line through the singular point, it follows that $D$ has multiplicity sequence $(2_{(6)})$ and is hence projectively equivalent to $Q$ by Proposition~$\ref{proposition:quintic}$.
\end{proof}

To conclude the case of unicuspidal curves, we need two more observations.

\begin{lemma}\label{lemma:notunicuspidal}
Let $C \subset \p^2$ be a rational irreducible curve with one of the multiplicity sequences $(3_{(4)},2_{(3)})$, $(4,3_{(5)})$, $(4,3_{(4)},2_{(3)})$, or $(5,2_{(5)})$. Then $C$ is not unicuspidal.
\end{lemma}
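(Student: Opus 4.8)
The plan is to assume $C$ is unicuspidal and extract a contradiction purely from the combinatorics of the resolution of its cusp, so that no hypothesis about an embedding $\p^2\setminus C\hookrightarrow\p^2$ is needed. If $C$ is unicuspidal, its unique proper singular point $p_1$ is a cusp, hence the germ of $C$ at $p_1$ is a single analytic branch. I would resolve it by the chain of blow-ups $X=X_N\to\ldots\to X_0=\p^2$ at $p_1$ and its infinitely near points $p_2,\ldots,p_N$, continued until the strict transform $\hat{C}=C_N$ is smooth and meets the exceptional divisor transversally; these points are precisely those of the multiplicity sequence $(m_1,\ldots,m_k)$, followed by points $p_{k+1},\ldots,p_N$ of multiplicity $1$, with $m_N=1$. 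Since the branch is irreducible, $\hat{C}$ meets $E_1\cup\ldots\cup E_N$ transversally in a single point, and that point lies on $E_N$; hence $\hat{C}\cdot E_i=0$ for $i<N$ and $\hat{C}\cdot E_N=1$. Combining this with the intersection formula of Lemma~\ref{Lem:proximate mult} (whose proof applies to an arbitrary sequence of blow-ups) yields the proximity \emph{equalities}
$$m_i=\sum_{j>i,\ p_j\succ p_i}m_j\qquad(1\le i<N),\qquad m_N=1.$$
Furthermore, for each $i$ the set $\{\,j>i:p_j\succ p_i\,\}$ is a non-empty interval $\{i+1,\ldots,i+a_i\}$: the point $p_{i+1}$ always lies on $E_i$, and once the strict transform of $E_i$ avoids some $p_j$ it avoids all later ones. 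So each $m_i$ must be an \emph{initial} partial sum $m_{i+1}+\cdots+m_{i+a_i}$ of the multiplicities that follow it.

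\textbf{The four cases.} Granting this, it suffices to point to an index $i$, in each of the four multiplicity sequences, at which $m_i$ is not an initial partial sum of $m_{i+1},m_{i+2},\ldots$; that contradicts the proximity equality at $p_i$. For $(5,2_{(5)})$ take $i=1$: the tail $2,2,2,2,2,1,\ldots$ has initial partial sums $2,4,6,8,10,11,\ldots$, which skip $m_1=5$. For $(4,3_{(5)})$ and $(4,3_{(4)},2_{(3)})$ take $i=1$ as well: the tail begins $3,3,\ldots$, so the initial partial sums begin $3,6,\ldots$ and skip $m_1=4$. For $(3_{(4)},2_{(3)})=(3,3,3,3,2,2,2,1,\ldots)$ take $i=4$: here $m_4=3$, while the tail $2,2,2,1,\ldots$ has initial partial sums $2,4,6,7,\ldots$, which skip $3$. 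In each case the assumption that $C$ is unicuspidal is untenable, which proves the lemma.

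\textbf{Main obstacle.} The only step that really needs care is the passage from the proximity \emph{inequalities}, which hold for any plane curve, to the proximity \emph{equalities}: this is precisely where irreducibility of the cusp is used, via the classical fact that the embedded resolution of a single branch terminates with the strict transform meeting the exceptional divisor transversally at one point of the last blown-up curve (see \cite{Alb02}). Everything after that is the short arithmetic check above, which could be rephrased as the observation that none of the four sequences is produced by the Euclidean algorithm computing the multiplicity sequence of a branch from its characteristic exponents.
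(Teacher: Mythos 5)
Your proof is correct, and it rests on the same key ingredient as the paper's (the intersection formula of Lemma~\ref{Lem:proximate mult} applied to the exceptional curves over the cusp), but it packages the argument differently. The paper stays at the level of the minimal resolution of singularities and uses only the proximity \emph{inequality}: it locates an index $i$ with $m_i-m_{i+1}=1$, deduces $C_k\cdot E_i=1$ (since $m_{i+1}+m_{i+2}>m_i$ forces $p_{i+2}\not\succ p_i$), notes $C_k\cdot E_k=m_k>0$ with $E_i\cap E_k=\varnothing$, and concludes that the strict transform meets the exceptional locus in two distinct points; the sequence $(5,2_{(5)})$ does not fit this pattern and gets a separate two-case discussion. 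You instead push to the full SNC resolution, where unibranchness gives the proximity \emph{equalities} $m_i=\sum_{p_j\succ p_i}m_j$, and combine this with the (correct) observation that the proximate points form an initial interval, reducing all four cases to the single arithmetic criterion ``$m_i$ must be an initial partial sum of the tail.'' What your version buys is uniformity — $(5,2_{(5)})$ is no longer special — and a reusable numerical test; what it costs is the need to justify the classical proximity equalities for a branch (your justification via $\hat{C}\cdot E_i=0$ for $i<N$ is sound and characteristic-free), whereas the paper's inequality-based argument needs no input beyond $C_k\cdot E_i\geq 0$. Both proofs are valid.
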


\begin{proof}
Let $\pi \colon X = X_k \xrightarrow{\pi_{k}} \ldots \xrightarrow{\pi_2} X_1 \xrightarrow{\pi_1} X_0 = \p^2$ be a minimal resolution of singularities of $C$, where $\pi_i$ is the blow-up of the singular point $p_i \in X_i$ of multiplicity $m_i$ and has exceptional curve $E_i$ for $i=1,\ldots,k$. It follows that $C_k$ intersects $E_k$ with multiplicity $m_k$. If there exists some $i \leq k-2$ such that $m_i - m_{i+1} = 1$, it follows from Lemma~$\ref{Lem:proximate mult}$ that
$$C_k \cdot E_i = m_i - \sum_{p_j \succ p_i} m_j = m_i - m_{i+1} = 1$$
since $C_k \cdot E_i \geq 0$ and $m_{i+2} \geq 2$. If $E_i$ does moreover not intersect $E_k$, it follows that $C$ is not unicuspidal, as $C_k$ intersects the exceptional locus $E_1 \cup \ldots \cup E_k$ of $\pi$ in at least two points, one on $E_i$ and one on $E_k$. We observe that this is the case for the multiplicity sequences $(3,2_{(7)})$, $(3_{(4)},2_{(3)})$, $(4,3_{(5)})$, and $(4,3_{(4)},2_{(3)})$, since in each case the exceptional curves in their minimal resolution of singularities form a chain where $E_i$ and $E_k$ do not intersect, as one checks with Lemma~$\ref{Lem:proximate mult}$.

Similarly, we see with Lemma~$\ref{Lem:proximate mult}$ that for the multiplicity sequence $(5,2_{(5)})$, either $p_3$ is proximate to $p_1$ or not, but in both cases the curve $C_7$ intersects $E_1$ and $E_7$ in distinct points and thus $C$ is again not unicuspidal.
\end{proof}

\begin{lemma}\label{lemma:unicuspidalcontractible}
Let $C \subset \p^2$ be a rational, unicuspidal curve of degree $d$ and multiplicity sequence $(m_1,\ldots,m_k)$. There exists an open embedding $\p^2 \setminus C \hookrightarrow \p^2$ that does not extend to an automorphism of $\p^2$ if and only if exactly one of the following possibilities holds.

\begin{enumerate}
\item[$(i)$] $d^2 - \sum_{i=1}^k m_i^2 = -1$ and $m_{k-1} - m_k = 1$.
\item[$(ii)$] $d^2 - \sum_{i=1}^k m_i^2 - m_k = -2$ and $m_k = 2$, $m_{k-1} \neq 3$.
\item[$(iii)$] $d^2 - \sum_{i=1}^k m_i^2 - m_k \geq -1$.
\end{enumerate}
\end{lemma}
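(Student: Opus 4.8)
The plan is to reformulate the statement, via Lemma~$\ref{Lem:tower}$, as a question about resolutions: an open embedding $\p^2 \setminus C \hookrightarrow \p^2$ not extending to an automorphism of $\p^2$ exists if and only if there is a $(-1)$-tower resolution $\pi \colon X \to \p^2$ of $C$, with base-points $p_1,\ldots,p_n$ and exceptional curves $E_1,\ldots,E_n$, whose contracted locus $E_1 \cup \ldots \cup E_{n-1} \cup C_n$ can be successively contracted to $\p^2$ (the resulting surface being $\p^2$ automatically, as a smooth rational surface of Picard rank $1$); one direction is Lemma~$\ref{Lem:tower}$, the other is the construction $\varphi = \eta \circ \pi^{-1}$ with $D = \eta(E_n)$. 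Since $C_n \simeq \p^1$ is smooth, any such $\pi$ must blow up all proper and infinitely near singular points of $C$, and because $C$ is unicuspidal these are exactly $p_1,\ldots,p_k$ taken in order; hence $\pi$ factors through the minimal resolution of singularities of $C$. Setting $s \coloneqq d^2 - \sum_{i=1}^k m_i^2 = 3d - 2 - \sum m_i$ (the last equality by the genus-degree formula), after the $k$-th blow-up the curve $C_k$ is smooth with $C_k^2 = s$, and since the cusp has a single branch $C_k$ meets $E_k$ in one point, with contact order $m_k$.

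First I would record the constraints on the remaining blow-ups $p_{k+1},\ldots,p_n$. Each of these lying on the strict transform of $C$ lowers its self-intersection by $1$ while the others leave it unchanged, so $C_n^2 = s - r$ where $r$ is the number of those on $C$; as $C_n^2 = -1$ this forces $r = s+1 \geq 0$, hence $s \geq -1$ (recovering the inequality of Lemma~$\ref{Lem:inequalities1}$). On the other hand unicuspidality forces $C_n$ to meet $E_1 \cup \ldots \cup E_n$ in exactly one point (Proposition~$\ref{Prop:form}$), while the total transform of $C$ must be a tree; in particular, whenever the strict transform of $C$ is blown up at a point where it is tangent to an exceptional curve, the resulting triple point has to be blown up in turn, which consumes a further base-point on $C$. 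Combining these observations with Lemma~$\ref{Lem:proximate mult}$ — which gives $C_k \cdot E_{k-1} = m_{k-1} - m_k$, so that the intersection behaviour of $C$ with $E_{k-1}$, and whether $p_{k+1}$ is a free or a satellite point, are governed by $m_{k-1}$ and $m_k$ — pins down the admissible shapes of the resolution.

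The three cases then come out as follows. If $s \geq m_k - 1$, that is $d^2 - \sum m_i^2 - m_k \geq -1$, there is enough self-intersection to fully resolve the order-$m_k$ tangency of $C_k$ with $E_k$ and afterwards bring $C^2$ down to $-1$, with $C_n$ staying attached to a single exceptional leaf; the resulting tree contracts to $\p^2$, which is case $(iii)$. If $s = m_k - 2$ one is exactly one blow-up short; I expect this can be salvaged only when $m_k = 2$, by stopping the tangency resolution one step early and rerouting $C_n$, and that it then succeeds precisely when the tail of the sequence leaves a contractible chain at $E_{k-1}$ — the condition $m_{k-1} \neq 3$ — giving case $(ii)$. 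If $s = -1$ no further base-point may lie on $C$, so $C_n = C_k$ is already a $(-1)$-curve and must be hooked onto the configuration along $E_{k-1}$; this works exactly when $C_k \cdot E_{k-1} = m_{k-1} - m_k = 1$, which is case $(i)$. Conversely, outside these three regimes I would exhibit in each sub-case an unavoidable obstruction: either $r < 0$; or completing the resolution makes $C_n$ meet the contracted locus in at least two points, contradicting unicuspidality; or the tree acquires a subconfiguration — typically a star, or a chain carrying a curve of self-intersection $\leq -3$ adjacent to $C_n$ — that does not contract to $\p^2$.

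The hard part will be the combinatorial bookkeeping in the two boundary cases $(i)$ and $(ii)$: one has to describe, in terms of the tail $(\ldots,m_{k-1},m_k)$ of the multiplicity sequence and the induced proximity relations, the dual graph and the self-intersection weights of $E_1 \cup \ldots \cup E_{n-1} \cup C_n$ around $E_{k-1}$ and $E_k$, and then verify directly that it contracts to $\p^2$ exactly when $m_{k-1} - m_k = 1$ (respectively $m_k = 2$ and $m_{k-1} \neq 3$). Once the correct local picture is in place, checking contractibility in case $(iii)$ and, for the ``only if'' direction, certifying the failure of contractibility in the remaining sub-cases is routine, but still demands care with the several possible shapes of the exceptional configuration near the cusp.
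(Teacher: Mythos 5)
Your proposal follows essentially the same route as the paper's proof: reduce via Lemma~$\ref{Lem:tower}$ to the existence of a contractible $(-1)$-tower resolution, observe that after the minimal resolution of singularities $C_k$ meets the exceptional locus in a single point of $E_k$ with contact order $m_k$, and then split into the three regimes $C_k^2=-1$, $C_k^2=m_k-2$ and $C_k^2\geq m_k-1$, extracting the side conditions of cases $(i)$ and $(ii)$ from the connectivity and SNC tree structure of the contracted locus and proving the converse by exhibiting the symmetric configuration that contracts back to $\p^2$. The combinatorial bookkeeping you defer is exactly what the paper carries out, and your identification of where the conditions $m_{k-1}-m_k=1$ and $m_k=2$, $m_{k-1}\neq 3$ come from is the correct one.
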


\begin{proof}
We first prove the direction $(\Rightarrow)$ and suppose that there exists an open embedding $\varphi \colon \p^2 \setminus C \hookrightarrow \p^2$ that does not extend to an automorphism of $\p^2$ and show that we are in one of the cases $(i), (ii)$, or $(iii)$. It follows by Lemma~$\ref{Lem:tower}$ that there exists a $(-1)$-tower resolution $\pi \colon X = X_n \xrightarrow{\pi_n} \ldots \xrightarrow{\pi_2} X_1 \xrightarrow{\pi_1} X_0 = \p^2$ of $C$ with base-points $p_1,\ldots,p_n$ and exceptional curves $E_1,\ldots,E_n$, and a $(-1)$-tower resolution $\eta \colon X \to \p^2$ of some curve $D \subset \p^2$ such that $\varphi \circ \pi = \eta$. Then $E_1 \cup \ldots \cup E_{n-1} \cup C_n$ is the exceptional locus of $\eta$, being the support of an SNC-divisor that has a tree structure. The minimal resolution of singularities of $C$ is $\pi_1 \circ \ldots \circ \pi_k$. The curve $C_k$ intersects $E_k$ and since $C$ is unicuspidal this intersection is in a single point with multiplicity $m_k$ (see Figure~$\ref{fig:unicuspidal}$ on the left). Since $\pi$ is a $(-1)$-tower resolution of $C$, the self-intersection of $C_k$ is $\geq -1$.

Suppose that $(C_k)^2 = -1$. Then $\pi$ has no other base-point, as this point would lie on $E_k \setminus C_k$, and this would imply that $C_n$ and $E_k$ do not intersect transversely in $X$. Moreover, the configuration of the curves $E_1,\ldots,E_{k-1},C_k$ is connected, i.e.\ $C_k$ transversely intersects exactly one curve $E \in \{E_1,\ldots,E_{k-1}\}$ in its intersection point with $E_k$. We observe that $C_k$ intersects $E_1\cup\ldots \cup E_{k-1}$ only in the curve $E$, and thus $E_1\cup\ldots \cup E_{k-1}$ is connected. But this implies that $E_k$ intersects only one curve from $E_1,\ldots,E_{k-1}$, and thus $E = E_{k-1}$. Now it follows from the fact that $E_{k-1} \cdot C_k = 1$ and from Lemma~$\ref{Lem:proximate mult}$ that $m_{k-1} - 1= m_k$ and we are thus in case $(i)$.

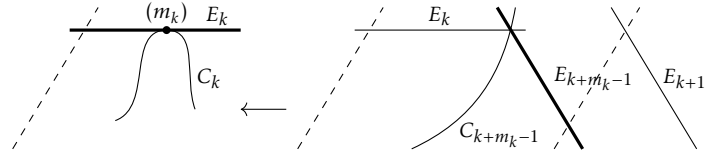
\begin{figure}[ht]
\centering
\begin{tikzpicture}[scale=0.75]
\draw[dashed] (2,0) -- (3.5,2.5);

\draw[very thick] (3,2.1) -- (6,2.1);
\node at (5.6,2.35) {\scriptsize $E_{k}$};

\draw (3.8,0.5) to [out=20, in=180] (4.7,2.1);
\draw (4.7,2.1) to [out=0, in=110] (5.2,0.7);

\draw (4.7,2.4) node{$\scriptstyle (m_k)$};
\fill (4.7,2.1) circle (2pt);

\node at (5.45,1.25) {\scriptsize $C_{k}$};

\draw[<-] (6,0.7) -- (6.8,0.7);

\draw[dashed] (7,0) -- (8.5,2.5);

\draw (8,2.1) -- (11,2.1);
\node at (9.5,2.35) {\scriptsize $E_{k}$};

\draw[very thick] (10.5,2.5) -- (12,0);
\draw[dashed] (11.5,0) -- (13,2.5);
\draw (12.5,2.5) -- (14,0);

\draw (10.82,2.5) to [out=-100, in=25] (9,0);
\node at (10.55,0.25) {\scriptsize $C_{k+m_k-1}$};
\node at (13.8,1.25) {\scriptsize $E_{k+1}$};
\node at (12.2,1.25) {\scriptsize $E_{k+m_k-1}$};

\end{tikzpicture}
\caption{Blow-up of the points $p_k,\ldots,p_{k+m_k-2}$.}\label{fig:unicuspidal}
\end{figure}

Suppose now that $(C_k)^2 \neq -1$. Then $\pi$ has a base-point on $E_k \cap C_k$. Thus $k<n$ and the union of the curves $E_1,\ldots,E_{n-1},C_n$ is SNC in $X$. It follows that the base-point $p_{i+1}$ is the intersection point between $C_i$ and $E_k$ for $i=k,\ldots,k+m_k-2$. The configuration of curves in $X_{k+m_k-1}$ is shown in the diagram on the right in Figure~$\ref{fig:unicuspidal}$. The self-intersection of $C_{k+m_k-1}$ is then $d^2 - \sum_{i=1}^k m_i^2 - (m_k-1)$, and this number is $\geq -1$, since $\pi$ is a $(-1)$-tower resolution of $C$. 

Assume that $d^2 - \sum_{i=1}^k m_i^2 - m_k = -2$, i.e.\ there is no base-point on $C_{k+m_k-1}$. But this means that there is no more base-point at all, since there is a triple intersection between $E_k, E_{k+m_k-1}$ and $C_{k+m_k-1}$, which would violate the SNC structure of the exceptional divisor of $\eta$ if $E_{k+m_k-1}$ was not the last exceptional curve of $\pi$. Since the union of $E_1, \ldots$, $E_{k+m_k-2}$, $C_{k+m_k-1}$ is connected, it follows that $m_k=2$ (see Figure~$\ref{fig:unicuspidal}$). It also follows that the union of $E_1,\ldots,E_{k+m_k-1}$ is connected and hence $C_k$ does not intersect any other exceptional curve apart from $E_k$ in $X_k$. It then follows from Lemma~$\ref{Lem:proximate mult}$ that $m_{k-1} - m_k \neq 1$ and thus $m_{k-1} \neq 3$. We are thus in case $(ii)$.

The last remaining case is when $d^2 - \sum_{i=1}^k m_i^2 - m_k \neq -2$, but then this expression is $\geq -1$ and we are in case $(iii)$. We observe moreover that the cases $(i)$, $(ii)$, $(iii)$ are mutually exclusive.

We now prove the direction $(\Leftarrow)$. In each case we first blow up the $k$ singular points of $C$ (with exceptional curves $E_1,\ldots,E_k$). In case~$(i)$, this yields the resolution in Figure~$\ref{fig:unicuspidal1}$. By the symmetry of the configuration, there exists a morphism from this surface to $\p^2$ contracting $C_k,E_{k-1},\ldots,E_1$.

\begin{figure}[ht]
\centering
\begin{tikzpicture}[scale=0.75]
\draw[dashed] (1,2.5) --(2.5,0);
\draw (2,0) -- (3.5,2.5);
\node at (2.98,0.7){\scriptsize $E_{k-1}$};

\draw[very thick] (2,2.1) -- (5,2.1);
\node at (4.4,2.35) {\scriptsize $E_{k}$};

\draw[very thick] (2.35,1) to [out=70, in=180] (3.25,2.1);
\draw[very thick] (3.25,2.1) to [out=0, in=110] (4.15,1);
\node at (4.39,1.25) {\scriptsize $C_{k}$};

\end{tikzpicture}
\caption{Case $(i)$.}\label{fig:unicuspidal1}
\end{figure}

In case~$(ii)$, we also blow up the the intersection point of $C_k$ and $E_k$ and obtain the diagram in Figure~$\ref{fig:unicuspidal2}$. Again, by the symmetry of the configuration, there exists a morphism to $\p^2$ that contracts $C_{k+1},E_k,\ldots,E_1$.

\begin{figure}[ht]
\centering
\begin{tikzpicture}[scale=0.75]

\draw[dashed] (9,0) -- (10.5,2.5);

\draw (10,2.1) -- (13,2.1);
\node at (11.5,2.35) {\scriptsize $E_{k}$};

\draw[very thick] (12.5,2.5) -- (14,0);
\node at (13.9,1.2){\scriptsize $E_{k+1}$};

\draw (12.82,2.5)[very thick] to [out=-100, in=25] (11,0);
\node at (12.15,0.2) {\scriptsize $C_{k+1}$};

\end{tikzpicture}
\caption{Case $(ii)$.}\label{fig:unicuspidal2}
\end{figure}

Finally, in case~$(iii)$, we blow up $m_k$ points, with exceptional curves $E_{k+1},\ldots,E_{k+m_k}$, all proximate to the intersection point between $C_k$ and $E_k$. Then $C_{k+m_k}$ intersects $E_{k+m_k}$ transversely and the self-intersection of $C_{k+m_k}$ is $\geq -1$. We can thus continue to blow up points until we have a $(-1)$-tower resolution of $C$, where $C_{n-1}$ intersects $E_{n-1}$ transversely. We then blow up any point on $E_{n-1}$ that does not lie on $C_{n-1}$ or any other exceptional curve. We then obtain the configuration in Figure~$\ref{fig:unicuspidal3}$. By the symmetry of this configuration, there exists a morphism to $\p^2$ by contracting the curves $C_n,E_{n-1},\ldots,E_1$.

\begin{figure}[ht]
\centering
\begin{tikzpicture}[scale=0.75]

\draw[dashed] (9,0) -- (10.5,2.5);

\draw (10,2.1) -- (13,2.1);
\node at (11,2.35) {\scriptsize $E_{n-1}$};

\draw[very thick] (11.5,2.5) -- (13,0);
\node at (12.25,0.7){\scriptsize $E_{n}$};

\draw (12.6,2.5)[very thick] to [out=-90, in=160] (13.9,0);
\node at (13.45,0.7) {\scriptsize $C_{n}$};

\end{tikzpicture}
\caption{Case $(iii)$.}\label{fig:unicuspidal3}
\end{figure}
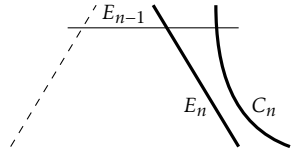
\end{proof}

\begin{remark}
{\rm Lemma~$\ref{lemma:unicuspidalcontractible}$ allows us to determine for a unicuspidal curve $C \subset \p^2$, whether  there exists an open embedding $\p^2 \setminus C \hookrightarrow \p^2$ that does not extend to an automorphism of $\p^2$, simply by looking at the multiplicity sequence of $C$.}
\end{remark}

\begin{corollary}\label{cor:unicuspidal8}
Let $C \subset \p^2$ be an irreducible unicuspidal curve of degree $\leq 8$ and let $\varphi \colon \p^2 \setminus C \to \p^2 \setminus D$ be an isomorphism, where $D \subset \p^2$ is some curve. Then $C$ and $D$ are projectively equivalent.
\end{corollary}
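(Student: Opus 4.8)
The plan is to combine all the structural results assembled above and then to run through the finite list of admissible multiplicity sequences. First I would dispose of the easy case: if $\varphi$ extends to an automorphism of $\p^2$, then $C$ and $D$ are projectively equivalent by definition, so assume from now on that $\varphi$ does not extend. Then $\deg(C)=\deg(D)$ by Lemma~\ref{Lem:equaldegree}; since $C$ is unicuspidal it is singular and hence has degree $d\geq3$; it is rational by Lemma~\ref{Lem:inequalities1} (or Proposition~\ref{Prop:form}); and by Corollary~\ref{Cor:table} its multiplicity sequence $(m_1,\ldots,m_k)$ (setting $m_2=1$ when $k=1$) is one of those in Table~\ref{table:sequences}.

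The bulk of the argument rests on a single mechanism. Whenever $d=m_1+m_2$, Lemma~\ref{lemma:first2} produces a line $L$ very tangent to $C$ through its proper singular point, so $C\setminus L\simeq\A^1$ by Lemma~\ref{Lem:unicuspidal}, and then Theorem~\ref{Thm:Yoshihara} immediately yields that $C$ and $D$ are projectively equivalent. Inspecting Table~\ref{table:sequences} one checks that this covers every entry except exactly six: $(2_{(6)})$ in degree $5$, $(3,2_{(7)})$ in degree $6$, $(3_{(4)},2_{(3)})$ in degree $7$, and $(4,3_{(5)})$, $(4,3_{(4)},2_{(3)})$, $(3_{(7)})$ in degree $8$.

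It remains to handle these six sequences. The unicuspidal quintic $(2_{(6)})$ genuinely has no very tangent line through its cusp, but this is exactly the situation settled by Proposition~\ref{proposition:quintic} and Corollary~\ref{cor:onequintic}, which force $D$ to be projectively equivalent to $C$. For the remaining five, I would show that they cannot occur at all for our curve $C$: by Lemma~\ref{lemma:notunicuspidal} no unicuspidal curve has multiplicity sequence $(3_{(4)},2_{(3)})$, $(4,3_{(5)})$ or $(4,3_{(4)},2_{(3)})$, contradicting the hypothesis; and for $(3,2_{(7)})$ and $(3_{(7)})$ a direct check of the three numerical conditions of Lemma~\ref{lemma:unicuspidalcontractible} (one has $d^2-\sum m_i^2=-1$ with $m_{k-1}=m_k=2$ in the first case, and $d^2-\sum m_i^2=1$ with $m_k=3$ in the second, so none of $(i)$, $(ii)$, $(iii)$ is met) shows that a unicuspidal curve with such a sequence admits no non-extending open embedding into $\p^2$, again contradicting the existence of $\varphi$. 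This exhausts the table.

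The proof is thus essentially bookkeeping; no new geometric input is needed beyond the tools already in place. The only case carrying real content is the unicuspidal quintic with multiplicity sequence $(2_{(6)})$, which was disposed of by the explicit analysis in Section~\ref{subsec:quintic}. The main obstacle is purely organizational: making sure that every row of Table~\ref{table:sequences} is accounted for by exactly one of the four mechanisms above, and in particular verifying that $d=m_1+m_2$ genuinely holds for all the sequences outside the six exceptions.
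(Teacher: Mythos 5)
Your proposal is correct and follows essentially the same route as the paper: extendable case trivial, Table~\ref{table:sequences} via Corollary~\ref{Cor:table}, the very tangent line from Lemma~\ref{lemma:first2} plus Theorem~\ref{Thm:Yoshihara} for all sequences with $d=m_1+m_2$, Corollary~\ref{cor:onequintic} for $(2_{(6)})$, Lemma~\ref{lemma:notunicuspidal} for $(3_{(4)},2_{(3)})$, $(4,3_{(5)})$, $(4,3_{(4)},2_{(3)})$, and Lemma~\ref{lemma:unicuspidalcontractible} for $(3_{(7)})$; your list of the six exceptional sequences and the numerical checks are all accurate. The only (harmless) divergence is that you dispose of $(3,2_{(7)})$ by the numerical criterion of Lemma~\ref{lemma:unicuspidalcontractible} ($d^2-\sum m_i^2-m_k=-3$, so none of $(i)$--$(iii)$ holds), whereas the paper appeals to Lemma~\ref{lemma:notunicuspidal} for that sequence; both arguments work.
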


\begin{proof}
If $\varphi$ extends to an automorphism of $\p^2$, the claim is trivial. If not, then $C$ has one of the multiplicity sequences in Table~$\ref{table:sequences}$, by Corollary~$\ref{Cor:table}$. In the case of the multiplicity sequence $(2_{(6)})$, the claim follows from Corollary~$\ref{cor:onequintic}$. For the multiplicity sequences $(3,2_{(7)})$, $(3_{(4)}$, $2_{(3)})$, $(4,3_{(5)})$, $(4,3_{(4)},2_{(3)})$ the claim follows from Lemma~$\ref{lemma:notunicuspidal}$ and for $(3_{(7)})$ from Lemma~$\ref{lemma:unicuspidalcontractible}$, since $8^2 - 7\cdot 3^2 - 3 = -2 < -1$. In all other cases, there exists a very tangent line through the proper singular point of $C$ by Lemma~$\ref{lemma:first2}$. Then the claim follows from Theorem~$\ref{Thm:Yoshihara}$.
\end{proof}

\subsection{Some special multiplicity sequences}

In this section we present some extension results for isomorphisms between curves that are not unicuspidal and have a multiplicity sequence of a special form. Together with the previous results this will lead to the proof of Theorem~$\ref{Thm:degree8}$.

\begin{proposition}\label{Prop:nojump}
Let $C$ be an irreducible rational curve of degree $d \geq 4$ and multiplicity sequence $(m_{(k)})$, where $m \geq 2$ and $k \geq 1$, and let $\varphi \colon \p^2 \setminus C \hookrightarrow \p^2$ be an open embedding that does not extend to an automorphism of $\p^2$. If $C$ is not unicuspidal, then $C \setminus \Sing(C)$ is isomorphic to $\A^1 \setminus \{0\}$ and $C$ has either degree $8$ with multiplicity sequence $(3_{(7)})$ or degree $16$ with multiplicity sequence $(6_{(7)})$.
\end{proposition}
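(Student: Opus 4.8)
The plan is to combine the diophantine constraints of Lemma~\ref{Lem:inequalities1} with a proximity analysis of the $(-1)$-tower resolution. First I would run the numerics: with multiplicity sequence $(m_{(k)})$, equation~(\ref{genus}) reads $d^2-3d+2 = k\,m(m-1)$ and inequality~(\ref{squares}) reads $d^2+1 \geq k m^2$. Subtracting, $k m^2 - (d^2-3d+2) \leq d^2 + 1 - (d^2-3d+2) = 3d-1$, i.e.\ $km \leq 3d-1$; combined with $km^2 = d^2-3d+2+km \leq d^2 - 1 + m$ this already forces $m$ and $k$ into a short finite list once one also uses $d<3m$ from Lemma~\ref{Lem:inequalities2} and $m_1+m_2 = 2m \leq d$. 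In fact $2m \leq d < 3m$ together with $km(m-1) = d^2-3d+2 = (d-1)(d-2)$ pins down the possibilities: writing $d = 2m+r$ with $0 \leq r < m$ and solving for $k$ gives only finitely many integral solutions, and one checks that the surviving candidates are exactly $(d,m,k) \in \{(8,3,7),(16,6,7)\}$ together with the sequences where $k$ is forced small enough that the curve is unicuspidal (which is excluded by hypothesis). I expect this to be the bookkeeping-heavy but entirely mechanical part.

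Next I would pin down the geometry of a non-unicuspidal $C$ with constant multiplicity sequence. Take the $(-1)$-tower resolution $\pi = \pi_1 \circ \cdots \circ \pi_n$ of Lemma~\ref{Lem:tower}, with the first $k$ blow-ups resolving the singularities (all of multiplicity $m$). By Lemma~\ref{Lem:proximate mult}, $C_k \cdot E_i = m - \sum_{p_j \succ p_i} m_j$ for $i \leq k$; since all multiplicities among $p_1,\dots,p_k$ equal $m$ and each must be $\geq 0$, the only way a point $p_{i+1}$ can fail to be proximate to $p_i$ while keeping things consistent is constrained, and one sees that at most one index $i<k$ can have $C_k\cdot E_i = 0$ replaced by a genuine extra intersection; $C$ non-unicuspidal forces $C_k$ to meet $E_1 \cup \cdots \cup E_k$ in exactly two points (it cannot be three or more, since the contracted locus of $\eta$ is a tree and $C_n$ is a leaf-adjacent $(-1)$-curve meeting it transversally — the argument is the one already used in Proposition~\ref{Prop:form} and Lemma~\ref{Lem:jump}). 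Two complement points on the normalization $\p^1$ of $C$ means $C \setminus \Sing(C) \simeq \p^1 \setminus \{2 \text{ points}\} \simeq \A^1 \setminus \{0\}$, which is the first claim.

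Then I would feed this "exactly two complement points" condition back into the proximity relations to eliminate any candidate other than $(3_{(7)})$ and $(6_{(7)})$ that the pure numerics might still allow, and to confirm these two do occur (consistency, not necessarily existence of an actual curve — though for $(3_{(7)})$ existence is anyway established later in the paper, cf.\ Theorem~\ref{theorem:counterexample}). Concretely: $C_k\cdot E_i = m - \#\{j : p_j \succ p_i\}\cdot m'$ type identities, with all the $m_j = m$, force each $p_{i+1}$ ($i<k-1$) to be proximate to $p_i$ and then the self-intersection bookkeeping $(C_k)^2 = d^2 - km^2$ together with the requirement that after finitely many further blow-ups on $E_k \cap C_k$ one reaches a $(-1)$-curve meeting the tree in a way compatible with $\eta$ being a $(-1)$-tower resolution of $D$ — this is exactly the mechanism of Lemma~\ref{lemma:unicuspidalcontractible}, adapted to the non-unicuspidal (two-branch) case — leaves only $d^2 - km^2 \in \{-1,0\}$ with the two branches symmetric, which together with $km^2 \leq d^2+1$ and $km(m-1) = d^2-3d+2$ isolates $(8,3,7)$ and $(16,6,7)$.

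The main obstacle I anticipate is the middle step: carefully showing that a non-unicuspidal $C$ with \emph{constant} multiplicity sequence has its two singular branches in a \emph{symmetric} configuration relative to $E_1,\dots,E_k$, so that the diophantine elimination closes. Naively the numerics $km(m-1) = d^2-3d+2$, $km \leq 3d-1$, $2m \leq d < 3m$ admit a few more integer triples than the two claimed; ruling those out requires the tree-structure argument on the contracted locus of $\eta$ (no triple points, $C_n$ a $(-1)$-curve, $E_1 \cup \cdots \cup E_{n-1} \cup C_n$ an SNC tree) to force extra proximity relations that the surplus triples violate. Once that symmetry is in hand, everything else is routine substitution.
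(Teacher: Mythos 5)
Your overall architecture matches the paper's: set up the $(-1)$-tower resolutions of Lemma~\ref{Lem:tower}, use Lemma~\ref{Lem:proximate mult} to see that $C_k$ meets only $E_k$ among the exceptional curves of the minimal resolution of singularities, use the tree structure of the contracted locus of $\eta$ to cap the number of branches of $C_k\cap E_k$ at two (giving $C\setminus\Sing(C)\simeq \A^1\setminus\{0\}$), and then combine geometric constraints with the genus formula to isolate the degrees. Up to that point your plan is essentially the paper's argument.

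However, there is a concrete error in the quantitative endpoint, and it would make your elimination fail. You assert that the geometry forces $(C_k)^2=d^2-km^2\in\{-1,0\}$. It does not: since $C_n$ must meet $E_k$ transversely in a single point, $C_k$ meets $E_k$ once transversely and once with multiplicity $m-1$ at the next base-point, and the tower must spend exactly $m-1$ further blow-ups (all with base-points on $E_k$) separating that tangency. The relevant self-intersection is therefore $\delta := (C_{k+m-1})^2 = d^2-km^2-(m-1)$, and the paper's case analysis shows that $k=1$, $\delta\geq 1$ and $\delta=0$ are all impossible, leaving $\delta=-1$, i.e.\ $d^2-km^2=m-2$. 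For the two actual solutions one has $d^2-km^2=1$ and $4$ respectively, so your condition $d^2-km^2\in\{-1,0\}$ would exclude precisely the answers you are trying to reach (for instance $d^2-km^2=0$ together with the genus formula gives $m=d^2/(3d-2)$, which is not an integer at $d=8$). Relatedly, the steps you defer to ``routine substitution'' are where most of the work lies: ruling out $\delta=0$ requires tracking the order in which $\eta$ contracts the chains to deduce $k=m-1$ and then a separate arithmetic argument; ruling out $k=1$ and $\delta\geq1$ each use the fact that $\eta$ cannot contract a curve that meets two $(-2)$-curves whose images would simultaneously become $(-1)$-curves. Your purely numerical first step also admits extra triples such as $(d,m,k)=(10,4,6)$ and $(17,6,8)$, which only these geometric arguments (and the constraint $\delta\geq-1$) eliminate.
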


\begin{proof}
Suppose that $C$ is not unicuspidal. By Lemma~$\ref{Lem:tower}$, there exists a $(-1)$-tower resolution of $C$ given by $\pi \colon X = X_n \xrightarrow{\pi_n} \ldots \xrightarrow{\pi_2} X_1 \xrightarrow{\pi_1} X_0 = \p^2$ with base-points $p_1,\ldots,p_n$ and exceptional curves $E_1,\ldots,E_n$, and a $(-1)$-tower resolution $\eta \colon X \to \p^2$ of some curve $D \subset \p^2$ such that $\varphi \circ \pi = \eta$. Then $E_1 \cup \ldots \cup E_{n-1} \cup C_n$ is the exceptional locus of $\eta$, being the support of an SNC-divisor that has a tree structure. The composition $\pi_k \circ \ldots \circ \pi_1$ is the minimal resolution of singularities of $C$. By Lemma~$\ref{Lem:proximate mult}$ we obtain that in the surface $X_k$, we have the intersection numbers $C_k \cdot E_i = 0$, for $i = 1,\ldots,k-1$, and $C_k \cdot E_k = m$. Since $E_1 \cup \ldots \cup E_{k-1} \cup C_k$ is not connected, we know that $n > k$, hence more points are blown up to obtain the $(-1)$-tower resolution $\pi$. Since we assumed $C$ not to be unicuspidal, the curves $C_k$ and $E_k$ intersect in at least two points in $X_k$. If $C_k$ and $E_k$ intersect in at least $3$ points, then it follows that $C_n$ and $E_k$ intersect in at least two points in $X$, which is not possible by the tree structure of $E_1 \cup \ldots \cup E_{n-1} \cup C_n$. It thus follows that $C_k$ and $E_k$ intersect in exactly two points and hence $C \setminus \Sing(C) = C \setminus \{p_1\} \simeq \A^1 \setminus \{0\}$. Moreover, it follows (again by the tree structure) that $C_n$ intersects $E_k$ transversely in one point in the surface $X$, thus $C_k$ intersects $E_k$ in one point transversely and in the point $p_{k+1}$ with intersection multiplicity $m-1$ in $X_k$.  The configuration of curves is illustrated in the diagram on the left in Figure~$\ref{fig:mmm1}$, where the dashed lines represent chains of $(-2)$-curves. Again by the fact that $C_n$ and $E_k$ intersect only in one point, the base-points of the blow-ups $\pi_{k+1},\ldots,\pi_{k+m-1}$ are proximate to $p_{k+1}$ (i.e.\ all lie on $E_k$) and we obtain  $E_k^2 = -m$ in $X_{k+m-1}$, as illustrated in the diagram on the right of Figure~$\ref{fig:mmm1}$. We denote the self-intersection of $C_{k+m-1}$ by $\delta$ and thus have $\delta = d^2 - km^2 - (m-1)$. Since $\pi$ is a $(-1)$-tower resolution of $C$ we have $\delta \geq -1$.

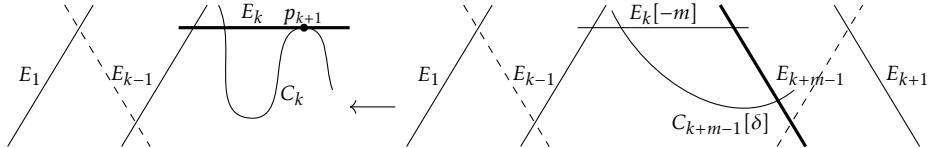
\begin{figure}[ht]
\centering
\begin{tikzpicture}[scale=0.75]
\draw (0,0) -- (1.5,2.5);
\node at (0.4,1.25) {\scriptsize $E_{1}$};
\draw[dashed] (1,2.5) --(2.5,0);
\draw (2,0) -- (3.5,2.5);
\node at (2.2,1.25) {\scriptsize $E_{k-1}$};

\draw[very thick] (3,2.1) -- (6,2.1);
\node at (4.3,2.35) {\scriptsize $E_{k}$};

\fill (5.2,2.1) circle (2pt);
\node at (5.2,2.3) {${\scriptstyle p_{k+1}}$};

\draw (3.7,2.5) to [out=-65, in=180] (4.3,0.5);
\draw (4.3,0.5) to [out=0, in=180] (5.2,2.1);
\draw (5.2,2.1) to [out=0, in=110] (5.7,1);

\node at (5,0.9) {\scriptsize $C_{k}$};

\draw[<-] (6,0.7) -- (6.8,0.7);

\draw (7,0) -- (8.5,2.5);
\node at (7.4,1.25) {\scriptsize $E_{1}$};
\draw[dashed] (8,2.5) -- (9.5,0);
\draw (9,0) -- (10.5,2.5);
\node at (9.25,1.25) {\scriptsize $E_{k-1}$};

\draw (10,2.1) -- (13,2.1);
\node at (11.5,2.35) {\scriptsize $E_{k}[-m]$};

\draw[very thick] (12.5,2.5) -- (14,0);
\draw[dashed] (13.5,0) -- (15,2.5);
\draw (14.5,2.5) -- (16,0);

\node at (14.1,1.25) {\scriptsize $E_{k+m-1}$};
\node at (15.8,1.25) {\scriptsize $E_{k+1}$};

\draw (10.6,2.4) to [out=-65, in=220] (13.8,1);
\node at (12.5,0.4) {\scriptsize $C_{k+m-1}[\delta]$};

\end{tikzpicture}
\caption{Minimal SNC-resolution of $C$.}\label{fig:mmm1}
\end{figure}

To simplify the later cases we first prove the following.

\begin{claim*}[1]
If $k =1$, we reach a contradiction.
\end{claim*}

\begin{subproof}[Proof of Claim~$(1)$.]
Since the degree of $C$ is $d \geq 4$, we obtain $m=d - 1 \geq 3$ by the rationality of $C$ and the genus-degree formula and hence we have $\delta = d+1 \geq 5$. Since $C_n$ has self-intersection $-1$, the base-point $p_{i+1}$ is the unique intersection point between $C_i$ and $E_i$ in $X_i$ for $i = m,\ldots,m+1+\delta$, as shown in Figure~$\ref{fig:(m)}$.

\begin{figure}[ht]
\centering
\begin{tikzpicture}[scale=0.75]

\draw (3,2.1)--(6,2.1);
\node at (4.6,2.4) {\scriptsize $ E_{1}[-m]$};

\draw (5.5,2.5)--(7,0);
\node at (6.68,1.21){\scriptsize $ E_{m}$};
\draw (6.5,0)--(8,2.5);
\draw[dashed] (7.5,2.5)--(9,0);
\node at (9.52,1.21){\scriptsize $ E_{2}$};
\draw (8.5,0)--(10,2.5);
\node at (7.81,1.21){\scriptsize $ E_{m-1}$};

\draw (6.7,1)--(5.5,-1);
\node at (6.52,-0.3){\scriptsize $ E_{m+1}$};
\draw[dashed] (6,-1)--(4.8,1);
\draw[very thick] (5.2,1)--(4,-1);
\node at (3.32,-0.65){\scriptsize $ E_{m+1+\delta}$};

\draw[very thick] (3.6,2.4) to [out=-80, in=160] (4.7,-0.6);
\node at (2.9,0.8) {\scriptsize $C_{m+1+\delta}$};

\end{tikzpicture}
\caption{Case $(m)$.}\label{fig:(m)}
\end{figure}

\noindent If $\pi$ has another base-point in $X_{m+1+\delta}$, then it lies on $E_{m+1+\delta} \setminus C_{m+1+\delta}$. We know that $\delta \geq 5$ and thus the curves $E_{m}$ and $E_{m+1}$ have self-intersection $-2$ in $X$. Moreover, the curves $E_1,\ldots,E_{n-1},C_n$ have a tree structure in $X$, thus $C_n$ and $E_m$ are uniquely connected via $E_1$ in this tree. The map $\eta$ successively contracts the curves in this tree, starting with $C_n$. The chain of curves that connects $C_n$ to $E_{m-1}$, respectively $E_{m+1}$, contains $E_{m}$, thus $\eta$ contracts $E_1$ before $E_{m-1}$ and $E_{m+1}$. But this is not possible since after contracting $E_{m}$, the images of both $E_{m-1}$ and $E_{m+1}$ have self-intersection $-1$. We thus get a contradiction and conclude that $k \geq 2$.
\end{subproof}

In the sequel, we separately study the cases $\delta \geq 1$, $\delta = 0$, and $\delta = -1$.

\begin{claim*}[2]
If $\delta \geq 1$, we reach a contradiction.
\end{claim*}

\begin{subproof}[Proof of Claim~$(2)$.]
Since $\pi$ is a $(-1)$-tower resolution of $C$ the base-point $p_{i+1}$ is the unique intersection point between $C_i$ and $E_i$ in $X_i$ for $i = k+m-1,\ldots,k+m+\delta$ (see Figure~$\ref{fig:mmm5}$).

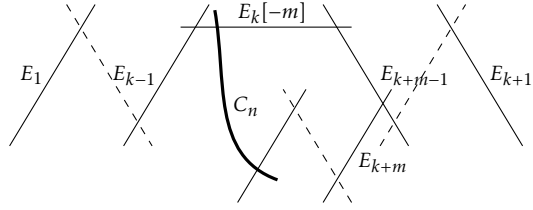
\begin{figure}[ht]
\centering
\begin{tikzpicture}[scale=0.75]
\draw (0,0)--(1.5,2.5);
\node at (0.4,1.25) {\scriptsize $E_{1}$};
\draw[dashed] (1,2.5)--(2.5,0);
\draw (2,0)--(3.5,2.5);
\node at (2.2,1.25) {\scriptsize $E_{k-1}$};

\draw (3,2.1)--(6,2.1);
\node at (4.6,2.35) {\scriptsize $E_{k}[-m]$};

\draw (5.5,2.5)--(7,0);
\node at (7.12,1.25){\scriptsize $E_{k+m-1}$};
\draw[dashed] (6.5,0)--(8,2.5);
\draw (7.5,2.5)--(9,0);
\node at (8.82,1.25){\scriptsize $E_{k+1}$};

\draw (6.7,1)--(5.5,-1);
\node at (6.55,-0.3){\scriptsize $E_{k+m}$};
\draw[dashed] (6,-1)--(4.8,1);
\draw (5.2,1)--(4,-1);

\draw[very thick] (3.6,2.4) to [out=-80, in=160] (4.7,-0.6);
\node at (4.15,0.7) {\scriptsize $C_n$};

\end{tikzpicture}
\caption{Case $\delta \geq 1$.}\label{fig:mmm5}
\end{figure}

\noindent Since $\delta \geq 1$, it follows that the curve $E_{k+m-1}$ has self-intersection $-2$ in $X$. Moreover, we know that $k \geq 2$ (i.e. there is a $(-2)$-curve $E_{k-1}$ as pictured in Figure~$\ref{fig:mmm5}$). The map $\eta$ contracts the curves $E_{k-1}$ and $E_{k+m-1}$ after $E_{k}$, since in the tree of curves $E_1,\ldots,E_{n-1},C_n$ the curves $C_n$ and $E_{k-1}$, respectively $E_{k+m-1}$, are connected via $E_k$. But after contracting $E_k$, the self-intersections of the images of $E_{k-1}$ and $E_{k+m-1}$ are both $-1$, which is not possible. We thus conclude that $\delta \geq 1$ is not possible.
\end{subproof}

\begin{claim*}[3]
If $\delta = 0$, we reach a contradiction.
\end{claim*}

\begin{subproof}[Proof of Claim~$(3)$.]
Since $\delta = 0$, the base-point of the next blow-up $\pi_{k+m}$ is the unique intersection point between $C_{k+m-1}$ and $E_{k+m-1}$ and we obtain the configuration of curves in the left part of Figure~\ref{fig:mmm3}.
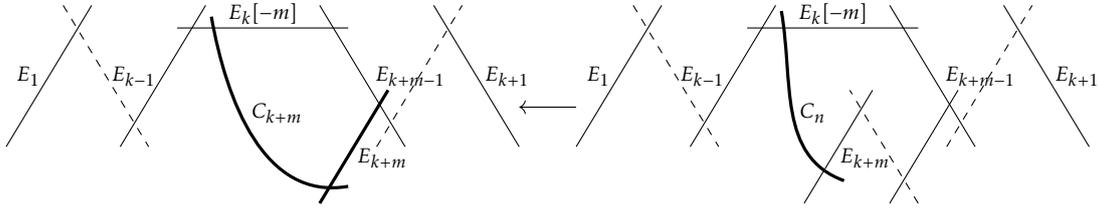
\begin{figure}[ht]
\centering
\begin{tikzpicture}[scale=0.75]
\draw (0,0)--(1.5,2.5);
\node at (0.4,1.25) {\scriptsize $E_{1}$};
\draw[dashed] (1,2.5)--(2.5,0);
\draw (2,0)--(3.5,2.5);
\node at (2.25,1.25) {\scriptsize $E_{k-1}$};

\draw (3,2.1)--(6,2.1);
\node at (4.5,2.35) {\scriptsize $E_{k}[-m]$};

\draw (5.5,2.5)--(7,0);
\node at (7.1,1.25){\scriptsize $E_{k+m-1}$};
\draw[dashed] (6.5,0)--(8,2.5);
\draw (7.5,2.5)--(9,0);
\node at (8.8,1.25){\scriptsize $E_{k+1}$};

\draw[very thick] (6.7,1)--(5.5,-1);
\node at (6.6,-0.2){\scriptsize $E_{k+m}$};

\draw[very thick] (3.6,2.3) to [out=-80, in=190] (6,-0.7);
\node at (4.75,0.6) {\scriptsize $C_{k+m}$};

\draw[<-] (9,0.7)--(10,0.7);

\draw (10,0)--(11.5,2.5);
\node at (10.4,1.25) {\scriptsize $E_{1}$};
\draw[dashed] (11,2.5)--(12.5,0);
\draw (12,0)--(13.5,2.5);
\node at (12.22,1.25) {\scriptsize $E_{k-1}$};

\draw (13,2.1)--(16,2.1);
\node at (14.5,2.35) {\scriptsize $E_{k}[-m]$};

\draw (15.5,2.5)--(17,0);
\node at (17.1,1.25){\scriptsize $E_{k+m-1}$};
\draw[dashed] (16.5,0)--(18,2.5);
\draw (17.5,2.5)--(19,0);
\node at (18.8,1.25){\scriptsize $E_{k+1}$};

\draw (16.7,1)--(15.5,-1);
\draw[dashed] (16,-1)--(14.8,1);
\draw (15.2,1)--(14,-1);
\node at (15.07,-0.2){\scriptsize $E_{k+m}$};

\draw[very thick] (13.6,2.4) to [out=-80, in=160] (14.7,-0.6);
\node at (14.15,0.6) {\scriptsize $C_n$};

\end{tikzpicture}
\caption{Case $\delta = 0$.}\label{fig:mmm3}
\end{figure}

In the surface $X$, the curves $E_{k+m},\ldots,E_n$ all lie in a chain (not necessarily in this order) between $C_n$ and $E_{k+m-1}$, i.e.\ the base-points always lie on the intersection points of the chain between $C_n$ and $E_{k+m-1}$, as otherwise there would be a loop in the configuration of the curves $E_1,\ldots,E_{n-1},C_n$ in $X$ (see the right part of Figure~$\ref{fig:mmm3}$). Moreover, $E_{k+m}$ intersects $C_n$ in this chain. The map $\eta$ first contracts $C_n$ and after this contraction the image of $E_k$ has self-intersection $-m+1$. It follows that  in the chain of curves between $C_n$ and $E_{k+m-1}$, after $C_n$ there is a chain of $(-2)$-curves of length $m-2$, such that the image of $E_k$ is $-1$, after this chain is contracted. This means that the base-points $p_{i+1}$ for $i=k+m,\ldots,k+m+(m-3)$ all lie on $E_{k+m-1}$. Denote the next curve in the chain after the $m-2$ $(-2)$-curves by $E$. After $C_n$ and the chain of $m-2$ $(-2)$-curves are contracted, the images of $E_k$ and $E$ intersect. Moreover, the self-intersection of $E_k$ is $-1$ in this surface and thus $\eta$ then contracts $E_k,\ldots,E_1$. Since we assume $k \geq 2$, it follows that the image of $E$ is tangent to $E_{k+m-1}$. But this means that $E$ is not contracted by $\eta$ and must in fact be $E_n = E_{k+m+(m-2)}$. Since the base-points $p_{k+m+1},\ldots,p_{k+m+(m-2)}$ all lie on $E_{k+m-1}$, the self-intersection of $E_{k+m-1}$ in $X$ is $-m$. We observe that after $\eta$ contracts $C_n$ and the chain $E_k,\ldots,E_1$ the image of $E_{k+m-1}$ has self-intersection $-m+k$, which has to be equal to $-1$, and thus $k=m-1$. From the condition $\delta =0$ and the genus-degree formula we obtain the equations
\begin{align*}
0 &= d^2 - (m-1)m^2 - m + 1, \\
0 &= d^2 - 3d + 2 - (m-1)m^2.
\end{align*}
Subtracting the second equation from the first then yields $3d - m^2 -1 = 0$. We can then substitute $d = \frac{m^2+1}{3}$ in the first equation and obtain
$$0 = \frac{(m^2+1)^2}{9} - (m-1)m^2 - (m-1) = (m^2+1)\left(\frac{m^2+1}{9} - m + 1\right),$$
which has no integer solutions in $m$. We conclude that $\delta = 0$ is not possible.
\end{subproof}

\begin{claim*}[4]
If $\delta = -1$, then $C$ is of degree $8$ or $16$ with multiplicity sequence $(3_{(7)})$ or $(6_{(7)})$ respectively.
\end{claim*}

\begin{subproof}[Proof of Claim~$(4)$.]
We already have a $(-1)$-tower resolution of $C$ in this case (see Figure~$\ref{fig:mmm2}$). We observe that blowing up the intersection point between $E_k$ and $E_{k+m-1}$ yields a symmetric diagram and thus there exists a morphism $X \to \p^2$ whose contracted locus is exactly $E_1\cup \ldots \cup E_{k+m-1} \cup C_{k+m}$. 

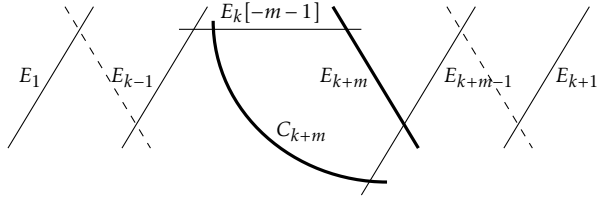
\begin{figure}[ht]
\centering
\begin{tikzpicture}[scale=0.75]
\draw (0,0) -- (1.5,2.5);
\node at (0.4,1.25) {\scriptsize $E_{1}$};
\draw[dashed] (1,2.5) -- (2.5,0);
\draw (2,0) -- (3.5,2.5);
\node at (2.2,1.25) {\scriptsize $E_{k-1}$};

\draw (3,2.1) -- (6.2,2.1);
\node at (4.6,2.4) {\scriptsize $E_{k}[-m-1]$};

\draw[very thick] (5.7,2.5) -- (7.2,0);
\node at (5.9,1.25) {\scriptsize $E_{k+m}$};
\draw (6.2,-0.83) -- (8.2,2.5);
\node at (8.27,1.25){\scriptsize $E_{k+m-1}$};
\draw[dashed] (7.7,2.5) -- (9.2,0);
\node at (10,1.25){\scriptsize $E_{k+1}$};
\draw (8.7,0) -- (10.2,2.5);

\draw[very thick] (3.6,2.25) to [out=-90, in=180] (6.65,-0.6);
\node at (5.15,0.3) {\scriptsize $C_{k+m}$};

\end{tikzpicture}
\caption{Case $\delta = -1$.}\label{fig:mmm2}
\end{figure}

\noindent The condition $\delta = -1$ and the genus-degree formula give us the following equations for the values of $d,m,k$:
\begin{align*}
0 &= d^2 - km^2 - m + 2,\\
0 &= d^2 - 3d + 2 - km^2 - km.
\end{align*}
We see from the first equation that any integer factor of $d$ and $m$ also divides $2$. Hence the greatest common divisor of $d$ and $m$ is $1$ or $2$. Subtracting the equations yields $3d - m - km = 0$, from which we conclude that $m$ divides $3d$. It thus follows that $m$ divides $6$. Next, we replace $k = \frac{3d - m}{m}$ in the first equation above and get $d^2 - 3dm - m^2 - m + 2 = 0$. We then check for natural solutions in $d$ for $m \in \{2,3,6\}$ and find $(d,m) =(8,3)$ or $(16,6)$ (both with $k=7$) as the only possibilities.
\end{subproof}
This concludes the proof of Proposition~$\ref{Prop:nojump}$.
\end{proof}

\begin{remark}
{\rm The assumption that $d = \deg(C) \geq 4$ in Proposition~$\ref{Prop:nojump}$ is necessary since the the complement of a nodal cubic has non-extendable automorphisms (see Remark~$\ref{remark:cubic}$).}
\end{remark}

\begin{corollary}\label{cor:equalcase}
Let $C \subset \p^2$ be an irreducible rational curve with one of the multiplicity sequences $(2_{(3)})$, $(3)$, $(4)$, $(2_{(6)})$, $(5)$, $(6)$, or $(7)$. If $C$ is not unicuspidal, then any open embedding $\p^2 \setminus C \hookrightarrow \p^2$ extends to an automorphism of $\p^2$.
\end{corollary}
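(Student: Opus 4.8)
The plan is to deduce this corollary directly from Proposition~\ref{Prop:nojump}, which already contains all the substantive work; what remains is a bookkeeping verification. The first step is to observe that each of the seven multiplicity sequences in the statement is a constant sequence of the form $(m_{(k)})$ with $m \geq 2$ and $k \geq 1$, so the combinatorial hypothesis of Proposition~\ref{Prop:nojump} is met, provided we also check the degree condition $d \geq 4$.

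Next I would record the degrees. Using the genus-degree identity~(\ref{genus}), namely $d^2 - 3d + 2 = \sum_i m_i(m_i - 1)$, one computes $d = 4$ for $(2_{(3)})$ and $(3)$; $d = 5$ for $(4)$ and $(2_{(6)})$; $d = 6$ for $(5)$; $d = 7$ for $(6)$; and $d = 8$ for $(7)$. In particular $d \geq 4$ in every case, so Proposition~\ref{Prop:nojump} is applicable to $C$.

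Then I would argue by contradiction. Suppose $\varphi \colon \p^2 \setminus C \hookrightarrow \p^2$ is an open embedding that does not extend to an automorphism of $\p^2$. Since $C$ is rational, not unicuspidal, of degree $\geq 4$, and has a multiplicity sequence of the form $(m_{(k)})$, Proposition~\ref{Prop:nojump} forces $C$ to be of degree $8$ with multiplicity sequence $(3_{(7)})$ or of degree $16$ with multiplicity sequence $(6_{(7)})$. Neither of these occurs among the seven sequences listed in the corollary, which is a contradiction. Hence every open embedding $\p^2 \setminus C \hookrightarrow \p^2$ extends to an automorphism of $\p^2$.

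I do not expect any genuine obstacle here: the geometric content is entirely absorbed into Proposition~\ref{Prop:nojump}, and the corollary amounts to checking that none of the seven listed sequences coincides with one of the two exceptional cases produced by that proposition. The only point requiring a moment's care is confirming that all seven curves have degree at least $4$, so that the proposition genuinely applies.
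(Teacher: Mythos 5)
Your proposal is correct and is exactly the paper's argument: the paper proves this corollary by declaring it a direct consequence of Proposition~\ref{Prop:nojump}, and your write-up merely makes explicit the routine checks (each sequence is constant, the degree computed from rationality and the genus formula is at least $4$, and none of the listed sequences is $(3_{(7)})$ or $(6_{(7)})$) that the paper leaves implicit. The degree computations are all accurate.
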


\begin{proof}
This is a direct consequence of Proposition~$\ref{Prop:nojump}$.
\end{proof}

\begin{proposition}\label{Prop:1jump}
Let $C \subset \p^2$ be an irreducible rational curve of degree $d$ and multiplicity sequence $\left(m_{(k)},(m-1)_{(l)} \right)$, where $m \geq 3$ and $k,l \geq 1$ and let $\varphi \colon \p^2 \setminus C \hookrightarrow \p^2$ be an open embedding that does not extend to an automorphism of $\p^2$. Then either $C$ is unicuspidal or of degree $6$ with multiplicity sequence $(3,2_{(7)})$ or of degree $13$ with multiplicity sequence $(5_{(6)},4)$.
\end{proposition}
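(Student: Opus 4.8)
The plan is to follow the strategy of the proof of Proposition~\ref{Prop:nojump}, now accommodating the single drop from $m$ to $m-1$ in the multiplicity sequence. Assume $C$ is not unicuspidal and let $\varphi \colon \p^2 \setminus C \hookrightarrow \p^2$ be a non-extending open embedding. By Lemma~\ref{Lem:tower} we obtain $(-1)$-tower resolutions $\pi$ of $C$ and $\eta$ of $D$ on a common surface $X$ with exceptional curves $E_1,\ldots,E_n$, where the exceptional locus of $\eta$ is $E_1\cup\cdots\cup E_{n-1}\cup C_n$, an SNC tree. Writing $N=k+l$, the composition $\pi_1\circ\cdots\circ\pi_N$ is the minimal resolution of singularities of $C$. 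Using Lemma~\ref{Lem:proximate mult} together with $C_n\cdot E_i\geq 0$, one checks that among $p_1,\ldots,p_N$ only consecutive proximities occur, so in $X_N$ the curves $E_1,\ldots,E_N$ form a chain $E_1 - E_2 - \cdots - E_N$ with $E_1,\ldots,E_{N-1}$ of self-intersection $-2$ and $E_N$ of self-intersection $-1$, and $C_N\cdot E_i=0$ for $i\neq k,N$, while $C_N\cdot E_k=1$ and $C_N\cdot E_N=m-1\geq 2$. Since $C$ is not unicuspidal, $C_N$ meets the exceptional locus in at least two points; as in Proposition~\ref{Prop:nojump}, a loop in the exceptional locus of $\eta$ is impossible, so $C_N$ meets $E_N$ in a single point $b$ with $(C_N\cdot E_N)_b=m-1$ and meets $E_k$ transversally in a single point $q$ away from the junctions of the chain, and $C\setminus\Sing(C)\simeq\A^1\setminus\{0\}$.

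\textbf{The invariant and the case analysis.} Put $\delta:=C_N^2=d^2-km^2-l(m-1)^2$; by the genus--degree formula $\delta=3d-2-km-l(m-1)$. Because $\pi$ is a $(-1)$-tower resolution, the remaining base-points of $\pi$ lie along the contact of $C$ with $E_N$ at $b$, and exactly $\delta+1$ of them are needed to bring the self-intersection of the strict transform of $C$ down to $-1$; in particular $\delta\geq -1$. If $\delta=-1$, then $n=N$, so $\pi$ is the minimal resolution of singularities, $E_N$ is the strict transform of $D$, and $C_N$ is tangent to $E_N$ to order $m-1$; the exceptional locus of $\eta$ is the chain $E_1-\cdots-E_{N-1}$ with $C_N$ attached at $E_k$, and as in Claim~$(4)$ of Proposition~\ref{Prop:nojump} the symmetry of the diagram shows this is a genuine $(-1)$-tower resolution of an irreducible curve $D$. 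The relations $\delta=-1$ and the genus--degree formula give $km+l(m-1)=3d-1$ and $km^2+l(m-1)^2=d^2+1$, and combining these with $m_1+m_2\leq d<3m$ (Lemma~\ref{Lem:inequalities2}) leaves only $(d,m,k,l)=(6,3,1,7)$, i.e.\ degree $6$ with multiplicity sequence $(3,2_{(7)})$. If $0\leq\delta\leq m-3$, then after the $\delta+1\leq m-2$ extra blow-ups the tangency of $C$ with $E_N$ is not yet resolved, so $C_n$ still meets $E_N$; but then $C_n$ meets both $E_k$ (at $q$) and $E_N$, which are joined by a chain of exceptional curves all lying in the exceptional locus of $\eta$, producing a loop --- contradiction.

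\textbf{The remaining two cases.} If $\delta=m-2$, the $m-1$ extra blow-ups resolve the tangency exactly; one then analyses the resulting configuration (the self-intersections along the tangency-resolution chain, and the requirement that the exceptional locus of $\eta$ be a connected SNC tree contracting to $\p^2$ with $E_n$ an irreducible curve), which eliminates all solutions of the corresponding Diophantine system except $(d,m,k,l)=(13,5,6,1)$, i.e.\ degree $13$ with multiplicity sequence $(5_{(6)},4)$. If $\delta\geq m-1$, further blow-ups are needed after the tangency is fully resolved, and one reaches a contradiction exactly as in Claims~$(2)$ and~$(3)$ of Proposition~\ref{Prop:nojump}: the map $\eta$ would have to contract two adjacent $(-2)$-curves to $(-1)$-curves simultaneously. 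This exhausts all cases and gives the statement.

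\textbf{Where the difficulty lies.} The routine parts are the proximity bookkeeping, the reduction to the invariant $\delta$, and the contradictions coming from loops or from the ``two adjacent $(-2)$-curves'' obstruction. The hard part is the $\delta=m-2$ case, where one must track the dual graph and self-intersections through the chain of blow-ups resolving the order-$(m-1)$ tangency and rule out the spurious solutions of the Diophantine system (and, where needed, argue about the realizability of the multiplicity sequences involved). This is of the same nature as the case analysis of Proposition~\ref{Prop:nojump}, but the extra parameter $l$ makes it more delicate.
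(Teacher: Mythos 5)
The skeleton you propose (chain configuration, the invariant $\delta=C_N^2$, case analysis on $\delta$) matches the paper's overall strategy, but there is a concrete gap at the start that breaks the argument. You claim that $C_N$ meets $E_k$ transversally at a point $q$ ``away from the junctions of the chain''. The loop/tree argument does not give this: when $l=1$ the curves $E_k$ and $E_N=E_{k+1}$ are adjacent, and $C_N$ may pass through the junction $E_k\cap E_N$, meeting $E_k$ transversally there, $E_N$ with multiplicity $m-2$ there, and $E_N$ transversally at one further point. That is not a loop but a non-SNC triple point, which is removed by blowing it up. The paper treats this as a separate sub-case (part (B)(i) of its proof), and it is precisely this configuration that produces the degree-$13$ curve with multiplicity sequence $(5_{(6)},4)$: there $\delta=169-6\cdot25-16=3=m-2$, but the contact of $C$ with $E_N$ sits at the junction with $E_k$. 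Under the configuration you actually assume, $\delta=m-2$ does not yield the degree-$13$ example: the $m-1$ extra blow-ups needed to separate $C$ from $E_N$ exactly exhaust the budget $\delta+1=m-1$, after which the new chain $E_{N+1},\dots,E_{N+m-2}$ is attached only to the non-contracted curve $E_n=E_{N+m-1}$, so the contracted locus of $\eta$ is disconnected --- a contradiction (this is the paper's Claim $(\mathrm{B.ii.}1)$, and the analogous statement for $k=1$). So as written your proof both omits the configuration responsible for one of the two exceptional conclusions and attributes that conclusion to a case which is in fact contradictory.

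Two secondary gaps. For $\delta\geq m-1$, the ``two adjacent $(-2)$-curves'' obstruction borrowed from Proposition~\ref{Prop:nojump} only disposes of the large values; the boundary cases $\delta=m-1$ and $\delta=m$ (the paper's $\delta=0$ and $\delta=1$ after its renormalization) require a further reduction to $l=1$ followed by Diophantine arguments (for instance showing that $d^2=m(m^2-1)$ and $d^2-6d+2=0$ have no admissible integer solutions), which the geometric obstruction alone does not provide. Likewise, the eliminations you assert for $\delta=-1$ (that only $(d,m,k,l)=(6,3,1,7)$ survives, which for $k\geq2$, $l=1$ needs the separate identity $d^2=m(3d-2)$) and for the $\delta=m-2$ case are stated but not carried out; in the paper these occupy several delicate divisibility and congruence arguments, and they are where most of the actual work lies.
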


\begin{proof}
We suppose that $C$ is not unicuspidal. Since $\varphi$ does not extend to an automorphism of $\p^2$, it follows by Lemma~$\ref{Lem:tower}$ that there exists a $(-1)$-tower resolution $\pi \colon X = X_n \xrightarrow{\pi_n} \ldots \xrightarrow{\pi_2} X_1 \xrightarrow{\pi_1} X_0 = \p^2$ of $C$ with base-points $p_1,\ldots,p_n$ and exceptional curves $E_1,\ldots,E_n$, and a $(-1)$-tower resolution $\eta \colon X \to \p^2$ of some curve $D \subset \p^2$ such that $\varphi \circ \pi = \eta$. Then $E_1 \cup \ldots \cup E_{n-1} \cup C_n$ is the exceptional locus of $\eta$, being the support of an SNC-divisor on $X$ that has a tree structure. The composition $\pi_{k+l} \circ \ldots \circ \pi_1$ is the minimal resolution of the singularities of $C$. By Lemma~$\ref{Lem:proximate mult}$ we obtain that in the surface $X_{k+l}$, we have the intersection numbers $C_{k+l} \cdot E_k = 1$ and $C_{k+l} \cdot E_i = 0$ for $i=1,\ldots,k-1$ and $i=k+1,\ldots,k+l-1$.

\setcounter{claim}{0}
\begin{claim*}[1]\label{claim5}
If $k \geq 2$ and $l \geq 2$, we reach a contradiction.
\end{claim*}

\begin{subproof}[Proof of Claim~$(1)$]
By Lemma~$\ref{Lem:proximate mult}$ we have $C_{k+l} \cdot E_{k+l} = m-1$. The configuration is shown in Figure~$\ref{fig:i1}$, where the dashed lines represent chains of $(-2)$-curves.

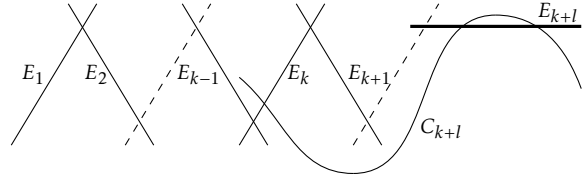
\begin{figure}[ht]
\centering
\begin{tikzpicture}[scale=0.75]
\draw (0,0) -- (1.5,2.5);
\node at (0.4,1.25) {\scriptsize $E_{1}$};
\draw (1,2.5) --(2.5,0);
\node at (1.5,1.25) {\scriptsize $E_{2}$};
\draw[dashed] (2,0) -- (3.5,2.5);
\draw (3,2.5) -- (4.5,0);
\node at (3.3,1.25) {\scriptsize $E_{k-1}$};
\draw (4,0) -- (5.5,2.5);
\node at (5.05,1.25) {\scriptsize $E_{k}$};
\draw (5,2.5) -- (6.5,0);
\node at (6.3,1.25) {\scriptsize $E_{k+1}$};
\draw[dashed] (6,0) -- (7.5,2.5);

\draw[very thick] (7,2.1) -- (10,2.1);
\node at (9.6,2.35) {\scriptsize $E_{k+l}$};

\draw (4,1.2) to [out=-40, in=180] (6,-0.5);
\draw (6,-0.5) to [out=0, in=180] (8.5,2.3);
\draw (8.5,2.3) to [out=0, in=110] (10,1);

\node at (7.55,0.3) {\scriptsize $C_{k+l}$};

\end{tikzpicture}
\caption{Minimal resolution of singularities of $C$.}\label{fig:i1}
\end{figure}

\noindent If $\pi$ has a base-point in $X_{k+l}$, then it lies on the intersection with $C_{k+l}$ and $E_{k+l}$, otherwise there would be a loop formed by $E_k,\ldots,E_{k+l}$ and $C_n$ in $X_n$, which is not possible by the tree structure of the curves $E_1,\ldots,E_{n-1},C_n$. Since $E_{k+l}$ does not intersect the $(-2)$-curves $E_{k-1}$, $E_k$, and $E_{k+1}$, it follows that their self-intersections in $X$ are also $-2$. We observe that the map $\eta$ contracts the curve $E_k$ before $E_{k-1}$ and $E_{k+1}$, since $C_n$ and $E_{k-1}$, respectively $E_{k+1}$, are connected via $E_k$ in the graph of the curves $E_1,\ldots,E_{n-1},C_n$. But after contracting $E_k$, the images of $E_{k-1}$ and $E_{k+1}$ both have self-intersection $-1$, which is a contradiction since $\eta$ is a $(-1)$-tower resolution.
\end{subproof}

In the sequel, we separately look at the more involved cases where $k=1$ or $l=1$ (parts (A) and (B) below).

\bigskip

(A) We assume that $k=1$.

\begin{claim*}[\mathrm{A.}1]\label{claim6}
If $(C_{l+1})^2 = -1$, then $C$ has degree $6$ and multiplicity sequence $(3,2_{{(7)}})$.
\end{claim*}

\begin{subproof}[Proof of Claim~$(\mathrm{A.}1)$]
By Lemma~$\ref{Lem:proximate mult}$ we have $C_{l+1} \cdot E_{l+1} = m-1$. If $C_{l+1}$ has self-intersection $-1$, then by the symmetry of the configuration (see Figure~$\ref{fig:32222222}$), there exists a morphism $X \to \p^2$ whose contracted locus is $E_1 \cup \ldots \cup E_{l} \cup C_{l+1}$.

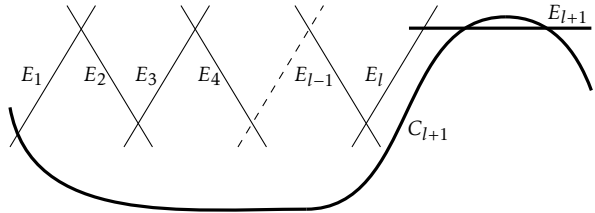
\begin{figure}[ht]
\centering
\begin{tikzpicture}[scale=0.75]

\draw (0,0) -- (1.5,2.5);
\node at (0.4,1.25) {\scriptsize $E_{1}$};
\draw (1,2.5) --(2.5,0);
\node at (1.5,1.25) {\scriptsize $E_{2}$};
\draw (2,0) -- (3.5,2.5);
\node at (2.4,1.25) {\scriptsize $E_{3}$};
\draw (3,2.5) -- (4.5,0);
\node at (3.5,1.25) {\scriptsize $E_{4}$};
\draw[dashed] (4,0) -- (5.5,2.5);
\draw (5,2.5) -- (6.5,0);
\node at (5.35,1.25) {\scriptsize $E_{l-1}$};
\draw (6,0) -- (7.5,2.5);
\node at (6.4,1.25) {\scriptsize $E_{l}$};

\draw[very thick] (7,2.1) -- (10.2,2.1);
\node at (9.8,2.35) {\scriptsize $E_{l+1}$};

\draw[very thick] (0,0.7) to [out=-80, in=180] (5.2,-1.1);
\draw[very thick] (5.2,-1.1) to [out=0, in=180] (8.7,2.3);
\draw[very thick] (8.7,2.3) to [out=0, in=110] (10.2,1);

\node at (7.35,0.3) {\scriptsize $C_{l+1}$};

\end{tikzpicture}
\caption{Case $k=1$, $(C_{l+1})^2=-1$.}\label{fig:32222222}
\end{figure}

From $(C_{l+1})^2 = -1$ and the genus-degree formula we obtain the following two identities:
\begin{align*}
0 &= d^2 - m^2 - l(m-1)^2 + 1, \\
0 &= d^2 -3d + 2 - m(m-1) - l(m-1)(m-2).
\end{align*}
Subtracting the second equation from the first yields $3d - 1 - m - l(m-1) = 0$. We then substitute the equality $l(m-1) = 3d - 1 - m$ in the first equation and obtain $d^2 = 3d(m-1)$ and thus $d = 3(m-1)$. Finally, we get
$$0 = 3d - 1 - m - l(m-1) = (9-l)(m-1) - (m+1)$$
and for positive integer values this equation is only satisfied with $m=2$ and $l=7$ since $1<9-l=\frac{m+1}{m-1}<2$, for $m\geq 3$. This leads to the multiplicity sequence $(3,2_{(7)})$ in degree $6$. The corresponding resolution diagram is shown in Figure~$\ref{fig:32222222}$, where the dashed line represents one $(-2)$-curve.
\end{subproof}

We suppose from now on that we are not in the case of the multiplicity sequence $(3,2_{(7)})$. We then have $(C_{l+1})^2 > -1$. This implies that $\pi$ has a base-point in the intersection of $C_{l+1}$ with $E_{l+1}$. In fact, the curves $C_n$ and $E_{l+1}$ do not intersect in $X$, otherwise there would be a loop in the graph of the curves $E_1,\ldots,E_{n-1},C_n$. Thus $C_{l+1}$ and $E_{l+1}$ intersect in a single point in $X_{l+1}$, and hence the intersection multiplicity is $m-1$. We have thus the configuration of curves shown in the left part of Figure~$\ref{fig:i2}$.

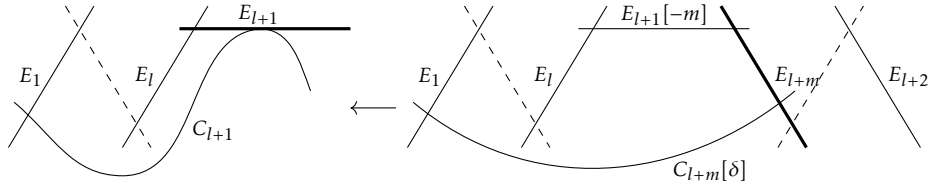
\begin{figure}[ht]
\centering
\begin{tikzpicture}[scale=0.75]
\draw (0,0) -- (1.5,2.5);
\node at (0.4,1.25) {\scriptsize $E_{1}$};
\draw[dashed] (1,2.5) --(2.5,0);
\draw (2,0) -- (3.5,2.5);
\node at (2.4,1.25) {\scriptsize $E_{l}$};

\draw[very thick] (3,2.1) -- (6,2.1);
\node at (4.4,2.35) {\scriptsize $E_{l+1}$};

\draw (0.1,0.8) to [out=-40, in=180] (2,-0.5);
\draw (2,-0.5) to [out=0, in=180] (4.5,2.1);
\draw (4.4,2.1) to [out=0, in=110] (5.3,1);

\node at (3.6,0.3) {\scriptsize $C_{l+1}$};

\draw[<-] (6,0.7) -- (6.8,0.7);

\draw (7,0) -- (8.5,2.5);
\node at (7.4,1.25) {\scriptsize $E_{1}$};
\draw[dashed] (8,2.5) -- (9.5,0);
\draw (9,0) -- (10.5,2.5);
\node at (9.4,1.25) {\scriptsize $E_{l}$};

\draw (10,2.1) -- (13,2.1);
\node at (11.5,2.35) {\scriptsize $E_{l+1}[-m]$};

\draw[very thick] (12.5,2.5) -- (14,0);
\draw[dashed] (13.5,0) -- (15,2.5);
\draw (14.5,2.5) -- (16,0);

\node at (13.85,1.25) {\scriptsize $E_{l+m}$};
\node at (15.8,1.25) {\scriptsize $E_{l+2}$};

\draw (7.1,0.8) to [out=-40, in=220] (13.8,1);
\node at (12.3,-0.4) {\scriptsize $C_{l+m}[\delta]$};

\end{tikzpicture}
\caption{Minimal SNC-resolution of $C$ for $k=1$.}\label{fig:i2}
\end{figure}

Since $C_n$ and $E_{l+1}$ do not intersect in $X$, it follows that the base-point $p_{i+1}$ for $i=l+1,\ldots,l+m-1$ is the unique intersection point between $C_i$ and $E_{i}$, which also lies on $E_{l+1}$. The configuration of curves in $X_{l+m}$ is shown in the right part of Figure~$\ref{fig:i2}$. We denote the self-intersection number of $C_{l+m}$ by $\delta$ and this number is equal to $d^2 - m^2 - l(m-1)^2 - (m-1)$. Since $\pi$ is a $(-1)$-tower resolution we have that $\delta \geq -1$.

\begin{claim*}[\mathrm{A.}2]\label{claim7}
If $\delta = -1$, we reach a contradiction.
\end{claim*}

\begin{subproof}[Proof of Claim~$(\mathrm{A}.2)$]
From $\delta = -1$ and the genus-degree formula we obtain
\begin{align*}
0 &= d^2 - m^2 - l(m-1)^2 - m + 2, \\
0 &= d^2 -3d + 2 - m(m-1) - l(m-1)(m-2).
\end{align*}
Subtracting the second equation from the first yields $3d - 2m - l(m-1) = 0$. We then replace $l = \frac{3d - 2m}{m-1}$ in the first equation and obtain the identity
$$0 = d^2 - m^2 - (3d-2m)(m-1) - m + 2 = d^2 - (m-1)(3d-m+2).$$
It follows that $m-1$ divides $d^2$. Let $p$ be a prime number that divides $m-1$. Then $p$ divides $d^2$ and thus also $d$. From the equality $l(m-1)=3d-2m$ it follows that $p$ divides $2m$. Since $m-1$ and $m$ are coprime, it follows that $p=2$. We can then write $m-1 = 2^r$ for some $r \geq 1$. We observe that $2^r$ divides $d^2$. Moreover, $2^r$ divides $3d-2(2^r+1)$ and thus also $3d-2$. But then $2^r$ divides $d^2-3d+2 = (d-1)(d-2)$. Since $d$ is even, it follows that $2^r$ divides $(d-2)$. Since $2^r$ divides $3d-2 = (d-2) + 2d$, it follows that $2^{r-1}$ divides $d$, but also $d-2$, and thus $r$ must be $1$ or $2$. Using these values for $r$, it is easy to check that the equations above have no integral solutions for $d$. We can thus conclude that $\delta \neq -1$.
\end{subproof}

\begin{claim*}[\mathrm{A.}3]\label{claim8}
If $\delta = 0$, we reach a contradiction.
\end{claim*}

\begin{subproof}[Proof of Claim~$(\mathrm{A}.3)$]
The curves $C_{l+m}$ and $E_{l+m}$ have a unique intersection point, hence this is the base-point $p_{l+m+1}$. After blowing up $p_{l+m+1}$ we obtain a $(-1)$-tower resolution of $C$ (see the left part of Figure~$\ref{fig:i3}$).
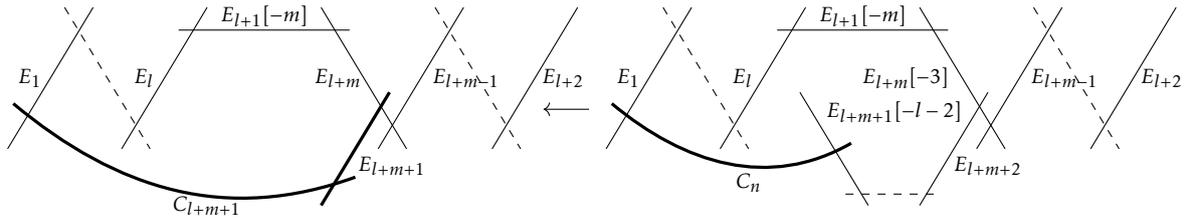
\begin{figure}[ht]
\centering
\begin{tikzpicture}[scale=0.75]
\draw (0,0) -- (1.5,2.5);
\node at (0.4,1.25) {\scriptsize $E_{1}$};
\draw[dashed] (1,2.5) -- (2.5,0);
\draw (2,0) -- (3.5,2.5);
\node at (2.4,1.25) {\scriptsize $E_{l}$};

\draw (3,2.1) -- (6,2.1);
\node at (4.5,2.35) {\scriptsize $E_{l+1}[-m]$};

\draw (5.5,2.5) -- (7,0);
\node at (5.8,1.25){\scriptsize $E_{l+m}$};
\draw (6.5,0) -- (8,2.5);
\node at (8.05,1.25){\scriptsize $E_{l+m-1}$};
\draw[dashed] (7.5,2.5) -- (9,0);
\node at (9.75,1.25){\scriptsize $E_{l+2}$};
\draw (8.5,0) -- (10,2.5);

\draw[very thick] (6.7,1) -- (5.5,-1);
\node at (6.75,-0.3){\scriptsize $E_{l+m+1}$};

\draw[very thick] (0.1,0.8) to [out=-40, in=200] (6.1,-0.5);
\node at (3.5,-1) {\scriptsize $C_{l+m+1}$};

\draw[<-] (9.4,0.7) -- (10.2,0.7);

\draw (10.5,0) -- (12,2.5);
\node at (10.9,1.25) {\scriptsize $E_{1}$};
\draw[dashed] (11.5,2.5) -- (13,0);
\draw (12.5,0) -- (14,2.5);
\node at (12.9,1.25) {\scriptsize $E_{l}$};

\draw (13.5,2.1) -- (16.5,2.1);
\node at (15,2.35) {\scriptsize $E_{l+1}[-m]$};

\draw (16,2.5) -- (17.5,0);
\node at (15.75,1.25){\scriptsize $E_{l+m}[-3]$};
\draw (17,0) -- (18.5,2.5);
\node at (18.55,1.25){\scriptsize $E_{l+m-1}$};
\draw[dashed] (18,2.5) -- (19.5,0);
\node at (20.25,1.25){\scriptsize $E_{l+2}$};
\draw (19,0) -- (20.5,2.5);

\draw (17.2,1) -- (16,-1);
\node at (17.2,-0.3){\scriptsize $E_{l+m+2}$};
\draw[dashed] (14.7,-0.8) -- (16.3,-0.8);
\draw (15.1,-1)--(13.9,1);

\node at (15.55,0.65){\scriptsize $E_{l+m+1}[-l-2]$};

\draw[very thick] (10.6,0.8) to [out=-40, in=210] (14.8,0.1);
\node at (13,-0.6) {\scriptsize $C_n$};

\end{tikzpicture}
\caption{Case $k=1$, $\delta = 0$.}\label{fig:i3}
\end{figure}
In the surface $X$, the curves $E_{l+m+1},\ldots,E_n$ all lie in a chain (not necessarily in this order) between $C_n$ and $E_{l+m}$, otherwise there would be a loop in the configuration of the curves $E_1,\ldots,E_{n-1},C_n$. The curve $E_{l+m+1}$ intersects $C_n$ in this chain. The map $\eta$ contracts first $C_n$ and then the chain $E_1,\ldots,E_l$. The self-intersection of the image of $E_{l+m+1}$ after these contractions increases by $l+1$. Since $E_{l+1}$ is not a $(-1)$-curve after these contractions (as $m \geq 3$), it follows that $E_{l+m+1}$ is a $(-1)$-curve in this surface. This implies that in $X$ the curve $E_{l+m+1}$ has self-intersection $-(l+2)$. This means that the base-points $p_{l+m+2},\ldots,p_{l+m+(l+2)}$
 must lie on the strict transform of $E_{l+m+1}$. Assume first that $l \geq 2$. Then $E_{l+m+2}$ has self-intersection $-2$ in $X$. The map $\eta$ contracts $E_{l+m}$ before the $(-2)$-curves $E_{l+m-1}$ and $E_{l+m+2}$, but this is not possible, as the images of both $E_{l+m-1}$ and $E_{l+m+2}$ are $(-1)$-curves, after contracting $E_{l+m}$. Hence $l$ must be $1$ and the multiplicity sequence of $C$ is then $(m,m-1)$. The condition $\delta = 0$ and the genus-degree formula give
\begin{align*}
0 &= d^2 - m^2 - (m-1)^2 - m + 1, \\
0 &= d^2 - 3d + 2 - m(m-1) - (m-1)(m-2).
\end{align*}
Subtracting these equations yields the identity $3d=3m$, which is not possible as $m<d$. We conclude that $\delta \neq 0$.
\end{subproof}

\begin{claim*}[\mathrm{A.}4]\label{claim9}
If $\delta = 1$, we reach a contradiction.
\end{claim*}

\begin{subproof}[Proof of Claim~$(\mathrm{A}.4)$]
Again, the base-point $p_{l+m+1}$ is the intersection point between $E_{l+m}$ and $C_{l+m}$ and $p_{l+m+2}$ is the intersection point between $E_{l+m+1}$ and $C_{l+m+1}$. After blowing up $p_{l+m+1}$ and $p_{l+m+2}$ we have a $(-1)$-tower resolution of $C$ (see the left part of Figure~$\ref{fig:i5}$).

\begin{figure}[ht]
\centering
\begin{tikzpicture}[scale=0.75]
\draw (0,0) -- (1.5,2.5);
\node at (0.4,1.25) {\scriptsize $E_{1}$};
\draw[dashed] (1,2.5) -- (2.5,0);
\draw (2,0) -- (3.5,2.5);
\node at (2.4,1.25) {\scriptsize $E_{l}$};

\draw (3,2.1) -- (6,2.1);
\node at (4.5,2.35) {\scriptsize $E_{l+1}[-m]$};

\draw (5.5,2.5) -- (7,0);
\node at (5.8,1.25){\scriptsize $E_{l+m}$};
\draw (6.5,0) -- (8,2.5);
\node at (8.05,1.25){\scriptsize $E_{l+m-1}$};
\draw[dashed] (7.5,2.5) -- (9,0);
\node at (9.75,1.25){\scriptsize $E_{l+2}$};
\draw (8.5,0) -- (10,2.5);

\draw (6.7,1) -- (5.5,-1);
\node at (6.75,-0.25){\scriptsize $E_{l+m+1}$};
\draw[very thick] (6,-1) -- (4.8,1);
\node at (5.75,0.7){\scriptsize $E_{l+m+2}$};

\draw[very thick] (0.1,0.8) to [out=-40, in=210] (5.6,0.1);
\node at (2.7,-0.7) {\scriptsize $C_{l+m+2}$};

\draw[<-] (9.4,0.7) -- (10.2,0.7);

\draw (10.5,0) -- (12,2.5);
\node at (10.9,1.25) {\scriptsize $E_{1}$};
\draw[dashed] (11.5,2.5) -- (13,0);
\draw (12.5,0) -- (14,2.5);
\node at (12.9,1.25) {\scriptsize $E_{l}$};

\draw (13.5,2.1) -- (16.5,2.1);
\node at (15,2.35) {\scriptsize $E_{l+1}[-m]$};

\draw (16,2.5) -- (17.5,0);
\node at (16.3,1.25){\scriptsize $E_{l+m}$};
\draw (17,0) -- (18.5,2.5);
\node at (18.55,1.25){\scriptsize $E_{l+m-1}$};
\draw[dashed] (18,2.5) -- (19.5,0);
\node at (20.25,1.25){\scriptsize $E_{l+2}$};
\draw (19,0) -- (20.5,2.5);

\draw (17.2,1) -- (16,-1);
\node at (17.6,-0.25){\scriptsize $E_{l+m+1}[-3]$};
\draw[dashed] (14.7,-0.8) -- (16.3,-0.8);
\draw (15.1,-1)--(13.9,1);

\node at (15.5,0.65){\scriptsize $E_{l+m+2}[\leq-3]$};

\draw[very thick] (10.6,0.8) to [out=-40, in=210] (14.8,0.1);
\node at (13,-0.6) {\scriptsize $C_n$};

\end{tikzpicture}
\caption{Case $k=1$, $\delta = 1$.}\label{fig:i5}
\end{figure}
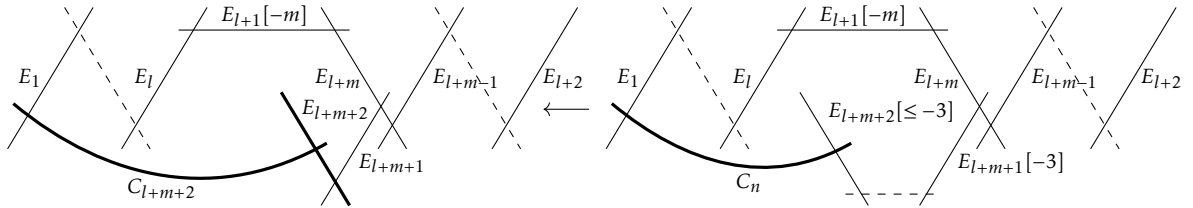

Suppose that this resolution is $\pi$. Then $\eta$ contracts $E_{l+m}$ before the $(-2)$-curves $E_{l+m-1}$ and $E_{l+m+1}$, but this is not possible. Hence $\pi$ has another base-point, which must be the intersection point between $E_{l+m+1}$ and $E_{l+m+2}$, otherwise there would be a loop in the resolution in $X$. Now in $X_{l+m+3}$, the curve $C_{l+m+3}$ intersects the $(-2)$-curves $E_1$ and $E_{l+m+2}$. Thus there is another base-point of  $\pi$, which is the intersection point between $E_{l+m+2}$ and $E_{l+m+3}$. But this implies that $E_{l+m+1}$ has self-intersection $-3$ in $X$ (see the right part of Figure~$\ref{fig:i5}$). We know that $\eta$ contracts $E_{l+m}$ before $E_{l+m-1}$ and $E_{l+m+1}$. After contracting $E_{l+m}$, the self-intersections of the images of $E_{l+m-1}$ and $E_{l+m+1}$ are $-1$ and $-2$ respectively. But then $E_{l+m-1}$ intersects no other $(-2)$-curve, so we have $E_{l+m-1} = E_{l+2}$ and hence $m=3$. The multiplicity sequence of $C$ is thus of the form $(3,2_{(l)})$. Using $\delta = 1$ and the genus degree formula, we obtain
\begin{align*}
0 &= d^2 - 4l - 10, \\
0 &= d^2 - 3d - 2l - 4.
\end{align*}
Subtracting these equations and rearranging terms, we obtain $l = \frac{3d-6}{2}$, which we can substitute in the first equation and get $d^2-6d+2=0$, which has no integer solution in $d$. Thus $\delta = 1$ is not possible.
\end{subproof}

\begin{claim*}[\mathrm{A.}5]\label{claim10}
If $\delta \geq 2$, we reach a contradiction.
\end{claim*}

\begin{subproof}[Proof of Claim~$(\mathrm{A}.5)$]
For $i=l+m,\ldots, l+m+\delta$, the base-point $p_{i+1}$ is then the unique intersection point between $C_i$ and $E_i$. As $\delta \geq 2$, this means that $E_{l+m+1}$ has self-intersection $-2$ in $X$ (see Figure~$\ref{fig:i6}$). But this leads to a contradiction, since $\eta$ contracts $E_{l+m}$ before the $(-2)$-curves $E_{l+m-1}$ and $E_{l+m+1}$, whose images both have self-intersection $-1$, after $E_{l+m}$ is contracted.

\begin{figure}[ht]
\centering
\begin{tikzpicture}[scale=0.75]
\draw (0,0) -- (1.5,2.5);
\node at (0.4,1.25) {\scriptsize $E_{1}$};
\draw[dashed] (1,2.5) -- (2.5,0);
\draw (2,0) -- (3.5,2.5);
\node at (2.4,1.25) {\scriptsize $E_{l}$};

\draw (3,2.1) -- (6,2.1);
\node at (4.5,2.35) {\scriptsize $E_{l+1}[-m]$};

\draw (5.5,2.5) -- (7,0);
\draw (6.5,0) -- (8,2.5);
\draw[dashed] (7.5,2.5) -- (9,0);
\draw (8.5,0) -- (10,2.5);
\node at (5.75,1.25){\scriptsize $E_{l+m}$};
\node at (8.05,1.25){\scriptsize $E_{l+m-1}$};
\node at (9.75,1.25){\scriptsize $E_{l+2}$};
\node at (6.73,-0.26){\scriptsize $E_{l+m+1}$};

\draw (6.7,1) -- (5.5,-1);
\draw[dashed] (4.2,-0.8) -- (5.9,-0.8);
\draw[very thick] (4.5,-1) -- (3.3,1);

\node at (4.5,0.8) {\scriptsize $E_{l+m+\delta+1}$};

\draw[very thick] (0.1,0.8) to [out=-40, in=210] (4.3,0.1);
\node at (2.5,-0.55) {\scriptsize $C_{l+m+\delta+1}$};

\end{tikzpicture}
\caption{Case $k=1$, $\delta \geq 2$.}\label{fig:i6}
\end{figure}
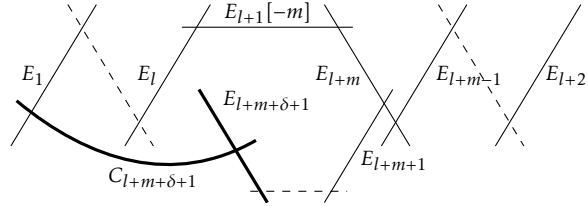
\end{subproof}

This concludes the case $k=1$.

\bigskip

(B) Assume now that $l=1$, as shown in Figure~$\ref{fig:ii0}$. We can also assume that $k \geq 2$, since we have already considered the case $k=1$. If $C_{k+1}$ has self-intersection~$-1$, then by the symmetry of the configuration, there exists a morphism $X \to \p^2$ whose contracted locus is $E_1 \cup \ldots \cup E_{n-1} \cup C_n$.

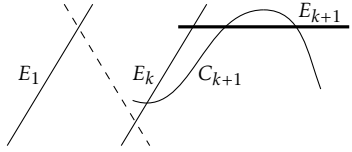
\begin{figure}[ht]
\centering
\begin{tikzpicture}[scale=0.75]
\draw (0,0) -- (1.5,2.5);
\node at (0.4,1.25) {\scriptsize $E_{1}$};
\draw[dashed] (1,2.5) --(2.5,0);
\draw (2,0) -- (3.5,2.5);
\node at (2.4,1.25) {\scriptsize $E_{k}$};

\draw[very thick] (3,2.1) -- (6,2.1);
\node at (5.5,2.35) {\scriptsize $E_{k+1}$};

\draw (2.2,0.8) to [out=-20, in=180] (4.5,2.4);
\draw (4.5,2.4) to [out=0, in=110] (5.5,1);

\node at (3.75,1.25) {\scriptsize $C_{k+1}$};

\end{tikzpicture}
\caption{Minimal resolution of singularities for $l=1$.}\label{fig:ii0}
\end{figure}

\noindent From $(C_{k+1})^2 = -1$ and the genus-degree formula we get the following two identities
\begin{align*}
0 &= d^2 - km^2 - (m-1)^2 + 1, \\
0 &= d^2 -3d + 2 - km(m-1) - (m-1)(m-1).
\end{align*}
Subtracting the second identity from the first yields $3d - 1 - km - (m-1) = 0$. We then substitute  the equality $km = 3d - 1 - (m-1)$ in the first equation and obtain $d^2 = m(3d-2)$. Let $p$ be a prime number that divides $3d-2$ and thus also $d$. But then $p=2$ and hence we can write $3d-2 = 2^r$ for some natural number $r$. It then follows that $m = \frac{(2^r+2)^2}{9\cdot 2^r}$, in particular $2^r$ divides $2^{2r}+4\cdot 2^r + 4$ and thus $r=1$ or $r=2$. If $r=1$, then $d = \frac{4}{3}$, which is absurd. If $r=2$, then $d=2$ and $m=1$, which is excluded by hypothesis.

We thus know that $(C_{k+1})^2 > -1$ and hence $\pi$ has a base-point on $E_{k+1}$ that also lies on $C_{k+1}$. Since $C$ is not unicuspidal, the curves $C_{k+1}$ and $E_{k+1}$ intersect in at least two points.

There are now two possibilities: either $C_{k+1}$ passes through the intersection point between $E_k$ and $E_{k+1}$, or it does not. We will look at these cases separately (parts (i) and (ii) below).

\bigskip

 (i) We suppose that $C_{k+1}$ passes through the intersection point between $E_k$ and $E_{k+1}$. Then this point is the next base-point of $\pi$, since there can be no triple intersections in the tree of the curves $E_1,\ldots,E_{n-1},C_n$ in $X$. Moreover the intersection multiplicity between $C_{k+1}$ and $E_{k+1}$ at $p_{k+2}$ is $m-2$ as $C_n$ and $E_{k+1}$ intersect transversely in $X$, see the configuration on the left in Figure~$\ref{fig:triplecase1}$.

\begin{figure}[ht]
\centering
\begin{tikzpicture}[scale=0.75]
\draw (0,0) -- (1.5,2.5);
\node at (0.4,1.25) {\scriptsize $E_{1}$};
\draw[dashed] (1,2.5) --(2.5,0);
\draw (2,0) -- (3.5,2.5);
\node at (3.1,1.25) {\scriptsize $E_{k}$};

\draw[very thick] (2.8,2.1) -- (6,2.1);
\node at (4.3,2.35) {\scriptsize $E_{k+1}$};

\draw (2.2,1) to [out=70, in=180] (3.3,2.1);
\draw (3.3,2.1) to [out=0, in=180] (4.4,1);
\draw (4.4,1) to [out=0, in=180] (5.4,2.5);

\node at (5.3,1.25) {\scriptsize $C_{k+1}$};

\draw[<-] (6,0.7) -- (6.8,0.7);

\draw (7,0) -- (8.5,2.5);
\node at (7.4,1.25) {\scriptsize $E_{1}$};
\draw[dashed] (8,2.5) -- (9.5,0);
\draw (9,0) -- (10.4,2.5);
\node at (9.73,1.25) {\scriptsize $E_{k}[-3]$};
\draw (10,2.5) -- (11.5,0);
\draw[dashed] (11,0) -- (12.5,2.5);
\node at (11.3,1.25) {\scriptsize $E_{k+2}$};

\draw[very thick] (12,2.1) -- (15,2.1);
\node at (13.47,2.35) {\scriptsize $E_{k+m-1}$};

\node at (16.6,1.25) {\scriptsize $E_{k+1}[-m+1]$};

\draw (14.5,2.5) -- (16,0);

\draw (12.6,2.4) to [out=-90, in=200] (16,0.5);
\node at (14.12,1.25) {\scriptsize $C_{k+m-1}[\delta]$};

\end{tikzpicture}
\caption{Blow-up of $p_{k+2},\ldots,p_{k+m-1}$.}\label{fig:triplecase1}
\end{figure}
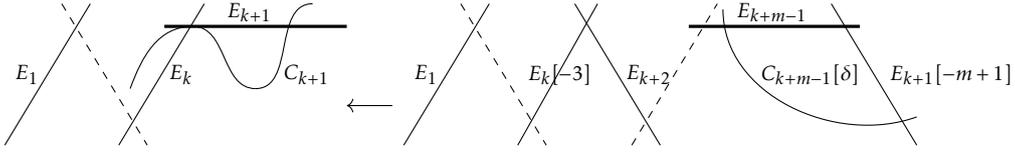

It follows that the base-point $p_{i+1}$ is the intersection point between $E_{k+1}$ and $E_i$ for $i=k+1,\ldots,k+m-2$. We then denote by $\delta$ the self-intersection of $C_{k+m-1}$ in $X_{k+m-1}$, see the configuration on the right in Figure~$\ref{fig:triplecase1}$. We have $\delta = d^2 - km^2 - (m-1)^2 - (m-2)$ and $\delta \geq -1$, since $\pi$ is a $(-1)$-tower resolution.

\begin{claim*}[\mathrm{B.i.}1]\label{claim11}
If $\delta = -1$, we reach a contradiction.
\end{claim*}

\begin{subproof}[Proof of Claim~$(\mathrm{B.i.}1)$]
From $\delta = -1$ and the genus-degree formula we obtain
\begin{align*}
0 &= d^2 - km^2 - (m-1)^2 - m + 3, \\
0 &= d^2 -3d + 2 - km(m-1) - (m-1)(m-2).
\end{align*}
Subtracting these identities yields $3d - km - 2m + 2 = 0$. Thus the greatest common divisor of $d$ and $m$ divides $2$. We then substitute $k = \frac{3d-2m+2}{m}$ in the first equation and obtain $d^2 - 3dm + m^2 - m +2 = 0$. Let $p$ be any prime number that divides $m$. Then $p$ divides $3d+2$ and also $d^2+2$. But then $p$ also divides $d^2-3d=d(d-3)$. Assume that $p$ does not divide $d$, then $p$ divides $d-3$. Then $p$ divides $3d+2 - 3(d-3) = 11$. On the other hand $p$ also divides $(d^2+2) - (d-3)^2 -3(d-3) = 2$ and thus we have a contradiction. It follows that $p$ divides $d$ and hence $p=2$. Dividing the equation above by $2$ yields
$$d\frac{d}{2} - 3d\frac{m}{2} + m\frac{m}{2} - \frac{m}{2} + 1 = 0.$$
We conclude that $\frac{m}{2}$ must be odd. Since $m$ is a power of $2$ it then follows that $m = 2$. We hence obtain the equation $d^2 - 6d + 4 = 0$, which has no integer solution in $d$. We conclude that $\delta = -1$ is not possible.
\end{subproof}

\begin{claim*}[\mathrm{B.i.}2]\label{claim12}
If $\delta = 0$, then $C$ has degree $13$ and multiplicity sequence $(5_{(6)},4)$.
\end{claim*}

\begin{subproof}[Proof of Claim~$(\mathrm{B.i.}2)$]
From $\delta = 0$ and the genus-degree formula we obtain
\begin{align*}
0 &= d^2 - km^2 - (m-1)^2 - m + 2, \\
0 &= d^2 -3d + 2 - km(m-1) - (m-1)(m-2).
\end{align*}
Subtracting these identities yields $3d - km - 2m + 1 = 0$. We thus see that $d$ and $m$ are coprime and that $m$ divides $3d+1$. We substitute $k = \frac{3d - 2m + 1}{m}$ in the first equation and obtain $d^2 - 3dm + m^2 + 1 = 0$. From this we see that $m$ divides $d^2 + 1$. But then $m$ also divides $(d^2+1)-(3d+1)=d(d-3)$. Since $d$ and $m$ are coprime, $m$ divides $d-3$. On the other hand, $m$ also divides $(d^2+1)+(3d+1)=(d+1)(d+2)$. Let $p$ be a prime number that divides $m$. Then $p$ divides $d-3$ and either $d+1$ or $d+2$, but not both since they are coprime. Thus $p$ must be either $2$ or $5$. Assume moreover that $p^2$ divides $m$. Then $p^2$ also divides $d^2+1$ and $3d+1$. Since $p$ divides $d-3$, it follows that $p^2$ divides $(d-3)^2 = d^2 - 6d + 9 = d^2+1 - 2(3d+1) + 10$. But then $p^2$ divides $10$, which is not possible. We conclude that $m \in \{5,10\}$ (since $m \geq 3$). We then check for integer solutions for $d$ in the equation $d^2 - 3dm + m^2 + 1 = 0$ for these values of $m$ and find $(d,m) = (13,5)$ as the only possibility. For a diagram of a resolution of such an isomorphism see Remark~$\ref{remark:5555554}$. We assume from now on that we are not in this case.
\end{subproof}

\begin{claim*}[\mathrm{B.i.}3]\label{claim13}
If $\delta = 1$, we reach a contradiction.
\end{claim*}

\begin{subproof}[Proof of Claim~$(\mathrm{B.i.}3)$]
From $\delta = 1$ and the genus-degree formula we get the equations
\begin{align*}
0 &= d^2 - km^2 - (m-1)^2 - m + 1, \\
0 &= d^2 -3d + 2 - km(m-1) - (m-1)(m-1).
\end{align*}
Subtracting these identities yields $3d - km - 2m = 0$. We then substitute $k = \frac{3d - 2m}{m}$ in the first equation and obtain $d^2 = m(3d+m+1)$. Let $p$ be any prime number that divides $m$. But then $p$ divides $d^2$ and thus also $d$. It then follows that $p$ divides $1$ and we have a contradiction.
\end{subproof}

\begin{claim*}[\mathrm{B.i.}4]\label{claim14}
If $\delta \geq 2$, we reach a contradiction.
\end{claim*}

\begin{subproof}[Proof of Claim~$(\mathrm{B.i.4})$]
Since $\pi$ is a $(-1)$-tower resolution of $C$, the base-point $p_{i+1}$ is the unique intersection point between $C_i$ and $E_i$, for $i = k+m-1,\ldots,k+m+\delta-1$. The configuration after these blow-ups is shown in Figure~$\ref{fig:triplecase2}$.
\begin{figure}[ht]
\centering
\begin{tikzpicture}[scale=0.75]
\draw (7,0) -- (8.5,2.5);
\node at (7.4,1.25) {\scriptsize $E_{1}$};
\draw[dashed] (8,2.5) -- (9.5,0);
\draw (9,0) -- (10.4,2.5);
\node at (9.73,1.25) {\scriptsize $E_{k}[-3]$};
\draw[dashed] (10,2.5) -- (11.5,0);
\draw (11,0) -- (12.5,2.5);
\node at (11.72,1.25) {\scriptsize $E_{k+m-2}$};

\draw (12,2.1) -- (15,2.1);
\node at (13.35,2.37) {\scriptsize $E_{k+m-1}$};

\node at (16.6,1.25) {\scriptsize $E_{k+1}[-m+1]$};

\draw (14.5,2.5) -- (16,0);

\draw (13.25,2.25)--(14,1);
\node at (13.2,1.5) {\scriptsize $E_{k+m}$};
\draw (14,1.25)--(13.25,0);
\node at (12.86,0.7) {\scriptsize $E_{k+m+1}$};
\draw[dashed] (13.25,0.25)--(14,-1);
\draw[very thick] (13.6,-0.8)--(15.5,-0.8);
\node at (14.9,-1.11) {\scriptsize $E_{k+m+\delta}$};

\draw[very thick] (15,-1.1) to [out=90, in=195] (16,0.7);
\node at (15.95,-0.25) {\scriptsize $C_{k+m+\delta}$};

\end{tikzpicture}
\caption{Case $l=1$, $\delta \geq 2$.}\label{fig:triplecase2}
\end{figure}
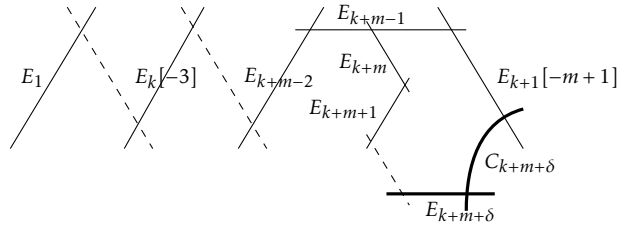\\
Since no more base-point of $\pi$ can lie on $E_{k+m}$, its strict transform in $X$ has self-intersection $-2$. If $m>3$, then $E_{k+m-1}$ intersects the two $(-2)$-curves $E_{k+m-2}$ and $E_{k+m}$ in $X$. But $\eta$ contracts $E_{k+m-1}$ before these two curves and thus this situation is not possible and we have $m=3$. Since $d < 3m = 9$ by Lemma~$\ref{Lem:inequalities2}$, the multiplicity sequence of $C$ is in Table~$\ref{table:sequences}$ and can only be $(3_{(3)},2)$ in degree $6$. In this case $\delta = 5$. But this implies that $E_{k+m+1}$ is also a $(-2)$-curve in $X$. We hence get a contradiction after $\eta$ contracts $E_{k+m-1}$. Then the image of $E_{k+m}$ intersects the $(-2)$-curves $E_k$ and $E_{k+m+1}$.
\end{subproof}

This concludes (i) of part (B).

\bigskip

(ii) Suppose now that $C_{k+1}$ does not pass through the intersection point between $E_k$ and $E_{k+1}$. Then $C_{k+1}$ intersects $E_{k+1}$ in one point with intersection multiplicity $m-1$, otherwise there would be a loop in the configuration of the curves $E_1,\ldots,E_{n-1},C_n$. The configuration of curves in $X_{k+1}$ is shown in the left part of Figure~$\ref{fig:ii1}$. Since $C_n$ and $E_{k+1}$ do not intersect in $X$, it follows that the base-point $p_{i+1}$ for $i=k+1,\ldots,k+m-1$ is the unique intersection point between $C_i$ and $E_i$, which also lies on $E_{k+1}$. The configuration of curves in $X_{k+m}$ is shown in the right part of Figure~$\ref{fig:ii1}$. We denote the self-intersection of $C_{k+m}$ by $\delta$ and this number is equal to $d^2 - km^2 - (m-1)^2 - (m-1)$. Since $\pi$ is a $(-1)$-tower resolution of $C$, it follows that $\delta \geq -1$.

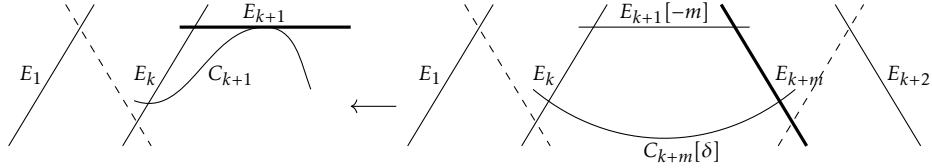
\begin{figure}[ht]
\centering
\begin{tikzpicture}[scale=0.75]
\draw (0,0) -- (1.5,2.5);
\node at (0.4,1.25) {\scriptsize $E_{1}$};
\draw[dashed] (1,2.5) --(2.5,0);
\draw (2,0) -- (3.5,2.5);
\node at (2.4,1.25) {\scriptsize $E_{k}$};

\draw[very thick] (3,2.1) -- (6,2.1);
\node at (4.5,2.35) {\scriptsize $E_{k+1}$};

\draw (2.2,0.8) to [out=-20, in=180] (4.5,2.1);
\draw (4.5,2.1) to [out=0, in=110] (5.3,1);

\node at (3.9,1.25) {\scriptsize $C_{k+1}$};

\draw[<-] (6,0.7) -- (6.8,0.7);

\draw (7,0) -- (8.5,2.5);
\node at (7.4,1.25) {\scriptsize $E_{1}$};
\draw[dashed] (8,2.5) -- (9.5,0);
\draw (9,0) -- (10.5,2.5);
\node at (9.4,1.25) {\scriptsize $E_{k}$};

\draw (10,2.1) -- (13,2.1);
\node at (11.5,2.35) {\scriptsize $E_{k+1}[-m]$};

\draw[very thick] (12.5,2.5) -- (14,0);
\draw[dashed] (13.5,0) -- (15,2.5);
\draw (14.5,2.5) -- (16,0);
\node at (13.88,1.25) {\scriptsize $E_{k+m}$};
\node at (15.8,1.25) {\scriptsize $E_{k+2}$};

\draw (9.2,1) to [out=-40, in=220] (13.8,1);
\node at (11.8,-0.1) {\scriptsize $C_{k+m}[\delta]$};

\end{tikzpicture}
\caption{Blow-up of $p_{k+2},\ldots,p_{k+m}$.}\label{fig:ii1}
\end{figure}

In the surface $X$, let $E \neq E_k$ in $\{E_1,\ldots,E_n\}$ be a curve that intersects $C_n$. We know that the map $\eta$ first contracts $C_n$ and then the chain $E_k,\ldots,E_1$. Since $k\geq 2$, it follows that the image of $E$ is tangent to $E_{k+1}$, after these contractions. This implies that $E$ is not contracted by $\eta$ and thus $E=E_n$ is the last exceptional curve in the $(-1)$-tower resolution $\pi$. We now discuss what happens for different values of $\delta$.

\begin{claim*}[\mathrm{B.ii.}1]\label{claim15}
If $\delta = -1$, we reach a contradiction.
\end{claim*}

\begin{subproof}[Proof of Claim~$(\mathrm{B.ii.}1)$]
In this case we already have a $(-1)$-tower resolution of $C$. This resolution must be $\pi$, since there is no more base-point on $C_{k+m}$ and $C_n$ intersects $E_n$. But we observe that the curves $E_1,\ldots,E_{k+m-1},C_{k+m}$ are not connected and thus cannot be the contracted locus of $\eta$. Hence $\delta = -1$ is not possible.
\end{subproof}

\begin{claim*}[\mathrm{B.ii.}2]\label{claim16}
If $\delta = 0$, we reach a contradiction.
\end{claim*}

\begin{subproof}[Proof of Claim~$(\mathrm{B.ii.}2)$]
The base-point $p_{k+m+1}$ is the unique intersection point between $C_{k+m}$ and $E_{k+m}$. After this blow-up, we have a $(-1)$-tower resolution of $C$, which must be $\pi$, for the same reason as in the case $\delta = -1$. The configuration of curves is shown in Figure~$\ref{fig:ii2}$.

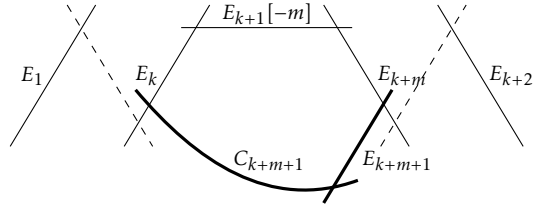
\begin{figure}[ht]
\centering
\begin{tikzpicture}[scale=0.75]
\draw (0,0) --(1.5,2.5);
\node at (0.4,1.25) {\scriptsize $E_{1}$};
\draw[dashed] (1,2.5) --(2.5,0);
\draw (2,0) --(3.5,2.5);
\node at (2.4,1.25) {\scriptsize $E_{k}$};

\draw (3,2.1) --(6,2.1);
\node at (4.5,2.35) {\scriptsize $E_{k+1}[-m]$};

\draw (5.5,2.5) --(7,0);
\node at (6.88,1.25){\scriptsize $E_{k+m}$};
\draw[dashed] (6.5,0) --(8,2.5);
\draw (7.5,2.5) --(9,0);
\node at (8.8,1.25){\scriptsize $E_{k+2}$};

\draw[very thick] (6.7,1) --(5.5,-1);
\node at (6.8,-0.25){\scriptsize $E_{k+m+1}$};

\draw[very thick] (2.2,1) to [out=-50, in=200] (6.1,-0.6);
\node at (4.55,-0.25) {\scriptsize $C_{k+m+1}$};

\end{tikzpicture}
\caption{Case $l=1$, $\delta = 0$.}\label{fig:ii2}
\end{figure}

The map $\eta$ contracts first $C_{k+m+1}$ and then the chain $E_k,\ldots,E_1$. After these contractions the self-intersection of the image of $E_{k+1}$ is $-m+k$, but must also be $-1$ and hence $k=m-1$. From $\delta = 0$ we then obtain the equation $d^2 = m(m^2-1)$. Since $m$ and $m^2-1$ are coprime, they are both squares, as $d>0$. But if $m \geq 2$ is a square, then $m^2-1$ is not a square. Hence the only integer solutions to the equation are $(d,m) = (0,-1), (0,0), (0,1)$, and thus $\delta = 0$ is also not possible.
\end{subproof}

\begin{claim*}[\mathrm{B.ii.}3]
If $\delta \geq 1$, we reach a contradiction.
\end{claim*}

\begin{subproof}[Proof of Claim~$(\mathrm{B.ii.}3)$]
For $i=l+m,\ldots, l+m+\delta$, the base-point $p_{i+1}$ is the unique intersection point between $C_i$ and $E_i$. After these blow-ups we have a $(-1)$-tower resolution of $C$, which has to be $\pi$ for the same reason as in the previous cases. The configuration of curves is shown in Figure~$\ref{fig:ii5}$.
\begin{figure}[ht]
\centering
\begin{tikzpicture}[scale=0.75]
\draw (0,0) --(1.5,2.5);
\node at (0.4,1.25) {\scriptsize $E_{1}$};
\draw[dashed] (1,2.5) --(2.5,0);
\draw (2,0) --(3.5,2.5);
\node at (2.4,1.25) {\scriptsize $E_{k}$};

\draw (3,2.1) --(6,2.1);
\node at (4.5,2.35) {\scriptsize $E_{k+1}[-m]$};

\draw (5.5,2.5) --(7,0);
\node at (5.75,1.25){\scriptsize $E_{k+m}$};
\draw (6.5,0) --(8,2.5);
\node at (8.05,1.25){\scriptsize $E_{k+m-1}$};
\draw[dashed] (7.5,2.5) --(9,0);
\draw (8.5,0) --(10,2.5);
\node at (9.8,1.25){\scriptsize $E_{k+2}$};

\draw (6.7,1) --(5.5,-1);
\node at (6.8,-0.25){\scriptsize $E_{k+m+1}$};
\draw[dashed] (6,-1) --(4.8,1);
\draw[very thick] (5.3,1) --(4.1,-1);
\node at (4.17,0.85){\scriptsize $E_{k+m+\delta+1}$};

\draw[very thick] (2.2,1) to [out=-60, in=200] (4.9,-0.5);
\node at (3.78,0.3) {\scriptsize $C_{k+m+\delta+1}$};

\end{tikzpicture}
\caption{Case $l=1$, $\delta \geq 1$.}\label{fig:ii5}
\end{figure}
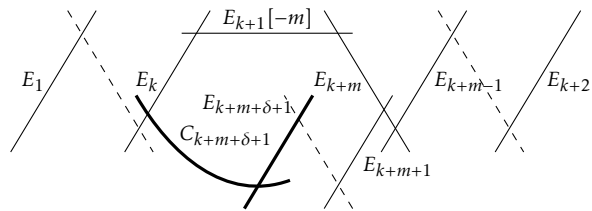

Since $\delta \geq 1$, the curve $E_{l+m+1}$ has self-intersection $-2$. But we know that $\eta$ contracts $E_{k+m}$ before the $(-2)$-curves $E_{l+m-1}$ and $E_{l+m+1}$, which leads to a contradiction.
\end{subproof}

This concludes (ii) of part (B) and hence finishes the proof of Proposition~$\ref{Prop:1jump}$.
\end{proof}

\begin{remark}\label{remark:5555554}
{\rm Below we see the configuration of exceptional curves of a resolution of a non-extendable isomorphism between two curves of degree $13$ with multiplicity sequence $(5_{(6)},4)$. All the unlabeled curves have self-intersection $-2$. Starting with either of the $(-1)$-curves, one can successively contract all curves in this configuration, except the other $(-1)$-curve. The image of this curve in $\p^2$, denoted $C$, then has self-intersection $169 = 13^2$. It remains to be verified whether such curves exist and whether new counterexamples to Conjecture~$\ref{conjecture:yoshihara}$ may arise in this way. We remark that $C \setminus \Sing(C) \simeq \A^1 \setminus \{0\}$ and thus $C$ is different from the unicuspidal examples of degree $13$ constructed in \cite{Cos12}.

\begin{center}
\begin{tikzpicture}[scale=0.75]
\draw (-2,0)--(-0.5,2.5);
\draw (-1,2.5) --(0.5,0);
\draw (0,0) -- (1.5,2.5);
\draw (1,2.5) --(2.5,0);
\draw (2,0) -- (3.5,2.5);
\draw (3,2.5) -- (4.5,0);
\node at (3.52,1) {\scriptsize $-3$};
\draw (4,0) -- (5.5,2.5);
\draw (5,2.5) -- (6.5,0);

\draw (6.1,0.2) -- (9.7,0.2);
\node at (8.5,-0.05) {\scriptsize $-5$};

\draw (9.25,0) -- (10,1.25);
\node at (7.27,0.6) {\scriptsize $-4$};
\draw[very thick] (10,1)--(9.25,2.25);
\node at (7.27,1.7) {\scriptsize $-1$};
\draw (9.25,2)--(10,3.25);
\draw (10,3)--(9.25,4.25);

\draw (8,0)--(7.25,1.25);
\node at (9.92,0.6) {\scriptsize $-4$};
\draw[very thick] (7.25,1)--(8,2.25);
\node at (9.92,1.7) {\scriptsize $-1$};
\draw (8,2)--(7.25,3.25);
\draw (7.25,3)--(8,4.25);

\draw (7.75,4.1)--(9.5,4.1);
\node at (8.5,4.3) {\scriptsize $-3$};

\end{tikzpicture}
\end{center}
}
\end{remark}

\begin{corollary}\label{cor:1jumpcase}
Let $C \subset \p^2$ be an irreducible curve with one of the multiplicity sequences $(3,2_{(3)})$, $(3_{(2)},2_{(4)})$, $(3_{(3)},2)$, $(3_{(4)},2_{(3)})$, $(4,3_{(3)})$, $(4,3_{(5)})$, $(4_{(2)},3_{(3)})$, or $(4_{(3)},3)$. Then either $C$ is unicuspidal or any open embedding $\p^2 \setminus C \hookrightarrow \p^2$ extends to an automorphism of $\p^2$.
\end{corollary}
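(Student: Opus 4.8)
The plan is to obtain the corollary directly from Proposition~\ref{Prop:1jump}, using Lemma~\ref{Lem:inequalities1} only to supply the rationality hypothesis. First I would fix an open embedding $\varphi \colon \p^2 \setminus C \hookrightarrow \p^2$ and argue by contradiction: assume that $\varphi$ does not extend to an automorphism of $\p^2$ and that $C$ is not unicuspidal. Lemma~\ref{Lem:inequalities1} then tells us that $C$ is a rational curve, so the hypotheses of Proposition~\ref{Prop:1jump} are in place.

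The key (and only) observation is combinatorial: each of the eight multiplicity sequences listed is of the form $\bigl(m_{(k)},(m-1)_{(l)}\bigr)$ with $m\geq 3$ and $k,l\geq 1$. Indeed, $(3,2_{(3)})$, $(3_{(2)},2_{(4)})$, $(3_{(3)},2)$ and $(3_{(4)},2_{(3)})$ are the cases $m=3$ with $(k,l)=(1,3),(2,4),(3,1),(4,3)$, while $(4,3_{(3)})$, $(4,3_{(5)})$, $(4_{(2)},3_{(3)})$ and $(4_{(3)},3)$ are the cases $m=4$ with $(k,l)=(1,3),(1,5),(2,3),(3,1)$. Hence Proposition~\ref{Prop:1jump} applies to $C$ in every one of these cases.

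Applying Proposition~\ref{Prop:1jump} now forces $C$ to be either unicuspidal, or of degree $6$ with multiplicity sequence $(3,2_{(7)})$, or of degree $13$ with multiplicity sequence $(5_{(6)},4)$. We have assumed $C$ is not unicuspidal, and none of the eight sequences in the statement equals $(3,2_{(7)})$ or $(5_{(6)},4)$ (they already differ as sequences — in length or in the leading multiplicity), so we reach a contradiction. Therefore, if $C$ is not unicuspidal, then $\varphi$ must extend to an automorphism of $\p^2$, which is precisely the disjunction asserted by the corollary. I do not expect any genuine obstacle: the real work has been absorbed into Proposition~\ref{Prop:1jump}, and the corollary is just the bookkeeping of fitting the listed sequences to the ``$m$ then $m-1$'' pattern and excluding the two sporadic exceptions; the mildest point of care is noting that the degree of $C$ is pinned down by its multiplicity sequence via the genus identity~(\ref{genus}), though since the sequences already differ from the two exceptions as sequences, this is not strictly needed.
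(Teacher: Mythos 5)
Your proposal is correct and is exactly the paper's argument: the paper proves this corollary in one line as ``a direct consequence of Proposition~$\ref{Prop:1jump}$'', and your write-up simply makes explicit the bookkeeping (each listed sequence has the form $\bigl(m_{(k)},(m-1)_{(l)}\bigr)$ with $m\geq 3$, $k,l\geq 1$; rationality comes from Lemma~$\ref{Lem:inequalities1}$; and the two sporadic outputs of the proposition are visibly different sequences). Nothing is missing.
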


\begin{proof}
This is a direct consequence of Proposition~$\ref{Prop:1jump}$.
\end{proof}

\begin{remark}
{\rm Note that in Corollary~$\ref{cor:1jumpcase}$, only curves with the multiplicity sequences $(3_{(3)},2)$ and $(4_{(3)},3)$ can be unicuspidal.}
\end{remark}

\begin{proposition}\label{Prop:alleven}
Let $C \subset \p^2$ be a rational curve of degree $d$ and multiplicity sequence $(m_1,\ldots,m_k)$ such that all multiplicities are even and there exists $l<k$ such that $m_{l+1} = \ldots = m_k = 2$ and $m_{j} < m_{j+1}+\ldots+m_k$ for all $j \leq l$. Let $\varphi \colon \p^2 \setminus C \hookrightarrow \p^2$ be an open embedding that does not extend to an automorphism of $\p^2$. Then $C$ is unicuspidal.
\end{proposition}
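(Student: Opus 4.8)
We argue by contradiction, in the spirit of Propositions~\ref{Prop:nojump} and~\ref{Prop:1jump}: suppose $C$ is not unicuspidal. By Lemma~\ref{Lem:tower} fix a $(-1)$-tower resolution $\pi\colon X = X_n \xrightarrow{\pi_n}\cdots\xrightarrow{\pi_1} X_0 = \p^2$ of $C$ with base-points $p_1,\dots,p_n$ and exceptional curves $E_1,\dots,E_n$, together with a $(-1)$-tower resolution $\eta\colon X\to\p^2$ of some curve $D$ satisfying $\varphi\circ\pi = \eta$; recall that $E_1\cup\cdots\cup E_{n-1}\cup C_n$ is the contracted locus of $\eta$, an SNC curve with a tree structure, and that $\pi_k\circ\cdots\circ\pi_1$ is the minimal resolution of singularities of $C$, which contracts the connected tree $E_1\cup\cdots\cup E_k$ onto the unique proper singular point $p_1$. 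From the square formula $d^2-\sum_{i=1}^n m_i^2=-1$ with $m_{k+1}=\cdots=m_n=1$ we get $n-k=(C_k)^2+1$, so each of $p_{k+1},\dots,p_n$ lies on the strict transform of $C$.

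The core of the plan is to produce a cycle inside the contracted locus of $\eta$. First I would pin down the proximity structure of $p_1,\dots,p_k$: since these form a chain, the points proximate to a given $p_j$ occur in a consecutive block $p_{j+1},\dots,p_{j+s_j}$, and Lemma~\ref{Lem:proximate mult} gives $0\le C_k\cdot E_j=m_j-(m_{j+1}+\cdots+m_{j+s_j})$ for $j\le k$. For $l<j\le k-1$ one has $m_j=m_{j+1}=2$, forcing $s_j=1$ and $C_k\cdot E_j=0$, so $E_{l+1},\dots,E_k$ form a chain, $C_k$ avoids $E_{l+1},\dots,E_{k-1}$, and $C_k\cdot E_k=m_k=2$. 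For $j\le l$ the hypothesis $m_j<m_{j+1}+\cdots+m_k$ combined with $m_j\ge m_{j+1}+\cdots+m_{j+s_j}$ forces $j+s_j<k$, so $p_k$ is proximate to no $p_j$ with $j\le l$, and hence $E_j\cap E_k=\emptyset$ in $X_k$ for such $j$ (this also rules out $l=k-1$, so $m_{k-1}=m_k=2$). Now $C_k\cap(E_1\cup\cdots\cup E_k)$ is precisely the fibre of the normalization $C_k\to C$ over $p_1$, so non-unicuspidality of $C$ means it contains at least two points; as $E_1\cup\cdots\cup E_k$ is a connected tree, the exceptional curves through two such points are either equal (a bigon with $C_k$) or joined by a path meeting $C_k$ only at those two points, and either way $C_k\cup E_1\cup\cdots\cup E_k$ contains a cycle.

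It then remains to push this cycle to $X$ and derive the contradiction. The blow-ups $\pi_{k+1},\dots,\pi_n$ are all centred on the strict transform of $C$, and each one subdivides an edge of the cycle or adds a pendant, so a cycle persists in $X$ among $C_n$, certain strict transforms of the $E_i$ with $i\le k$, and some $E_j$ with $j>k$ inserted along it. To contradict the tree structure of $E_1\cup\cdots\cup E_{n-1}\cup C_n$ I must exclude the one escape route — that the cycle passes through $E_n$, the strict transform of $D$, which is the only curve of the diagram not contracted by $\eta$. I expect this to be the main obstacle, and to require the same kind of finite case analysis as in Propositions~\ref{Prop:nojump} and~\ref{Prop:1jump}: split according to whether $C_k$ meets $E_k$ in two distinct points or meets some $E_j$ with $j\le l$, and then subdivide according to the self-intersection $\delta$ of the strict transform of $C$ at the stage where the tower resolution stabilizes. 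When $\delta$ is not minimal one should find two adjacent $(-2)$-curves that $\eta$ would be forced to contract in an impossible order; in the borderline cases one is reduced to a system coming from $\delta=d^2-\sum m_i^2-(\text{correction})$ and the genus-degree identity $d^2-3d+2=\sum m_i(m_i-1)$, and it is exactly the hypothesis that all $m_i$ are even that makes this system have no admissible solution, as in Claim~$(4)$ of Proposition~\ref{Prop:nojump} and the Diophantine claims of Proposition~\ref{Prop:1jump}. Verifying that this enumeration of resolution graphs for a non-unicuspidal $C$ with the prescribed multiplicity data is exhaustive, and that evenness closes every branch, is the part that needs real care.
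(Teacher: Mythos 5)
Your setup and the proximity analysis (the chain structure of $E_{l+1},\ldots,E_k$, the fact that $C_k\cdot E_k=2$, and that $E_j\cap E_k=\varnothing$ for $j\le l$) match the paper, but the core of the argument is missing, and the cycle strategy you propose does not by itself close any of the cases that actually occur. The paper's key step, which you never take, is the following use of evenness: for $i\le l$ the intersection number $C_k\cdot E_i=m_i-\sum_{p_j\succ p_i}m_j$ is even because all multiplicities are even, and it equals $C_n\cdot E_i$ (the later centres lie on or near $E_k$, which is disjoint from $E_i$); since $E_1\cup\dots\cup E_{n-1}\cup C_n$ is an SNC tree, $C_n\cdot E_i\le 1$, hence $C_n\cdot E_i=0$. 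This forces $C_k$ to meet the exceptional locus only along $E_k$, in exactly two points, and reduces everything to a single bigon configuration governed by $\delta=(C_k)^2=d^2-\sum m_i^2$. Your cycle argument captures only the crude ``at least two intersection points give a cycle'' observation; it ignores parity, and the case you defer --- the cycle passing through $E_n$ --- is not an exceptional branch but the \emph{only} branch: in every surviving configuration the chain $E_k,E_{k+1},\dots,E_n$ closing up the bigon contains $E_n$, so the tree-structure contradiction never applies directly.

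Consequently the contradictions still have to be produced case by case, and your guess that evenness enters only through Diophantine identities is off the mark. In the paper, $\delta=-1$ is killed by disconnectedness of the contracted locus of $\eta$ (not by a cycle); $\delta=0$ by the parity argument comparing $\frac{d}{2}\left(\frac{d}{2}+3\right)\pm 1$ with $\sum\frac{m_i}{2}\left(\frac{m_i}{2}+1\right)$, which is the second and last place evenness is used; $\delta=1$ by showing that the image of $E_{k+1}$ under the first contractions of $\eta$ is singular, which needs non-constancy of the multiplicity sequence (itself secured by first invoking Proposition~\ref{Prop:nojump}, a reduction you also omit); and $\delta\ge2$ by the two adjacent $(-2)$-curves $E_{k-1}$ and $E_{k+1}$ that $\eta$ cannot both contract after $E_k$. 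None of this is in your proposal. A smaller but genuine error: ``each of $p_{k+1},\dots,p_n$ lies on the strict transform of $C$'' does not follow from $d^2-\sum_{i=1}^n m_i^2=-1$; base-points of a $(-1)$-tower resolution may lie off the curve and contribute multiplicity $0$ (as happens, for instance, in Lemma~\ref{Lem:decomposition}), so the formula only gives $\#\{i>k: p_i\in C_i\}=(C_k)^2+1\le n-k$.
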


\begin{proof}
Suppose that $C$ is not unicuspidal. By Proposition~$\ref{Prop:nojump}$, we can assume that the multiplicity sequence of $C$ is non-constant. By Lemma~$\ref{Lem:tower}$, there exists $\pi \colon X = X_n \xrightarrow{\pi_n} \ldots \xrightarrow{\pi_2} X_1 \xrightarrow{\pi_1} X_0 = \p^2$ a $(-1)$-tower resolution  of $C$ with base-points $p_1,\ldots,p_n$ and exceptional curves $E_1,\ldots,E_n$, and a $(-1)$-tower resolution $\eta \colon X \to \p^2$ of some curve $D \subset \p^2$ such that $\varphi \circ \pi = \eta$. Then $E_1 \cup \ldots \cup E_{n-1} \cup C_n$ is the exceptional locus of $\eta$, being the support of an SNC-divisor that has a tree structure. The composition $\pi_1 \circ \ldots \circ \pi_k$ is the minimal resolution of singularities of $C$. For $i = 1,\ldots,k$, we obtain the following intersection numbers, by Lemma~$\ref{Lem:proximate mult}$:
$$C_k \cdot E_i = m_i - \sum_{p_j \succ p_i} m_j.$$
In particular, $C_k \cdot E_k = 2$. Since $m_{j} < m_{j+1}+\ldots+m_k$ for all $j \leq l$, it follows that, for $i=1,\ldots,l$, the curves $E_i$ and $E_k$ do not intersect in $X_k$ and hence also not in $X$. Since all $m_i$ are even, it follows that the intersection numbers $C_k \cdot E_i$ are even. It follows moreover that the intersection numbers $C_n \cdot E_i$ are also even for $i=1,\ldots,l$, since $E_k$ and $E_i$ do not intersect in $X_k$. The curve $E_1\cup\ldots \cup E_{n-1} \cup C_n$ is SNC and therefore $E_i$ and $C_n$ do not intersect at all, for $i=1,\ldots,l$. Since the multiplicities $m_{l+1},\ldots,m_k$ are all equal to $2$, it follows that $C_k$ does not intersect any of the curves $E_1,\ldots,E_{k-1}$, but only $E_k$. Since $C$ is not unicuspidal, the curves $C_k$ and $E_k$ intersect in two distinct points. We denote by $\delta$ the self-intersection of $C_k$, which is given by $\delta = d^2 - \sum_{i=1}^k m_i^2$. Since $C$ has a $(-1)$-tower resolution, we have $\delta \geq -1$.

\begin{claim*}[1]
If $\delta = -1$, we reach a contradiction.
\end{claim*}

\begin{subproof}[Proof of Claim~$(1)$]
We already have a $(-1)$-tower resolution of $C$ (see Figure~$\ref{fig:even1}$). Since $C_k$ and $E_k$ intersect in two points and there is no more base-point on $C_k$, there is no more base-point at all. But we observe that $C_k$ and $E_1 \cup \ldots \cup E_{k-1}$ are not connected. This is not possible and hence $\delta$ must be $\geq 0$.

\begin{figure}[ht]
\centering
\begin{tikzpicture}[scale=0.75]
\draw[dashed] (2,0) -- (3.5,2.5);

\draw[very thick] (3,2.1) -- (6,2.1);
\node at (5.65,2.35) {\scriptsize $E_{k}$};

\draw[very thick] (3.2,0.5) to [out=70, in=180] (4.5,2.6);
\draw[very thick] (4.5,2.6) to [out=0, in=110] (5.8,0.5);

\node at (3.75,1.25) {\scriptsize $C_{k}$};

\end{tikzpicture}
\caption{Case $\delta = -1$.}\label{fig:even1}
\end{figure}
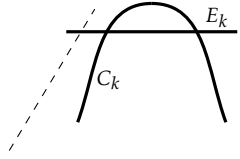
\end{subproof}

\begin{claim*}[2]
If $\delta = 0$, we reach a contradiction.
\end{claim*}

\begin{subproof}[Proof of Claim~$(2)$]
The genus-degree formula yields
$$d^2 - 3d + 2 = \sum_{i=1}^k m_i(m_i-1).$$
Using $\delta = 0$, we get $3d-2 = \sum_{i=1}^k m_i$. This identity implies that $d$ is even. We can thus find the equations
\begin{align*}
\left(\frac{d}{2}\right)^2 & = \sum_{i=1}^k \left(\frac{m_i}{2}\right)^2 ,\\
3\left(\frac{d}{2}\right)+1 &= \sum_{i=1}^k \frac{m_i}{2}.
\end{align*}
Adding these identities yields
\begin{align*}
\frac{d}{2}\left(\frac{d}{2}+3\right) + 1 &= \sum_{i=1}^k \frac{m_i}{2}\left(\frac{m_i}{2}+1\right).
\end{align*}
The left-hand side of this equation is odd, whereas the right-hand side is even. This is a contradiction and thus $\delta = 0$ is not possible.
\end{subproof}

\begin{claim*}[3]
If $\delta = 1$, we reach a contradiction.
\end{claim*}

\begin{subproof}[Proof of Claim~$(3)$]
The base-point $p_{k+1}$ is one of the intersection points between $C_k$ and $E_k$. The curve $C_{k+1}$ has then self-intersection $0$ in $X_{k+1}$ and thus the base-point $p_{k+2}$ is the unique intersection point between $C_{k+1}$ and $E_{k+1}$. The configuration of curves in $X_{k+2}$ is shown in Figure~$\ref{fig:even2}$. In the surface $X$, the curve $E_k$ has self-intersection $-2$. This implies that $\eta$ first contracts $C_n$ and then $E_{k},\ldots,E_1$, in this order. By assumption, the multiplicity sequence of $C$ is non-constant. This implies that there exists a curve $E_j$ with $j<k$ that intersects $3$ other exceptional curves. But this implies that the image of $E_{k+1}$, after contracting $C_n,E_k,\ldots,E_1$, is singular and hence cannot be contracted. We thus reach a contradiction and conclude that $\delta \neq 1$.

\begin{figure}[ht]
\centering
\begin{tikzpicture}[scale=0.75]
\draw (0,0) -- (1.5,2.5);
\draw (1,0) -- (2.5,2.5);
\draw (1,2.5) -- (2.5,0);
\draw[dashed] (2,0) -- (3.5,2.5);
\node at (2.25,0.9) {\scriptsize $E_{j}$};

\draw (3,2.1) -- (6,2.1);
\node at (4.5,2.35) {\scriptsize $E_{k}$};

\draw[very thick] (5.5,2.5) -- (7,0);
\node at (5.75,1.25) {\scriptsize $E_{k+1}$};
\draw (6,-0.83) -- (8,2.5);
\node at (7.8,1.25){\scriptsize $E_{k+2}$};

\draw[very thick] (4,2.3) to [out=-90, in=180] (6.6,-0.3);
\node at (4.8,-0.02) {\scriptsize $C_{k+2}$};

\end{tikzpicture}
\caption{Case $\delta = 1$.}\label{fig:even2}
\end{figure}
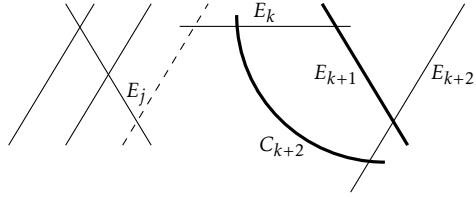
\end{subproof}

\begin{claim*}[4]
If $\delta \geq 2$, we reach a contradiction.
\end{claim*}

\begin{subproof}[Proof of Claim~$(4)$]
Again, the base-point $p_{k+1}$ is one of the intersection points between $C_k$ and $E_k$. Since $\pi$ is a $(-1)$-tower resolution of $C$, it follows that for $i=k+1,\ldots,k+\delta$, the base-point $p_{i+1}$ is the unique intersection point between $C_k$ and $E_k$ (see Figure~$\ref{fig:even3}$). This implies that in $X$, the curve $E_{k+1}$ has self-intersection $-2$. We observe that $E_k$ also intersects the $(-2)$-curve $E_{k-1}$ in $X$. Since $\eta$ contracts $E_k$ before $E_{k-1}$ and $E_{k+1}$, this leads to a contradiction.

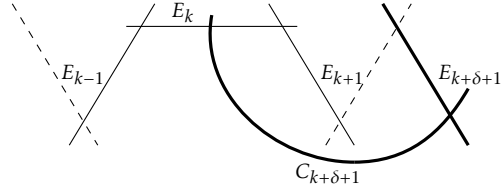
\begin{figure}[ht]
\centering
\begin{tikzpicture}[scale=0.75]
\draw[dashed] (1,2.5) -- (2.5,0);
\draw (2,0) --(3.5,2.5);
\node at (2.25,1.25){\scriptsize $E_{k-1}$};

\draw (3,2.1) --(6,2.1);
\node at (4,2.35) {\scriptsize $E_{k}$};

\draw (5.5,2.5) --(7,0);
\node at (6.8,1.25){\scriptsize $E_{k+1}$};
\draw[dashed] (6.5,0) --(8,2.5);
\draw[very thick] (7.5,2.5) --(9,0);
\node at (9.05,1.25){\scriptsize $E_{k+\delta+1}$};

\draw[very thick] (4.5,2.3) to [out=-100, in=180] (7,-0.3);
\draw[very thick] (7,-0.3) to [out=0, in=240] (9,1);
\node at (6.55,-0.5) {\scriptsize $C_{k+\delta+1}$};

\end{tikzpicture}
\caption{Case $\delta \geq 2$.}\label{fig:even3}
\end{figure}
\end{subproof}
This concludes the proof of Proposition~$\ref{Prop:alleven}$.
\end{proof}

\begin{corollary}\label{cor:proximatecase}
Let $C \subset \p^2$ be an irreducible curve with one of the multiplicity sequences $(4,2_{(4)})$, $(4_{(3)},2_{(3)})$, or $(6,2_{(6)})$. If $C$ is not unicuspidal, then any open embedding $\p^2 \setminus C \hookrightarrow \p^2$ extends to an automorphism of $\p^2$.
\end{corollary}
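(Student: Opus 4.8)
The plan is to obtain this corollary as an immediate application of Proposition~\ref{Prop:alleven}, in the same way that Corollaries~\ref{cor:equalcase} and \ref{cor:1jumpcase} follow from Propositions~\ref{Prop:nojump} and \ref{Prop:1jump}. I would argue by contradiction. Suppose $C$ is not unicuspidal and that there exists an open embedding $\varphi \colon \p^2 \setminus C \hookrightarrow \p^2$ that does not extend to an automorphism of $\p^2$. By Lemma~\ref{Lem:inequalities1} the curve $C$ is then rational, so it satisfies the standing hypothesis of Proposition~\ref{Prop:alleven}.

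It then remains only to check, for each of the three listed multiplicity sequences, that it has the shape required by Proposition~\ref{Prop:alleven}: all multiplicities are even, the sequence terminates in a nonempty block of $2$'s (taking $l$ to be the number of multiplicities that are $> 2$), and each earlier multiplicity $m_j$ (for $j \leq l$) is strictly less than $m_{j+1}+\cdots+m_k$. This is the only computation involved. For $(4,2_{(4)})$ one has $l = 1$ and $4 < 2+2+2+2 = 8$; for $(6,2_{(6)})$ one has $l = 1$ and $6 < 2\cdot 6 = 12$; for $(4_{(3)},2_{(3)})$ one has $l = 3$ and $4$ is strictly smaller than each of $4+4+2+2+2 = 14$, $4+2+2+2 = 10$ and $2+2+2 = 6$. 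In all three cases $l < k$ and the trailing block of $2$'s is nonempty. Proposition~\ref{Prop:alleven} then forces $C$ to be unicuspidal, contradicting our assumption; hence $\varphi$ extends to an automorphism of $\p^2$.

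There is no real obstacle here: the substantial work is already contained in Proposition~\ref{Prop:alleven}, and what is left is the elementary verification of the numerical inequalities above, together with the remark that a non-extendable open embedding forces rationality (Lemma~\ref{Lem:inequalities1}). The only point deserving a moment's attention is matching the index convention of Proposition~\ref{Prop:alleven} to each sequence, but since all three sequences end in at least three $2$'s this is immediate.
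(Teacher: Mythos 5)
Your proof is correct and takes exactly the same route as the paper, which states the corollary as a direct consequence of Proposition~\ref{Prop:alleven} and leaves the verification implicit. Your explicit check of the hypotheses (all multiplicities even, trailing block of $2$'s, and the inequalities $m_j < m_{j+1}+\cdots+m_k$ for $j\leq l$, together with rationality via Lemma~\ref{Lem:inequalities1}) is accurate for all three sequences.
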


\begin{proof}
This is a direct consequence of Proposition~$\ref{Prop:alleven}$.
\end{proof}

\subsection{A special sextic curve and the proof of Theorem~$\ref{Thm:degree8}$}

\begin{proposition}\label{prop:deg6equivalent}
Let $C \subset \p^2$ be a curve of degree $6$ and multiplicity sequence $(3,2_{(7)})$ such that there exists $\varphi \colon \p^2 \setminus C \to \p^2 \setminus D$ an isomorphism, where $D \subset \p^2$ is a curve. Then $C$ and $D$ are projectively equivalent.
\end{proposition}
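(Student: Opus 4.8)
The plan is to adapt the treatment of unicuspidal quintics with multiplicity sequence $(2_{(6)})$ given in Proposition~\ref{proposition:quintic} and Corollary~\ref{cor:onequintic}, i.e.\ to show that \emph{any} two irreducible sextics with multiplicity sequence $(3,2_{(7)})$ are projectively equivalent; the statement then follows once one knows that $D$ is again of this type.

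First I would reduce $D$. If $\varphi$ extends to an automorphism there is nothing to prove, so assume it does not. By Lemma~\ref{Lem:equaldegree} the curve $D$ has degree $6$, and by Proposition~\ref{Prop:form} it is rational with a single proper singular point, and $C$ and $D$ have the same number $k$ of punctures after removing the singular point. Since the multiplicity sequence is $(3,2_{(7)})$ one has, in the notation of Lemma~\ref{Lem:proximate mult}, $C_8\cdot E_1=m_1-m_2=1$ and $C_8\cdot E_8=m_8=2$, so the strict transform of $C$ meets the exceptional locus of its minimal resolution in at least two points; hence $k\ge 2$ and neither $C$ nor $D$ is unicuspidal. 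Going through the degree-$6$ multiplicity sequences of Corollary~\ref{Cor:table}, the cases $(5)$ and $(4,2_{(4)})$ for $D$ are excluded by Corollary~\ref{cor:equalcase} and Corollary~\ref{cor:proximatecase} applied to $\varphi^{-1}$, and the cases $(3_{(3)},2)$ and $(3_{(2)},2_{(4)})$ by Corollary~\ref{cor:1jumpcase}; in the sub-cases where the candidate for $D$ would be unicuspidal one uses Corollary~\ref{cor:unicuspidal8} (or Theorem~\ref{Thm:Yoshihara}, after Lemma~\ref{lemma:first2} provides a very tangent line) to conclude $C\simeq D$ as abstract curves, contradicting that $C$ is not unicuspidal. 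Hence $D$ also has multiplicity sequence $(3,2_{(7)})$.

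The main step is then the analogue of Lemma~\ref{Lemma:6points}: if $C$ and $D$ are sextics with multiplicity sequence $(3,2_{(7)})$ and (proper and infinitely near) singular points $p_1,\dots,p_8$, respectively $q_1,\dots,q_8$, then there is $\alpha\in\Aut(\p^2)$ with $\alpha(p_i)=q_i$ for all $i$. I would prove this by applying a quadratic transformation $\tau$ based at the chain $p_1,p_2,p_3$ (which carries multiplicities $3,2,2$, so such a $\tau$ exists); by Lemma~\ref{Lem:formula} the image $\tau(C)$ is a quintic, and a multiplicity count shows it is rational with exactly six double points, namely the images of $p_4,\dots,p_8$ together with one base point of $\tau^{-1}$, and that these form a single chain, so that $\tau(C)$ has multiplicity sequence $(2_{(6)})$. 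Lemma~\ref{Lemma:6points} then provides an automorphism of $\p^2$ matching the six singular points of $\tau(C)$ and $\tau'(D)$, where $\tau'$ is the corresponding quadratic transformation for $D$; conjugating it back through $\tau$ and $\tau'$, after normalising the remaining base points, yields the desired $\alpha$. I expect this matching step to be the main obstacle, just as the corresponding step was for quintics; the delicate points are verifying that $\tau(C)$ is genuinely of type $(2_{(6)})$ and that the automorphism obtained on the quintic level pulls back compatibly with the base points of $\tau$ and $\tau'$.

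Finally, once $\alpha(p_i)=q_i$ for all $i$, the curves $\alpha(C)$ and $D$ both belong to the linear system $\Lambda$ of plane sextics having multiplicity $\ge 3$ at $q_1$ and multiplicity $\ge 2$ at each of $q_2,\dots,q_8$. If $\Lambda$ contained two distinct curves, then by Noether's formula for intersection multiplicities they would meet at $q_1$ with multiplicity at least $3^2+7\cdot 2^2=37$, contradicting B\'ezout's theorem since $6^2=36$. Hence $\Lambda=\{\alpha(C)\}=\{D\}$, so $\alpha(C)=D$ and $C$ and $D$ are projectively equivalent. In particular this shows, in analogy with Proposition~\ref{proposition:quintic}, that there is up to projective equivalence exactly one irreducible sextic with multiplicity sequence $(3,2_{(7)})$.
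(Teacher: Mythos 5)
Your reduction of $D$ to the multiplicity sequence $(3,2_{(7)})$ is fine, and the final B\'ezout/Noether argument showing that the sextic is determined by its eight assigned singular points is also correct. The gap is in the central step, the claimed analogue of Lemma~$\ref{Lemma:6points}$: the quadratic transformation $\tau$ based at the chain $p_1,p_2,p_3$ does \emph{not} send $C$ to a quintic with multiplicity sequence $(2_{(6)})$. Indeed, after blowing up $p_1,p_2,p_3$ the contracted locus of $\tau$ is $\tilde{L},E_2,E_1$ (where $L$ is the line through $p_1,p_2$), and $E_3$ survives as a line in the target plane. One computes $\hat{C}\cdot E_1=m_1-m_2=1$, $\hat{C}\cdot E_2=0$, $\hat{C}\cdot\tilde{L}=6-3-2=1$, so the only base point of $\tau^{-1}$ at which $\tau(C)$ is singular is the \emph{proper} point $q_1'=\tau(E_1)$, with multiplicity $1+0+1=2$. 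On the other hand $p_4$ lies on $E_3$ away from $E_2$ (it is not proximate to $p_2$), so $\tau(p_4)$ is a proper point of the target, distinct from $q_1'$ (which is the image of $E_3\cap E_2$). Hence $\tau(C)$ is a rational quintic whose six double points split into the chain $\tau(p_4),\ldots,\tau(p_8)$ \emph{plus} a separate proper node at $q_1'$; it has two proper singular points and does not satisfy Definition~$\ref{Def:multseq}$ with sequence $(2_{(6)})$, so Lemma~$\ref{Lemma:6points}$ cannot be invoked.

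Beyond this local failure, the overall strategy of proving that \emph{all} irreducible sextics with multiplicity sequence $(3,2_{(7)})$ are projectively equivalent is almost certainly not salvageable: chains of $8$ points contribute $2+7=9$ parameters while $\dim\PGL_3=8$, and the $6+7\cdot 3=27$ linear conditions on the $27$-dimensional system of sextics always leave a non-empty system, so one expects a positive-dimensional family of pairwise non-equivalent such curves (your own uniqueness argument then shows the curve is determined by the point configuration, so non-equivalent configurations give non-equivalent curves). This is why the paper's proof uses the non-extending isomorphism $\varphi$ in an essential way, not merely to pin down the multiplicity sequence of $D$: it takes the common resolution $X$ of $\pi$ and $\eta$, adjoins the strict transforms of two auxiliary conics through $p_1,\ldots,p_5$ (resp.\ the analogous points for $D$), contracts them together with $E_1,E_2,E_3$ and $E_5,E_6,E_7$ to reach a plane configuration of three conics $\rho(\hat{C}),\rho(E_4),\rho(E_8)$ pairwise tangent of order three in a symmetric pattern, and writes down an explicit involution of $\p^2$ exchanging $\rho(\hat{C})$ and $\rho(E_8)$; this lifts to an automorphism of $X$ exchanging $\hat{C}$ and $E_8$ and hence descends to an automorphism of $\p^2$ sending $C$ to $D$. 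You would need to replace your matching step by an argument of this kind that exploits the symmetry of the resolution of $\varphi$.
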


\begin{proof}
If $\varphi$ extends to an automorphism of $\p^2$ the claim is trivial, so we assume this is not the case. Then by Lemma~$\ref{Lem:tower}$, there exists a $(-1)$-tower resolution $\pi \colon X \to \p^2$ of $C$ and a $(-1)$-tower resolution $\eta \colon X \to \p^2$ of $D$ such that $\eta = \varphi \circ \pi$. The curve $C$ has $8$ singular points $p_1,\ldots,p_8$, where $p_{i+1}$ lies in the first neighborhood of $p_i$ for $i=1,\ldots,7$. The map $\pi$ is a $(-1)$-tower resolution of $C$ and thus blows up the points $p_1,\ldots,p_8$. We denote by $E_i$ the exceptional curve of the blow-up of $p_i$, for $i=1,\ldots,8$. After blowing up these $8$ points, the strict transform $\hat{C}$ of $C$ has self-intersection $6^2 - 3^2 - 7 \cdot 2^2 = -1$. We observe that $\hat{C}$ and $E_8$ intersect with multiplicity $2$. Since no other base-point of $\pi$ lies on $\hat{C}$, it follows that also the strict transforms of $\hat{C}$ and $E_8$ intersect with multiplicity $2$ in $X$. But this means that $E_8$ is not contracted by $\eta$. It follows that $E_8$ is the last exceptional curve of $\pi$ and $\eta(E_8) = D$.

By B\'ezout's theorem the points $p_1, p_2, p_3$ are not collinear and hence there exists a conic $Q_1 \subset \p^2$ that passes through $p_1,\ldots,p_5$. Again by B\'ezout's theorem, it follows that $C$ and $Q_1$ intersect transversely in some proper point of $\p^2$ that is different from $p_1$. It then follows that the strict transform $\hat{Q}_1$ of $Q_1$ in $X$ transversely intersects $E_5$ and $\hat{C}$. By symmetry there also exists a conic $Q_2 \subset \p^2$ whose strict transform $\hat{Q}_2$ by $\eta$ intersects $E_3$ and $\hat{D}$ transversely. The configuration of curves in $X$ is shown below.

 \bigskip
 
\begin{center}
\begin{tikzpicture}[scale=0.9]
\draw (0,0)--(1.5,2.5);
\draw (1,2.5) -- (2.5,0);
\draw (2,0) --(3.5,2.5);
\node at (2.45,1.3){\scriptsize $E_3$};
\node at (0.45,1.3){\scriptsize $E_1$};
\node at (1.45,1.3){\scriptsize $E_2$};

\draw (3,2.1) --(6,2.1);
\node at (4.5,2.35) {\scriptsize $E_4$};

\draw (5.5,2.5) --(7,0);
\node at (6.55,1.3){\scriptsize $E_5$};
\draw (6.5,0) --(8,2.5);
\draw (7.5,2.5) --(9,0);
\node at (8.55,1.3){\scriptsize $E_7$};
\node at (7.55,1.3){\scriptsize $E_6$};

\node at (3.05,0.7){\scriptsize $\hat{Q}_2$};
\node at (6,0.7){\scriptsize $\hat{Q}_1$};

\draw[very thick] (2.7,1.7)--(3.9,-0.3);
\draw[very thick] (6.3,1.7) --(5.1,-0.3);

\draw[red] (3.5,0) to [out=90, in=180] (5.5,1.5);
\draw[red] (3.5,0) to [out=-90, in=180] (5.5,-1.5);
\draw[red] (5.5,-1.5) to [out=0, in=-90] (8.8,0.8);

\draw[blue] (5.5,0) to [out=90, in=0] (3.5,1.5);
\draw[blue] (5.5,0) to [out=-90, in=0] (3.5,-1.5);
\draw[blue] (3.5,-1.5) to [out=180, in=-90] (0.2,0.8);

\node at (7.5,-1.35) {\scriptsize $E_8$};
\node at (1.5,-1.35) {\scriptsize $\hat{C}$};

\end{tikzpicture}
\end{center}

 \bigskip

To see that $\hat{Q}_1$ and $\hat{Q}_2$ do not intersect in $X$, we observe that $\pi$ sends $\hat{Q}_2$ to a rational quartic curve with multiplicity sequence $(2_{(3)})$ and singular points $p_1,p_2,p_3$. It then follows that
$$\hat{Q}_1\cdot \hat{Q}_2 =  Q_2 \cdot \pi(\hat{Q}_2)  - 2 - 2 - 2 - 1 - 1 = 0.$$
Moreover, the curves $\hat{Q}_1$ and $\hat{Q}_2$ both have self-intersection $-1$ in $X$. We can thus construct a morphism $\rho$ by contracting the curves $\hat{Q}_2, E_3, E_2, E_1$ and $\hat{Q}_1, E_5, E_6, E_7$. The rank of the Picard group of $X$ is $9$, and hence the rank of the Picard group of the image of $\rho$ is $1$. It thus follows that $\rho$ is a morphism $X \to \p^2$. The images of $\hat{C}, E_4$ and $E_8$ all have self-intersection $4$ and are thus smooth conics in $\p^2$. The curves $\rho(E_4)$ and $\rho(\hat{C})$ intersect in two distinct points $p,q \in \p^2$, with multiplicity $1$ in $p$ and multiplicity $3$ in $q$. The curves $\rho(E_4)$ and $\rho(E_8)$ also intersect in $p$ and $q$, but with multiplicity $3$ in $p$ and multiplicity $1$ in $q$. The configuration of the $3$ conics is shown below.

 \bigskip
 
\begin{center}
\begin{tikzpicture}[scale=0.95]

\draw[red] (-2,0) to [out=-90, in=180] (0.3,-1.8);
\draw[red] (-2,0) to [out=90, in=120] (2,0);
\draw[red] (0.3,-1.8) to [out=0, in=-60] (2,0);

\draw[blue] (2,0) to [out=90, in=60] (-2,0);
\draw[blue] (-2,0) to [out=-120, in=180] (-0.3,-1.8);
\draw[blue] (-0.3,-1.8) to [out=0, in=-90] (2,0);

\draw (0,0) ellipse (2 and 1.5);

\fill (-2,0) circle (2pt);
\fill (2,0) circle (2pt);

\node at (0,-1.2){\scriptsize $\rho(E_4)$};
\node at (-2.4,-1){\scriptsize $\rho(\hat{C})$};
\node at (2.4,-1){\scriptsize $\rho(E_8)$};

\node at (-2.2,0){\scriptsize $p$};
\node at (2.2,0){\scriptsize $q$};

\end{tikzpicture}
\end{center}

 \bigskip
 
\noindent Up to a linear change of coordinates, we can assume that the smooth conic $\rho(E_4)$ has equation $xz+y^2 = 0$ and the points $p$ and $q$ are $[1:0:0]$ and $[0:0:1]$ respectively. Conics that pass through the points $[1:0:0]$ and $[0:0:1]$ are of the form
$$ay^2 + bxy + cxz + dyz = 0$$
where $a,b,c,d \in \k$. A smooth conic with this equation intersects $xz+y^2=0$ with multiplicity $3$ in $[1:0:0]$ if and only if $a=c \neq 0$, $b=0$ and $d \neq 0$. Thus there exists some $\lambda \in \k^*$ such that $\rho(\hat{C})$ has equation $xz + y^2 + \lambda yz = 0$. Analogously, there exists $\mu \in \k^*$ such that $\rho(E_8)$ has equation $xz + y^2 + \mu yz = 0$.

We then find $\theta \in \Aut(\p^2)$ that sends a point $[x:y:z]$ to $[\frac{\lambda}{\mu}z:y:\frac{\mu}{\lambda}x]$. Thus $\theta$ preserves the conic $xz+y^2=0$ and exchanges $\rho(\hat{C})$ and $\rho(E_8)$. It follows that $\hat{\theta} \coloneqq \rho^{-1}\circ \theta \circ \rho$ is an automorphism of $X$ that exchanges $\hat{C}$ and $E_8$ and sends $E_i$ to $E_{8-i}$ for $i = 2,\ldots,7$. But then $\eta \circ \hat{\theta} \circ \pi^{-1}$ is an automorphism of $\p^2$ that sends $C$ to $D$, and hence $C$ and $D$ are projectively equivalent.
\end{proof}

Before we are able to prove Theorem~$\ref{Thm:degree8}$, we need to look at one more special case.

\begin{lemma}\label{lemma:522222}
Let $C \subset \p^2$ be a curve of degree $7$ and multiplicity sequence $(5,2_{(5)})$. Then every open embedding $\p^2 \setminus C \hookrightarrow \p^2$ extends to an automorphism of $\p^2$.
\end{lemma}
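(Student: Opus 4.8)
The plan is to argue by contradiction. Suppose $\varphi\colon\p^2\setminus C\hookrightarrow\p^2$ is an open embedding that does not extend to an automorphism of $\p^2$. Then $C$ is rational by Lemma~\ref{Lem:inequalities1}, and by Lemma~\ref{Lem:tower} there is a $(-1)$-tower resolution $\pi\colon X=X_n\to\cdots\to X_0=\p^2$ of $C$ with base-points $p_1,\ldots,p_n$ and exceptional curves $E_1,\ldots,E_n$, together with a $(-1)$-tower resolution $\eta\colon X\to\p^2$ of some curve $D$ with $\varphi\circ\pi=\eta$; the contracted locus $E_1\cup\cdots\cup E_{n-1}\cup C_n$ of $\eta$ is an SNC curve with a tree structure, and by observation (ii) after Lemma~\ref{Lem:tower} the only $(-1)$-curve it contains is $C_n$. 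The composition $\pi_1\circ\cdots\circ\pi_6$ is the minimal resolution of singularities of $C$, so $(C_6)^2=7^2-5^2-5\cdot2^2=4$.

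The first step is to pin down the proximity relations among $p_1,\ldots,p_6$, writing $(m_1,\ldots,m_6)=(5,2,2,2,2,2)$. The chain condition gives $p_{i+1}\succ p_i$ for $i=1,\ldots,5$, and since every intersection number $C_6\cdot E_i$ must be non-negative, Lemma~\ref{Lem:proximate mult} forbids every other proximity with the single exception that $p_3$ may or may not be proximate to $p_1$. This yields two cases. In both, Lemma~\ref{Lem:proximate mult} gives $C_6\cdot E_i=0$ for $i=2,\ldots,5$ and $C_6\cdot E_6=2$. In case A, where $p_3\not\succ p_1$, the curves $E_1,\ldots,E_6$ form a chain with $E_1,\ldots,E_5$ of self-intersection $-2$ in $X_6$, and $C_6\cdot E_1=m_1-m_2=3$. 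In case B, where $p_3\succ p_1$, the curve $E_1$ has self-intersection $-3$, the curve $E_2$ is a $(-2)$-leaf attached to $E_3$, the curves $E_1,E_3,E_4,E_5,E_6$ form a chain, and $C_6\cdot E_1=m_1-m_2-m_3=1$. A key remark to record here is that all remaining base-points $p_7,\ldots,p_n$ lie on $E_6$ or on curves produced above $E_6$, all of which are disjoint from $E_1$; hence no $p_j$ with $j\geq7$ is proximate to $p_1$, so $C_n\cdot E_1=C_6\cdot E_1$ already holds in $X$, and the self-intersections of $E_1,\ldots,E_5$ are unchanged in $X$.

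In case A this finishes things: since $C_n\cdot E_1=3$, the (smooth rational) curves $C_n$ and $E_1$ either meet at one point with local intersection number $3$, contradicting the SNC property, or meet at two or more points, creating a cycle in the dual graph of the contracted locus of $\eta$; either way this contradicts observation (iii) after Lemma~\ref{Lem:tower}. In case B the intersection $C_n\cdot E_1=1$ is harmless, so one analyzes the rest of the tower. Because $C_6$ is smooth and $(C_n)^2=-1$, every base-point $p_7,\ldots,p_n$ must lie on the strict transform of $C$ — otherwise that strict transform would become disjoint from the last exceptional curve and remain so, and the tower could not terminate; in particular $p_7\in C_6\cap E_6$, and since $C_6\cdot E_6=2$, either $C_6$ meets $E_6$ at two distinct points, or it is tangent to $E_6$. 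In the first sub-case, after completing the tower $C_n$ still meets $E_6$, and together with the chain $E_1-E_3-E_4-E_5-E_6$ this gives a cycle in the contracted locus of $\eta$, a contradiction. In the second sub-case, the tangency forces one extra blow-up of a triple point before the configuration becomes SNC, and after completing the tower $C_n$ meets the contracted locus of $\eta$ only along $E_1$, transversally at one point, while $E_1$ still has self-intersection $-3$; since $C_n$ is the only $(-1)$-curve in this locus, $\eta$ must contract $C_n$ first, after which the image of $E_1$ is a $(-2)$-curve and no component of the remaining locus has self-intersection $-1$, so $\eta$ cannot be a composition of blow-downs — a contradiction. As every case is impossible, $\varphi$ extends to an automorphism of $\p^2$.

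The step I expect to be the main obstacle is the bookkeeping in case B: tracking the configuration of exceptional curves and their self-intersections through the additional blow-ups $p_7,\ldots,p_n$, in particular the subtlety that a tangential intersection $C_6\cdot E_6=2$ first produces a triple point that must itself be blown up before the SNC condition holds, and verifying that the two sub-cases of case B together with case A genuinely exhaust all possibilities and that no superfluous blow-ups off $C$ can occur in the resolution.
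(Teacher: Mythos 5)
Your overall framework is the one the paper uses: pass to the minimal resolution of Lemma~\ref{Lem:tower}, use Lemma~\ref{Lem:proximate mult} to compute $C_n\cdot E_1\in\{3,1\}$ according to whether $p_3$ is proximate to $p_1$, and rule out the value $3$ because $C_n$ and $E_1$ are components of an SNC tree; your case A coincides with the paper's first step. Where you diverge is case B. The paper finishes there in one stroke: with $p_3\succ p_1$, the curve $E_3$ is adjacent to the two $(-2)$-curves $E_2$ and $E_4$, whose self-intersections are unchanged in $X$ because all later base-points lie on $E_6$ or above, and since $C_n$ is connected to each of $E_2$ and $E_4$ through $E_3$ in the tree, $\eta$ must contract $E_3$ before both of them --- after which both images are $(-1)$-curves, which is impossible. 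This local argument needs no information whatsoever about the blow-ups $p_7,\ldots,p_n$ and disposes of both of your sub-cases at once.

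Your own treatment of case B has a genuine gap in sub-case B2. The claim that every base-point $p_7,\ldots,p_n$ lies on the strict transform of $C$ is not justified: your disjointness argument only shows that base-points lie on $C$ \emph{until} its strict transform reaches self-intersection $-1$; after that the $(-1)$-tower may continue with base-points off $C$ (this actually happens, for instance, in the resolutions of Lemma~\ref{Lem:decomposition} and in case $(iii)$ of the proof of Lemma~\ref{lemma:unicuspidalcontractible}). If such extra blow-ups occur, the contracted locus of $\eta$ also contains the exceptional curve of the blow-up that brought $C$ to self-intersection $-1$; the curve $C_n$ meets that component as well as $E_1$, and after $\eta$ contracts $C_n$ that component becomes a $(-1)$-curve, so your concluding statement that no component of the remaining locus has self-intersection $-1$ fails. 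The sub-case can still be closed --- in that scenario the contracted locus acquires a cycle running from $C_n$ through $E_1,E_3,E_4,E_5,E_6$ and back along the later exceptional curves to $C_n$, contradicting the tree structure --- but as written your argument only covers the case where the tower stops as soon as $C$ reaches self-intersection $-1$. Sub-case B1 is unaffected, since the surviving second intersection point of $C$ with $E_6$ produces a cycle regardless of any further blow-ups.
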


\begin{proof}
Suppose that there exists an open embedding $\varphi \colon \p^2 \setminus C \hookrightarrow \p^2$ that does not extend to an automorphism of $\p^2$. Then by Lemma~$\ref{Lem:tower}$, there exists $\pi \colon X = X_n \xrightarrow{\pi_n} \ldots \xrightarrow{\pi_2} X_1 \xrightarrow{\pi_1} X_0 = \p^2$ a $(-1)$-tower resolution of $C$ with base-points $p_1,\ldots,p_n$ and exceptional curves $E_1,\ldots,E_n$, and a $(-1)$-tower resolution $\eta \colon X \to \p^2$ of some curve $D \subset \p^2$ such that $\varphi \circ \pi = \eta$. Then $E_1 \cup \ldots \cup E_{n-1} \cup C_n$ is the exceptional locus of $\eta$, being the support of an SNC-divisor that has a tree structure. By Lemma~$\ref{Lem:proximate mult}$, we obtain the intersection number
$$C_n \cdot E_1 = m_i - \sum_{p_j \succ p_1} m_j.$$
Thus either $C_n \cdot E_1 =3$ or $C_n \cdot E_1 =1$. Since $C_n$ can intersect $E_1$ only transversely in at most one point, we conclude that $C_n \cdot E_1 =1$ and that $p_3$ is proximate to $p_1$. For the first $6$ blow-ups of $\pi$, we then obtain the configuration of curves illustrated below.

\begin{center}
\begin{tikzpicture}[scale=0.75]

\draw (2,0) --(3.5,2.5);
\node at (2.4,1.25) {\scriptsize $E_{1}$};
\draw (3,2.1) --(6,2.1);
\node at (4.8,2.35) {\scriptsize $E_3$};
\draw (5.5,2.5) --(7,0);
\node at (6.6,1.25) {\scriptsize $E_2$};

\draw (4.25,2.5) --(5,1.25);
\node at (4.45,1.7) {\scriptsize $E_4$};
\draw (5,1.5) --(4.25,0.25);
\node at (4.9,0.8) {\scriptsize $E_5$};
\draw[very thick] (4.25,0.5) --(5,-0.75);
\node at (4.45,-0.28) {\scriptsize $E_6$};

\draw[very thick] (2,0.45) to [out=-50, in=120] (4.8,-0.2);
\draw[very thick] (4.8,-0.2) to [out=300, in=20] (4.6,-0.8);
\node at (3.5,-0.07) {\scriptsize $C_6$};

\end{tikzpicture}
\end{center}
The curves $E_2$ and $E_4$ have self-intersection $-2$ in $X$ since the resolution $\pi$ is obtained by blowing up more points on $E_6$. Moreover, the map $\eta$ contracts $E_3$ before $E_2$ and $E_4$, but this leads to a contradiction.
\end{proof}

We are now ready to give the proof of the second main result.

\begin{proof}[Proof of Theorem~$\ref{Thm:degree8}$]
We assume that $C$ is not a line, conic, or a nodal cubic. We can also assume that $C$ is rational and has a unique proper singular point with one of the multiplicity sequences in Table~$\ref{table:sequences}$, by Corollary~$\ref{Cor:table}$. Otherwise, $\varphi$ extends to an automorphism of $\p^2$. If $C$ is unicuspidal, then $C$ and $D$ are projectively equivalent by Corollary~$\ref{cor:unicuspidal8}$. If $C$ is not unicuspidal, then $\varphi$ extends to an automorphism of $\p^2$ by Corollary~$\ref{cor:jumpcase}$, Corollary~$\ref{cor:equalcase}$, Corollary~$\ref{cor:1jumpcase}$, Corollary~$\ref{cor:proximatecase}$, and Lemma~$\ref{lemma:522222}$, except when $C$ is of degree $6$ with multiplicity sequence $(3,2_{(7)})$ or $C$ is of degree $8$ with multiplicity sequence $(3_{(7)})$. If $C$ has multiplicity sequence $(3,2_{(7)})$, the claim follows from Proposition~$\ref{prop:deg6equivalent}$. If $C$ has multiplicity sequence $(3_{(7)})$, then $C \setminus \Sing(C)$ is isomorphic to $\A^1 \setminus \{0\}$, by Proposition~$\ref{Prop:nojump}$.
\end{proof}

\begin{remark}
{\rm For all known examples of irreducible curves $C \subset \p^2$ that have non-extendable open embeddings $\p^2 \setminus C \hookrightarrow \p^2$, we have that $C \setminus \Sing(C) \simeq \p^1 \setminus \{p_1,\ldots,p_k\}$, where $k \in \{1,2,3,9\}$. There are only very few known non-unicuspidal examples. Do there exist examples for any $k \in \N$?}
\end{remark}

\subsection{A counterexample of degree 8}\label{sec:deg8}
It follows from Theorem~$\ref{Thm:degree8}$ that if two irreducible curves $C, D \subset \p^2$ of degree $\leq 8$ are counterexamples to Conjecture~$\ref{conjecture:yoshihara}$, then $C$ and $D$ are of degree $8$ and have multiplicity sequence $(3_{(7)})$. In this section, we show that such counterexamples do indeed exist. First we need the following auxiliary construction.

 \begin{lemma}\label{lemma:conicconfiguration}
 We denote the conic
 $$\Lambda \colon xy + xz + yz = 0$$
 and for $\lambda \in \k \setminus \{0,-1\}$ the conics
\begin{align*}
\Gamma_\lambda &\colon x^2 - (1+\lambda)xy - \lambda xz - (1+\lambda)yz = 0, \\
\Delta_\lambda &\colon z^2 - \left(1+\frac{1}{\lambda}\right)xy - \frac{1}{\lambda}xz - \left(1+\frac{1}{\lambda}\right)yz = 0.
\end{align*}
Then the curves $\Lambda$, $\Gamma_\lambda$ and $\Delta_\lambda$ intersect in $[0:1:0]$ with multiplicity $3$ for each pair. Moreover, the curves
\begin{itemize}
\item $\Lambda$ and $\Gamma_\lambda$ intersect in $[0:0:1]$,
\item $\Lambda$ and $\Delta_\lambda$ intersect in $[1:0:0]$,
\item $\Gamma_\lambda$ and $\Delta_\lambda$ intersect in $[\lambda:0:1]$,
\end{itemize}
and in no other point apart from $[0:1:0]$. The configuration of these conics is shown below.

\begin{center}
\begin{tikzpicture}[scale=0.9]

\draw[red] (-2,0) to [out=90, in=170] (1,1.3);
\draw[red] (-2,0) to [out=-90, in=170] (1,-2.5);
\draw[red] (1,1.3) [out=-10,in=90] to (4,-1.2);
\draw[red] (4,-1.2) [out=-90, in=-10] to (1,-2.5);

\draw[blue] (-2,0) to [out=90, in=110] (1,-1.3);
\draw[blue] (1,-1.3) to [out=-70, in=45] (1,-2.5);
\draw[blue] (-2,0) to [out=-90,in=-145] (1,-2.5);

\draw[green] (0,0) ellipse (2 and 1.5);

\fill (1,-2.5) circle (2pt);
\fill (1,-1.3) circle (2pt);
\fill (1,1.3) circle (2pt);
\fill (-2,0) circle (2pt);

\node at (1.9,-0.8){\scriptsize $\Lambda$};
\node at (3.65,-2.4){\scriptsize $\Delta_\lambda$};
\node at (-1.1,-2.4){\scriptsize $\Gamma_\lambda$};

\node at (-2.75,0){\scriptsize $[0:1:0]$};
\node at (1.75,1.5){\scriptsize $[1:0:0]$};
\node at (1.75,-1.4){\scriptsize $[0:0:1]$};
\node at (1.75,-2.75){\scriptsize $[\lambda:0:1]$};

\end{tikzpicture}
\end{center}

\noindent Furthermore, there exists an automorphsim of $\p^2$ that preserves $\Lambda$ and exchanges $\Gamma_\lambda$ and $\Delta_\lambda$ if and only if $\lambda = 1$.

\end{lemma}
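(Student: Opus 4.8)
The plan is as follows. For the ``if'' direction I would simply exhibit the automorphism $\alpha\colon[x:y:z]\mapsto[z:y:x]$. Since $\alpha^*(xy+xz+yz)=xy+xz+yz$, the conic $\Lambda$ is preserved; and since $\alpha^*\bigl(x^2-2xy-xz-2yz\bigr)=z^2-2xy-xz-2yz$, which is exactly the equation of $\Delta_1$, the involution $\alpha$ exchanges $\Gamma_1$ and $\Delta_1$. So the condition is sufficient.

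For the ``only if'' direction, suppose $\alpha\in\Aut(\p^2)$ preserves $\Lambda$ and satisfies $\alpha(\Gamma_\lambda)=\Delta_\lambda$ and $\alpha(\Delta_\lambda)=\Gamma_\lambda$. First I would extract from the incidence data in the lemma the combinatorial constraints on $\alpha$. The point $[0:1:0]$ is the unique common point of the three conics, and $\alpha$ permutes them, so $\alpha([0:1:0])=[0:1:0]$. Since $\Lambda$ and $\Gamma_\lambda$ meet at $[0:1:0]$ with multiplicity $3\ge 2$ they are tangent there, and likewise for every pair, so the three conics have a common tangent line $T$ at $[0:1:0]$; a direct computation in the chart $y=1$ (the linear part of $\Lambda$ there is $x+z$) gives $T\colon x+z=0$. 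As $\alpha$ fixes $[0:1:0]$ and maps $\Lambda$ to $\Lambda$, it sends the tangent line of $\Lambda$ at $[0:1:0]$ to itself, i.e. $\alpha(T)=T$. Next, from $\alpha(\Lambda\cap\Gamma_\lambda)=\Lambda\cap\Delta_\lambda$, $\alpha(\Lambda\cap\Delta_\lambda)=\Lambda\cap\Gamma_\lambda$ and $\alpha(\Gamma_\lambda\cap\Delta_\lambda)=\Gamma_\lambda\cap\Delta_\lambda$, together with $\alpha([0:1:0])=[0:1:0]$ and the point lists in the lemma, I read off that $\alpha$ swaps $[0:0:1]$ and $[1:0:0]$ and fixes $[\lambda:0:1]$. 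In particular $\alpha$ preserves the line $L_y\colon y=0$ through these three points, hence it also fixes $T\cap L_y=[1:0:-1]$.

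Finally I would restrict $\alpha$ to the invariant line $L_y\cong\p^1$, with affine coordinate $t=x/z$; there $[0:0:1],[1:0:0],[\lambda:0:1],[1:0:-1]$ correspond to $t=0,\infty,\lambda,-1$. The induced M\"obius transformation swaps $0$ and $\infty$, so it has the form $t\mapsto c/t$; fixing $t=\lambda$ forces $c=\lambda^2$, and then fixing $t=-1$ forces $-\lambda^2=-1$, so $\lambda^2=1$, and since $\lambda\notin\{0,-1\}$ we get $\lambda=1$. The step requiring care -- and the reason this does not follow at once from a ``four points in general position'' argument -- is exactly that the three marked points $[0:0:1],[1:0:0],[\lambda:0:1]$ are \emph{collinear}, lying on $L_y$; it is the extra fixed point $[1:0:-1]=T\cap L_y$, produced by the common tangent of the conics, that rigidifies $\alpha$ on $L_y$ enough to pin down $\lambda$.
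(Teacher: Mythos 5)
Your proof is correct, and in the ``only if'' direction it departs from the paper's argument at the key normalization step. Both proofs start the same way: the incidence data forces $\theta$ to fix $[0:1:0]$, exchange $[0:0:1]$ and $[1:0:0]$, and fix $[\lambda:0:1]$ (and the ``if'' direction is the same involution $[x:y:z]\mapsto[z:y:x]$ in both). The paper then writes $\theta$ as the explicit anti-diagonal map $[x:y:z]\mapsto[\alpha z:y:\beta x]$, substitutes into the equation of $\Lambda$ to get $\alpha=\beta=1$, and reads off $\theta([\lambda:0:1])=[1:0:\lambda]$, whence $\lambda^2=1$. You avoid the matrix entirely: you use that the three conics share the tangent $T\colon x+z=0$ at $[0:1:0]$ (a consequence of the multiplicity-$3$ intersections there), that $T$ is $\theta$-invariant, and hence that $T\cap L_y=[1:0:-1]$ is a fourth constrained point on the invariant line $L_y$; rigidity of M\"obius transformations of $L_y\simeq\p^1$ then gives $\lambda^2=1$, and $\lambda\neq-1$ yields $\lambda=1$. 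The paper's route is computationally shorter; yours costs a small tangent-line computation but needs only the linear part of $\Lambda$ at $[0:1:0]$ and makes explicit why the collinearity of $[0:0:1]$, $[1:0:0]$, $[\lambda:0:1]$ on $L_y$ is not an obstruction. Like the paper, you treat the first assertion of the lemma (the intersection points and multiplicities) as a direct verification from the equations; that data is exactly what your tangency argument consumes, so nothing is circular.
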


\begin{proof}
The curves $\Lambda$, $\Gamma_\lambda$ and $\Delta_\lambda$ are given by explicit equations and it is a straightforward computation to determine the intersection points and multiplicities.

To prove the last claim, suppose that $\theta \in \Aut(\p^2) = \PGL_3(\k)$ preserves $\Lambda$ and exchanges $\Gamma_\lambda$ and $\Delta_\lambda$. Then $\theta$ fixes $[0:1:0]$ and exchanges $[1:0:0]$ and $[0:0:1]$. These conditions imply that $\theta$ is of the form $[x:y:z] \mapsto [\alpha z : y : \beta x]$, for some $\alpha, \beta \in \k^*$. The image of $\Lambda$ under $\theta$ then has equation $\beta xy + \alpha \beta xz + \alpha yz = 0$. Since $\Lambda$ is preserved, it follows that $\alpha = \beta = \alpha \beta$ and hence $\alpha = \beta = 1$. The map $\theta$ also fixes the intersection point $[\lambda : 0 : 1]$ between $\Gamma_\lambda$ and $\Delta_\lambda$. Since $\theta([\lambda:0:1]) = [1:0:\lambda]$, it follows that $\lambda = 1$. For the converse, suppose that $\lambda = 1$. Then the automorphism $[x:y:z] \mapsto [z:y:x]$ preserves $\Lambda$ and exchanges $\Gamma_1$ and $\Delta_1$.
\end{proof}

\begin{proof}[Proof of Theorem~$\ref{theorem:counterexample}$]
With the same notations as in Lemma~$\ref{lemma:conicconfiguration}$, we choose some $\lambda \in \k \setminus \{0,\pm 1\}$ and conics $\Lambda$, $\Gamma = \Gamma_\lambda$, $\Delta = \Delta_\lambda$. We denote moreover by $L_y$ the line $y=0$ and by $L_\lambda$ the line through $[0:1:0]$ and $[\lambda : 0 : 1]$. The line $L_\lambda$ has equation $x - \lambda z = 0$ and intersects $\Lambda$ in the points $[0:1:0]$ and $[1+\lambda:-1:1+\frac{1}{\lambda}]$. The configuration of these curves in shown below.

\begin{center}
\begin{tikzpicture}[scale=0.9]

\draw[red] (-2,0) to [out=90, in=170] (1,1.3);
\draw[red] (-2,0) to [out=-90, in=170] (1,-2.5);
\draw[red] (1,1.3) [out=-10,in=90] to (4,-1.2);
\draw[red] (4,-1.2) [out=-90, in=-10] to (1,-2.5);

\draw[blue] (-2,0) to [out=90, in=110] (1,-1.3);
\draw[blue] (1,-1.3) to [out=-70, in=45] (1,-2.5);
\draw[blue] (-2,0) to [out=-90,in=-145] (1,-2.5);

\draw[green] (0,0) ellipse (2 and 1.5);

\draw[orange] (1,1.8)--(1,-3);
\draw[orange] (-2.35,0.3)--(1.4,-2.85);

\fill (1,-2.5) circle (2pt);
\fill (1,-1.3) circle (2pt);
\fill (1,1.3) circle (2pt);
\fill (-2,0) circle (2pt);
\fill (-0.22,-1.48) circle (2pt);

\node at (1.9,-0.8){\scriptsize $\Lambda$};
\node at (3.5,-2.4){\scriptsize $\Delta$};
\node at (-1.1,-2.4){\scriptsize $\Gamma$};
\node at (0.7,0.2){\scriptsize $L_y$};
\node at (-0.6,-0.8){\scriptsize $L_\lambda$};

\node at (-2.75,0){\scriptsize $[0:1:0]$};
\node at (1.75,1.5){\scriptsize $[1:0:0]$};
\node at (1.75,-1.4){\scriptsize $[0:0:1]$};
\node at (1.75,-2.75){\scriptsize $[\lambda:0:1]$};
\node at (-1.8,-1.7){\scriptsize $[1+\lambda:-1:1+\frac{1}{\lambda}]$};

\end{tikzpicture}
\end{center}

We then blow up the points $[1:0:0]$, $[0:0:1]$ and $[\lambda:0:1]$, with exceptional curves $E_1$, $E_2$, and $E_3$ respectively. The configuration after these blow-ups is shown below. By abuse of notation, we use the same names for the strict transforms of all curves. Curves with self-intersection $-1$ are drawn with thick lines and all other self-intersection numbers are indicated, except if they are $-2$.

\begin{center}
\begin{tikzpicture}[scale=0.9]

\draw[orange] (1,2)--(1,-2);
\draw[green] (-0.5,0) to [out=90,in=200] (1,1.5);
\draw[green] (-0.5,0) to [out=-90,in=160] (1,-1.5);
\draw[green] (1,1.5) to [out=20,in=180] (2,1.7);
\draw[green] (1,-1.5) to [out=-20,in=180] (2,-1.7);
\node at (-0.15,0){\scriptsize $\Lambda[2]$};
\node at (1.45,-1){\scriptsize $L_\lambda[0]$};
\node at (6.3,-1.9){\scriptsize $L_y$};

\draw[orange] (3.8,-1.7)--(7,-1.7);

\draw[red] (0.3,0.8) to [out=80,in=200] (1,1.5);
\draw[red] (1,1.5) to [out=20,in=190] (6.5,0);
\node at (4.1,1){\scriptsize $\Delta[2]$};

\draw[blue] (0.5,0.5) to [out=80,in=200] (1,1.5);
\draw[blue] (1,1.5) to [out=20,in=190] (5.5,-0.5);
\node at (2.9,0.5){\scriptsize $\Gamma[2]$};

\draw[very thick] (0.7,0) to [out=0,in=180] (3.5,-2);
\draw[very thick] (3.5,-2) to [out=0,in=-100] (4.9,0.6);

\draw[very thick] (1.6,-1.4) to [out=-80,in=180] (4,-2.3);
\draw[very thick] (4,-2.3) to [out=0,in=-100] (5.3,-0.2);

\draw[very thick] (1.3,-1.3) to [out=-80,in=180] (4.5,-2.6);
\draw[very thick] (4.5,-2.6) to [out=0,in=-100] (6.2,0.4);

\node at (2.75,-1.4){\scriptsize $E_3$};
\node at (2.4,-1.95){\scriptsize $E_2$};
\node at (1.3,-1.95){\scriptsize $E_1$};

\node at (1.2,1.75){\scriptsize $p$};
\node at (1.2,0.15){\scriptsize $q$};

\fill (1,1.5) circle (2pt);
\fill (1,-0.04) circle (2pt);

\end{tikzpicture}
\end{center}

Next, we blow up the intersection point $q$ between $L_\lambda$ and $E_3$, with exceptional curve $E_4$. The curves $\Gamma$, $\Delta$ and $\Lambda$ each intersect with multiplicity $3$ in the point $p$. We then blow up $p$ and two points proximate to $p$ (with exceptional curves $E_5$, $E_6$, $E_7$) so that the strict transforms of $\Gamma$, $\Delta$ and $\Lambda$ are disjoint. We thus obtain the following configuration of curves.

\begin{center}
\begin{tikzpicture}[scale=0.9]

\draw[orange] (-1,2.5) --(0.5,0);
\draw (0,0) -- (1.5,2.5);
\draw (1,2.5) --(2.5,0);
\draw[very thick] (2,0) -- (3.5,2.5);
\draw (4,2.5) --(5.5,0);

\draw[very thick,red] (2.7,1.9)--(6.5,1.9);
\node at (5.2,2.1){\scriptsize $\Delta$};
\draw[very thick,blue] (2.3,1.2)--(6,1.2);
\node at (5.2,1.4){\scriptsize $\Gamma$};

\draw[orange] (4.5,0.4)--(7.3,0.4);
\node at (-0.45,1.25){\scriptsize $L_\lambda$};

\draw[very thick,green] (-1.5,0.3) to [out=80,in=110] (3.5,2);
\node at (0.2,2.55){\scriptsize $\Lambda$};
\node at (6.6,0.15){\scriptsize $L_y$};

\draw[very thick] (-1.5,1) to [out=-80,in=180] (4.4,-1);
\draw[very thick] (4.4,-1) to [out=0,in=-80] (6.3,2.1);

\draw[very thick] (-1.2,1.5) to [out=-80,in=180] (4.5,-0.6);
\draw[very thick] (4.5,-0.6) to [out=0,in=-70] (5.8,1.4);

\draw[very thick] (0.25,0.2) to [out=-20,in=180] (3.8,-0.2);
\draw[very thick] (3.8,-0.2) to [out=0,in=220] (5.6,0.3);

\node at (-0.8,-0.5){\scriptsize $E_1$};
\node at (0.55,-0.55){\scriptsize $E_2$};
\node at (4.75,0.75){\scriptsize $E_3$};
\node at (1.3,0.15){\scriptsize $E_4$};
\node at (0.4,1.25){\scriptsize $E_5$};
\node at (1.45,1.25){\scriptsize $E_6$};
\node at (2.75,0.75){\scriptsize $E_7$};

\node at (3.1,2.3){\scriptsize $r$};

\fill (3.36,2.27) circle (2pt);

\end{tikzpicture}
\end{center}

Finally, we blow up the intersection point $r$ between $\Lambda$ and $E_7$ and two points proximate to $r$, with exceptional curves $E_8$, $E_9$, $E_{10}$, and obtain the configuration shown below. We denote the surface obtained after these blow-ups by $X$ and denote the composition of all 10 blow-ups by $\rho \colon X \to \p^2$. The curves $E_1$, $E_2$, $E_4$, $E_{10}$ are dashed and unlabeled because they will not be used for what follows.

\begin{center}
\begin{tikzpicture}[scale=0.9]
\draw (-4,0)--(-2.5,2.5);
\draw (-3,2.5) --(-1.5,0);
\draw[green] (-2,0)--(-0.5,2.5);
\draw[orange] (-1,2.5) --(0.5,0);
\draw (0,0) -- (1.5,2.5);
\draw (1,2.5) --(2.5,0);
\draw (2,0) -- (3.5,2.5);
\draw (4,2.5) --(5.5,0);
\node at (-0.45,1.25){\scriptsize $L_\lambda$};
\node at (-1.5,1.25){\scriptsize $\Lambda$};
\node at (6.6,0.15){\scriptsize $L_y$};

\draw[very thick, red] (2.7,1.9)--(6.5,1.9);
\node at (5.2,2.1){\scriptsize $\Delta$};
\draw[very thick, blue] (2.3,1.2)--(6,1.2);
\node at (5.2,1.4){\scriptsize $\Gamma$};

\draw[orange] (4.5,0.4)--(7.3,0.4);

\draw[dashed] (-3.45,0.2) to [out=150,in=180] (-2.2,3.2);
\draw[dashed] (-2.2,3.2) to [out=0,in=145] (3.6,2.05);

\draw[dashed] (-1.5,1) to [out=-80,in=180] (4.4,-1);
\draw[dashed] (4.4,-1) to [out=0,in=-80] (6.3,2.1);

\draw[dashed] (-1.2,1.5) to [out=-80,in=180] (4.5,-0.6);
\draw[dashed] (4.5,-0.6) to [out=0,in=-70] (5.8,1.4);

\draw[dashed] (0.25,0.2) to [out=-20,in=180] (3.8,-0.2);
\draw[dashed] (3.8,-0.2) to [out=0,in=220] (5.6,0.3);

\node at (4.75,0.75){\scriptsize $E_3$};

\node at (0.4,1.25){\scriptsize $E_5$};
\node at (1.47,1.25){\scriptsize $E_6$};
\node at (3.05,0.75){\scriptsize $E_7[-4]$};
\node at (-2.5,1.25){\scriptsize $E_8$};
\node at (-3.55,1.25){\scriptsize $E_9$};

\end{tikzpicture}
\end{center}

The rank of the Picard group of $X$ is $11$, since this surface is obtained from $\p^2$ by $10$ blow-ups.
We can now find a morphism $\pi \colon X \to \p^2$, by contracting the $10$ curves $\Delta$, $E_3$, $L_y$, $E_7$, $E_6$, $E_5$, $L_\lambda$, $\Lambda$, $E_8$, $E_9$, in this order. The image $C \coloneqq \pi(\Gamma)$ is then a curve of degree $8$ in $\p^2$ with multiplicity sequence $(3_{(7)})$. Likewise, we find a morphism $\eta \colon X \to \p^2$, where we first contract $\Gamma$ instead of $\Delta$. The image $D \coloneqq \eta(\Delta)$ is then also a curve of degree $8$ with multiplicity sequence $(3_{(7)})$. The complements $\p^2 \setminus C$ and $\p^2 \setminus D$ are both isomorphic to the complement of the union of the curves $\Gamma$, $\Delta$, $E_3$, $L_y$, $E_7$, $E_6$, $E_5$, $L_\lambda$, $\Lambda$, $E_8$, $E_9$ in $X$.

Suppose now that $C$ and $D$ are projectively equivalent, i.e. there exists $\theta \in \PGL_3(\k)$ with $\theta(C) = D$. We observe that the base-points of $\pi$ are completely determined by $C$, since $\pi$ is the minimal SNC-resolution of $C$ followed by the blow-up of the unique intersection point between $E_3$ and $E_7$. Likewise, the base-points of $\eta$ are determined by $D$. It follows that $\hat{\theta} \coloneqq \eta^{-1}\circ \theta \circ \pi$ defines an automorphism of $X$ that exchanges $\Gamma$ and $\Delta$ and preserves the other exceptional curves. But then $\hat{\theta}$ induces an automorphism of $\p^2$ (via $\rho$) that exchanges the conics $\Gamma, \Delta \subset \p^2$ and preserves $\Lambda$, $L_y$ and $L_\lambda$. But this is not possible by Lemma~$\ref{lemma:conicconfiguration}$, since we have chosen $\lambda \neq 1$. We thus reach a contradiction and conclude that $C$ and $D$ are not projectively equivalent.
\end{proof}

\begin{remark}
{\rm The construction in the proof of Theorem~$\ref{theorem:counterexample}$ also works if the base-field $\k$ is not algebraically closed, except if $\k$ has only $2$ or $3$ elements. In these cases we cannot choose $\lambda \in \k \setminus \{0,\pm1\} = \varnothing$.}
\end{remark}

\bibliographymark{References}

\providecommand{\bysame}{\leavevmode\hbox to3em{\hrulefill}\thinspace}
\providecommand{\arXiv}[2][]{\href{https://arxiv.org/abs/#2}{arXiv:#1#2}}
\providecommand{\MR}{\relax\ifhmode\unskip\space\fi MR }
% \MRhref is called by the amsart/book/proc definition of \MR.
\providecommand{\MRhref}[2]{%
  \href{http://www.ams.org/mathscinet-getitem?mr=#1}{#2}
}
\providecommand{\href}[2]{#2}


\begin{thebibliography}{XXXX}
\addcontentsline{toc}{section}{References}

\bibitem[Alb02]{Alb02}
M. Alberich-Carrami\~nana,
\emph{Geometry of the plane Cremona maps},
Lecture Notes in Mathematics, vol. 1769, Springer-Verlag, Berlin, 2002.
\MR{1874328}

\bibitem[AM75]{AM75}
S. S. Abhyankar and T. T. Moh,
\emph{Embeddings of the line in the plane},
J. Reine Angew. Math. {\bf 276} (1975), 148--166.
\MR{0379502}

\bibitem[BD11]{BD11}
J. Blanc and A. Dubouloz,
\emph{Automorphisms of $\A^1$-fibered affine surfaces},
Trans. Amer. Math. Soc. {\bf 363} (2011), no. 11, 5887--5924.
\MR{2817414}

\bibitem[Bla09]{Bla09}
J. Blanc,
\emph{The correspondence between a plane curve and its complement},
J. Reine Angew. Math. {\bf 633} (2009), 1--10. 
\MR{2561193}

\bibitem[BFH16]{BFH16}
J. Blanc, J.-P. Furter, and M. Hemmig,
\emph{Exceptional isomorphisms between complements of affine plane curves},
Duke Math. J. {\bf 168} (2019), no. 12, 2235--2297.
\MR{3999446}

\bibitem[BS15]{BS15}
J. Blanc and I. Stampfli,
\emph{Automorphisms of the plane preserving a curve},
Algebr. Geom. {\bf 2} (2015), no. 2, 193--213.
\MR{3350156}

\bibitem[Cos12]{Cos12}
P. Costa,
\emph{New distinct curves having the same complement in the projective plane},
Math. Z. {\bf 271} (2012), no. 3-4, 1185--1191.
\MR{2945603}

\bibitem[Gan85]{Gan85}
R. Ganong,
\emph{Kodaira dimension of embeddings of the line in the plane},
J. Math. Kyoto Univ. {\bf 25} (1985), no. 4, 649--657.
\MR{0810969}

\bibitem[Jun42]{Jun42}
H. W. E. Jung,
\emph{\"Uber ganze birationale Transformationen der Ebene},
J. Reine Angew. Math. {\bf 184} (1942), 161--174.
\MR{0008915}

\bibitem[KM83]{KM83}
N. M. Kumar and M. P. Murthy,
\emph{Curves with negative self intersection on rational surfaces},
J. Math. Kyoto Univ. {\bf 22} (1983), no. 4, 767--777.
\MR{0685529}

\bibitem[Lam02]{Lam02}
S. Lamy,
\emph{Une preuve g\'eom\'etrique du th\'eor\`eme de Jung}, 
Enseign. Math. (2) {\bf 48} (2002), no. 3-4, 291--315.
\MR{1955604}

\bibitem[Nam84]{Nam84}
M. Namba,
\emph{Geometry of projective algebraic curves},
Monographs and Textbooks in Pure and Applied Mathematics, vol. 88, Marcel Dekker, Inc., New York, 1984. 
\MR{0768929}

\bibitem[Suz74]{Suz74}
M. Suzuki,
\emph{Propri\'et\'es topologiques des polyn\^omes de deux variables complexes, et automorphismes alg\'ebriques de l'espace {${\bf
  C}^{2}$}},
J. Math. Soc. Japan {\bf 26} (1974), 241--257.
\MR{0338423}

\bibitem[vdK53]{vdK53}
W. van der Kulk,
\emph{On polynomial rings in two variables},
Nieuw Arch. Wiskunde (3) {\bf 1} (1953), 33--41.
\MR{0054574}

\bibitem[Yos79]{Yos79}
H. Yoshihara
\emph{On plane rational curves},
Proc. Japan Acad. Ser. A Math. Sci. {\bf 55} (1979), no. 4, 152--155. 
\MR{0533711}

\bibitem[Yos84]{Yos84}
H. Yoshihara,
\emph{On open algebraic surfaces {${\bf P}^2$}--C},
Math. Ann. {\bf 268} (1984), no. 1, 43--57.
\MR{0744327}

\bibitem[Yos85]{Yos85}
H. Yoshihara,
\emph{Projective plane curves and the automorphism groups of their complements},
J. Math. Soc. Japan {\bf 37} (1985), no. 1, 87--113.
\MR{0769779}

\end{thebibliography}
\end{document}